 \numberwithin{equation}{section}
 \newtheorem{theorem}{Theorem}[section]
 \newtheorem{proposition}[theorem]{Proposition}
 \newtheorem{lemma}[theorem]{Lemma}
 \newtheorem{claim}[theorem]{Claim}
 \newtheorem{corollary}[theorem]{Corollary}
 \newtheorem{conjecture}[theorem]{Conjecture}
 \theoremstyle{definition}
 \newtheorem{example}[theorem]{Example}
 \newtheorem{remark}[theorem]{Remark}
 \newtheorem*{acknowledgements}{Acknowledgements}
 \theoremstyle{remark}
 \newtheorem{exercise}[theorem]{Exercise}
 \newtheorem*{sketch}{Sketch proof}
 \newtheorem*{roadmap}{Sketch proof}
 \newtheorem*{roadmap2}{Sketch proof (continued)}
 \newcommand{\one}{\ensuremath{(\mathrm{i})}}
 \newcommand{\two}{\ensuremath{(\mathrm{ii})}}
 \newcommand{\three}{\ensuremath{(\mathrm{iii})}}
 \newcommand{\kk}{\ensuremath{\Bbbk}}
 \newcommand{\CC}{\ensuremath{\mathbb{C}}}
 \newcommand{\QQ}{\ensuremath{\mathbb{Q}}}
 \newcommand{\NN}{\ensuremath{\mathbb{N}}}
 \newcommand{\ZZ}{\ensuremath{\mathbb{Z}}}
 \DeclareMathOperator{\Circuit}{Cir}
 \DeclareMathOperator{\coh}{coh}
 \DeclareMathOperator{\Cone}{cone}
 \DeclareMathOperator{\End}{End}
 \DeclareMathOperator{\Ext}{Ext}
 \DeclareMathOperator{\Gale}{Gale}
 \DeclareMathOperator{\GL}{GL}
 \DeclareMathOperator{\Hom}{Hom}
 \DeclareMathOperator{\Ker}{Ker}
 \DeclareMathOperator{\Irr}{Irr}  
 \DeclareMathOperator{\Lderived}{\mathbf{L}\!}
 \DeclareMathOperator{\Pic}{Pic} 
 \DeclareMathOperator{\Proj}{Proj}
 \DeclareMathOperator{\Rderived}{\mathbf{R}\!}
 \DeclareMathOperator{\SL}{SL}
 \DeclareMathOperator{\Sp}{Sp}
 \DeclareMathOperator{\Spec}{Spec}
 \DeclareMathOperator{\Sym}{Sym}
 \DeclareMathOperator{\Wt}{Wt}
 \DeclareMathOperator{\codim}{codim}
 \DeclareMathOperator{\conv}{conv}
 \DeclareMathOperator{\diag}{diag}
 \DeclareMathOperator{\face}{face}
 \DeclareMathOperator{\head}{h}
 \DeclareMathOperator{\homdim}{homdim}
 \DeclareMathOperator{\inc}{inc}
 \DeclareMathOperator{\id}{id}
 \DeclareMathOperator{\opp}{op}
 \DeclareMathOperator{\pic}{pic}
 \DeclareMathOperator{\rad}{rad}
 \DeclareMathOperator{\rank}{rank}
 \DeclareMathOperator{\rep}{rep}
 \DeclareMathOperator{\supp}{supp}
 \DeclareMathOperator{\tail}{t}
 \newcommand{\ltensor}{\overset{\mathbf{L}}{\otimes}} 
 \newcommand{\Gcoh}{\ensuremath{G}\operatorname{-coh}}
 \newcommand{\GExt}{\ensuremath{G}\operatorname{-Ext}}
 \newcommand{\GHom}{\ensuremath{G}\operatorname{-Hom}}
 \newcommand{\ghilb}{\ensuremath{G}\operatorname{-Hilb}}
 \newcommand{\hilbg}{\operatorname{Hilb}\ensuremath{^G}}
 \newcommand{\Irel}{\ensuremath{I_{\varrho}}}
 \newcommand{\rel}{\ensuremath{\varrho}}
 \newcommand{\modA}{\operatorname{mod}(\ensuremath{A})}
 \newcommand{\modAop}{\operatorname{mod}(\ensuremath{A^{\opp}})}
 \newcommand{\ModAop}{\operatorname{Mod}(\ensuremath{A^{\opp}})}
 \newcommand{\git}{\ensuremath{/\!\!/\!}}
 \renewcommand{\div}{\operatorname{div}}
\begin{document}
 \bibliographystyle{plain} 

 \title[Quiver representations in toric geometry]{Quiver
 representations in toric geometry} \author{Alastair Craw}
 \dedicatory{To Miles Reid on his 60th birthday} \address{Department
 of Mathematics,Glasgow University, Glasgow G12 8QW, Scotland}
 \email{craw@maths.gla.ac.uk}

 \begin{abstract}
   This article is based on my lecture notes from summer schools at
   the Universities of Utah (June 2007) and Warwick (September 2007).
   We provide an introduction to explicit methods in the study of
   moduli spaces of quiver representations and derived categories
   arising in toric geometry. The first main goal is to present the
   noncommutative geometric approach to semiprojective toric varieties
   via quivers. To achieve this, we use geometric invariant theory to
   construct both semiprojective toric varieties and moduli spaces of
   quiver representations.  The second main goal builds on the first
   by presenting an introduction to explicit methods in derived
   categories of coherent sheaves in toric geometry. We recall the
   notion of tilting bundles with examples, and describe the McKay
   correspondence as a derived equivalence in some detail following
   Bridgeland, King and Reid. We also describe extensions of their
   result beyond the $G$-Hilbert scheme to other fine moduli spaces of
   bound quiver representations.
     \end{abstract}

 \maketitle
 \tableofcontents

 \medskip
 
 \section{Introduction}
 This article provides an introduction to explicit methods in the
 study of moduli spaces of quiver representations and derived
 categories arising in toric geometry. Our guiding principle is to
 understand when a fine moduli space description can be given for a
 class of toric varieties so that the resulting universal family gives
 insight into geometric properties of the toric variety itself. For
 instance, if the universal family defines a collection of line
 bundles on the variety, do the classes of these bundles freely
 generate the Picard group of line bundles, the Grothendieck group of
 vector bundles, or the bounded derived category of coherent sheaves?
 We provide the necessary background from toric geometry,
 representations of quivers and derived categories in order to study
 this question. We also go beyond the toric category in several
 places, notably in presenting a detailed description of the work of
 Bridgeland, King and Reid that establishes the McKay correspondence
 as an equivalence of derived categories in dimension three.
 
 To begin, we provide a brief and novel introduction to toric
 varieties by describing the class of semiprojective toric varieties.
 A brief tour of geometric invariant theory is given in
 Section~\ref{sec:GIT}, where results for diagonalisable group actions
 on affine toric varieties are listed, including a full description of
 finite group quotients and the geometric interpretation of $\Proj$ of
 a $\ZZ$-graded ring. This paves the way for the description of
 semiprojective toric varieties in Section~\ref{sec:toric}. We do not
 assume that toric varieties are normal, but we do restrict attention
 only to toric varieties that are projective over an affine toric
 variety. In the normal case, we describe the relation to polyhedra
 and fans, and describe the canonical GIT description of Cox, giving
 worked examples for the first Hirzebruch surface $\mathbb{F}_1$ and
 the minimal resolution of the $A_2$-singularity.
 
 The second part of the article introduces the construction of normal
 semiprojective toric varieties as fine moduli spaces of bound quiver
 representations by Craw--Smith~\cite{CrawSmith}.
 Section~\ref{sec:quivers} presents the abelian categories of quiver
 representations and bound quiver representations with examples,
 before recalling the construction of fine moduli spaces of quiver
 representations by King~\cite{King1}. We restrict the dimension
 vector to ensure that the resulting moduli space is toric, or at
 worst (in the case of bound quivers) where each irreducible component
 is a semiprojective toric variety.  Section~\ref{sec:noncommutative}
 uses these moduli spaces to extend the classical notion of the linear
 series $\vert L\vert $ of a single basepoint-free line bundle $L$,
 obtaining the multilinear series $\vert \mathcal{L}\vert $ of a
 collection of basepoint-free line bundles
 $\mathcal{L}=(\mathscr{O}_X,L_1,\dots, L_r)$ on a normal
 semiprojective toric variety $X$. Under nice conditions, the image
 under the natural morphism $\varphi_{\vert \mathcal{L}\vert}\colon
 X\to \vert \mathcal{L}\vert$ is shown to represent a functor, and
 hence carries a tautological bundle that encodes a description of $X$ as
 a fine moduli space of bound quiver representations.
 
 In the final part, comprising
 Sections~\ref{sec:tilting}-\ref{sec:VGIT}, we use quiver
 representations to study derived categories in toric geometry. Fine
 moduli space constructions and derived category questions and have
 been linked closely since the pioneering work on abelian varieties by
 Mukai~\cite{Mukai}. This relationship blossomed with the development
 of Fourier--Mukai transforms by Bondal, Orlov, Bridgeland and others,
 culminating in the striking moduli space construction of threefold
 flops by Bridgeland~\cite{Bridgeland2}. Huybrechts~\cite{Huybrechts}
 provides an excellent introduction to Fourier--Mukai transforms but,
 as Huybrechts remarks in his preface: ``\emph{questions related to
 representation theory of, e.g., quivers, or to modules over
 (noncommutative) rings, have not been touched upon.}''.  The final
 part of this article introduces some of these ideas, focusing largely
 (but not entirely) on the toric case.  Section~\ref{sec:tilting}
 introduces the link between derived categories and moduli space
 descriptions for semiprojective toric varieties that goes back to
 King's work on tilting bundles in toric geometry~\cite{King2}. We
 present several of King's results, and describe briefly the extension
 to smooth toric DM stacks by Kawamata~\cite{Kawamata5} and, more
 recently, Borisov--Hua~\cite{BorisovHua}. Section~\ref{sec:McKay}
 provides a detailed look at the proof of the celebrated theorem of
 Bridgeland--King--Reid~\cite{BKR} that establishes the derived McKay
 correspondence for the $G$-Hilbert scheme in dimension three. Here we
 gain nothing by restricting to the toric case, so we do not do so. In
 Section~\ref{sec:VGIT}, we conclude by moving beyond the $G$-Hilbert
 scheme to other moduli spaces of quiver representations for which the
 universal family determines explicitly the bounded derived category of
 coherent sheaves, describing briefly the work of
 Craw--Ishii~\cite{CrawIshii}.

 \begin{acknowledgements}
   Thanks to Tarig Abdel Gadir, Chris Brav, Akira Ishii and Greg Smith
   for comments and useful discussions. Thanks also to the participants of
   the graduate summer schools in Utah and Warwick, several of
   whom have since made valuable contributions in this area (see, for
   example, \cite{BSW, BoerQuinteroVelez, BorisovHua}). Special thanks
   to the organisers of both schools, and in particular to Miles Reid
   for introducing me to the $G$-Hilbert scheme a decade ago. It is
   with great pleasure that I dedicate this article to him on the
   occasion of his 60th birthday (conference).
 \end{acknowledgements}

 \section{GIT for diagonalisable group actions}
 \label{sec:GIT}
 This section reviews some of the basic elements in the construction
 of orbit spaces in algebraic geometry. Our primary examples are
 provided by quotients of affine space by a finite group, and
 the projective spectrum of a graded ring. Throughout these notes
 we write $\kk$ for an algebraically closed field of characteristic
 zero.

 \subsection{Affine quotients by finite groups}
 Let $R$ be a finitely generated integral $\kk$-algebra.  For any set
 of $\kk$-algebra generators $f_1,\dots, f_n\in R$, write $\psi\colon
 \kk[x_1,\dots, x_n]\rightarrow R$ for the surjective $\kk$-algebra
 homomorphism obtained by sending $x_i$ to $f_i$ for $1\leq i\leq n$.
 The prime ideal $\Ker(\psi)$ that records the relations between the
 generators $f_1,\dots, f_n$ cuts out the affine variety 
 \[
 \Spec(R)=\big\{p\in \mathbb{A}^n_\kk : f(p)=0 \text{ for all }f\in
 \Ker(\psi)\big\}
 \]
 with coordinate ring $R$. An affine variety need not be presented as
 a subvariety of affine space, though one may always choose an
 embedding. A different choice of $\kk$-algebra generators for $R$
 provides an alternative embedding of $X$ into affine space.

 \begin{example}
 \label{ex:GLV}
 Let $V$ be a $\kk$-vector space of dimension $n$ with dual space
 $V^*$. The symmetric algebra $R=\bigoplus_{k\geq 0} \Sym^k(V^*)$ is
 isomorphic to the polynomial ring in $n$ variables, and we identify
 $V$ with affine space $\mathbb{A}^n_\kk = \Spec(R)$. Similarly,
 we regard $\GL(V)=\GL(n,\kk)$ as the affine variety whose coordinate
 ring is the localisation of $\kk[x_{i,j} : 1\leq i,j\leq n]$ at the
 determinant function.
 \end{example}

 \begin{example}
 \label{ex:third11}
 Let $G$ denote the cyclic group of order three acting on
 $V=\mathbb{A}^2_\kk$ with generator the diagonal matrix
 $\diag(\omega, \omega)$ for $\omega$ a primitive third root of unity.
 The dual action of $G$ on $V^*$ extends to an action on the
 polynomial ring $\kk[x,y] = \bigoplus_{k\geq 0} \Sym^k(V^*)$, and a
 monomial is invariant under the action of $G$ precisely when it has
 total degree divisible by three. It follows that the algebra of
 $G$-invariant functions is
 \[
 \kk[x,y]^G\cong \kk[x^3,x^2y,xy^2,y^3].
 \]
 The $\kk$-algebra homomorphism $\psi \colon \kk[y_1,y_2,y_3,y_4]\to
 \kk[x,y]^G$ determined by setting $\psi(y_1)=x^3$, $\psi(y_2)=x^2y$,
 $\psi(y_3)=xy^2$ and $\psi(y_4)=y^3$ has kernel
 \begin{equation}
 \label{eqn:twistedcubic}
 \Ker(\psi)=\big{(}y_1y_3-y_2^2, y_1y_4-y_2y_3, y_2y_4-y_3^2\big{)}.
 \end{equation}
 Thus, $\Spec\big(\kk[x,y]^G\big)$ is the singular surface in
 $\mathbb{A}^4_\kk$ cut out by the ideal $\Ker(\psi)$.
 \end{example}

 \begin{example}
 \label{ex:714}
 Let $G$ denote the cyclic group of order 7 that acts on
 $\mathbb{A}^2_\kk$ with generator the diagonal matrix
 $\diag(\epsilon, \epsilon^4)$, for $\epsilon$ a primitive 7th root of
 unity. The algebra of $G$-invariants $R = \kk[x,y]^G$ is generated minimally
 by the monomials $x^{7},x^{3}y,x^{2}y^{3},xy^{5},y^{7}$,
 and the induced surjective map of $\kk$-algebras $\varphi\colon
 \kk[y_1,\dots,y_5]\rightarrow R$ determines the affine subvariety of
 $\mathbb{A}^5_\kk$ cut out by the equations
 \begin{equation}
 \label{eqn:eqns}
 \rank\left(\begin{array}{cccc} y_{0} & y_{1} & y_{2} & y_{3} \\
                               y_{1}^{2} & y_{2} & y_{3} & y_{4} \end{array}\right) \leq 1.
 \end{equation}
 This rank conditions means simply that the equations are the
 binomials obtained as the $2\times 2$ minors of this matrix.
 \end{example}

 This example can be generalised significantly. For now we observe
 only that for any finite subgroup $G$ of $\GL(n,\kk)$, the set of
 $G$-orbits in $\mathbb{A}^n_\kk$ admits the structure of an affine variety.

 \begin{proposition}
 \label{prop:finiteGIT}
 For $R=\kk[x_1,\dots,x_n]$ and $G\subset \GL(n,\kk)$ a finite subgroup, we have:
 \begin{enumerate}
 \item[\one] the invariant ring $R^G$
 is a finitely generated integral $\kk$-algebra; and 
 \item[\two] the morphism $ \pi \colon \mathbb{A}^n_\kk\rightarrow
 \Spec(R^G)$ induced by the inclusion $\iota \colon R^G\hookrightarrow
 R$ is surjective and closed. Moreover, for all points $p,p^\prime \in
 \mathbb{A}^n_\kk$,
 \begin{equation}
 \label{eqn:Gorbits}
 \pi(p)=\pi(p^\prime) \iff G\cdot p = G\cdot p^\prime.
 \end{equation}
 \end{enumerate}
 In particular, the \emph{affine quotient} $\mathbb{A}^n_\kk/G :=\Spec
 \big(R^G\big)$ parametrise all $G$-orbits in $\mathbb{A}^n_\kk$.
 \end{proposition}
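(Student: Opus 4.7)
The plan is to treat the two parts in sequence: part \one reduces to the classical Hilbert--Noether finiteness theorem for invariants of a finite group, while part \two follows from the integrality of $R$ over $R^G$ together with a separation argument via the Reynolds operator. Throughout, the standing assumption that $\kk$ has characteristic zero enters precisely through the ability to average over $G$.

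For \one, the integral domain statement is immediate, since $R^G$ is a subring of the domain $R$. For finite generation the key device is the Reynolds operator $\rho\colon R\to R^G$ defined by $\rho(f)=\frac{1}{|G|}\sum_{g\in G} g\cdot f$, which is well-defined and $R^G$-linear in characteristic zero and splits the inclusion $R^G\hookrightarrow R$. For each generator $x_i$, the polynomial $\prod_{g\in G}(T-g\cdot x_i)$ is monic of degree $|G|$ with coefficients equal to the elementary symmetric functions in the orbit $\{g\cdot x_i:g\in G\}$, and these coefficients lie in $R^G$. Let $S\subseteq R^G$ be the $\kk$-subalgebra generated by all such coefficients as $i$ ranges from $1$ to $n$. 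Then $S$ is a finitely generated, hence Noetherian, $\kk$-algebra, and $R$ is a finitely generated $S$-module since each $x_i$ is integral over $S$. Since $R^G$ is a sub-$S$-module of the Noetherian $S$-module $R$, it too is finitely generated over $S$, and a fortiori over $\kk$.

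For \two, the same integral relations display $R$ as integral over $R^G$, so the induced morphism $\pi\colon\Spec(R)\to\Spec(R^G)$ is surjective and closed by Cohen--Seidenberg. It remains to identify the fibres with $G$-orbits. The direction $(\Leftarrow)$ of \eqref{eqn:Gorbits} is immediate, since $f(g\cdot p)=f(p)$ for every $f\in R^G$. For $(\Rightarrow)$ I would argue by contrapositive: if $G\cdot p$ and $G\cdot p'$ are disjoint finite subsets of $\mathbb{A}^n_\kk$, then Chinese remainder (or direct Lagrange interpolation) produces $f\in R$ with $f\equiv 0$ on $G\cdot p$ and $f\equiv 1$ on $G\cdot p'$, and then $\rho(f)\in R^G$ satisfies $\rho(f)(p)=0$ and $\rho(f)(p')=1$, so $\pi(p)\neq\pi(p')$. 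The only genuine ingredient is the Hilbert--Noether finiteness in \one; the remaining assertions are formal consequences of integrality and Reynolds averaging, and present no real obstacle in characteristic zero.
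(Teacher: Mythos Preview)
Your proof is correct. The overall structure matches the paper's: both parts ultimately rest on the integrality of $R$ over $R^G$, established via the monic polynomials $\prod_{g\in G}(T-g\cdot x_i)$. For part~\one\ the paper simply invokes the Artin--Tate lemma (applied to $\kk\subseteq R^G\subseteq R$), whereas you unpack this by hand via the intermediate subalgebra $S$; these are the same argument at different levels of abstraction. For part~\two\ there is a genuine, if minor, difference in the separation step: the paper appeals to the going-up theorem and the standard consequence that $G$ acts transitively on the primes of $R$ lying over a given prime of $R^G$ (so distinct orbits, being distinct maximal ideals not in the same $G$-orbit, map to distinct points), while you argue constructively via the Reynolds operator and the Chinese remainder theorem. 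Your route is slightly more elementary and makes the role of characteristic zero explicit; the paper's route avoids the averaging and would work in positive characteristic as well, since the transitivity-on-fibres argument does not require the Reynolds operator.
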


 \begin{proof}
   The map $\iota \colon R^G\hookrightarrow R$ is integral, so $R$ is
   finitely-generated as an $R^G$-module.  Since $R$ is a
   finitely-generated $\kk$-algebra, part (1) follows from the
   result of Artin-Tate (see \cite[Prop~7.8]{AtiyahMacdonald}). The
   going-up theorem \cite[Ex~5.10]{AtiyahMacdonald} implies that $\pi$
   is a surjective and closed map, and, moreover, that any pair of
   closed, disjoint and $G$-invariant subsets $Z_1, Z_2\subset
   \mathbb{A}^n_\kk$ satisfy $\pi(Z_1)\cap \pi(Z_2)=\emptyset$. This
   gives the statement from \eqref{eqn:Gorbits}.
 \end{proof}

 \begin{exercise}
 \label{ex:r111}
 For $r\in \NN$ and $a_1,\dots, a_n\in\NN$ satisfying $a_i<r$ for
 $1\leq i\leq n$, consider the action of the cyclic group $G=\ZZ/r$ on
 $\mathbb{A}^n_\kk$ with generator the diagonal matrix
 $\diag(\epsilon^{a_1}, \dots, \epsilon^{a_n})$ for $\epsilon$ a
 primitive $r$th root of unity. This is the \emph{action of type
   $\frac{1}{r}(a_1,\dots, a_n)$}, and the variety
 $\mathbb{A}^n_\kk/G$ is the \emph{cyclic quotient singularity of type
   $\frac{1}{r}(a_1,\dots, a_n)$}. Generalise Example~\ref{ex:714} by
 writing down the defining equations for the quotient singularity of
 type $\frac{1}{r}(1,1,\dots,1)$.
\end{exercise}

 \begin{remark}
 \label{rem:verbatim}
   For $X=\Spec(R)$, choose an embedding in
   $\mathbb{A}^n_\kk$ and suppose that a finite subgroup $G\subset
   \GL(n,\kk)$ acts on $X$ by restriction. The proof of
   Proposition~\ref{prop:finiteGIT} applies verbatim to show that the
   \emph{affine quotient} $X/G:=\Spec\big(R^G\big)$ parametrises all
   $G$-orbits in $X$.
 \end{remark}

 \subsection{Diagonalisable algebraic groups} 
 \label{sec:algebraicactions}
 In order to study further group actions on algebraic varieties we
 first establish the actions that are of interest to us. 
 
 An affine algebraic group is an affine variety $G$ over $\kk$
 equipped with three morphisms, namely \emph{multiplication} $\nu\colon
 G\times G\to G$; an \emph{identity} $e\colon \Spec(\kk)\to G$; and an
 \emph{inverse} $\iota\colon G\to G$, where:
 \begin{enumerate}
 \item[\one] $\nu\circ(\id_{G}\times \nu)=\nu\circ(\nu\times
 \id_G) \colon G\times G\times G\to G$;
 \item[\two] $\id_G=\nu\circ(e\times \id_G) = \nu\circ(\id_G\times e) \colon G\to G$;
 \item[\three] $\nu \circ(\id_G\times \iota)\circ \Delta = \nu \circ(\iota\times \id_G)\circ \Delta = e\circ \pi\colon G\to G$.
 \end{enumerate}
 Here, $\id_G\colon G\to G$ is the identity morphism, $\Delta\colon
 G\to G\times G$ is the diagonal embedding, and $\pi\colon G\to
 \Spec(\kk)$ is the natural map.

 \begin{exercise}
   Draw commutative diagrams expressing each condition and convince
   yourself that these conditions are simply the axioms for a group
   stated in the category of affine varieties.
 \end{exercise}
 
 Let $\Lambda$ be a finitely generated abelian group with group
 algebra $\kk[\Lambda]$. The group generators of $\Lambda$ determine
 finitely many $\kk$-algebra generators of $\kk[\Lambda]$, and we
 write $G_\Lambda:=\Spec\big(\kk[\Lambda]\big)$. The $\kk$-algebra
 homomorphisms $\nu^*\colon \kk[\Lambda]\to \kk[\Lambda]\otimes_\kk
 \kk[\Lambda]$, $e^*\colon \kk[\Lambda]\to \kk$ and $\iota^*\colon
 \kk[\Lambda]\to\kk[\Lambda]$ defined by setting
 $\nu^*(\lambda)=\lambda\otimes \lambda$, $e^*(\lambda)=1$ and
 $\iota^*(\lambda)=\lambda^{-1}$ for $\lambda\in \Lambda$ induce
 morphisms $\nu\colon G_\Lambda\times G_\Lambda\to G_\Lambda$,
 $e\colon \Spec(\kk)\to G_\Lambda$ and $\iota\colon G_\Lambda\to
 G_\Lambda$ that make $G_\Lambda$ an affine algebraic group. An affine
 algebraic group is \emph{diagonalisable} if it is isomorphic to
 $G_\Lambda$ for some finitely generated abelian group $\Lambda$.

 \begin{proposition}
   The assignment $\Lambda\mapsto G_\Lambda$ establishes a
   contravariant equivalence from the category of finitely generated
   abelian groups to the category of diagonalisable algebraic groups,
   with quasi-inverse given by sending each group $G$ to its character
   group $G^*$.
 \end{proposition}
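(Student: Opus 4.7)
The plan is to exhibit the two functors explicitly and verify they are quasi-inverse. On the one side, the assignment $\Lambda \mapsto G_\Lambda = \Spec\bigl(\kk[\Lambda]\bigr)$ is plainly contravariantly functorial: a group homomorphism $f\colon \Lambda \to \Lambda'$ induces a $\kk$-algebra homomorphism $\kk[\Lambda]\to \kk[\Lambda']$ that is compatible with the coproduct, counit and antipode defined by $\nu^*(\lambda)=\lambda\otimes \lambda$, $e^*(\lambda)=1$ and $\iota^*(\lambda)=\lambda^{-1}$, hence induces a morphism $G_{\Lambda'}\to G_\Lambda$ of affine algebraic groups. On the other side, define the character group functor by $G^*:=\Hom_{\mathrm{alg.gp.}}(G,\mathbb{G}_m)$, where $\mathbb{G}_m=\Spec\kk[t,t^{-1}]=G_{\ZZ}$; a homomorphism $G\to H$ induces $H^*\to G^*$ in the obvious way. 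Both categories are then clear, and the task reduces to constructing natural isomorphisms $\Lambda \cong (G_\Lambda)^*$ and $G\cong G_{G^*}$ for $G$ diagonalisable.

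The first and crucial isomorphism is realised by sending $\lambda\in \Lambda$ to the morphism $G_\Lambda\to \mathbb{G}_m$ dual to the Hopf algebra map $\kk[t,t^{-1}]\to \kk[\Lambda]$ that sends $t\mapsto \lambda$. A character of $G_\Lambda$ is determined by the image of $t$, which must be a \emph{grouplike} element of $\kk[\Lambda]$, that is, an element $x$ satisfying $\nu^*(x)=x\otimes x$ and $e^*(x)=1$. The key step, which I expect to be the main (though minor) obstacle, is to show that the only grouplike elements of $\kk[\Lambda]$ are the elements of $\Lambda$ itself. Writing $x=\sum_{\lambda\in\Lambda}c_\lambda \lambda$, the equation $\nu^*(x)=x\otimes x$ expands as
\[
\sum_{\lambda}c_\lambda\,\lambda\otimes\lambda \;=\; \sum_{\lambda,\mu}c_\lambda c_\mu\,\lambda\otimes\mu,
\]
and comparing coefficients in the basis $\{\lambda\otimes\mu\}$ of $\kk[\Lambda]\otimes\kk[\Lambda]$ forces $c_\lambda c_\mu=0$ for $\lambda\neq\mu$ and $c_\lambda^2=c_\lambda$ otherwise, so exactly one $c_\lambda$ is nonzero and equals $1$; the condition $e^*(x)=1$ then rules out $x=0$. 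Thus $x=\lambda$ for a unique $\lambda\in\Lambda$, proving that $\Lambda\to (G_\Lambda)^*$ is bijective. Functoriality of this identification in $\Lambda$ is immediate from the definitions, giving the required natural isomorphism.

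For the second isomorphism, suppose $G$ is diagonalisable, so $G\cong G_{\Lambda}$ for some finitely generated abelian group $\Lambda$. Applying the previous step gives $G^*\cong \Lambda$ naturally, and hence $G_{G^*}\cong G_{\Lambda}\cong G$, completing the quasi-inverse relation. Since both composites are naturally isomorphic to the respective identity functors, the two functors define a contravariant equivalence of categories as claimed.
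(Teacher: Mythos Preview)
Your proof is correct and in fact substantially more detailed than what the paper provides: the paper simply cites Milne and gives no argument at all. Your identification of characters with grouplike elements of $\kk[\Lambda]$, and the coefficient-comparison showing these are exactly the elements of $\Lambda$, is the standard and correct route.

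One small point worth tightening: for the second natural isomorphism $G\cong G_{G^*}$ you invoke a chosen isomorphism $G\cong G_\Lambda$, which is not a priori natural in $G$. A cleaner way to finish is to observe that your grouplike argument actually proves more, namely that Hopf algebra maps $\kk[\Lambda]\to\kk[\Lambda']$ are in bijection with group homomorphisms $\Lambda\to\Lambda'$ (since such maps must carry grouplikes to grouplikes). This shows directly that $\Lambda\mapsto G_\Lambda$ is fully faithful, and essential surjectivity is the definition of diagonalisable, so the equivalence follows without having to verify the second natural isomorphism separately. Alternatively, note that there is a \emph{canonical} Hopf algebra map $\kk[G^*]\to\mathcal{O}(G)$ sending a character to itself as a regular function, and your computation shows this is an isomorphism when $G=G_\Lambda$; naturality is then automatic.
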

 \begin{proof}
 See, for example, Milne~\cite{Milne}.
 \end{proof}

 To explain the terminology, we mention the following result~\cite{Milne}.

 \begin{lemma}
 Let $V$ be a finite dimensional $\kk$-vector space and $G\subset \GL(V)$ an
 algebraic subgroup. There is a basis of $V$ in which
 $G$ acts diagonally on $V$ if and only if $G=G_\Lambda$ for some finitely
 generated abelian group $\Lambda$.
 \end{lemma}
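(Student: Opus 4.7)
The plan is to treat the two implications separately; both reduce to standard facts about representations of diagonalisable groups, quoted as needed from Milne in the same spirit as the previous proposition.

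For the ($\Rightarrow$) direction, fix a basis of $V$ in which $G$ acts by diagonal matrices. This realises $G$ as a closed algebraic subgroup of the diagonal torus $T\subset \GL(V)$, and $T\cong G_{\ZZ^n}$ where $n=\dim_\kk V$. A closed algebraic subgroup of a diagonalisable group is again diagonalisable: concretely, $\kk[G]$ is a quotient Hopf algebra of $\kk[\ZZ^n]$, and since the grouplike elements of $\kk[\ZZ^n]$ form a $\kk$-basis indexed by $\ZZ^n$, their images in $\kk[G]$ give an identification $\kk[G]=\kk[\Lambda]$ for some quotient $\Lambda=\ZZ^n/H$. By the previous proposition, this gives $G=G_\Lambda$ with $\Lambda$ finitely generated abelian.

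For the ($\Leftarrow$) direction, suppose $G=G_\Lambda$, and let $\rho\colon G\hookrightarrow \GL(V)$ be the given inclusion. The main input is the weight space decomposition
\[
V=\bigoplus_{\lambda\in \Lambda} V_\lambda,\qquad V_\lambda=\bigl\{v\in V : \rho(g)\cdot v = \lambda(g) v \text{ for all } g\in G\bigr\},
\]
where each $\lambda\in\Lambda$ is regarded as a character $G\to \kk^*$. To establish this I would translate $\rho$ into a right $\kk[\Lambda]$-comodule structure $\Delta_V\colon V\to V\otimes_\kk \kk[\Lambda]$. Writing $\Delta_V(v)=\sum_{\lambda}v_\lambda\otimes \lambda$ (a finite sum), the counit axiom gives $v=\sum_\lambda v_\lambda$, while coassociativity together with $\nu^*(\lambda)=\lambda\otimes \lambda$ forces $\Delta_V(v_\lambda)=v_\lambda\otimes \lambda$; hence $v_\lambda\in V_\lambda$ and $V=\sum_\lambda V_\lambda$. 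Directness of the sum follows from linear independence of distinct characters as functions on $G$. Concatenating bases of the finitely many non-zero weight spaces yields a basis of $V$ in which every $g\in G$ acts by $\diag\bigl(\lambda_1(g),\dots,\lambda_n(g)\bigr)$.

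The step I expect to require the most care is the weight space decomposition: one must simultaneously verify that $V$ is spanned by its weight spaces (a coaction-axiom computation) and that the spanning sum is direct (linear independence of characters). Both points are standard Hopf-algebraic bookkeeping and can be invoked from Milne, but they are the only non-formal ingredient in the argument.
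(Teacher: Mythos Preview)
Your argument is correct and is precisely the standard proof one finds in references on linear algebraic groups. Note, however, that the paper does not actually prove this lemma: it simply states the result with a citation to Milne~\cite{Milne}, just as it does for the preceding proposition. So there is no paper proof to compare against beyond observing that your approach is the expected one, and indeed your $(\Leftarrow)$ direction is the finite-dimensional analogue of the coaction computation the paper carries out in Theorem~\ref{thm:grading} for the coordinate ring of an affine variety.
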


 \begin{example}
 Let $\Lambda$ be a cyclic group.
 \begin{itemize}
 \item If $\Lambda$ has order $r\in \NN$ then $\kk[\Lambda]\cong
 \kk[t]/\langle t^r-1\rangle$, so $G_\Lambda\cong \mu_r:= \{p\in
 \mathbb{A}^1_\kk : t^r=1\}$.
 \item If $\Lambda\cong \ZZ$ then $G_\Lambda\cong \kk^\times
 :=\Spec \big(\kk[t,t^{-1}]\big)$ is the algebraic torus of dimension one.
 \end{itemize}
 \end{example}
 
 Direct sums of cyclic groups induce tensor products of group algebras
 that are compatible with the algebraic groups structure. Thus, the
 structure theorem finitely generated abelian groups implies that
 every diagonalisable algebraic group is isomorphic to a product
 \[
 G_\Lambda \cong \kk^\times\times \dots \times \kk^\times \times \mu_{r_1}^{m_1}\times \dots
 \times \mu_{r_t}^{m_t}
 \]
 of finitely many copies of the algebraic torus $\kk^\times$ with a
 finite abelian group.

 \subsection{On actions and grading}
 An \emph{algebraic action} of an affine algebraic group $G$ on an affine
 variety $X=\Spec(R)$ is a morphism $\mu\colon G\times
 X\longrightarrow X$ of affine varieties such that the following
 compatibility conditions hold:
 \begin{enumerate}
 \item $\id_X=\mu\circ(e\times \id_X) \colon X\to X$;
 \item $\mu\circ(\id_{G}\times \mu)=\mu\circ(\nu\times
 \id_X) \colon G\times G\times X\to X$.
 \end{enumerate}
 These conditions simply the natural conditions for the action of a
 group on a set restated in the category of affine varieties.

 \begin{example}
   Let $V$ be a finite-dimensional $\kk$-vector space and $G$ an
   affine algebraic group. Recall from Example~\ref{ex:GLV} that
   $\GL(V)$ is an affine variety. Algebraic actions of $G$ on $V$
   correspond precisely to those representations $\rho\colon G\to
   \GL(V)$ that are morphisms in the category of affine varieties.
% Let $\rho\colon \kk^*\to \GL(V)$ be an
%   algebraic representation as in (1), and define a $\kk$-linear map
%   $\mu^*\colon V^*\to \kk[t,t^{-1}]\otimes_\kk V^*$ by sending $x_i$
%   to $\sum_j \rho^*(x_{i,j})\otimes x_j$. Since $A\cong \bigoplus_k
%   \Sym^k(V^*)$, this extends to a $\kk$-algebra homomorphism
%   $\mu^*\colon A \rightarrow \kk[t,t^{-1}]\otimes_\kk A$ satisfying
%   2(a) and 2(b). Reversing this procedure constructs $\rho$ from
%   $\mu^*$, so (1) and (2) are equivalent.  
 \end{example}

 We consider only algebraic actions of a diagonalisable algebraic
 group $G$ on an affine variety $X=\Spec(R)$.  In the
 presence of such an action, $G$ acts dually on $R$ via
 \begin{equation}
 \label{eqn:dualaction}
 (g\cdot f)(p) = f(g^{-1}\cdot p) \quad \text{ for all }g\in G, f\in R, p\in X.
 \end{equation}
 Let $\iota \colon R^G\hookrightarrow R$ denote the inclusion of the
subalgebra of $G$-invariant functions, i.e., functions $f\in R$ such
that $g\cdot f = f$. The following algebraic reformulation is extremely useful.

 \begin{theorem}
 \label{thm:grading}
 Let $\Lambda$ be a finitely generated abelian group. The following
 are equivalent:
 \begin{enumerate}
 \item[\one] an algebraic action of the diagonalisable group $G_\Lambda$ on
 an affine variety $X=\Spec(R)$;
 \item[\two] a $\Lambda$-grading of $R=\bigoplus_{\chi\in \Lambda} R_\chi$.
 \end{enumerate}
 \end{theorem}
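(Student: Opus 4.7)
The plan is to translate the geometric action into its dual algebraic form and then extract the weight space decomposition. First I would dualise: the action $\mu\colon G_\Lambda\times X\to X$ corresponds to a $\kk$-algebra homomorphism $\mu^*\colon R\to \kk[\Lambda]\otimes_\kk R$ (using that $G_\Lambda\times X = \Spec(\kk[\Lambda]\otimes_\kk R)$). Axioms (1) and (2) for an action translate into the counit and coassociativity axioms
\[
(e^*\otimes \id_R)\circ \mu^* = \id_R, \qquad (\nu^*\otimes \id_R)\circ \mu^* = (\id_{\kk[\Lambda]}\otimes \mu^*)\circ \mu^*,
\]
where recall $\nu^*(\chi)=\chi\otimes \chi$ and $e^*(\chi)=1$ for $\chi\in \Lambda$. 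Thus an algebraic $G_\Lambda$-action on $X$ is the same data as a $\kk[\Lambda]$-comodule structure on $R$ by $\kk$-algebra maps.

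For the direction \one$\Rightarrow$\two, define the weight space $R_\chi := \{f\in R : \mu^*(f) = \chi\otimes f\}$ for each $\chi\in \Lambda$. Given any $f\in R$, expand $\mu^*(f) = \sum_{\chi\in \Lambda} \chi\otimes f_\chi$ in the basis $\Lambda\subset \kk[\Lambda]$; the $f_\chi\in R$ are unique and almost all zero. Applying coassociativity gives
\[
\sum_\chi (\chi\otimes \chi)\otimes f_\chi \;=\; \sum_\chi \chi\otimes \mu^*(f_\chi),
\]
and comparing coefficients of $\chi\in \Lambda$ yields $\mu^*(f_\chi) = \chi\otimes f_\chi$, so $f_\chi\in R_\chi$. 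Applying the counit identity recovers $f = \sum_\chi f_\chi$, proving that the $R_\chi$ span $R$; linear independence of $\Lambda$ in $\kk[\Lambda]$ makes the sum direct. Finally, the fact that $\mu^*$ is an algebra homomorphism implies $\mu^*(fg) = (\chi\otimes f)(\chi'\otimes g) = (\chi+\chi')\otimes fg$ whenever $f\in R_\chi$ and $g\in R_{\chi'}$, so $R_\chi R_{\chi'}\subseteq R_{\chi+\chi'}$, giving the $\Lambda$-grading.

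For the converse \two$\Rightarrow$\one, a $\Lambda$-grading $R = \bigoplus_\chi R_\chi$ lets me define $\mu^*\colon R\to \kk[\Lambda]\otimes_\kk R$ on homogeneous elements by $\mu^*(f)=\chi\otimes f$ for $f\in R_\chi$ and extend $\kk$-linearly. Multiplicativity of $\mu^*$ follows at once from $R_\chi R_{\chi'}\subseteq R_{\chi+\chi'}$, while the counit and coassociativity axioms are immediate from $e^*(\chi)=1$ and $\nu^*(\chi)=\chi\otimes \chi$. Dualising produces the desired algebraic action $\mu$, and the two constructions are manifestly mutually inverse.

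The main obstacle is the first direction: one must show that the formally defined weight spaces $R_\chi$ actually exhaust $R$. This is where the comodule axioms do real work — coassociativity forces each Fourier coefficient $f_\chi$ to itself lie in $R_\chi$, and the counit axiom then reassembles $f$ from these pieces. Everything else is book-keeping with characters and linear independence in $\kk[\Lambda]$.
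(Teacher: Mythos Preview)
Your proof is correct and follows essentially the same approach as the paper: dualise the action to a comodule structure $\mu^*\colon R\to \kk[\Lambda]\otimes_\kk R$, define the weight spaces $R_\chi$, use coassociativity to show each Fourier coefficient $f_\chi$ lies in $R_\chi$, use the counit to recover $f=\sum_\chi f_\chi$, and reverse the construction for the converse. The paper's argument is slightly terser but structurally identical.
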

 \begin{proof}
 Applying the $\Spec$ functor to the morphism $\mu\colon G_\Lambda\times
 X\rightarrow X$ and to the morphisms associated to $G_\Lambda$ gives
 a $\kk$-algebra homomorphism $\mu^*\colon R\to
 \kk[\Lambda]\otimes_\kk R$ satisfying conditions
 \begin{enumerate}
 \item[($1^*$)] $\id_R= (e^*\otimes \id_R)\circ \mu^*\colon R\to R$;
 \item[($2^*$)] $(\id_{\kk[\Lambda]}\otimes \mu^*)\circ \mu^*=(\nu^*\otimes
 \id_R)\circ \mu^* \colon R\to\kk[\Lambda]\otimes \kk[\Lambda]\otimes R$.
 \end{enumerate}
 For each character $\chi\in \Lambda$ of the group $G_\Lambda$, define 
 \[
 R_\chi:= \{ f\in
 R : \mu^*(f)=\chi\otimes f\}.
 \]
 Note that $R_\chi\cap R_{\chi'}= 0$ when $\chi\neq \chi'$, and that
 $R_\chi\otimes_\kk  R_\chi'\subseteq R_{\chi+\chi'}$ since $\mu^*$ is a
 $\kk$-algebra homomorphism.  For $f\in R$ decompose $\mu^*(f) =
 \sum_{\chi\in \Lambda} \chi\otimes f_\chi$. Applying condition
 ($2^*$) to $f\in R$ gives
% \[
% \sum_{\chi\in \Lambda} \chi\otimes \mu^*(f_\chi) = \sum_{\chi\in
% \Lambda} \chi\otimes \chi\otimes f_\chi.
% \]
% This implies that
 $\mu^*(f_\chi)=\chi\otimes f_\chi$, so $f_\chi\in R_\chi$. On the
 other hand, condition ($1^*$) gives $f=\sum_{\chi\in\Lambda} f_\chi$,
 from which we obtain $R=\bigoplus_{\chi\in \Lambda}
 R_\chi$. Conversely, given a $\Lambda$-grading of $R$ we construct a
 $\kk$-algebra homomorphism $\mu^*\colon R\to \kk[\Lambda]\otimes_\kk
 R$ by setting $\mu^*(f)=\chi\otimes f$ for each $f\in R_\chi$.
 Reversing the argument above shows that the induced morphism $\mu
 \colon G_\Lambda\times X\to X$ is an algebraic action. \end{proof}

For a character $\chi\in \Lambda$ of $G=G_\Lambda$, the elements $f\in
R_\chi$ are \emph{homogeneous of degree $\chi$ with respect to $G$},
or simply \emph{$G$-semi-invariants of weight $\chi$}.  Explicitly, an
element $f\in R$ is semi-invariant of weight $\chi$ if
 \[
 g\cdot f = \chi(g^{-1}) f \quad \text{ for all }g\in G.
 \]
 In light of \eqref{eqn:dualaction}, this means that $f(g\cdot p) =
 \chi(g)f(p)$ for all $g\in G$ and $p\in X$.  The zero-graded piece
 $R_0$ coincides with the $G$-invariant subalgebra of $R$.

 \subsection{Proj of a \protect$\ZZ\protect$-graded ring}
 The most common examples of diagonalisable group actions on affine
 varieties arise from finitely generated $\kk$-algebras that are
 $\ZZ$-graded:
 \[
 R=\bigoplus_{\chi\in \Lambda} R_\chi \quad\text{for }\Lambda=\ZZ.
 \]
 Assume $R_0=\kk$. Choose homogeneous generators $f_0,f_1,\dots, f_n\in R$
 and suppose for now that each has degree one.
 If we grade the polynomial ring $\kk[x_0,x_1,\dots, x_n]$ by setting
 $\deg(x_i) = 1$ for $0\leq i\leq n$ then the surjective map of
 $\kk$-algebras $\psi\colon \kk[x_0,x_1,\dots, x_n]\to R$ where
 $\psi(x_i) = f_i$ for $0\leq i\leq n$ preserves the
 $\Lambda$-grading. The ideal $\Ker(\psi)$ that records the
 relations between the generators $f_0,\dots, f_n$ is
 $\Lambda$-homogeneous, and cuts out a projective subvariety
 \[
 \Proj(R):= \big{\{}p\in \mathbb{P}^n_\kk : f(p) =0 \text{ for
 all homogeneous } f\in \Ker(\psi)\big{\}};
 \]
 this is the \emph{projective spectrum} of the graded ring $R$. A
 different choice of generators for $R$ leads to an alternative
 embedding of $\Proj(R)$.
 
 A geometric interpretation of $\Proj(R)$ is obtained from
 Theorem~\ref{thm:grading} above. Indeed, for $X=\Spec(R)$ the map
 $\psi\colon \kk[x_0,x_1,\dots, x_n]\to R$ defines a closed immersion
 $\varphi\colon X \rightarrow \mathbb{A}^{n+1}_\kk$ sending $p\in X$
 to the point $\big(f_0(p),f_1(p),\dots, f_n(p)\big)\in
 \mathbb{A}^{n+1}_\kk$. The $\Lambda$-grading determines actions of
 the algebraic torus $\kk^\times= \Spec\big(\kk[\Lambda]\big)$ on both $X$ and
 $\mathbb{A}^{n+1}_\kk$, making $\varphi$ into a
 $\kk^\times$-equivariant map.  Since each variable $x_i$ has degree one,
 let $t\in \kk^\times$ act on $\mathbb{A}^{n+1}_\kk$ as 
 \[
 t\cdot (p_0,p_1,,\dots,p_n)=(tp_0,\dots,t p_n). 
 \]
  If we remove from $X$ the
 common zero-locus of the generators $f_1,\dots, f_n\in R$, then the
 restriction of $\varphi$ descends to a morphism on
 $\kk^\times$-orbits that fits in to the right-hand square of the
 commutative diagram
 \begin{equation}
 \label{diag:GIT}
 \begin{CD}
 \Spec\big(R[f_i^{-1}]\big) @>>> X\smallsetminus \mathbb{V}(f_0,f_1,\dots, f_n) @>\varphi >> \mathbb{A}^{n+1}_\kk\smallsetminus\{0\} \\
 @V\pi VV  @V\pi VV  @VV\pi V \\
 \Spec \big(R[f_i^{-1}]^{\kk^\times}\big) @>>> \Proj (R) @>\overline{\varphi}>> \mathbb{P}^{n}_\kk
  \end{CD}
\end{equation}
where the horizontal maps in the right-hand square are closed
immersions, the horizontal maps in the left-hand square are open
embeddings, and the vertical maps identify points in the same
$\kk^\times$-orbit. The locus $X\smallsetminus
\mathbb{V}(f_0,f_1,\dots, f_n)$ on which the quotient map
$\pi\vert_{X}$ is well defined is covered by the principal affine open
subsets $X\smallsetminus \mathbb{V}(f_i) = \Spec\big(R[f_i^{-1}]\big)$
for $0\leq i\leq n$, and by examining the standard local coordinate
charts on $\mathbb{P}^n_\kk$, we see that $\Proj(R)$ is covered by
local charts of the form $\Spec \big(R[f_i^{-1}]^{\kk^\times}\big)$
for $0\leq i\leq n$, where
 \[
 R[f_i^{-1}]^{\kk^\times}:= \Bigg{\{}\frac{f}{f_i^j} : f\in R_{j} \text{ for
   }j\geq 0\Bigg{\}}
 \]
 is the $\kk^\times$-invariant subalgebra of the localisation of $R$
 at $f_i$. Note that $\Spec \big(R[f_i^{-1}]^{\kk^\times}\big)$ is
 well defined since $R[f_i^{-1}]^{\kk^\times}$ is a finitely generated
 $\kk^\times$-algebra (see Theorem~\ref{thm:affineGIT} to follow).

  \begin{example}
 \label{ex:O3}
 Associate to the line bundle $\mathscr{O}_{\mathbb{P}^1}(3)$ the
 $\ZZ$-graded algebra
 \[
 R:=\bigoplus_{\chi\in \ZZ}
 H^0\big{(}\mathbb{P}^1,\mathscr{O}_{\mathbb{P}^1}(3\chi)\big{)}\cong\kk[tx_1^3,
 tx_1^2x_2,tx_1x_2^2, tx_2^3],
 \]
 where the exponent of $t$ in a monomial gives the degree. The obvious
 homomorphism of graded $\kk$-algebras $\psi\colon
 \kk[y_1,y_2,y_3,y_4]\rightarrow \kk[t,x_1,x_2]$ has kernel the
 $\kk^\times$-homogeneous ideal $I$ from equation
 \eqref{eqn:twistedcubic}, and $\Proj(R)$ is the twisted cubic curve
 cut out by $I$ in the complete linear series $\vert
 \mathscr{O}_{\mathbb{P}^1}(3)\vert =\mathbb{P}^3_\kk$. The affine
 cone $\Spec(R)$ over $\Proj(R)$ featured in
 Example~\ref{ex:third11}.
\end{example}

 \begin{exercise}
   Generalise the previous example to the $d$-uple Veronese embedding
   on $\mathbb{P}^n$, and present $\Spec(R)$ as the quotient of affine
   space by a finite group (compare Exercise~\ref{ex:r111}).
 \end{exercise}

 To extend the construction to algebras $R$ whose generators do not
 all have degree one we recall weight projective space. Fix weights
 $a_0,\dots, a_n\in \ZZ_{>0}$, and let $t\in\kk^\times$ act on
 $\mathbb{A}^{n+1}_\kk$ as
 \[
 t\cdot(p_0,p_1,,\dots, p_n)=(t^{a_0}p_0,t^{a_1}p_1,\dots, t^{a_n}p_{n}).
 \]
 Equivalently, grade the polynomial ring $\kk[x_0,x_1,\dots,x_n]$ by
 setting $\deg(x_i)=a_i$ for $0\leq i\leq n$. For $0\leq i\leq n$, the
 coordinate ring of $U_i:=\mathbb{A}^{n+1}_\kk\smallsetminus (x_i=0)$
 is isomorphic to the quotient singularity of type
 $\frac{1}{a_i}(a_0,\dots, \widehat{a_i},\dots, a_n)$ from
 Exercise~\ref{ex:r111}. These varieties glue nicely along
 intersections and provide the standard open cover of \emph{weighted
   projective space} $\mathbb{P}_\kk(a_0,\dots,a_n)$. 
 
 Suppose now that a $\kk$-algebra $R$ has generators $f_0, f_1, \dots,
 f_n$ where $f_i$ has degree $a_i>0$ for $0\leq i\leq n$. For
 $X=\Spec(R)$, the induced closed immersion $\varphi\colon X
 \rightarrow \mathbb{A}^{n+1}_\kk$ is $\kk^\times$-homogeneous and
 descends to a map $\overline{\varphi}\colon \Proj(R)\rightarrow
 \mathbb{P}_\kk(a_0,\dots,a_n)$ on $\kk^\times$-orbits, where
 $\Proj(R)$ is the variety obtained by gluing the affine open sets
 \[
 R[f_i^{-1}]^{\kk^\times}:= \Bigg{\{}\frac{f}{f_i^j} : f\in R_{ja_i}
   \text{ for }j\geq 0\Bigg{\}}
 \]
 for $0\leq i\leq n$.

 \begin{remark}
 \label{rem:chitrivial}
 We need not always assume that $R_0=\kk$, in which case $\Proj(R)$ is a
   subvariety of $\mathbb{P}^n_{R_0}$.  Thus, while $\Proj(R)$ need
   not be projective, it is always projective over $\Spec(R_0)$.
\end{remark}

 \subsection{Affine GIT}
 The left-hand vertical map in diagram \eqref{diag:GIT} and the map
 analysed in Proposition~\ref{prop:finiteGIT} both arise from the
 inclusion of a $G$-invariant subalgebra $R^G\hookrightarrow R$ for
 some diagonalisable group action on a finitely generated
 $\kk$-algebra. The relation between morphisms of this kind and
 $G$-orbits in $\Spec(R)$ can be straightforward as in
 Proposition~\ref{prop:finiteGIT}, but it need not be as the following
 example shows.

 \begin{example}
 \label{ex:Pn}
 Consider the standard diagonal action of $\kk^\times$ on
 $\mathbb{A}^{n+1}_\kk=\Spec\big(\kk[x_0,\dots,x_n]\big)$ that defines
 $\mathbb{P}^{n}_\kk$.  Note that $\kk^\times$ acts on the open subset
 $U_i:= \mathbb{A}^{n+1}_\kk\smallsetminus (x_i=0)$ for $0\leq i\leq
 n$.
 \begin{enumerate} 
 \item[\one] For the action of $\kk^\times$ on the whole of
   $\mathbb{A}^{n+1}_\kk$ we have
   $\Spec\big(\kk[x_0,\dots,x_n]^{\kk^\times}\big)\cong\Spec(\kk)$, so
   the map $\mathbb{A}^{n+1}_\kk\to\Spec(\kk)$ induced by the
   inclusion $\kk\hookrightarrow\kk[x_0,\dots,x_n]$ identifies every
   point. In this case, the closure of each orbit in
   $\mathbb{A}^{n+1}_\kk$ contains the origin, so the variety
   $\Spec\big(\kk[x_0,\dots,x_n]^{\kk^\times}\big)$ may be thought of
   as parametrising $\kk^\times$-orbit closures.
 \item[\two] For the $\kk^\times$-action on the principal open set
   $U_i$ we have
   $\Spec\big{(}(\kk[x_0,\dots,x_n][x_i^{-1}])^{\kk^\times}\big{)}
   \cong
   \Spec\big{(}\kk[\textstyle{\frac{x_1}{x_i},\dots,1,\dots,\frac{x_n}{x_i}}]\big{)}\cong
   \mathbb{A}^{n}_\kk$.  The $\kk^\times$-orbits in $U_i$ are all
   fixed ratios $[p_0:\dots :p_n]$ with $p_i=1$. Such orbits are not
   closed in $\mathbb{A}^{n+1}_\kk$, but they are closed in $U_i$.
   Thus, the variety
   $\Spec\big(\kk[x_0,\dots,x_n][x_i^{-1}]^{\kk^\times}]\big)$ parametrises
   genuine $\kk^\times$-orbits in $U_i$.
 \end{enumerate}
 \end{example}
 
 \begin{remark}
   Note that Proposition~\ref{prop:finiteGIT} (compare
   Remark~\ref{rem:verbatim}) provides a special case.
 \end{remark}

 To state the general result, consider the action of a diagonalisable
 group $G$ on an affine variety $X=\Spec(R)$. A point $p\in X$ is
 \emph{stable} with respect to $G$ if the orbit $G\cdot p$ is closed
 in $X$, and if the stabiliser subgroup $\{g\in G : gp=p\}$ is finite.
 Let $X^{\textrm{s}}\subseteq X$ denote the locus of stable points in
 $X$. The main result of Geometric Invariant Theory for the action of
 $G$ on an affine variety now applies because $G$ is isomorphic to the
 product of an algebraic torus $(\kk^\times)^r$ with a finite group
 and hence is linearly reductive (see Dolgachev~\cite[\S3.3,
 \S6]{Dolgachev}, Mukai~\cite[\S4.3, \S5]{Mukai} or Mumford
 et.~al.~\cite[\S1.2]{Mumford}).

 \begin{theorem}
 \label{thm:affineGIT}
 Let a diagonalisable group $G$ act on an affine variety
 $X=\Spec(R)$:
 \begin{enumerate}
 \item[\one] the subalgebra $R^G$ is a finitely
 generated integral $\kk$-algebra;
 \item[\two] the stable locus
 $X^{\textrm{s}}\subseteq X$ is open, and there is a
 commutative diagram
 \[
 \begin{CD}
 X^{\textrm{s}} @>>> X=\Spec(R) \\ @VV\pi^{\textrm{s}}V @VV\pi V \\
 X^{\textrm{s}}/G @>>> \Spec(R^G)
  \end{CD}
\]
where the horizontal maps are open embeddings, and $\pi$ is surjective
and closed. Moreover, for all $p,p^\prime\in X$ we have
 \[
 \pi(p)=\pi(p^\prime) \iff \overline{G\cdot p} \cap \overline{G\cdot
 p^\prime}\neq\emptyset,
 \]
 and each fibre of $\pi$ contains a unique closed
 $G$-orbit in $X$; and
\item[\three] each the fibre of $\pi$ over a point of
  $X^{\textrm{s}}/G$ is equal to a unique closed orbit.
 \end{enumerate}
The variety $X/G:= \Spec(R^G)$ is the \emph{affine quotient}
 of $X$ by $G$.
 \end{theorem}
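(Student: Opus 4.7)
The plan is to reduce the theorem to the standard machinery for linearly reductive group actions; the diagonalisability hypothesis on $G$ serves only to guarantee linear reductivity. Because $G \cong (\kk^\times)^r \times \mu_{r_1}^{m_1} \times \cdots \times \mu_{r_t}^{m_t}$ in characteristic zero, it is linearly reductive; equivalently, by Theorem~\ref{thm:grading} the action gives $R$ a $\Lambda$-grading (for $\Lambda = G^*$), and the $G$-equivariant projection $R \twoheadrightarrow R_0 = R^G$ onto the zero-weight piece is an $R^G$-linear splitting of the inclusion $R^G \hookrightarrow R$ --- the \emph{Reynolds operator}. Part \one then follows from Hilbert's classical finiteness theorem: using the Reynolds operator together with the Hilbert basis theorem applied to the ideal generated by the positive-weight invariants, one shows $R^G$ is finitely generated (see, e.g., Dolgachev~\cite{Dolgachev}), with integrality inherited from $R$.

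For part \two the key tool is a separation statement: if $Z_1, Z_2 \subset X$ are disjoint $G$-invariant closed subsets, the Chinese remainder theorem produces $h \in R$ with $h|_{Z_1} = 0$ and $h|_{Z_2} = 1$, and applying the Reynolds operator yields $f \in R^G$ with the same property. Surjectivity of $\pi$ then follows because for any maximal $\mathfrak{m} \subset R^G$ the extended ideal $\mathfrak{m} R$ is proper (else the Reynolds operator applied to a relation $1 = \sum a_i m_i$ would force $1 \in \mathfrak{m}$), so $\mathfrak{m}$ is the contraction of a maximal ideal of $R$. Closedness of $\pi$ follows by identifying $\pi(Z)$, for closed $Z \subset X$, with the zero locus of the ideal of invariants vanishing on the closed $G$-saturation $GZ$. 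The equivalence $\pi(p) = \pi(p') \iff \overline{G\cdot p} \cap \overline{G\cdot p'} \neq \emptyset$ is the contrapositive of separation applied to the two orbit closures. Finally, each fibre of $\pi$ is closed and $G$-invariant, hence contains an orbit of minimal dimension which is automatically closed; two distinct closed orbits in one fibre would be separable by an invariant, contradicting their common image.

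For part \three I would show that $p \in X^{\mathrm{s}}$ forces $\pi^{-1}(\pi(p)) = G \cdot p$: any $p'$ in the fibre satisfies $\overline{G \cdot p'} \cap G \cdot p \neq \emptyset$ by \two, so $G \cdot p \subseteq \overline{G \cdot p'}$, and because $\dim(G \cdot p) = \dim G$ is maximal while orbits in the boundary $\overline{G\cdot p'} \setminus G\cdot p'$ have strictly smaller dimension, necessarily $G \cdot p' = G \cdot p$. The main obstacle is showing that $X^{\mathrm{s}}$ is itself open --- this does not fall out of the categorical-quotient formalism. One handles the finite-stabiliser condition via upper semicontinuity of stabiliser dimension, and the closed-orbit condition via a saturation argument: the stable locus is $\pi$-saturated and its image in $\Spec(R^G)$ is open by a fibre-dimension calculation combined with the characterisation of fibres in \two. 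For this step I would cite Mumford~\cite{Mumford} or Dolgachev~\cite{Dolgachev} rather than reprove it from scratch.
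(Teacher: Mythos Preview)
Your proposal is correct and in fact supplies far more detail than the paper itself. The paper does not prove this theorem at all: immediately before the statement it simply observes that a diagonalisable group is a product of a torus with a finite abelian group and hence is linearly reductive, and then cites Dolgachev~\cite[\S3.3, \S6]{Dolgachev}, Mukai~\cite[\S4.3, \S5]{Mukai} and Mumford et~al.~\cite[\S1.2]{Mumford} for the result. Your outline --- Reynolds operator from the $\Lambda$-grading, Hilbert's finiteness argument, the separation lemma via Chinese remainder plus Reynolds, and the semicontinuity/fibre-dimension argument for openness of the stable locus --- is precisely the standard proof one finds in those references, so you are in complete agreement with the paper's approach, only more explicit.
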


 We prefer whenever possible to work only with stable points, when the
 affine quotient $X^{\textrm{s}}/G$ really does parametrise $G$-orbits
 in $X^{\textrm{s}}$, at the expense of having to throw away part
 (possibly all) of $X$. For example, every point of $X$ is stable for
 the action of a finite group, in which case
 Theorem~\ref{thm:affineGIT} reduces to
 Proposition~\ref{prop:finiteGIT}. On the other hand, for the
 $\kk^\times$-action given in Example~\ref{ex:Pn}, no point of
 $\mathbb{A}^{n+1}_\kk$ is $\kk^\times$-stable whereas every point of
 $U_i\subset\mathbb{A}^{n+1}_\kk$ is $\kk^\times$-stable. This
 illustrates clearly that stability depends on the choice of $G$ and
 $X$.

 \begin{remark}
 The quotient map $\pi\colon X\to X/G$ satisfies a universal property,
 namely, that any morphism $\tau \colon X\to Z$ that is constant on
 $G$-orbits factors through $\pi\colon X\to X/G$. Given the
 categorical nature of this property, $X/G$ is often called the
 \emph{categorical quotient} of $X$ by $G$. The restriction to the
 stable locus $\pi^{\textrm{s}}\colon X^{\textrm{s}}\to
 X^{\textrm{s}}/G$ is a \emph{geometric quotient}.
 % (see Mumford et.~al.~\cite{Mumford} or Dolgachev~\cite[Chapter 6]{Dolgachev}).
 \end{remark}

 \subsection{Projective GIT}
 \label{sec:projectiveGIT}
 Let a diagonalisable group $G$ act algebraically on
 $X=\Spec(R)$, so $R$ is graded by a finitely generated abelian
 group $\Lambda$ by Theorem~\ref{thm:grading}. To produce a
 $\ZZ$-graded ring, choose a character $\chi\in \Lambda$ of $G$ and
 let $R_\chi$ denote the $\kk$-vector subspace of $R$ spanned by the
 $G$-semi-invariants of weight $\chi$. The \emph{algebra of
 $\chi$-semi-invariants}
 \begin{equation}
 \label{eqn:chisemi}
 \bigoplus_{j\in \ZZ} R_{j\chi}
 \end{equation}
 is a $\ZZ$-graded ring. We assume that $R_{j\chi}=0$ for $j<0$.

 \begin{lemma}
 The graded ring $\bigoplus_{k\geq 0} R_{k\chi}$ is a
 finitely-generated $\kk$-algebra.
 \end{lemma}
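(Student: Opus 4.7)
The plan is to realize $S := \bigoplus_{k\geq 0} R_{k\chi}$ as the ring of invariants of a diagonalisable group action on a finitely generated $\kk$-algebra, and then invoke Theorem~\ref{thm:affineGIT}\one{} to conclude finite generation.

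Introduce the polynomial ring $\tilde R := R[t]$ and extend the $\Lambda$-grading from $R$ by declaring $\deg(t) = -\chi$; concretely, $\tilde R_{\chi'} = \bigoplus_{k\geq 0} R_{\chi' + k\chi}\cdot t^k$. By Theorem~\ref{thm:grading} this $\Lambda$-grading corresponds to an algebraic action of the diagonalisable group $G = G_\Lambda$ on $\Spec(\tilde R) \cong X\times \mathbb{A}^1_\kk$. Because $\tilde R$ contains only non-negative powers of $t$, its zero-weight piece is
\[
\tilde R^G \;=\; \tilde R_0 \;=\; \bigoplus_{k\geq 0} R_{k\chi}\cdot t^k,
\]
and the $\kk$-linear map $a t^k \mapsto a$ (for $a\in R_{k\chi}$) is a $\kk$-algebra isomorphism $\tilde R^G \to S$, since $(a_j t^j)(a_k t^k) = (a_j a_k)t^{j+k}$ on the left matches $a_j a_k\in R_{(j+k)\chi}$ on the right.

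Now $\tilde R$ is a finitely generated $\kk$-algebra (a polynomial ring over $R$), so Theorem~\ref{thm:affineGIT}\one{} applied to the $G$-action on $\Spec(\tilde R)$ gives that $\tilde R^G$ is finitely generated, and hence so is $S$. There is no serious obstacle: the only things to verify are the well-definedness of the extended grading on $\tilde R$ and the $\kk$-algebra identification $\tilde R^G\cong S$, both of which are immediate from the construction. (An alternative route, equally quick, is to take $H := \ker(\chi\colon G\to \kk^\times)$, observe via the contravariant equivalence of Section~\ref{sec:algebraicactions} that $H$ is diagonalisable with character group $\Lambda/\langle\chi\rangle$, and deduce $R^H = \bigoplus_{k\in\ZZ} R_{k\chi} = S$ from the hypothesis $R_{j\chi}=0$ for $j<0$, then apply Theorem~\ref{thm:affineGIT}\one{} to $H$.)
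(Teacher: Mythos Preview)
Your proof is correct and takes essentially the same approach as the paper: both extend the $G$-action to $R[t]$ by giving $t$ weight $-\chi$, identify $\bigoplus_{k\geq 0}R_{k\chi}$ with the $G$-invariant subring of $R[t]$, and then appeal to Theorem~\ref{thm:affineGIT}\one. The paper's version is slightly terser but otherwise identical in content.
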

 \begin{proof}
   Lift the $G$-action on $X=\Spec(R)$ to the action on $X\times
   \mathbb{A}^1_\kk = \Spec\big(R[y]\big)$ for which $y$ is semi-invariant of
   weight $\chi^{-1}$. For $j\geq 0$, the function $fy^j\in R[y]$ is
   $G$-invariant if and only if $f$ is semi-invariant of weight
   $\chi^j$. The graded ring $\bigoplus_{j\geq 0} R_{j\chi}$ is
   therefore the $G$-invariant subring of $R[y]$.  The result follows
   from Theorem~\ref{thm:affineGIT}.
 \end{proof}

 The \emph{GIT quotient} of $X = \Spec(R)$ by the action of the
 diagonalisable group $G$ linearised by the character $\chi\in
 \Lambda$ is defined to be the projective spectrum 
 \[
 X\git_{\chi} G:= \Proj
 \Big{(}\bigoplus_{j\geq 0} R_{j\chi}\Big{)}.
 \]
 If $X\git_{\chi} G$ is nonempty then it is projective over the affine
 variety $\Spec(R_0)$. In fact, most of the statements that one might
 make about GIT quotients work well only under the assumption that
 $X\git_{\chi} G\neq\emptyset$, in which case we say that $\chi$ is
 \emph{effective}. While this is not always the case, we
 restrict attention only to effective characters for the sake of
 simplicity.

 \begin{example}
 \label{ex:wPn}
 Consider the action of $\kk^\times$ on $\mathbb{A}^3_\kk =
 \Spec\big(\kk[x,y,z]\big)$ where the variables $x,y,z$ have degree one, two
 and three respectively. For $\chi=6$, the subalgebra of $\kk[x,y,z]$
 spanned by all monomials $x^ay^bz^c$ for which $6\vert a+2b+3c$ is
 generated by $\{x^6, x^4y, x^2y^2, y^3, x^3z, xyz, z^2\}$. The GIT
 quotient $\mathbb{A}^3_\kk\git_\chi\kk^\times$ is the weighted
 projective plane $\mathbb{P}_\kk(1,2,3)$ presented as a subvariety of
 $\mathbb{P}^6_\kk$.
 \end{example}

  To describe the local structure of the GIT quotient, we say that a
 point $p\in X$ is \emph{$\chi$-semistable} with respect to $G$ if
 there exists $j> 0$ and $f\in R_{j\chi}$ satisfying $f(p)\neq 0$. The
 affine open subset $X_\chi^{\textrm{ss}}:= X\smallsetminus
 \mathbb{V}(f_0, f_1, \dots, f_n)$ of $X$ consisting of all such
 points is the \emph{$\chi$-semistable locus} in $X$. The restriction
 of $\pi$ to $X^{\textrm{ss}}_\chi$ and to $X\smallsetminus \mathbb{V}(f_i)$
 gives a commutative diagram
  \begin{equation}
 \label{diag:GITcharacter}
 \begin{CD}
 \Spec\big(R[f_i^{-1}]\big) @>>> X_\chi^{\textrm{ss}} \\ @VVV @VV\pi V \\
 \Spec\big(R[f_i^{-1}]^G\big) @>>> X\git_{\chi} G
  \end{CD}
 \end{equation}
where the horizontal maps are open embeddings, and the left-hand
vertical map is the affine quotient map arising from the $G$-action on
$X\smallsetminus\mathbb{V}(f_i)$. Theorem~\ref{thm:affineGIT} implies
the following.

 \begin{theorem}
 \label{thm:projectiveGIT}
 The categorical quotient $X\git_{\chi} G$ parametrises equivalence
classes of $G$-orbit-closures for all $\chi$-semistable points in
$X_\chi^{\textrm{ss}}$. The geometric quotient $X\git_{\chi} G$
parametrises $G$-orbits of $\chi$-stable points in
$X_\chi^{\textrm{ss}}$.
 \end{theorem}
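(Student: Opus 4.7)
The plan is to reduce the theorem to Theorem~\ref{thm:affineGIT} by working locally on an affine cover of $X\git_\chi G$. Fix homogeneous generators $f_0,\dots, f_n$ of $\bigoplus_{j\geq 0} R_{j\chi}$ with degrees $j_i := \deg f_i > 0$. From the discussion preceding diagram~\eqref{diag:GITcharacter}, $X\git_\chi G$ admits the affine cover by $U_i := \Spec\bigl(R[f_i^{-1}]^G\bigr)$, while the semistable locus decomposes as $X_\chi^{\textrm{ss}} = \bigcup_{i=0}^n V_i$ for the $G$-invariant principal opens $V_i := X \smallsetminus \mathbb{V}(f_i) = \Spec\bigl(R[f_i^{-1}]\bigr)$. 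The restriction $\pi_i := \pi\vert_{V_i}\colon V_i\to U_i$ is precisely the affine GIT quotient for the $G$-action on $V_i$, to which Theorem~\ref{thm:affineGIT} applies directly.

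The essential compatibility needed to glue these local quotients is the identity $\pi^{-1}(U_i) = V_i$. I would prove this by observing that the overlap $U_i\cap U_j$ sits inside $U_j$ as the non-vanishing locus of the $G$-invariant function $f_i^{j_j}/f_j^{j_i}\in R[f_j^{-1}]^G$, whence $\pi_j^{-1}(U_i\cap U_j) = V_j\smallsetminus \mathbb{V}(f_i) = V_i\cap V_j$. It follows that whenever $p\in V_j$ satisfies $\pi(p)\in U_i$ one necessarily has $p\in V_i$, so every fibre of $\pi$ is contained in a single $V_i$ as soon as its image lies in $U_i$.

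With this compatibility in hand, both claims reduce chart-by-chart to Theorem~\ref{thm:affineGIT}. Given $q\in X\git_\chi G$, pick $i$ with $q\in U_i$; the fibre $\pi^{-1}(q)$ lies entirely in $V_i$, and Theorem~\ref{thm:affineGIT}\two identifies it with the union of $G$-orbits in $V_i$ whose closures meet, containing a unique closed orbit. Since $V_i\subseteq X_\chi^{\textrm{ss}}$ is open and the fibre is closed in $X_\chi^{\textrm{ss}}$, closures of these orbits inside $V_i$ and inside $X_\chi^{\textrm{ss}}$ agree on the fibre, yielding the categorical statement. For the geometric statement, set $X_\chi^{\textrm{s}} := \bigcup_i V_i^{\textrm{s}}$ using the stable locus from Theorem~\ref{thm:affineGIT} applied to each $V_i$; the identity $\pi^{-1}(U_i)=V_i$ guarantees these assemble into a $G$-invariant open subset on which $\pi$ restricts to a geometric quotient.

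The main obstacle is the chart-compatibility $\pi^{-1}(U_i)=V_i$, which is what underwrites the entire reduction to the affine case. A priori a point of $V_j$ whose image lies in $U_i$ need not visibly sit in $V_i$, and expressing $U_i\cap U_j$ as the correct principal open in $U_j$ cut out by a $G$-invariant function is the one nontrivial step; once it is in place, everything else is a formal consequence of Theorem~\ref{thm:affineGIT}.
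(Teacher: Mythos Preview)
Your proposal is correct and follows precisely the route the paper intends: the paper does not give a detailed argument but simply declares that the result is implied by Theorem~\ref{thm:affineGIT} via the local picture in diagram~\eqref{diag:GITcharacter}. Your write-up fills in exactly the details behind that sentence, and the compatibility $\pi^{-1}(U_i)=V_i$ you single out is indeed the only point requiring care in making the reduction rigorous.
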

 
 For the action of a diagonalisable group $G$ on an affine variety
 $X$, any fractional character $\chi\in G^*\otimes_\ZZ \QQ$ is called
 \emph{generic} if every $\chi$-semistable of $X$ is $\chi$-stable.
 Following Thaddeus~\cite{Thaddeus2} and
 Dolgachev--Hu~\cite{DolgachevHu}, define an equivalence relation on
 the set of generic fractional characters by setting $\chi\sim\chi'$
 if and only if every $\chi$-stable point of $X$ is $\chi'$-stable,
 and vice versa.  This gives a polyhedral decomposition of the set of
 effective fractional characters called the \emph{(polarised) GIT
   chamber decomposition} for the given action of $G$ on $X$. As
 $\chi$ varies in a chamber the GIT quotient $X\git_{\chi} G$ remains
 unchanged as a variety, though the graded ring that defines it varies.

 \begin{remark}
 We present several examples of this phenomenon later in these notes,
 including:
 \begin{enumerate}
 \item[\one] an investgation of the GIT chamber decomposition for an
   action of $(\kk^\times)^2$ on $\mathbb{A}^4_\kk$ that defines the
   first Hirzebruch surface $\mathbb{F}_1$ (see
   Exercise~\ref{ex:fundiagramF1}); and
 \item[\two] an explicit construction of the GIT chamber decomposition
   arising from an action of $(\kk^\times)^2$ on an affine fourfold
   associated to the quotient singularity of type $\frac{1}{3}(1,2)$;
   this is better known as the Weyl chamber decomposition of type
   $A_2$ (see Exercise~\ref{exa:VGIT1/3(1,2)}).
 \end{enumerate}
 \end{remark}

 \section{Semiprojective toric varieties}
 \label{sec:toric}
 This section introduces semiprojective toric varieties using the
 projective GIT construction for diagonalisable group actions. This
 simple class of toric varieties includes the overwhelming majority of
 examples that one meets in the course of everyday life. We do not
 assume that such toric varieties are normal, but in that case we
 obtain naturally a convex geometric interpretation via the fan. We
 also present a simple proof of Cox's quotient description of a normal
 toric variety in the semiprojective case.

% Our treatment is ore extensive introductions to toric geometry are due to
% Thaddeus~\cite[\S2.8]{Thaddeus}, Fulton~\cite{Fulton} and
% Oda~\cite{Oda}.

 \subsection{Affine toric varieties}
 \label{sec:affinetoric}
 For $d\in \mathbb{N}$, let $S\subseteq \ZZ^d$ be a finitely generated
 subsemigroup containing the zero element. Without loss of generality,
 we may assume that $S$ generates $\ZZ^d$ over $\ZZ$, in which case
 $S$ is the image of the subsemigroup $\NN^n\subset \ZZ^n$ under a
 lattice map $\pi\colon \ZZ^n\to \ZZ^d$.  Identify the semigroup
 algebras of $\NN^n$ and $\ZZ^d$ with the polynomial ring
 $\kk[x_1,\dots,x_n]$ and the ring of Laurent polynomials
 $\kk[t_1^{\pm 1},\dots, t_d^{\pm 1}]$ respectively.  Write $\kk[S] =
 \{t^u \in \kk[t_1^{\pm 1},\dots, t_d^{\pm 1}] : u\in S\}$ for the
 semigroup algebra of $S$, where the element $u=(u_1,\dots, u_d)\in S$
 defines the monomial $t^u:= \prod_{1\leq i\leq d} t_i^{u_i}$. The map
 $\pi\colon \NN^n\to \ZZ^d$ factors through $S$ and induces diagrams
 of semigroup algebras and affine varieties as shown:
 \begin{equation}
 \label{eqn:cdaffinetoric}
 \xymatrix@C=.6em{\kk[x_1,\dots,x_n]\ar[d]^{\psi}\ar[drrr]^{\pi} 
    &&& &&& \mathbb{A}^n_\kk &&& \\
    \kk[S]\ar[rrr]^{\tau} &&&\kk[t_1^{\pm 1},\dots, t_d^{\pm
   1}]&&&\Spec\big(\kk[S]\big) \ar[u]^{\psi^*} &&& (\kk^\times)^d\ar[ulll]^{\pi^*}\ar[lll]^{\tau^*}}
 \end{equation}
 \noindent Since $S$ generates
 $\ZZ^d$ over $\ZZ$, the algebra $\kk[t_1^{\pm 1},\dots, t_d^{\pm 1}]$
 can be obtained from $\kk[S]$ by localising suitably, so the
 righthand horizontal map is an open embedding that presents the
 algebraic torus $(\kk^\times)^d$ as a Zariski dense open subset of
 $\Spec\big(\kk[S]\big)$.  Moreover, the lefthand vertical map
 surjects, so the righthand vertical map is a closed immersion with
 image cut out by the \emph{toric ideal} of $\pi$, namely the prime
 ideal
 \[
 \Ker(\psi) =\big{(}x^{u}-x^{u'} \in \kk[x_1,\dots,x_n] :  u-u'\in \Ker(\pi)\big{)}.
 \]
 An affine variety is a \emph{toric variety} if it is of the form
 $\Spec\big(\kk[S]\big)$ for some subsemigroup $S\subset \ZZ^d$ as above. With
 this definition, toric varieties need not be normal.

 \begin{example}
 \label{ex:cusp}
 Define $\pi\colon \NN^4\to \ZZ^2$ by the matrix $\bigl[
    \text{\scriptsize $\begin{array}{rrrr} 4 & 3 & 1 & 0 \\ 0 & 1 & 3
    & 4 \end{array}$} \bigr]$, so $S\subset \ZZ^2$ is generated by the
    columns of the matrix. The toric variety $\Spec\big(\kk[S]\big)$ is not
    normal.
 \end{example}

 \begin{remark}
   For further evidence that nonnormal toric varieties arise naturally
   in a geometric context, see Example~\ref{exa:notnormallinearseries}
   and Remark~\ref{rem:notnormal2}.
 \end{remark}

 To state the normality criterion for a toric variety, associate to a
 subsemigroup $S\subseteq \ZZ^d$ with generators $s_1,\dots, s_n\in S$
 the polyhedral cone 
 \[
 \Cone(S):= \Big{\{}\sum_{1\leq i\leq n}
 \lambda_i s_i\in\QQ^d : \lambda_i\in \QQ_{\geq 0}\Big{\}}.
 \]
 Clearly, $S$ is contained in the set $\Cone(S)\cap \ZZ^d$ of
 integral points of the cone, but this inclusion can be strict, e.g.,
 consider the vector $\bigl[ \text{\scriptsize $\begin{array}{r} 2 \\
 2 \end{array}$} \bigr]$ from Example~\ref{ex:cusp}.

 \begin{lemma}
 \label{lem:normal}
 The affine toric variety $\Spec\big(\kk[S]\big)$ is normal if and only if the
 semigroup $S$ consists of the integral points of a polyhedral cone.
 When this holds,  $S=\Cone(S)\cap \ZZ^d$.
 \end{lemma}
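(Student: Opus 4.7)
The plan is to exploit the fact that $\kk[S]$ embeds in the Laurent polynomial ring $\kk[t_1^{\pm 1},\dots,t_d^{\pm 1}]$ and shares its field of fractions (since $S$ generates $\ZZ^d$), so that normality of $\Spec\big(\kk[S]\big)$ is exactly the condition that $\kk[S]$ is integrally closed in $\kk(t_1,\dots,t_d)$. Both implications will be established by comparing $S$ with $\Cone(S)\cap\ZZ^d$ via this integral closure.

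For the ``only if'' direction, assume $\Spec\big(\kk[S]\big)$ is normal and pick any $v\in \Cone(S)\cap\ZZ^d$. By definition of the cone, $v=\sum_{i}\lambda_i s_i$ for some $\lambda_i\in\QQ_{\geq 0}$ and generators $s_1,\dots,s_n$ of $S$. Clearing denominators yields a positive integer $N$ with $Nv = \sum_i (N\lambda_i) s_i\in S$, so $t^{Nv}\in\kk[S]$. Then $t^{v}$ is a root of the monic polynomial $X^N - t^{Nv}\in\kk[S][X]$ and hence is integral over $\kk[S]$; normality forces $t^{v}\in\kk[S]$, i.e. $v\in S$. Since the reverse inclusion $S\subseteq\Cone(S)\cap\ZZ^d$ is obvious, equality holds.

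For the ``if'' direction, assume $S=\Cone(S)\cap\ZZ^d$. The finitely generated rational cone $\Cone(S)$ is a finite intersection of closed rational half-spaces $H_{u_i}^+=\{v\in\QQ^d:\langle u_i,v\rangle\geq 0\}$, where the $u_i$ run over primitive inward-pointing facet normals. Intersecting with $\ZZ^d$ gives $S=\bigcap_i S_i$ with $S_i=H_{u_i}^+\cap\ZZ^d$, and consequently $\kk[S]=\bigcap_i \kk[S_i]$ inside $\kk[t_1^{\pm 1},\dots,t_d^{\pm 1}]$. Each $S_i$ is cut out of $\ZZ^d$ by a single nonnegativity condition, so $\kk[S_i]$ is precisely the valuation ring in $\kk(t_1,\dots,t_d)$ of the discrete valuation $v_i(t^w)=\langle u_i,w\rangle$ (extended multiplicatively). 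A finite intersection of valuation rings inside a common field is integrally closed, so $\kk[S]$ is normal, as required.

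The key obstacle is the ``if'' direction, specifically the identification of $\kk[S_i]$ with a discrete valuation ring; one must check that $S_i$ is generated as a semigroup by elements that make $v_i$ a genuine valuation on the function field. Should this identification prove awkward, I would fall back on a Newton-polytope argument: any $f\in\kk(t_1,\dots,t_d)$ satisfying an integral equation $f^N+a_1 f^{N-1}+\dots+a_N=0$ over $\kk[S]$ may be written as a Laurent polynomial $f=\sum_v c_v t^v$, and by examining leading exponents with respect to each inward facet normal $u_i$ of $\Cone(S)$ one deduces that every $v$ with $c_v\neq 0$ lies in $H_{u_i}^+$, hence in $\Cone(S)\cap\ZZ^d=S$.
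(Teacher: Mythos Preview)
The paper leaves this lemma as an exercise, so there is no proof in the text to compare against. Your overall strategy---show that normality forces $S=\Cone(S)\cap\ZZ^d$ via the monic equation $X^N-t^{Nv}$, then for the converse write $\Cone(S)$ as an intersection of rational half-spaces and hence $\kk[S]$ as an intersection of simpler semigroup algebras---is exactly the standard one (as in Fulton's \emph{Introduction to Toric Varieties}, \S1.3), and the ``only if'' direction is fine as written.

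There is, however, a genuine error in the ``if'' direction: the ring $\kk[S_i]$ is \emph{not} the valuation ring of $v_i$. The discrete valuation ring $\mathcal{O}_{v_i}\subset\kk(t_1,\dots,t_d)$ contains many non-polynomial elements such as $1/(1-t^w)$ for $w$ in the kernel of $u_i$, whereas $\kk[S_i]$ consists only of Laurent polynomials. What is true is that after choosing a basis of $\ZZ^d$ in which $u_i$ is the first dual basis vector, one has $\kk[S_i]\cong\kk[x_1,x_2^{\pm 1},\dots,x_d^{\pm 1}]$, a localisation of a polynomial ring and hence a UFD, so integrally closed. Since an intersection of integrally closed subrings of a common field is again integrally closed, the conclusion $\kk[S]=\bigcap_i\kk[S_i]$ is normal follows. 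This is the fix you need; the DVR language should be dropped.

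Your fallback Newton-polytope argument is essentially correct, but you skip a step: you assert that an element $f\in\kk(t_1,\dots,t_d)$ integral over $\kk[S]$ ``may be written as a Laurent polynomial''. This requires knowing that $\kk[t_1^{\pm 1},\dots,t_d^{\pm 1}]$ is itself integrally closed (it is a localisation of a UFD), so that integrality over the subring $\kk[S]$ already forces $f$ into the Laurent ring. Once that is said, the leading-term argument along each facet normal $u_i$ works: if some exponent $v$ in $f$ had $\langle u_i,v\rangle<0$, the minimal-$u_i$-value part of $f^N$ could not be cancelled by any $a_jf^{N-j}$ since $a_j\in\kk[S]$ has nonnegative $u_i$-value.
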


 \begin{exercise} 
 Prove Lemma~\ref{lem:normal}.
 \end{exercise}
 
 For a simple class of examples, recall from Exercise~\ref{ex:r111}
 that the quotient singularity of type $\frac{1}{r}(1,a)$ is the
 affine quotient of $\mathbb{A}^2_\kk$ by the action of the cyclic
 group $\ZZ/r$ with generator the matrix
 $\diag(\epsilon,\epsilon^{a})$ for $\epsilon$ a primitive $r$th root
 of unity. Proposition~\ref{prop:finiteGIT} shows that the coordinate
 ring of $\mathbb{A}^2_\kk/(\ZZ/r)$ is the $\ZZ/r$-invariant subring of
 $\kk[x,y]$. The monomial $x^{u_1}y^{u_2}$ is $\ZZ/r$-invariant if and
 only if $u_1 + au_2 \equiv 0 \mod r$, and we write $S = \big{\{}
 (u_1,u_2)\in \NN^2 : u_1+au_2 \equiv 0 \mod r\big{\}}$ for the
 subsemigroup of (exponents of) $\ZZ/r$-invariant monomials.  To
 describe explicitly the generators of $S$, define the
 Jung--Hirzebruch continued fraction expansion of \(r/(r-a)\) to be
 \[
 \frac{r}{r-a} = b_{1} - \frac{1}{b_{2} - \frac{1}{\dots - \frac{1}{b_{t}}}}.
 \]
 %we also write \(r/(r-a) = [b_{1},b_{2},\dots ,b_{t}]\) for
%convenience.
 The subsemigroup $S\subset \ZZ^2$ is generated by $\{u_0,u_1,\dots,
 u_{t+1}\}$, where $u_0=(r,0), u_1=(r-a,1)$ and
 $u_{i+1}:=b_iu_i-u_{i-1}\text{ for }1\leq i\leq t$.
 
 More generally, the cyclic quotient singularity of type
 $\frac{1}{r}(a_1,\dots, a_n)$ from Exercise~\ref{ex:r111} is toric.
 For the action of $\ZZ/r$ on $\mathbb{A}^n_\kk$ with generator matrix
 $\diag(\epsilon^{a_1}, \dots, \epsilon^{a_n})$, a monomial
 $x_1^{u_1}x_2^{u_2}\cdots x_n^{u_n}$ is $\ZZ/r$-invariant if and only
 if the exponent vector lies in the subsemigroup
 \[
 S = \Big{\{} (u_1,\dots, u_n)\in \NN^n : \sum_{i=1,\dots, n} a_iu_i
 \equiv 0 \mod r\Big{\}}
 \]
 of (exponents of) $\ZZ/r$-invariant monomials. In fact, the lattice
 point $(u_1,\dots, u_n)\in \NN^n$ lies in $S$ if and only if its
 inner product with the rational vector $\frac{1}{r}(a_1,\dots, a_n)$
 is zero; this explains the terminology. In any event, the cyclic quotient
 $\mathbb{A}^n/(\ZZ/r)\cong \Spec\big{(}\kk[S]\big{)}$ is a toric variety.
 
  \begin{example}
  The singularity of type $\frac{1}{7}(1,4)$ from
  Exercise~\ref{ex:714} is $\Spec\big(\kk[S]\big)$ where the semigroup $S$ is
  obtained as the image of the map $\pi\colon \NN^5\to \ZZ^2$ with
  matrix $\bigl[ \text{\scriptsize $\begin{array}{rrrrr} 7 & 3 & 2 & 1
  & 0 \\ 0 & 1 & 3 & 5 & 7 \end{array}$} \bigr]$.
  \end{example}

 \subsection{Semiprojective toric varieties}
 Let $S\subseteq \ZZ^d$ be a finitely generated subsemigroup that
 generates $\ZZ^d$ over $\ZZ$ as above, and suppose in addition that
 $\nu\colon \ZZ^d\to \ZZ$ is a $\ZZ$-linear map satisfying
 $\nu(S)\subseteq \NN$. Set
 $M:=\Ker(\nu)$. The semigroup algebra $\kk[S]$ admits a $\ZZ$-grading
 $\bigoplus_{j\geq 0} \kk[S]_j$, where $\kk[S]_j$ has a $\kk$-vector
 space basis consisting of monomials $t^u\in \kk[S]$ with
 $\nu(u)=j$. The graded pieces satisfy $\kk[S]_j=0$ for $j<0$ by
 assumption, and $\kk[S]_0$ is the semigroup algebra of $S\cap M$, so
 the variety
 \[
 \Proj\big{(}\kk[S]\big{)}:=\Proj\Big{(}\bigoplus_{j\geq 0} \kk[S]_{j}\Big{)}
 \]
 is projective over the affine toric variety $\Spec\big(\kk[S]_0\big)$. A
 variety is a \emph{semiprojective toric variety} if it is of the form
 $\Proj\big{(}\kk[S]\big{)}$ for some $\ZZ$-graded semigroup algebra
 $\kk[S]$ as above, and it is a \emph{projective toric variety} if
 $\kk[S]_0=\kk$.

 \begin{example}
 \label{exa:notnormallinearseries}
   Let $X$ be a normal projective toric variety and let $L\in \Pic(X)$
   be very ample with complete linear series $\vert L\vert =
   \Proj\big(\bigoplus_{j\geq 0}H^0(X,L^j)\big)\cong \mathbb{P}^m_\kk$
   for $m=\dim_\kk H^0(X,L)-1$. The closed immersion $\varphi_{\vert
   L\vert}\colon X\to \vert L\vert$ obtained by evaluating a
   $\kk$-vector space basis of $H^0(X,L)$ at points of $X$ presents
   $X$ as a toric subvariety of $\vert L\vert$. The affine cone
   $\Spec\big(\bigoplus_{j\geq 0}H^0(X,L^j)\big)$ over $\varphi_{\vert
   \mathcal{L}\vert}(X)$ need not be normal in general. See
   \cite[Example~4.2]{HSS} for details.
 \end{example}

 Put geometrically (compare Theorem~\ref{thm:grading}), every
 algebraic variety that is obtained as the GIT quotient of an affine
 toric variety $\Spec\big(\kk[S]\big)$ by an action of $\kk^\times$ is a
 semiprojective toric variety. In fact, one can say the following.

 \begin{proposition}
 \label{prop:toricGIT}
 Let $G$ be a diagonalisable group acting on an affine toric variety
 $\Spec\big{(}\kk[S]\big{)}$. Then $\Spec\big{(}\kk[S]\big{)}\git_\chi
 G$ is a semiprojective toric variety for any character
 $\chi\in G^*$.
 \end{proposition}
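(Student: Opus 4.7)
The plan is to translate the $G$-action into a grading via Theorem~\ref{thm:grading}, then to exhibit the algebra of $\chi$-semi-invariants in \eqref{eqn:chisemi} as a $\ZZ$-graded semigroup algebra; taking $\Proj$ then gives a semiprojective toric variety directly from the definition.

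In detail, Theorem~\ref{thm:grading} presents the $G$-action on $\Spec\big(\kk[S]\big)$ as a $\Lambda$-grading $\kk[S] = \bigoplus_{\lambda\in\Lambda}\kk[S]_\lambda$ for $\Lambda = G^*$. Because $G$ acts on a toric variety, its action preserves the dense algebraic torus $T \cong (\kk^\times)^d$ and hence factors through a group homomorphism $G \to T$; dually, this is encoded by a lattice map $\alpha \colon \ZZ^d \to \Lambda$ under which each monomial $t^u \in \kk[S]$ becomes a $G$-semi-invariant of weight $\alpha(u)$. In particular, the $\Lambda$-grading refines the canonical monomial $\ZZ^d$-grading on $\kk[S]$: the weight space $\kk[S]_\lambda$ has a monomial $\kk$-basis consisting of those $t^u$ with $u\in S$ and $\alpha(u) = \lambda$.

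Next, set $\widetilde{S}_\chi := \{(u,j) \in S\times \NN : \alpha(u) = j\chi\}$, viewed as a finitely generated subsemigroup of $\ZZ^{d+1}$, and let $\nu \colon \widetilde{S}_\chi\to \NN$ be the projection $(u,j)\mapsto j$. The assignment $t^u \leftrightarrow (u,j)$ yields an isomorphism of $\ZZ$-graded $\kk$-algebras
\[
\bigoplus_{j\geq 0}\kk[S]_{j\chi} \;\cong\; \kk[\widetilde{S}_\chi],
\]
where the grading on the right is induced by $\nu$, and visibly $\nu(\widetilde{S}_\chi) \subseteq \NN$. After shrinking the ambient lattice (if necessary) so that $\widetilde{S}_\chi$ generates it over $\ZZ$, the definition of semiprojective toric variety applies, giving $\Spec\big(\kk[S]\big)\git_\chi G = \Proj\big(\kk[\widetilde{S}_\chi]\big)$ as desired.

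The main obstacle is the compatibility claim used above: that the $\Lambda$-grading supplied by Theorem~\ref{thm:grading} refines the monomial $\ZZ^d$-grading, equivalently that each monomial $t^u$ is a $G$-semi-invariant. An arbitrary diagonalisable action on an affine variety need not be \emph{monomial} in this sense, so this step uses essentially the hypothesis that $\Spec\big(\kk[S]\big)$ is toric and that $G$ acts compatibly with its toric structure, so that $G$ maps into the dense torus.
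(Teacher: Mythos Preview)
Your approach is essentially the same as the paper's: both use Theorem~\ref{thm:grading} to obtain a $\Lambda$-grading and then identify the algebra of $\chi$-semi-invariants as a $\ZZ$-graded semigroup algebra, whose $\Proj$ is semiprojective toric by definition. The paper writes this semigroup as $S_\chi = \bigoplus_{j\geq 0}\big(S\cap\pi^{-1}(j\chi)\big)$ for a semigroup homomorphism $\pi\colon S\to\Lambda$, which is your $\widetilde{S}_\chi$ in slightly different packaging.

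You are in fact more explicit than the paper about the point you flag as ``the main obstacle''. The paper simply asserts that the $\Lambda$-grading of $\kk[S]$ yields a semigroup homomorphism $\pi\colon S\to\Lambda$ (equivalently, that each monomial $t^u$ is $G$-homogeneous) without further comment. Your attempted justification --- that $G$ preserves the dense torus $T$ and hence factors through a homomorphism $G\to T$ --- is not quite airtight as written, since preserving $T$ as a subset does not by itself force the action to be by multiplication through $T$ (think of a coordinate permutation). But the paper offers no justification for this step at all, so you are ahead in having identified the implicit hypothesis that the action be compatible with the toric structure.
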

 \begin{proof}
 Let $\Lambda=G^*$ denote the character group of
 $G$. Theorem~\ref{thm:grading} shows that the $G$-action on
 $\Spec\big{(}\kk[S]\big{)}$ gives a $\Lambda$-grading of $\kk[S]$ and
 hence a semigroup homomorphism $\pi\colon S\to \Lambda$. For $\chi\in
 \Lambda$, the $\kk$-algebra $\bigoplus_{j\geq 0} \kk[S]_{j\chi}$ of
 $\chi$-semi-invariant functions is the semigroup
 algebra of
 \[
 S_\chi=\bigoplus_{j\geq 0} \Big(S\cap
 \pi^{-1}\big{(}j\chi\big{)}\Big).
 \]
 The GIT quotient $\Spec\big{(}\kk[S]\big{)}\git_\chi G$ is the
 semiprojective toric variety $\Proj\big(\kk[S_\chi]\big)$.
 \end{proof}

 The class of semiprojectve toric varieties is an especially nice
 class: while it includes all affine and projective toric varieties,
 it also includes many varieties that arise as toric resolutions of
 affine toric singularites (see Example~\ref{exa:res1/3(1,2)}).

 \begin{example}
 \label{ex:dualseq}
 For the graded ring $\kk[x,y,z]$ from Example~\ref{ex:wPn} with
 $\deg(x)=1$, $\deg(y)=2$ and $\deg(z)=3$, the corresponding toric
 variety $\Proj\big(\kk[x,y,z]\big)$ is weighted projective space
 $\mathbb{P}_\kk(1,2,3)$.  As $t^{u_i}$ varies over the generators $x,y$
 and $z$, the ring $\kk[x,y,z][t^{-u_i}]^{\kk^\times}$ is
 $\kk[\frac{y}{x^2},\frac{z}{x^3}]$,
 $\kk[\frac{x^2}{y},\frac{xz}{y^2},\frac{z^2}{y^3}]$ and
 $\kk[\frac{x^3}{z},\frac{xy}{z},\frac{y^3}{z^2}]$ respectively. The
 corresponding charts on $\mathbb{P}_\kk(1,2,3)$ are isomorphic to the
 affine toric varieties $\mathbb{A}^2_{\kk}$, the singularity of type
 $\frac{1}{2}(1,1)$ and the singularity of type $\frac{1}{3}(1,2)$.
 \end{example}

 \begin{exercise} 
 \label{ex:weightedPn}
   For the graded ring $R=\kk[x_0,\dots,x_n]$ with $\deg(x_i)=a_i>0$
   for $0\leq i\leq n$, prove that the charts of $\Proj(R)$
   corresponding to the generators $x_0,\dots,x_n$ of $R$ are the
   quotient singularities of type $\frac{1}{a_i}(a_0,\dots,
   \widehat{a_i},\dots, a_n)$ for $0\leq i\leq n$. In particular,
   weighted projective space admits a cover by affine toric varieties.
 \end{exercise}

 The semiprojective toric varieties in Example~\ref{ex:dualseq} and
 Exercise~\ref{ex:weightedPn} each admit a cover by affine toric
 varieties. This statement holds true in general.

 \begin{proposition}
 \label{prop:charts}
 Let $X$ be a semiprojective toric variety and set $T_X:=\Spec\big(\kk[M]\big)$. Then:
 \begin{enumerate}
 \item[\one] $X$ admits a cover by affine toric varieties, each
 containing the algebraic torus $T_X$ as a Zariski dense open subset;
 \item[\two] the action of $T_X$ on itself extends to an action on
 $X$.
 \end{enumerate} 
 \end{proposition}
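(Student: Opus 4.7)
The plan is to use the Proj construction of Section~\ref{sec:GIT} directly. Write $X=\Proj(\kk[S])$ using the $\ZZ$-grading induced by $\nu\colon\ZZ^d\to\ZZ$, and let $M=\Ker(\nu)$. After replacing $\nu$ by $\nu/e$, where $e\ZZ=\nu(\ZZ^d)$, we may assume $\nu$ is surjective; this changes neither $\Proj(\kk[S])$ nor $M$. Let $s_1,\dots,s_n$ generate $S$ as a semigroup. For each $k$ with $\nu(s_k)>0$, the standard affine chart
\[
X_{s_k} := \Spec\!\big(\kk[S][t^{-s_k}]^{\kk^\times}\big)
\]
has coordinate ring equal to the semigroup algebra of $(S+\ZZ_{\leq 0}s_k)\cap M$, since $\kk[S][t^{-s_k}]$ is the semigroup algebra of $S+\ZZ_{\leq 0}s_k$ and taking $\kk^\times$-invariants is the same as extracting the zero-graded part under $\nu$. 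Thus each $X_{s_k}$ is itself an affine toric variety in the sense of Section~\ref{sec:affinetoric}, and these charts cover $X$.

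To complete part \one, we must show that $T_X=\Spec(\kk[M])$ is a dense open subvariety of each $X_{s_k}$. Equivalently, the subgroup $L\subseteq M$ generated by $(S+\ZZ_{\leq 0}s_k)\cap M$ must equal $M$: once this is known, $\kk[M]$ is obtained from the chart's coordinate ring by inverting the finitely many semigroup generators (equivalently, their product), so $T_X$ appears as a principal, and hence dense, open subvariety of $X_{s_k}$.

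The identity $L=M$ is the one nontrivial step. Fix $u=s_k$, set $a=\nu(u)>0$, and take $m\in M$. Decompose $m=v-w$ with $v,w\in S$ and $\nu(v)=\nu(w)=:n$. Since $\nu$ is surjective and $S$ generates $\ZZ^d$, the integers $\nu(s_1),\dots,\nu(s_n)$ have gcd one, so $\nu(S)$ is a numerical semigroup containing all sufficiently large integers and in particular hitting every residue class modulo $a$. Choose $s\in S$ with $\nu(s)\equiv -n\pmod a$; then both
\[
x_v := (v+s) - \tfrac{n+\nu(s)}{a}\, u, \qquad x_w := (w+s) - \tfrac{n+\nu(s)}{a}\, u
\]
lie in $(S+\ZZ_{\leq 0}u)\cap M$, and their difference $x_v-x_w=v-w=m$ certifies $m\in L$.

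Part \two\ then follows by a standard gluing argument: each chart $X_{s_k}$ is an affine toric variety with dense torus $T_X$, so the translation action of $T_X$ on itself extends uniquely to each $X_{s_k}$; the overlaps $X_{s_k}\cap X_{s_l}$ are themselves open toric subvarieties with the same dense torus $T_X$, so the local actions agree there and glue to a global $T_X$-action on $X$ extending translation on $T_X$. The one real obstacle is the lattice identity $L=M$ above; the remainder of the argument simply repackages material already developed in Section~\ref{sec:GIT} and Section~\ref{sec:affinetoric}.
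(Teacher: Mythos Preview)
Your proof is correct and follows essentially the same route as the paper: cover $X$ by the standard $\Proj$ charts $\Spec\big(\kk[S][t^{-u_i}]^{\kk^\times}\big)$, identify each coordinate ring as the semigroup algebra of $S_i=(S+\ZZ_{\le 0}u_i)\cap M$, show that $S_i$ generates $M$ so that $T_X$ is dense, and then glue the local $T_X$-actions. The paper asserts ``since $S$ generates $\ZZ^d$ over $\ZZ$, it follows that $S_i$ generates $M$ over $\ZZ$'' without further comment; your numerical-semigroup argument (reducing to $\nu$ surjective, then shifting $v,w$ by some $s\in S$ with $\nu(s)\equiv -n\pmod a$) is exactly the kind of justification that one-liner needs, so your write-up is in fact more complete on this point.
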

 \begin{proof}
 Consider $X = \Proj\big{(}\kk[S]\big{)}$ where $\kk[S]$ is a
   $\ZZ$-graded semigroup algebra. For \one, choose $u_1,\dots, u_m\in
   S$ so that the monomials $t^{u_1},\dots, t^{u_m}$ form a minimal
   $\kk[S]_0$-algebra generating set for $\kk[S]$.  The $\Proj$
   construction shows that $X$ is covered by charts
   $\Spec\big(\kk[S][t^{-u_i}]^{\kk^\times}\big)$ for $1\leq i\leq m$. The
   $\kk$-vector space underlying $\kk[S][t^{-u_i}]^{\kk^\times}$ has
   basis consisting of elements $t^u/(t^{u_i})^j$ for $j\geq 0$ and
   for $t^u\in \kk[S]$ of degree $j\deg(u_i)$. It follows that
   $\kk[S][t^{-u_i}]^{\kk^\times}$ is isomorphic to the semigroup
   algebra $\kk[S_i]$ of the subsemigroup $S_i:= \{u-ju_i\in M : u\in
   S, j\in \NN\}$ of $M$. Since $S$ generates $\ZZ^d$ over $\ZZ$, it
   follows that $S_i$ generates $M$ over $\ZZ$. Therefore $X$ admits a
   cover by the affine toric varieties $\Spec\big(\kk[S_i]\big)$ for
   $1\leq i\leq m$ and, since $\kk[S_i]$ can be localised to obtain
   $\kk[M]$, the torus $T_X$ is dense in $\Spec\big(\kk[S_i]\big)$ for
   $1\leq i\leq m$. This completes the proof of \one. For \two, since
   $S_i\subset M$ we obtain by restriction an $M$-grading of
   $\kk[S_i]$ and hence a $T_X$-action on $\Spec\big(\kk[S_i]\big)$
   that extends the $T_X$-action on itself. These actions agree where
   charts overlap, so the action extends to the whole of $X$.
 \end{proof}

 \begin{remark}
 The existence of a Zariski dense algebraic torus in $X$ whose action
 on itself extends to an action on $X$ can be used to characterise semiprojective
 toric varieties.
 \end{remark}

 \subsection{A dual approach via convex geometry}
 \label{sec:dual}
 In the special case when the toric variety
 $X=\Proj\big{(}\kk[S]\big{)}$ is normal, the affine cover by toric
 charts admits a simple description in terms of polyhedral cones. This
 approach relies on some elementary facts about duality in convex
 geometry that we now recall.

 Let $S\subseteq \ZZ^d$ be a finitely generated subsemigroup that
 generates $\ZZ^d$ over $\ZZ$, and let 
 \begin{equation} 
  \label{eq:gradingdiagram}
  \begin{CD}   
    0 @>>> M @>>> \ZZ^{d} @>\nu >> \ZZ @>>> 0
 \end{CD}
\end{equation}
be a short exact sequence of lattice maps satisfying $\nu(S)\subseteq
\NN$. Write $N:=\Hom_{\ZZ}(M,\ZZ)$ for the lattice dual to $M$ and
$\langle \;\;,\;\;\rangle \colon M\times N\to \ZZ$ for the dual
pairing. Let $\sigma\subseteq N\otimes_\ZZ \QQ$ be a \emph{polyhedral
cone}, that is, the $\mathbb{Q}_{\geq 0}$-span of a finite set of
vectors in $N\otimes_\ZZ \QQ$. The \emph{dual cone}
 \[
 \sigma^\vee:= \big{\{}u\in M\otimes_\ZZ \QQ : \langle u,v\rangle \geq
 0 \text{ for all } v\in \sigma\big{\}}
 \]
 is also a polyhedral cone. A \emph{face} of $\sigma$ is the
 intersection of $\sigma$ with a supporting hyperplane, and $\sigma$
 is \emph{strongly convex} if $\sigma^\vee$ spans $M\otimes_\ZZ \QQ$.
 Gordan's Lemma asserts that the integral points $\sigma^\vee\cap M$
 form a finitely generated subsemigroup of $M$, giving rise to a
 normal affine toric variety $\Spec\big(\kk[\sigma^\vee\cap M]\big)$. This
 dual approach to constructing a semigroup algebra may appear rather
 convoluted at first, but the cones $\sigma\subseteq N\otimes_\ZZ \QQ$
 that encode the toric charts of a normal semiprojective toric variety
 satisfy the following nice convex-geometry property.

 \begin{proposition}
 \label{prop:cones}
 Every normal semiprojective toric variety $X=\Proj\big(\kk[S]\big)$ is
 covered by toric charts of the form $\Spec\big(\kk[\sigma_i^\vee\cap M]\big)$
 for $1\leq i\leq m$, where $\sigma_1,\dots, \sigma_m\subseteq
 N\otimes_\ZZ \QQ$ are strongly convex polyhedral cones such that
 $\sigma_j\cap \sigma_k$ is a face of both $\sigma_j$ and $\sigma_k$
 for $1\leq j,k\leq m$.
 \end{proposition}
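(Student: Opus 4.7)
The plan is to start from the affine toric cover already produced by Proposition~\ref{prop:charts} and translate it into the convex-geometric language of the dual lattice. Concretely, write $X = \Proj(\kk[S])$ and let $t^{u_1}, \dots, t^{u_m}$ be the minimal $\kk[S]_0$-algebra generators from the proof of Proposition~\ref{prop:charts}, so that $X$ is covered by the affine charts $\Spec(\kk[S_i])$ with $S_i = \{u - ju_i \in M : u \in S,\ j \in \NN\}$. Since $X$ is normal and each chart is open in $X$, each $\Spec(\kk[S_i])$ is a normal affine toric variety, so Lemma~\ref{lem:normal} yields $S_i = \Cone(S_i) \cap M$. I therefore \emph{define} $\sigma_i \subseteq N\otimes_\ZZ\QQ$ to be the dual cone of $\Cone(S_i) \subseteq M\otimes_\ZZ\QQ$, giving $\Spec(\kk[S_i]) = \Spec(\kk[\sigma_i^\vee \cap M])$ as required. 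Strong convexity of $\sigma_i$ follows from the fact, also established in the proof of Proposition~\ref{prop:charts}, that $S_i$ generates $M$ over $\ZZ$: this forces $\sigma_i^\vee = \Cone(S_i)$ to span $M\otimes_\ZZ\QQ$.

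The main content is the common-face property, and the cleanest way to obtain it is to exhibit the $\sigma_i$ as the maximal cones of the normal fan of a single polyhedron. Using the short exact sequence \eqref{eq:gradingdiagram}, intersect $\Cone(S) \subseteq \ZZ^d \otimes \QQ$ with the affine hyperplane $\nu^{-1}(1)$ to obtain a rational polyhedron $P$ whose recession cone is $\Cone(S)\cap M\otimes\QQ$. After rescaling each minimal generator $u_i$ by its degree $d_i := \nu(u_i) > 0$, the points $p_i := u_i/d_i$ are precisely the vertices of $P$. A short calculation I would perform inline shows
\[
\Cone(S_i) \;=\; \Cone(P - p_i) \;=\; \{\, s(q - p_i) : s \geq 0,\ q \in P\,\},
\]
and dualising gives $\sigma_i = \{\xi \in N\otimes\QQ : \langle \xi, p_i\rangle \leq \langle \xi, q\rangle \text{ for all } q \in P\}$, i.e., $\sigma_i$ is the normal cone to $P$ at the vertex $p_i$.

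With this identification in place, the conclusion is a standard fact about normal fans of polyhedra, but I would spell it out briefly for completeness. For any $\xi \in \sigma_j \cap \sigma_k$, the set of points of $P$ on which $\langle \xi, -\rangle$ attains its minimum is a face $F(\xi) \subseteq P$ containing both $p_j$ and $p_k$, and the cone of all $\xi$ realising a fixed such face $F$ is itself a face of every $\sigma_i$ for which $p_i \in F$; taking the union over the finitely many faces of $P$ containing $\{p_j, p_k\}$ exhibits $\sigma_j \cap \sigma_k$ as a common face of $\sigma_j$ and $\sigma_k$. The hard part of the argument is really this last step: while the identification with normal cones is a routine unwinding, verifying carefully that the intersection of two normal cones is the normal cone of the smallest face of $P$ containing both vertices, and hence a genuine face of each, is the place where the polyhedral structure of $P$ (as opposed to merely the individual cones $\Cone(S_i)$) is essential.
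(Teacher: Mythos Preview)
Your proposal follows essentially the same route as the paper: build the affine toric cover, invoke Lemma~\ref{lem:normal} via normality of $X$ to saturate each $S_i$, dualise to define $\sigma_i$, and then identify the $\sigma_i$ as normal cones of a single polyhedron to deduce the common-face property. The paper actually leaves this last step as an exercise, so your sketch of the normal-fan argument is exactly what is expected.

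One slip to correct: the rescaled minimal generators $p_i = u_i/d_i$ need not be \emph{precisely} the vertices of $P = \Cone(S)\cap\nu^{-1}(1)$. For instance, take $S\subset\ZZ^2$ generated by $(1,0),(1,1),(1,2)$ with $\nu$ the first coordinate: here $(1,1)$ is a minimal generator but lies in the relative interior of $P=\{1\}\times[0,2]$. In general the $p_i$ contain all vertices of $P$ (so the charts still cover $X$) but may include additional points, giving redundant charts whose $\sigma_i$ are lower-dimensional normal cones. This does not damage your argument, since normal cones at arbitrary points of $P$ still satisfy the face condition, but the sentence should be weakened accordingly. The paper avoids this wrinkle by first passing to a Veronese subring $\bigoplus_{j\geq 0}\kk[S]_{j\ell}$ generated in degree one and then explicitly taking only the vertices of the lattice polyhedron $\conv(S\cap\nu^{-1}(\ell))$; your device of rescaling by the degree is a legitimate alternative once the overclaim is fixed.
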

 \begin{proof}
 Choosing $\ell\geq 0$ sufficiently large ensures that the
 $\ZZ$-graded $\kk[S]_0$-algebra $\bigoplus_{j\geq 0}\kk[S]_{j\ell}$
 is generated in degree $j=1$, so $X$ is covered by the charts
 $\Spec\big(\kk[S][t^{-u}]^{\kk^\times}\big)$ as $u$ ranges over some subset
 of $S\cap \nu^{-1}(\ell)$. To remove redundancy in this cover, let
 $\conv(S\cap \nu^{-1}(\ell))$ denote the polyhedron obtained as the
 convex hull of the points $S\cap \nu^{-1}(\ell)$ in the vector space
 $\nu^{-1}(\ell)\otimes_\ZZ \QQ$, and let $u_1,\dots, u_m\in S$ be the
 vertices of $\conv(S\cap \nu^{-1}(\ell))$. As in the proof of
 Proposition~\ref{prop:charts}, $\kk[S][t^{-u_i}]^{\kk^\times}$ is
 isomorphic to the semigroup algebra of $S_i:= \{u-u_i\in M : u\in S
 \cap \nu^{-1}(\ell)\}$. Set $N:=\Hom_\ZZ(M,\ZZ)$, and for each $1\leq
 i\leq m$ define the cone $\sigma_i:= \Cone(S_i)^\vee$ in
 $N\otimes_\ZZ \QQ$. Since $X$ is normal, the semigroup $\sigma_i^\vee
 \cap M = \Cone(S_i)\cap M$ is equal to $S_i$ by
 Lemma~\ref{lem:normal} and hence $X$ is covered by the charts
 $\Spec\big(\kk[\sigma_i^\vee\cap M]\big)$ for $1\leq i\leq m$. Each cone
 $\sigma_i$ is strongly convex since $\sigma_i^\vee = \Cone(S_i)$
 spans $M\otimes_\ZZ \QQ$. We leave the proof of the final statement as a simple exercise in convex geometry.
 \end{proof}

 \begin{exercise}
 Complete the proof of Proposition~\ref{prop:cones}.
 \end{exercise}

 \begin{remark}
 \label{rem:normal}
 Normality of $\Spec\big{(}\kk[S]\big{)}$ implies normality of
 $\Spec\big{(}\kk[S]\big{)}\git_\chi G$ by Lemma~\ref{lem:normal}, so
 Proposition~\ref{prop:cones} applies in particular to those varieties
 obtained as GIT quotients of normal affine toric varieties by the
 action of a diagonalisable group (see
 Proposition~\ref{prop:toricGIT}).
 \end{remark}

 \begin{example}
 \label{ex:F1fan}
 Consider the action of $T=(\kk^\times)^2$ on
 $\mathbb{A}^4_\kk=\Spec\big(\kk[x_1,x_2,x_3,x_4]\big)$ given by
 \[
 (t_1,t_2)\cdot (p_1,p_2,p_3,p_4) = (t_1p_1,t_1^{-1}t_2p_2,t_1p_3,t_2p_4),
 \]
 and let $\pi\colon \ZZ^4\to \ZZ^2$ be the map defined by the matrix
 \[
 \begin{pmatrix} 1 & 0
      & -1 & 0 \\ 0 & 1 & 1 & -1 \end{pmatrix}.
 \]
 The character $\chi= (1,1)$ of $(\kk^\times)^2$ defines the
      $\ZZ$-graded semigroup $S_\chi=\bigoplus_{j\geq 0}\big(\NN^4\cap
      \pi^{-1}(j,j)\big)$ whose semigroup algebra $\kk[S_\chi] =
      \bigoplus_{j\geq 0} \kk[x_1,x_2,x_3,x_4]_{j\chi}$ determines the
      projective toric variety $\mathbb{A}^4_\kk\git_\chi T =
      \Proj\big(\kk[S_\chi]\big)$. To compute the toric charts that
      cover $\mathbb{A}^4_\kk\git_\chi T$, note that the $\kk$-algebra
      $\kk[S_\chi]$ is generated in degree $j=1$ by the five monomials
      shown in Figure~\ref{fig:F1}(a). The algebra of $T$-invariant
      functions on $U_1:=\mathbb{A}^4_\kk\setminus (x_3x_4=0)$ is
      $\kk[U_1]^T=\kk\Big{[}\frac{x_1x_4}{x_3x_4},\frac{x_1^2x_2}{x_3x_4},\frac{x_1x_2x_3}{x_3x_4},\frac{x_2x_3^2}{x_3x_4}\Big{]}$
      and hence the affine quotient is $U_1/T \cong
      \Spec\big(\kk[\frac{x_1}{x_3},\frac{x_2x_3}{x_4}]\big)\cong
      \mathbb{A}^2_\kk$. Computing the other charts similarly shows
      that the chart corresponding to the monomial $x_1x_2x_3$ is
      redundant.

 Alternatively, by examining the proof of Proposition~\ref{prop:cones}
 we see that the charts can be read off directly from the polytope
 $\conv(S_{\chi}\cap \nu^{-1}(1))$ shown in Figure~\ref{fig:F1}(a):
 each vertex determines a chart, and the coordinates have exponent
 vectors given by the edge-vectors emanating from the corresponding
 vertex.
 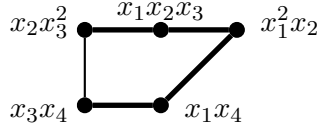
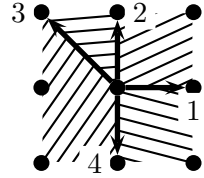
\begin{figure}[!ht]
    \centering \mbox{ \subfigure[Polytope $\conv(S_{\chi}\cap
    \nu^{-1}(1))$]{ \psset{unit=1cm}
        \begin{pspicture}(-1.5,-0.5)(4,2.2)
          \cnode*[fillcolor=black](0,0.5){3pt}{P1}
          \cnode*[fillcolor=black](1,0.5){3pt}{P2}
          \cnode*[fillcolor=black](0,1.5){3pt}{P4}
          \cnode*[fillcolor=black](1,1.5){3pt}{P5}
          \cnode*[fillcolor=black](2,1.5){3pt}{P6}
          \ncline[linewidth=2pt]{-}{P1}{P2}
          \ncline{-}{P1}{P4}
          \ncline[linewidth=2pt]{-}{P4}{P5}
          \ncline[linewidth=2pt]{-}{P5}{P6}
          \ncline[linewidth=2pt]{-}{P6}{P2}
          \rput(-0.6,0.4){\psframebox*{$x_3x_4$}}
         \rput(1.7,0.4){\psframebox*{$x_1x_4$}}
          \rput(-0.6,1.6){\psframebox*{$x_2x_3^2$}}
          \rput(2.7,1.6){\psframebox*{$x_1^2x_2$}}
         \rput(1,1.75){$x_1x_2x_3$}
          \end{pspicture}}
      \qquad \qquad  
       \subfigure[Cones $\sigma_i$ in $N\otimes_\ZZ \QQ$]{
        \psset{unit=1cm}
        \begin{pspicture}(-1.0,-0.5)(3.5,2.2)
        \pspolygon[linecolor=white,fillstyle=hlines*,
          hatchangle=25](1,1)(1,2)(2,2)(2,1)
          \pspolygon[linecolor=white,fillstyle=hlines*,
          hatchangle=5](1,1)(1,2)(0,2)(0,2)
          \pspolygon[linecolor=white,fillstyle=hlines*,
          hatchangle=55](1,1)(0,2)(0,0)(1,0)
          \pspolygon[linecolor=white,fillstyle=hlines*,
          hatchangle=165](1,1)(2,1)(2,0)(1,0)
          \cnode*[fillcolor=black](0,0){3pt}{P1}
          \cnode*[fillcolor=black](1,0){3pt}{P2}
          \cnode*[fillcolor=black](2,0){3pt}{P3}
          \cnode*[fillcolor=black](0,1){3pt}{P4}
          \cnode*[fillcolor=black](1,1){3pt}{P5}
          \cnode*[fillcolor=black](2,1){3pt}{P6}
          \cnode*[fillcolor=black](0,2){3pt}{P7}
          \cnode*[fillcolor=black](1,2){3pt}{P8}
          \cnode*[fillcolor=black](2,2){3pt}{P9}
          \ncline[linewidth=2pt]{->}{P5}{P7}
          \ncline{-}{P5}{P11}
          \ncline[linewidth=2pt]{->}{P5}{P6}
          \ncline[linewidth=2pt]{->}{P5}{P2}
          \ncline[linewidth=2pt]{->}{P5}{P8}
          \rput(2,0.7){\psframebox*{$1$}}
          \rput(1.3,2){\psframebox*{$2$}}
          \rput(-0.3,2){\psframebox*{$3$}}
          \rput(0.7,0){\psframebox*{$4$}}
        \end{pspicture}}
    }
    \caption{The Hirzebruch surface $\mathbb{F}_{1}$ as a toric
    variety} \label{fig:F1}
  \end{figure}
  For each chart $U_i$, the cone generated by the pair of edges
  vectors emanating from the vertex is the cone $\sigma_i^\vee$ whose
  integral points $\sigma_i^\vee\cap M$ define the semigroup algebra
  $\kk[\sigma_i^\vee\cap M]\cong \kk[U_i]^T$. The dual cone $\sigma_i$
  is generated by the inward-pointing normal vectors of
  $\sigma_i^\vee$, and forms one of the two-dimensional cones in
  Figure~\ref{fig:F1}(b). The patching data for the local toric charts
  shows that $\mathbb{A}^4_\kk\git_\chi T$ is the Hirzebruch
  surface $\mathbb{F}_1 = \mathbb{P}(\mathscr{O}_{\mathbb{P}^1}\oplus
  \mathscr{O}_{\mathbb{P}^1}(1))$.
 \end{example}

 \begin{exercise}
 Repeat the analysis of Example~\ref{ex:F1fan} for each of the
 characters $\chi = (1,0)$, $(0,1)$, $(-1,2)$ and $(1,2)$. Explain
 your findings in terms of $\pi$ and the birational geometry of
 $\mathbb{F}_1$.   
 \end{exercise}

 \begin{example}
 \label{exa:toricabelian}
   Let $G \subset \GL(n,\kk)$ be a finite abelian subgroup of order
   $r$. Choose coordinates $x_{1},\dots ,x_{n}$ on $\mathbb{A}^n_\kk$
   to diagonalise the action and write $g =
   \mbox{diag}\big{(}\epsilon^{\alpha_1(g)},\dots
   ,\epsilon^{\alpha_n(g)}\big{)}$ where \(\epsilon\) is a primitive
   \(r^{\text{th}}\) root of unity and $0\leq \alpha_j(g) < r$ for
   $1\leq j\leq n$. Define the lattice
 \[
 N := \ZZ^n + \textstyle{\sum_{g\in G}} \; \ZZ\cdot
\frac{1}{r}\big{(}\alpha_1(g),\dots ,\alpha_n(g)\big{)}
 \]
 and set $M=\Hom(N,\ZZ)$.  A Laurent monomial
 $x_1^{m_1}x_2^{m_2}\cdots x_n^{m_n}\in \kk[x_1^{\pm 1},\dots,x_n^{\pm
   1}]$ is $G$-invariant if and only if the exponent vector lies in
 $M$. Thus, if $\sigma\subset N\otimes_\ZZ \QQ$ denotes the
 positive orthant, then $\kk[\sigma^\vee\cap M]=\kk[x_1,\dots, x_n]^G$
 and hence the abelian quotient singularity $\mathbb{A}_\kk^n/G$ is the
 normal toric variety $\Spec\big{(}\kk[\sigma^\vee\cap M]\big{)}$ for
 $N$ and $\sigma$ as above.
 \end{example}

 \subsection{Fans and lattice polyhedra}
 For a lattice $N$ with dual lattice $M$, a \emph{fan}
 $\Sigma$ in $N\otimes_\ZZ \QQ$ is a finite collection of strongly
 convex polyhedral cones $\sigma\subset N\otimes_\ZZ \QQ$ such that:
 \begin{enumerate}
 \item[\one] for $\sigma, \sigma'\in \Sigma$, the
 cone $\sigma\cap \sigma'$ is a face of both $\sigma$ and $\sigma'$;
 \item[\two] for $\sigma\in\Sigma$, each face of $\sigma$ also lies in
 $\Sigma$.  
 \end{enumerate} 
 We may assume that the cones in any given fan do not all lie in a
 hyperplane in $N\otimes_\ZZ \QQ$.

 Each cone $\sigma\in \Sigma$ determines the normal toric variety
 $U_\sigma = \Spec\big{(}\kk[\sigma^\vee\cap M]\big{)}$ with dense
 algebraic torus $\Spec\big(\kk[M]\big)$. Moreover, each face of
 $\sigma$ is of the form $\tau=\sigma\cap u^\perp$ for some $u\in M$,
 and one can show that $\kk[\tau^\vee\cap M]$ is the localisation of
 $\kk[\sigma^\vee\cap M]$ at $t^u$. It follows that $U_\tau$ is a
 principal open affine subset of $U_\sigma$, and these normal affine
 varieties glue to give
 \[
 X_{\Sigma}:=\bigsqcup_{\sigma\in \Sigma} U_\sigma/\sim_{\textrm{glue}}.
 \] 
 We work only with fans arising from lattice polyhedra as follows. For
 an $n$-dimensional lattice polyhedron $P\subseteq \QQ^n$ and for $w
 \in (\QQ^n)^\ast$ we denote by $\face_{w}(P)$ the face of $P$
 minimising $w$.  Given a face $F$ of $P$, the \emph{inner normal
 cone} $\mathcal{N}_{P}(F)$ is the set of $w \in (\QQ^n)^{\ast}$ such
 that $\face_{w}(P)=F$, and the \emph{inner normal fan}
 $\mathcal{N}(P)$ of $P$ is the fan consisting of the inner normal
 cones $\{\mathcal N_{P}(F)\}$ as $F$ varies over the faces of $P$.

 \begin{proposition}
 Let $X_\Sigma$ be a normal toric variety. Then $X_\Sigma$ is
 semiprojective if and only if $\Sigma$ is the inner normal fan of a lattice
 polyhedron.
 \end{proposition}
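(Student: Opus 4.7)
The plan is to reduce both implications to a single convex-geometric identity: for any vertex $v$ of a lattice polyhedron $P \subseteq M\otimes_\ZZ \QQ$, one has
\[
\mathcal{N}_P(v) = \Cone\{u-v : u \in P\}^\vee,
\]
since $w \in \mathcal{N}_P(v)$ iff $v$ minimises $w$ on $P$, iff $\langle u-v, w\rangle \geq 0$ for every $u \in P$. Thus the inner normal cones at the vertices of $P$ are precisely the duals of the cones produced in the proof of Proposition~\ref{prop:cones}, and most of the proof becomes a matter of matching notation.

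For the forward direction, I would assume $X_\Sigma$ is semiprojective, so $X_\Sigma \cong \Proj(\kk[S])$ for some $\ZZ$-graded semigroup $S$ with grading $\nu$. Proposition~\ref{prop:cones} furnishes some $\ell \geq 0$, vertices $u_1, \dots, u_m$ of $P := \conv(S \cap \nu^{-1}(\ell))$, and an affine cover of $X_\Sigma$ by $\Spec(\kk[\sigma_i^\vee \cap M])$ with $\sigma_i^\vee = \Cone\{u - u_i : u \in S \cap \nu^{-1}(\ell)\}$. The identity above then gives $\sigma_i = \mathcal{N}_P(u_i)$, and because the fan axioms close $\Sigma$ under faces, $\Sigma$ is determined by its maximal cones; hence $\Sigma = \mathcal{N}(P)$.

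For the converse, I would start from $\Sigma = \mathcal{N}(P)$, form the rational cone $C(P) \subseteq (M \otimes_\ZZ \QQ) \oplus \QQ$ generated by $\{(u,1) : u \in P\}$, and define $S := C(P) \cap (M \oplus \ZZ)$ with grading given by the second coordinate. Gordan's lemma yields finite generation of $S$, and $\kk[S]_j = 0$ for $j < 0$ by construction, so $\Proj(\kk[S])$ is semiprojective by definition. Passing if necessary to a Veronese subring (which leaves $\Proj$ unchanged) to arrange generation in degree one, the set $S \cap \nu^{-1}(1)$ equals $P \cap M$, whose extreme points are the vertices of $P$. Running the forward-direction argument now identifies the fan of $\Proj(\kk[S])$ with $\mathcal{N}(P) = \Sigma$, hence $\Proj(\kk[S]) \cong X_\Sigma$.

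The only real subtlety lies in handling an unbounded $P$: its recession cone $\sigma_\infty$ contributes $\kk[S]_0 = \kk[\sigma_\infty^\vee \cap M]$, so that $X_\Sigma$ is projective over the affine toric variety $\Spec(\kk[S]_0)$ rather than projective outright --- which is exactly what the semiprojective hypothesis allows. Beyond that, everything is routine polyhedral bookkeeping around the vertex-cone duality of the opening identity.
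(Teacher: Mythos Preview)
Your proposal is correct and matches the paper's treatment: the paper states this proposition without a formal proof, sketching only the converse direction in the paragraph that follows (via the cone $\widetilde{P}$ over $P$ and the semigroup $S_P = \widetilde{P}\cap(\ZZ^n\oplus\ZZ)$, which are exactly your $C(P)$ and $S$), while the forward direction is left implicit in Proposition~\ref{prop:cones} just as you invoke it. One small slip worth noting: after passing to the Veronese, the degree-one slice corresponds to $\ell P$ rather than $P$ itself, but since $\mathcal{N}(\ell P) = \mathcal{N}(P)$ this is harmless.
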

 
 To construct the semiprojective toric variety directly from the
 polyhedron $P\subseteq \QQ^n$ let $\widetilde{P}$ be the polyhedral
 cone over $P$ obtained as the closure in $\QQ^{n}\oplus \QQ$ of the
 set $\{(\lambda p,\lambda) : p\in P, \lambda\in \QQ_{\geq 0}\}$.  The
 semigroup $S_P:= \widetilde{P}\cap (\ZZ^n\oplus \ZZ)$ determines the
 normal affine toric variety $\Spec\big(\kk[S_P]\big)$ of dimension
 $n+1$. The second projection $\nu\colon \ZZ^n\oplus\ZZ\to \ZZ$ gives
 $S_P$ a $\ZZ$-grading and hence defines the normal semiprojective
 toric variety $X_P:= \Proj\big(\kk[S_P]\big)$ of dimension $n$. The
 polyhedron $P$ is equal to $\conv(S_P\cap \nu^{-1}(1))$, the convex
 hull of the lattice points $S_P\cap \nu^{-1}(1)$ in the vector space
 $\nu^{-1}(1)\otimes_\ZZ \QQ$. The $n$-dimensional cones
 $\mathcal{N}_{P}(F)$ as $F$ ranges over the vertices of $P$ coincide
 with the cones $\sigma_1,\dots, \sigma_m$ from
 Proposition~\ref{prop:cones} that determine an affine toric cover of
 $X_P$. More generally, as $F$ ranges over the faces of $P$ we produce
 the fan $\Sigma = \mathcal{N}(P)$ that encodes the data to
 reconstruct toric variety $X_P$. It is worth recording two special cases:

 \begin{example}
 If $P\subseteq \QQ^n$ is a polyhedral cone then its normal fan
 consists of the faces of the dual cone $\sigma := P^\vee\subset
 N_\QQ$.  The toric variety $X_P$ is $\Spec\big{(}\kk[\sigma^\vee\cap M]\big{)}$ for
 $M=\Hom_\ZZ(N,\ZZ)$.
 \end{example}

 \begin{example}
 If $P\subseteq \QQ^n$ is a polytope (= bounded polyhedron) then
 $S_P\cap \nu^{-1}(0)$ is zero. Thus, the zero-graded piece of the
 algebra $\kk[S_P]$ is $\kk$, so $X_P$ is projective.
 \end{example}

 \begin{remark}
 \label{rem:smooth}
 Geometric properties of a normal toric variety $X$ are encoded by its
 fan $\Sigma\subset N\otimes_\ZZ \QQ$. An $n$-dimensional toric
 variety $X$ is smooth if and only if the generators of each
 $n$-dimensional cone $\sigma\in \Sigma$ form a $\ZZ$-basis of the
 lattice $N$.  Moreover, $X$ is an orbifold if each $n$-dimensional
 cone has precisely $n$ cone generators, in which case the fan
 $\Sigma$ is \emph{simplicial}. For more on the relation between a
 normal toric variety and its fan, see Fulton~\cite{Fulton} or
 Oda~\cite{Oda}.
 \end{remark}

  \begin{exercise}
 \label{ex:delPezzo}
   A very simple class of toric varieties is the class of smooth toric
   del Pezzo surfaces of which there are only five examples:
   $\mathbb{P}^1\times \mathbb{P}^1, \mathbb{P}^2$ and $\mathbb{P}^2$
   blown-up in one, two, or three points; the fans are shown in
   Figures~\ref{fig:F1} and~\ref{fig:Fanosurfaces}. Show that each fan
   can be realised as the inner normal fan of a \emph{reflexive}
   lattice polygon, that is, a lattice polygon $P$ such that
   the origin is the only lattice point in the interior of $P$.
  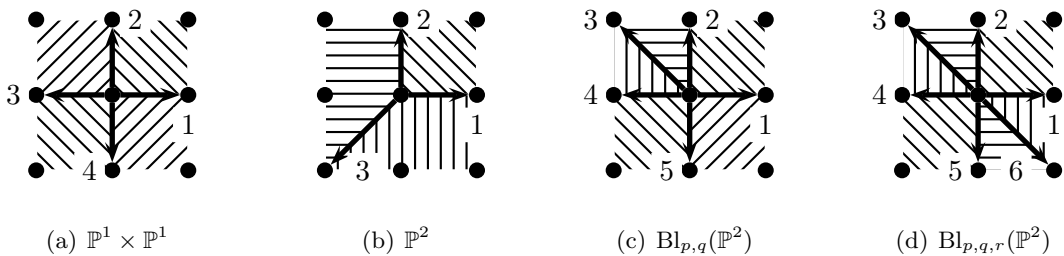
\begin{figure}[!ht]
    \centering
    \mbox{
      \subfigure[$\mathbb{P}^1\times \mathbb{P}^1$]{
        \psset{unit=1cm}
        \begin{pspicture}(0,-0.5)(2,2.2)
        \pspolygon[linecolor=white,fillstyle=hlines*,
          hatchangle=135](1,1)(1,2)(2,2)(2,1)
          \pspolygon[linecolor=white,fillstyle=hlines*,
          hatchangle=45](1,1)(1,2)(0,2)(0,1)
          \pspolygon[linecolor=white,fillstyle=hlines*,
          hatchangle=135](1,1)(0,1)(0,0)(1,0)
          \pspolygon[linecolor=white,fillstyle=hlines*,
          hatchangle=45](1,1)(2,1)(2,0)(1,0)
          \cnode*[fillcolor=black](0,0){3pt}{P1}
          \cnode*[fillcolor=black](1,0){3pt}{P2}
          \cnode*[fillcolor=black](2,0){3pt}{P3}
          \cnode*[fillcolor=black](0,1){3pt}{P4}
          \cnode*[fillcolor=black](1,1){3pt}{P5}
          \cnode*[fillcolor=black](2,1){3pt}{P6}
          \cnode*[fillcolor=black](0,2){3pt}{P7}
          \cnode*[fillcolor=black](1,2){3pt}{P8}
          \cnode*[fillcolor=black](2,2){3pt}{P9}
          \ncline[linewidth=2pt]{->}{P5}{P4}
          \ncline{-}{P5}{P11}
          \ncline[linewidth=2pt]{->}{P5}{P6}
          \ncline[linewidth=2pt]{->}{P5}{P2}
          \ncline[linewidth=2pt]{->}{P5}{P8}
          \rput(2,0.6){\psframebox*{$1$}}
          \rput(1.3,2){\psframebox*{$2$}}
          \rput(-0.3,1){\psframebox*{$3$}}
          \rput(0.7,0){\psframebox*{$4$}}
        \end{pspicture}}
      \qquad \qquad  
      \subfigure[$\mathbb{P}^2$]{
        \psset{unit=1cm}
 \begin{pspicture}(0,-0.5)(2,2.2)
          \pspolygon[linecolor=white,fillstyle=hlines*,
          hatchangle=135](1,1)(1,2)(2,2)(2,1)
          \pspolygon[linecolor=white,fillstyle=hlines*,
          hatchangle=0](1,1)(1,2)(0,2)(0,0)
          \pspolygon[linecolor=white,fillstyle=hlines*,
          hatchangle=90](1,1)(2,1)(2,0)(0,0)
          \cnode*[fillcolor=black](0,0){3pt}{P1}
          \cnode*[fillcolor=black](1,0){3pt}{P2}
          \cnode*[fillcolor=black](2,0){3pt}{P3}
          \cnode*[fillcolor=black](0,1){3pt}{P4}
          \cnode*[fillcolor=black](1,1){3pt}{P5}
          \cnode*[fillcolor=black](2,1){3pt}{P6}
          \cnode*[fillcolor=black](0,2){3pt}{P7}
          \cnode*[fillcolor=black](1,2){3pt}{P8}
          \cnode*[fillcolor=black](2,2){3pt}{P9}
          \ncline[linewidth=2pt]{->}{P5}{P8}
          \ncline{-}{P5}{P11}
          \ncline[linewidth=2pt]{->}{P5}{P6}
          \ncline[linewidth=2pt]{->}{P5}{P1}
          \rput(2,0.6){\psframebox*{$1$}}
          \rput(1.3,2){\psframebox*{$2$}}
          \rput(0.5,0){\psframebox*{$3$}}
        \end{pspicture}}
      \qquad \qquad  
      \subfigure[$\text{Bl}_{p,q}(\mathbb{P}^2$)]{
        \psset{unit=1cm}
        \begin{pspicture}(0,-0.5)(2,2.2)
        \pspolygon[linecolor=white,fillstyle=hlines*,
          hatchangle=135](1,1)(1,2)(2,2)(2,1)
          \pspolygon[linecolor=white,fillstyle=hlines*,
          hatchangle=0](1,1)(1,2)(0,2)(0,2)
          \pspolygon[linecolor=white,fillstyle=hlines*,
          hatchangle=90](1,1)(0,2)(0,1)
          \pspolygon[linecolor=white,fillstyle=hlines*,
          hatchangle=135](1,1)(1,0)(0,0)(0,1)
          \pspolygon[linecolor=white,fillstyle=hlines*,
          hatchangle=45](1,1)(2,1)(2,0)(1,0)
          \cnode*[fillcolor=black](0,0){3pt}{P1}
          \cnode*[fillcolor=black](1,0){3pt}{P2}
          \cnode*[fillcolor=black](2,0){3pt}{P3}
          \cnode*[fillcolor=black](0,1){3pt}{P4}
          \cnode*[fillcolor=black](1,1){3pt}{P5}
          \cnode*[fillcolor=black](2,1){3pt}{P6}
          \cnode*[fillcolor=black](0,2){3pt}{P7}
          \cnode*[fillcolor=black](1,2){3pt}{P8}
          \cnode*[fillcolor=black](2,2){3pt}{P9}
          \ncline[linewidth=2pt]{->}{P5}{P7}
%          \ncline{-}{P5}{P11}
          \ncline[linewidth=2pt]{->}{P5}{P6}
          \ncline[linewidth=2pt]{->}{P5}{P4}
          \ncline[linewidth=2pt]{->}{P5}{P2}
          \ncline[linewidth=2pt]{->}{P5}{P8}
          \rput(2,0.6){\psframebox*{$1$}}
          \rput(1.3,2){\psframebox*{$2$}}
          \rput(-0.3,2){\psframebox*{$3$}}
          \rput(-0.3,1){\psframebox*{$4$}}
          \rput(0.7,0){\psframebox*{$5$}}
        \end{pspicture}}
      \qquad \qquad  
      \subfigure[$\text{Bl}_{p,q,r}(\mathbb{P}^2$)]{
        \psset{unit=1cm}
        \begin{pspicture}(0,-0.5)(2,2.2)
        \pspolygon[linecolor=white,fillstyle=hlines*,
          hatchangle=135](1,1)(1,2)(2,2)(2,1)
          \pspolygon[linecolor=white,fillstyle=hlines*,
          hatchangle=0](1,1)(1,2)(0,2)
         \pspolygon[linecolor=white,fillstyle=hlines*,
          hatchangle=90](1,1)(0,2)(0,1)
          \pspolygon[linecolor=white,fillstyle=hlines*,
          hatchangle=135](1,1)(1,0)(0,0)(0,1)
          \pspolygon[linecolor=white,fillstyle=hlines*,
          hatchangle=0](1,1)(1,0)(2,0)
          \pspolygon[linecolor=white,fillstyle=hlines*,
          hatchangle=90](1,1)(2,1)(2,0)
          \cnode*[fillcolor=black](0,0){3pt}{P1}
          \cnode*[fillcolor=black](1,0){3pt}{P2}
          \cnode*[fillcolor=black](2,0){3pt}{P3}
          \cnode*[fillcolor=black](0,1){3pt}{P4}
          \cnode*[fillcolor=black](1,1){3pt}{P5}
          \cnode*[fillcolor=black](2,1){3pt}{P6}
          \cnode*[fillcolor=black](0,2){3pt}{P7}
          \cnode*[fillcolor=black](1,2){3pt}{P8}
          \cnode*[fillcolor=black](2,2){3pt}{P9}
          \ncline[linewidth=2pt]{->}{P5}{P7}
          \ncline{-}{P5}{P11}
          \ncline[linewidth=2pt]{->}{P5}{P6}
          \ncline[linewidth=2pt]{->}{P5}{P2}
         \ncline[linewidth=2pt]{->}{P5}{P4}
         \ncline[linewidth=2pt]{->}{P5}{P3}
          \ncline[linewidth=2pt]{->}{P5}{P8}
          \rput(2,0.6){\psframebox*{$1$}}
          \rput(1.3,2){\psframebox*{$2$}}
          \rput(-0.3,2){\psframebox*{$3$}}
          \rput(-0.3,1){\psframebox*{$4$}}
          \rput(0.7,0){\psframebox*{$5$}}
          \rput(1.5,0){\psframebox*{$6$}}
        \end{pspicture}}
    }
    \caption{Fans for the remaining four smooth toric del Pezzo surfaces}\label{fig:Fanosurfaces}
  \end{figure}
 \end{exercise}

 \subsection{Cox's construction}
 \label{sec:Cox}
 Every semiprojective toric variety $X$ is by construction the GIT
quotient of an affine toric variety by an action of $\kk^\times$. A
result of Cox~\cite{Cox} asserts that if $X$ is normal then it can be
constructed as a GIT quotient of affine space $\mathbb{A}^n_\kk$,
though one must replace the $\kk^\times$-action by that of a
diagonalisable group $G_\Lambda$. This construction is ubiquitous in
the study of toric varieties, and it is simple to describe in the
semiprojective case.

 For a normal semiprojective toric variety $X$ of dimension $n$ with
fan $\Sigma$, write $\Sigma(1)$ for the set of \emph{rays}
(one-dimensional cones) in $\Sigma$. We associate
to each ray $\rho \in \Sigma(1)$, a $T_X$-invariant Weil divisor
$D_\rho$ in $X$ as follows.  For each cone $\sigma$ containing $\rho$,
the hyperplane $\rho^\perp$ dual to $\rho$ cuts out a facet of
$\sigma^\vee$. This facet determines a divisor in $U_\sigma$ (given by
the vanishing of the function on $U_\sigma$ defined by the
inward-pointing normal vector), and $D_\rho$ is the divisor in
$X_\Sigma$ obtained by gluing these divisors together.  These divisors
generate the free abelian group $\ZZ^{\Sigma(1)}$ of $T_X$-invariant
(Weil) divisors, and there is an exact sequence
\begin{equation} 
  \label{eq:fundiagram}
  \begin{CD}   
    0 @>>> M @>\div>> \ZZ^{\Sigma(1)} @>\pi >> A_{n-1}(X) @>>> 0
 \end{CD}
\end{equation}
where $M:=\Hom(N,\ZZ)$ is the dual lattice.  Here, $\div(u) =
\sum_{\rho\in \Sigma(1)} \langle u,v_\rho\rangle D_\rho$, where
$v_\rho$ is the primitive lattice point on the cone $\rho$.

 The \emph{total coordinate ring} of $X$ is the
polynomial ring $R_X:= \kk[x_\rho : \rho \in \Sigma(1)]$ obtained as
the semigroup algebra of the semigroup $\NN^{\Sigma(1)}$ of
$T_X$-invariant effective divisors.  For each cone $\sigma \in
\Sigma$, write $\widehat{\sigma}$ for the set of one-dimensional cones
in $\Sigma$ that are not contained in $\sigma$, and consider the
\emph{irrelevant ideal}
 \[
 B_X:=
\Big{(}\prod_{\rho \in \widehat{\sigma}} x_{\rho}\in R_X : \sigma \in
\Sigma(n)\Big{)}
 \]
 whose generators are indexed by the top dimensional cones in
 $\Sigma$.  The map $\pi$ from \eqref{eq:fundiagram} induces a grading
 of $R_X$ by the finitely generated $\ZZ$-module $A_{n-1}(X)$, where
 $\deg(x^u):= \pi(u)$. The diagonalisable group
 $G:=\Hom_\ZZ(A_{n-1}(X),\kk^\times)$ acts on the affine space
 $\mathbb{A}^{\Sigma(1)}_\kk = \Spec(R_X)$ by
 Theorem~\ref{thm:grading}, and the ideal $B_X$ is homogeneous
 with respect to this action.

 \begin{theorem}[Cox~\cite{Cox}]
 \label{thm:Cox}
   Let $X$ be a normal semiprojective toric variety with simplicial fan
   $\Sigma$. Then $X$ is isomorphic to:
 \begin{enumerate}
 \item[\one] the GIT quotient $\mathbb{A}^{\Sigma(1)}_\kk\git_{L}G$ for any
   relatively ample line bundle $L$ on $X$;
 \item[\two] the geometric quotient of $\mathbb{A}^{\Sigma(1)}_\kk\setminus
   \mathbb{V}(B_X)$ by the action of $G$.
 \end{enumerate}
 \end{theorem}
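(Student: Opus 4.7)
The plan is to identify the $A_{n-1}(X)$-grading of $R_X$ with the decomposition coming from sections of torus-invariant line bundles on $X$, and then to apply Theorem~\ref{thm:projectiveGIT} together with the polyhedral description of semiprojective toric varieties. Fix a torus-invariant divisor $D = \sum_{\rho\in\Sigma(1)} a_\rho D_\rho$ of class $\chi = [D] \in A_{n-1}(X)$, and consider the lattice polyhedron $P_D = \{u \in M\otimes_\ZZ \QQ : \langle u, v_\rho\rangle \geq -a_\rho \text{ for all } \rho\}$. The exact sequence~\eqref{eq:fundiagram} shows that $(R_X)_\chi$ has a natural $\kk$-basis indexed by $P_D \cap M$, with $u$ corresponding to the monomial $\prod_\rho x_\rho^{\langle u, v_\rho\rangle + a_\rho}$; this identifies $(R_X)_\chi$ with $H^0(X, \mathcal{O}_X(D))$.

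For part \one, let $L$ be a relatively ample line bundle on $X$ with class $\chi = [D]$. Combining Theorem~\ref{thm:projectiveGIT} with the identification above yields
\[
\mathbb{A}^{\Sigma(1)}_\kk \git_\chi G = \Proj\Big(\bigoplus_{j\geq 0} (R_X)_{j\chi}\Big) \cong \Proj\Big(\bigoplus_{j\geq 0} H^0(X, L^j)\Big),
\]
and since $L$ is relatively ample the inner normal fan $\mathcal{N}(P_D)$ coincides with $\Sigma$, so the right-hand side is $X$ by the construction of $X_P$ from a lattice polyhedron.

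For part \two, one must show that the $\chi$-semistable locus in $\mathbb{A}^{\Sigma(1)}_\kk$ coincides with $\mathbb{A}^{\Sigma(1)}_\kk \setminus \mathbb{V}(B_X)$, and that on this locus the $G$-action has finite stabilisers. Since $D$ is relatively ample, each maximal cone $\sigma \in \Sigma(n)$ determines a vertex $u_\sigma$ of $P_D$ with $\langle u_\sigma, v_\rho\rangle = -a_\rho$ for every $\rho \in \sigma(1)$; the associated semi-invariant monomial $\prod_{\rho\notin \sigma(1)} x_\rho^{\langle u_\sigma, v_\rho\rangle + a_\rho}$ has strictly positive exponent precisely on those $x_\rho$ with $\rho \notin \sigma(1)$, so its non-vanishing locus is $\{x_\rho \neq 0 : \rho \notin \sigma(1)\}$. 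Taking the union over $\sigma \in \Sigma(n)$ produces $\mathbb{A}^{\Sigma(1)}_\kk \setminus \mathbb{V}(B_X)$, and diagram~\eqref{diag:GITcharacter} identifies the corresponding affine chart of the GIT quotient with the toric chart $U_\sigma = \Spec\big(\kk[\sigma^\vee \cap M]\big)$ of $X$. The simplicial hypothesis enters here to ensure that $G$-stabilisers on this open locus are finite, so $\chi$-semistability coincides with $\chi$-stability and the categorical quotient of Theorem~\ref{thm:projectiveGIT} is in fact geometric.

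The main obstacle is the local comparison just invoked: one must verify that the $G$-invariants of the localised ring $R_X[(x^{u_\sigma})^{-1}]$ are precisely the Laurent monomials $t^u$ with $u \in \sigma^\vee\cap M$. This comes down to splitting the sequence~\eqref{eq:fundiagram} at each maximal cone $\sigma$ using the vertex $u_\sigma$ of $P_D$, and invoking the simplicial condition to control the resulting change of basis between the $\NN^{\Sigma(1)}$ and $M \oplus A_{n-1}(X)$ descriptions. Once this dictionary between vertices of $P_D$ and top-dimensional cones of $\Sigma = \mathcal{N}(P_D)$ is set up, both \one\ and \two\ follow by gluing over the affine toric cover supplied by Proposition~\ref{prop:cones}.
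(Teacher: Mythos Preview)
Your proposal is correct and follows essentially the same route as the paper's proof: identify the $\chi$-graded pieces of $R_X$ with $H^0(X,L^{\otimes j})$, so that $\Proj$ of the algebra of semi-invariants is $X$ by relative ampleness, and then observe that the vertex monomials of the polytope $\conv(\NN^{\Sigma(1)}\cap\pi^{-1}(L))$ are supported exactly on the variables $x_\rho$ with $\rho\notin\sigma(1)$, so that the ideal they generate has radical $B_X$. You spell out a couple of points the paper leaves implicit---the explicit description via the polyhedron $P_D$, and the role of the simplicial hypothesis in ensuring finite stabilisers and hence that the quotient is geometric---but the underlying argument is the same.
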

 \begin{proof}
  Since the character group of $G$ is $A_{n-1}(X)$, we may regard the
   line bundle $L$ as a character of $G$ and hence
   $\mathbb{A}^{\Sigma(1)}_\kk\git_{L}G$ is well defined. For any $j\in
   \NN$, the set $\NN^{\Sigma(1)}\cap \pi^{-1}(L^{\otimes j})$
   coincides with the set of effective $T_X$-invariant divisors $D$ in
   $X$ satisfying $\mathscr{O}_X(D)=L^{\otimes j}$. It follows that
   the $L^{\otimes j}$-graded piece of ring $R_X$ is $H^0(X,L^{\otimes
   j})$, and hence
 \[
 \bigoplus_{j\geq 0} H^0(X,L^{\otimes j})\cong \bigoplus_{j\geq 0}
 \big{(}R_X\big{)}_{L^{\otimes j}}.
 \]
 Since $L$ is relatively ample, $\Proj$ of the left hand side is equal
 to $X$. Part \one\ follows since $\Proj$ of the right hand side is
 the GIT quotient $\mathbb{A}^{\Sigma(1)}_\kk\git_{L}G$. To establish the
 isomorphism between \one\ and \two, we assume without loss of
 generality that $L$ is (relatively) very ample. Then for each toric
 chart $U_\sigma$ in the cover of $X$, the corresponding face of the
 polyhedron $\conv(S\cap \pi^{-1}(L))$ defines a section
 $s_\sigma\in H^0(X,L)$ obtained as a product $s_\sigma =
 \prod_{\rho\in \widehat{\sigma}} x_\rho^{m_\rho}\in R_X$ for $m_\rho
 > 0$. The GIT quotient $\mathbb{A}^{\Sigma(1)}_\kk\git_{L}G$ is the
 geometric quotient of the $L$-stable locus
 $\mathbb{A}^{\Sigma(1)}_\kk\setminus \mathbb{V}((s_\sigma : \sigma \in
 \Sigma))$ by the action of $G$. The result follows since the radical
 of the ideal $(s_\sigma : \sigma \in \Sigma)$ is $B_X$.
 \end{proof}

 \begin{example}
 \label{ex:fundiagramF1}
 The toric fan of $X=\mathbb{F}_1$ is shown in
 Figure~\ref{fig:F1}. For $i=1,\dots, 4$, write $D_i$ for the toric
 divisor corresponding to the ray indexed by $i$.  For the standard
 basis $e_1,e_2\in \ZZ^2=M$, we have $\div(e_1)= D_1-D_3$ and
 $\div(e_2)=D_2+D_3-D_4$. If we choose $\mathscr{O}_X(D_1)$ and
 $\mathscr{O}_X(D_4)$ as the basis for $\Pic(X)\cong A_1(X)$, then the
 sequence \eqref{eq:fundiagram} becomes
 \begin{equation} 
  \label{eq:fundiagramF1}
  \begin{CD}   
    0 @>>> \ZZ^2 @>\left[ \text{\scriptsize $\begin{array}{rrrr} 1 & 0
    \\ 0 & 1 \\ -1 & 1 \\ 0 & -1 \end{array}$} \right]>> \ZZ^{4}
    @>\bigl[ \text{\scriptsize $\begin{array}{rrrr} 1 & -1 & 1 & 0 \\
    0 & 1 & 0 & 1 \end{array}$} \bigr]>> \ZZ^2 @>>> 0.
 \end{CD}
\end{equation}
 The line bundle $L=\mathscr{O}_X(D_1+D_4)$ is very ample. The
 monomials shown in Figure~\ref{fig:F1}(a) form both a $\kk$-vector
 space basis for the $L$-graded piece of $R_X=\kk[x_1,x_2,x_3,x_4]$
 and a set of $\kk$-algebra generators of $\bigoplus_{j\geq 0}
 (R_X)_{L^\otimes j}$. The GIT diagram \eqref{diag:GIT} shows that
 these $\kk$-algebra generators cut out the $L$-unstable locus, so
 $\mathbb{A}^{4}_\kk\git_{L}G$ is the categorical quotient of
 $\mathbb{A}^{4}_\kk\setminus \mathbb{V}(J)$ by the action of $G$
 for the ideal $J = (x_1x_3, x_1x_4, x_1^2x_2, x_1x_2x_3, x_2x_3^2)$.
 To compute $\mathbb{V}(J)$ we may of course replace $J$ by its
 radical, namely $\rad(J) = (x_1x_3, x_1x_4, x_1x_2, x_2x_3)$. In this
 case, the result boils down to the observation that $\rad(J)=B_X$.
 \end{example}

 \begin{example}
 \label{exa:res1/3(1,2)}
 Consider the toric variety $X$ whose fan $\Sigma\subset N\otimes_\ZZ
 \QQ$ is shown in Figure~\ref{fig:res1/3(1,2)}(a), where
 $N=\ZZ^2+\ZZ\cdot \frac{1}{3}(1,2)$. It follows from
 Remark~\ref{rem:smooth} that $X$ is smooth. If we list the rays
 $\rho_0,\dots,\rho_3$ as shown then $\mathscr{O}_X(D_0)$ and
 $\mathscr{O}_X(D_3)$ provide a basis for $\Pic(X)\cong A_1(X)$.
 \begin{figure}[!ht]
    \centering \mbox{
\subfigure[The fan $\Sigma$ in $N\otimes_\ZZ \QQ$]{
        \psset{unit=1cm}
        \begin{pspicture}(-1.0,-0.3)(3.5,3.2)
        \pspolygon[linecolor=white,fillstyle=hlines*,
          hatchangle=0](0,0)(0,3)(1.5,3)
          \pspolygon[linecolor=white,fillstyle=hlines*,
          hatchangle=135](0,0)(1.5,3)(3,1.5)
          \pspolygon[linecolor=white,fillstyle=hlines*,
          hatchangle=90](0,0)(3,1.5)(3,0)
          \cnode*(0,0){3pt}{P1}
          \cnode*(0.6667,1.3333){3pt}{P2}
          \cnode*(2,0){3pt}{P3}
          \cnode*(1.3333,0.6667){3pt}{P4}
          \cnode*(0,2){3pt}{P5}
          \cnode*(1.5,3){0pt}{P7}
          \cnode*(3,1.5){0pt}{P8}
          \cnode*(0,3){0pt}{P9}
          \cnode*(3,0){0pt}{P10}
          \ncline[linewidth=1.5pt]{-}{P1}{P7}
          \ncline[linewidth=1.5pt]{-}{P1}{P8}
          \ncline[linewidth=1.5pt]{-}{P1}{P9}
          \ncline[linewidth=1.5pt]{-}{P1}{P10}
          \rput(1.5,0){\psframebox*{$0$}}
          \rput(0.8,0.4){\psframebox*{$1$}}
          \rput(0.4,0.8){\psframebox*{$2$}}
          \rput(0,1.5){\psframebox*{$3$}}
        \end{pspicture}}
    \qquad \qquad  
 \subfigure[Polyhedron $P$]{ \psset{unit=1cm}
        \begin{pspicture}(-1,-0.2)(4,3.2)
         \pspolygon[linecolor=white,fillstyle=hlines*,
          hatchangle=45](0,0)(0,3)(1,3)(2.3,2.3)(3,1)(3,0)
         \cnode*(0,3){0pt}{Q1}
         \cnode*(1,3){3pt}{Q2}
         \cnode*(2.3,2.3){3pt}{Q3}
         \cnode*(3,1){3pt}{Q4}
         \cnode*(3,0){0pt}{Q5}
        \ncline[linewidth=1.5pt]{-}{Q1}{Q2}
        \ncline[linewidth=1.5pt]{-}{Q2}{Q3}
        \ncline[linewidth=1.5pt]{-}{Q3}{Q4}
        \ncline[linewidth=1.5pt]{-}{Q4}{Q5}
         \rput(3.6,1){\psframebox*{$x_2x_3^3$}}
          \rput(1.2,3.4){\psframebox*{$x_0^3x_1$}}
         \rput(2.6,2.6){$x_0x_3$}
          \end{pspicture}}
     }
    \caption{The minimal resolution of the singularity of type $\frac{1}{3}(1,2)$\label{fig:res1/3(1,2)}}
  \end{figure}
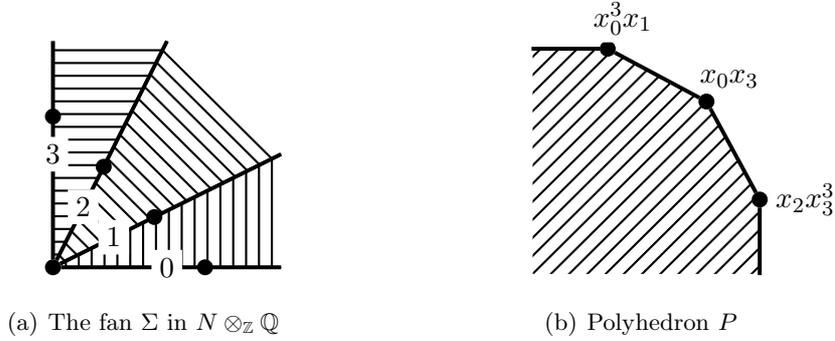
 The lattice $M$ dual to $N$ is the sublattice of $(\ZZ^2)^\vee$
 consisting of the exponents of monomials in $\kk[x,y]$ that are
 invariant under the group action of type $\frac{1}{3}(1,2)$ described
 in Section~\ref{sec:affinetoric}. If we choose the exponents of the
 monomials $x^3, xy$ as a $\ZZ$-basis for $M$ then sequence
 \eqref{eq:fundiagram} becomes
 \begin{equation} 
  \label{eq:fundiagramres1/3(1,2)}
  \begin{CD}   
    0 @>>> \ZZ^2 @>\left[ \text{\scriptsize $\begin{array}{rrrr} 3 & 1
    \\ 2 & 1 \\ 1 & 1 \\ 0 & 1 \end{array}$} \right]>> \ZZ^{4}
    @>\bigl[ \text{\scriptsize $\begin{array}{rrrr} 1 & -2 & 1 & 0 \\
    0 & 1 & -2 & 1 \end{array}$} \bigr]>> \ZZ^2 @>>> 0.
 \end{CD}
\end{equation}
In this case, $L=\mathscr{O}_X(D_0+D_3)$ is very ample, and the
monomials in the total coordinate ring corresponding to the vertices
of the polyhedron $P = \conv\big(\NN^4\cap \pi^{-1}(L)\big)$ appear in
Figure~\ref{fig:res1/3(1,2)}(b). The GIT quotient
$\mathbb{A}_\kk^{4}\git_{L}G$ is the categorical quotient of
$\mathbb{A}_\kk^{4}\setminus \mathbb{V}(x_0^3x_1, x_0x_3, x_2x_3^3)$
by the action of $G$. Again, the radical of the ideal is the
irrelevant ideal $B_X = (x_0x_1, x_0x_3, x_2x_3)$.
 \end{example}

 \begin{exercise}
   Show that the affine quotient $\mathbb{A}_\kk^{4}\git_{0}G \cong
   \Spec\big(\kk[\NN^4\cap M]\big)$ is isomorphic to the cyclic
   quotient singularity $\mathbb{A}^2_\kk/(\ZZ/3)$. Thus, the
   natural morphism $\mathbb{A}_\kk^{4}\git_{L}G\to
   \mathbb{A}_\kk^{4}\git_{0}G$ obtained by variation of GIT quotient
   is the minimal resolution $X\to\mathbb{A}^2_\kk/(\ZZ/3)$.
 \end{exercise}

%\subsection{Birational geometry of toric varieties}
%Extend scalars over $\QQ$ to obtain $\pi\colon \QQ^{\Sigma(1)}\to
%\Pic(X)\otimes_\ZZ \QQ$. The image under $\pi$ of the positive orthant
%in $\QQ^{\Sigma(1)}$ is a cone whose lattice points are the effective
%line bundles on $X$. For $\rho:=\rank \Pic(X)$, the image under $\pi$
%of the $(\rho-1)$-dimensional faces of the positive orthant subdivides
%the effective cone into a union of cones that form a fan.  The
%interiors of the top-dimensional cones in this fan are called
%\emph{chambers} for the GIT construction. One of the chambers is the
%ample cone, whose lattice points are precisely the ample line bundles
%on $X$.

 \section{Moduli spaces of quiver representations}
 \label{sec:quivers}
 We now turn our attention to quivers and to the abelian category of
 quiver representations. Certain toric varieties, known as toric
 quiver varieties, arise naturally as fine moduli spaces for
 representations of a quiver with a fixed dimension vector; the key to
 this construction is King's notion of $\theta$-stability. We extend
 the moduli construction to the case of bound quivers (= quivers with
 relations), and indicate some important examples which illustrate that
 the introduction of relations makes the study of the resulting moduli
 spaces much more delicate.

 \subsection{On quivers, path algebras and the incidence map}
 A \emph{quiver} $Q$ is a directed graph given by a set $Q_0$ of
 vertices, a set $Q_1$ of arrows, and maps $\tail,\head\colon Q_1\to
 Q_0$ that specify the tail and head of each arrow. We assume that
 both $Q_0, Q_1$ are finite sets, and that $Q$ is connected, i.e., the
 graph obtained by forgetting the orientation of arrows in $Q$ is
 connected.  A nontrivial \emph{path} $p$ in $Q$ of length $\ell\in
 \NN$ from vertex $i\in Q_0$ to vertex $j\in Q_0$ is a sequence of
 arrows $a_1\cdots a_\ell$ with $\head(a_{k}) = \tail(a_{k+1})$ for $1
 \leq k < \ell$; we set $\tail(p):= \tail(a_{1})$ and $\head(p):=
 \head(a_\ell)$.  In addition, each vertex $i \in Q_0$ gives a trivial
 path $e_i$ of length zero with $\tail(e_i) = \head(e_i) = i$.  A
 cycle is a nontrivial path with the same head and tail, and $Q$ is
 \emph{acyclic} if it contains no cycles. A \emph{tree} is a connected
 acyclic quiver. 

 Let $\kk$ be a field and $Q$ a finite, connected quiver.  The
 \emph{path algebra} $\kk Q$ of $Q$ is the associative $\kk$-algebra
 whose underlying $\kk$-vector space has as basis the set of paths in
 $Q$, where the product of basis elements equals the basis element
 defined by concatenation of the paths if possible, or zero otherwise.
 The identity element in $\kk Q$ is $\sum_{i\in Q_0} e_i$, where $e_i$
 is the trivial path of length zero at $i\in Q_0$. Note that $\kk Q$
 has finite dimension over $\kk$ if and only if $Q$ contains no
 cycles. The algebra $\kk Q$ is graded by path length, and the
 zero-graded subring $(\kk Q)_0\subset \kk Q$ spanned by the trivial
 paths $e_i$ for $i\in Q_0$ is a semisimple ring in which the elements
 $e_i$ are orthogonal idempotents.

 \begin{example}
 \label{ex:BeilinsonMcKay}
 The \emph{Be{\u\i}linson quiver for $\mathbb{P}^n$} is the quiver with
 $n+1$ vertices $Q_0=\{0,1,\dots,n\}$ that has $n+1$ arrows from the
 $i$th vertex to the $(i+1)$-st vertex for each $i=0,1, \dots, n-1$.
 If we augment $Q$ by adding $n+1$ arrows form the $n$th vertex to the
 0th vertex then we obtain a cyclic quiver, namely the \emph{McKay
 quiver} for the group action of type $\frac{1}{n+1}(1,1,\dots,1)$ on
 $\mathbb{A}^{n+1}_\kk$. For more on these quivers, see
 Examples~\ref{ex:boundBeilinson} and \ref{ex:boundMcKay}.
 \end{example}

 The characteristic functions $\chi_{i} \colon Q_0
 \to \ZZ$ for $i \in Q_0$ and $\chi_{a} \colon Q_1 \to \ZZ$ for $a \in
 Q_1$ form the standard integral bases of the vertex space $\ZZ^{Q_0}$
 and the arrows space $\ZZ^{Q_1}$ respectively.  The \emph{weight
   lattice} of $Q$ is the sublattice of $\ZZ^{Q_0}$ given by
 \[
 \Wt(Q):= \Big{\{}\textstyle{\sum_{i \in Q_0} \theta_i \chi_i \in
 \ZZ^{Q_0} : \sum_{i \in Q_0} \theta_i
 = 0}\Big{\}}.
 \]
 Since $Q$ is connected,
 the incidence map $\inc \colon \ZZ^{Q_1} \to \ZZ^{Q_0}$ defined by
 $\inc(\chi_{a})=\chi_{\head(a)} - \chi_{\tail(a)}$ determines a short exact sequence
\begin{equation} 
  \label{eq:graphses}
  \begin{CD}   
    0 @>>> \Circuit(Q) @>>> \ZZ^{Q_1} @>{\inc}>> \Wt(Q) @>>> 0
  \end{CD}
\end{equation}
where $\Circuit(Q)\subseteq \ZZ^{Q_1}$ denotes that sublattice of
elements $f=\sum_{a\in Q_1} f_a\chi_a\in \ZZ^{Q_1}$ such that
 \[
\sum_{\{a \in Q_1 : \; \tail(a) = i\}} f_a =
\sum_{\{a \in Q_1 : \; \head(a) = i\}} f_a
 \]
 holds for each vertex $i \in Q_0$. The incidence map plays a central
 role in what follows.

 \subsection{The category of quiver representations}
 As is typical in representation theory, our interest lies not just
 with $\kk Q$, but with modules over $\kk Q$. To help us visualise
 these modules we use the terminology of quiver representations. 
 
 A \emph{representation of the quiver} $Q$ consists of a $\kk$-vector
 space $W_i$ for each $i \in Q_0$ and a $\kk$-linear map $w_a \colon
 W_{\tail(a)} \to W_{\head(a)}$ for each $a \in Q_1$. More compactly,
 we write $W = \bigl((W_i)_{i\in Q_0}, (w_a)_{a\in Q_1}\bigr)$. A
 representation is finite dimensional if each vector space $W_i$ has
 finite dimension over $\kk$, and the dimension vector of $W$ is the
 tuple of nonnegative integers $(\dim_\kk W_i)_{i\in Q_0}$. The
 \emph{support} of a representation $W$ is the quiver with vertex set
 $\{i\in Q_0 : W_0\neq 0\}$ and arrow set $\{a\in Q_1 : w_a\neq 0\}$.
 A map between representations $W$ and $W^\prime$ is a family
 $\psi_{i} \colon W_i^{\,} \to W_i^\prime$ for $i \in Q_0$ of
 $\kk$-linear maps that are compatible with the structure maps, that
 is, such that the diagrams
\[\xymatrix@C=1.2em{
W_{\tail(a)}
\ar[d]^{\psi_{\tail(a)}} \ar[rr]^-{w_{a}} && W_{\head(a)} \ar[d]^{\psi_{\head(a)}} \\
W^\prime_{\tail(a)} \ar[rr]^-{w^\prime_{a}} && W^\prime_{\head(a)} \\ }
\] 
commute for all $a\in Q_1$. With composition defined componentwise, we
obtain the category of finite-dimensional representations of $Q$,
denoted $\rep_\kk(Q)$.  

 \begin{proposition}
 \label{prop:isomcats}
 The category $\rep_\kk(Q)$ is equivalent to the category of
 finitely-generated left $\kk Q$-modules. In particular, $\rep_\kk(Q)$
 is an abelian category.
 \end{proposition}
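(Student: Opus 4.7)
The plan is to construct an explicit equivalence of categories by passing back and forth between the data of vector spaces and maps attached to a quiver and the data of a module over $\kk Q$, using the idempotent decomposition $1=\sum_{i\in Q_0} e_i$ in the path algebra.

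First I would define a functor $F\colon \rep_\kk(Q)\to \modkQ$ (where $\modkQ$ denotes left $\kk Q$-modules) by sending a representation $W=\bigl((W_i)_{i\in Q_0},(w_a)_{a\in Q_1}\bigr)$ to the $\kk$-vector space $M=\bigoplus_{i\in Q_0} W_i$, equipped with the $\kk Q$-action determined by letting $e_i$ act as the projection $M\twoheadrightarrow W_i\hookrightarrow M$, letting each arrow $a\in Q_1$ act as $w_a$ on the summand $W_{\tail(a)}$ and as zero elsewhere, and extending multiplicatively to paths and linearly to all of $\kk Q$. A morphism $\psi=(\psi_i)_{i\in Q_0}$ maps to $\bigoplus_i \psi_i$; the compatibility squares in the definition of a morphism of representations are precisely what is needed for this direct sum to be $\kk Q$-linear.

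In the other direction I would define $G\colon \modkQ\to \rep_\kk(Q)$ by sending a left $\kk Q$-module $M$ to the representation with vector spaces $W_i := e_i M$ and, for each arrow $a$, the $\kk$-linear map $w_a\colon e_{\tail(a)}M\to e_{\head(a)}M$ given by $m\mapsto a\cdot m$. This is well defined because $a=e_{\head(a)}\,a\,e_{\tail(a)}$ inside $\kk Q$. A module homomorphism is sent to its restrictions to the summands $e_iM$, which automatically intertwine the arrow actions.

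To see $F$ and $G$ are quasi-inverse, the key input is that the $e_i$ are orthogonal idempotents with $\sum_{i\in Q_0} e_i=1$, so every $\kk Q$-module splits canonically as $M=\bigoplus_{i\in Q_0} e_iM$ as a $\kk$-vector space; this immediately gives a natural isomorphism $F\circ G\cong \id$, while $G\circ F\cong \id$ is even more transparent since $e_j\bigl(\bigoplus_i W_i\bigr)=W_j$. For the final clause, once the equivalence is established it suffices to note that $\modkQ$ is abelian (being a module category), so $\rep_\kk(Q)$ inherits an abelian structure; kernels, cokernels and direct sums are then computed vertex-by-vertex. Finally, a representation $W$ is finite dimensional (meaning each $W_i$ has finite $\kk$-dimension) precisely when the corresponding module is finitely generated: if $\dim_\kk W_i<\infty$ for all $i$, choose a $\kk$-basis of each $W_i$ and these finitely many elements generate $FW$ over $\kk Q$; conversely, finite generation of $M$ forces each $e_iM$ to be finitely generated over $e_i(\kk Q)e_i$, and a short argument using that $e_i\cdot e_i(\kk Q)e_i$-generators together with the arrow actions span $M$ recovers finite dimensionality in our setting (in particular for acyclic $Q$, where $\kk Q$ itself is finite dimensional, the two notions coincide automatically).

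The only step that requires genuine care is the last one, matching finite-dimensional representations with finitely generated modules; the categorical equivalence itself is essentially a bookkeeping exercise with idempotents, and the abelian structure is then automatic.
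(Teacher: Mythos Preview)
Your construction of the mutually inverse functors via the idempotent decomposition $1=\sum_i e_i$ is exactly the argument the paper gives, and your added detail on why $F$ and $G$ are quasi-inverse and why morphisms correspond is fine.

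There is, however, a genuine gap in your final paragraph, and it is not one the paper's proof addresses either. Your claim that a finitely generated left $\kk Q$-module $M$ has each $e_iM$ finite-dimensional is false in general. If $Q$ contains a cycle (for instance a loop at a vertex $i$), then $e_i(\kk Q)e_i$ contains a polynomial algebra, and $\kk Q$ itself is a finitely generated left $\kk Q$-module with $e_i(\kk Q)$ infinite-dimensional over $\kk$. So ``finitely generated'' and ``finite-dimensional'' do \emph{not} coincide for quivers with cycles, and your sketched argument (``a short argument using \ldots\ recovers finite dimensionality'') cannot work. You correctly note that the two notions agree when $Q$ is acyclic, since then $\kk Q$ is finite-dimensional; but the paper explicitly allows cycles (e.g.\ the McKay quiver), so this hedge does not save the claim.

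The honest fix is to match finite-dimensional representations with finite-dimensional (equivalently, finite-length) left $\kk Q$-modules rather than finitely generated ones; your functors $F$ and $G$ restrict perfectly well to these subcategories, and the abelian-category conclusion then follows as you say. The paper's own proof simply does not engage with this finiteness issue.
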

 \begin{proof}
   Let $W = \bigl((W_i)_{i\in Q_0}, (w_a)_{a\in Q_1}\bigr)$ be a
   finite-dimensional representation of $Q$.  Define a $\kk Q$-module
   structure on the $\kk$-vector space $M:= \bigoplus_{i \in Q_0} W_i$
   by extending linearly from
 $$
e_i m = 
\begin{cases}
m & \text{for }m \in W_i, \\
0 & \text{for }m \in W_j \text{ with }j\neq i,
\end{cases} \text{and}\quad
a \cdot m = 
\begin{cases}
w_{a}(m_{\tail(a)}) & \text{for }m \in W_{\tail(a)}, \\
0 & \text{for }m \in W_j \quad\text{with }j\neq \tail(a),
\end{cases}
$$
for $i \in Q_{0}$ and $a \in Q_{1}$.  In the opposite direction,
associate to each left $\kk Q$-module $M$ the quiver representation
with $W_i:= e_i M$ for $i \in Q_0$ and maps $w_{a} \colon W_{\tail(a)}
\to W_{\head(a)}$ for $a\in Q_1$ satisfying $w_a(m) = a\cdot m$.
These operations are inverse to each other, and maps of
representations of $Q$ correspond to $\kk Q$-module homomorphisms.
 \end{proof}
 
 \begin{remark}
 \label{rem:subreps}
   This equivalence gives the notion of a subrepresentation of a
   quiver representation $W$, and the proof shows that quiver
   representations of dimension vector $(\dim_\kk W_i)_{i\in Q_0}$
   determine left $\kk Q$-modules that are isomorphic as $\bigoplus_{i\in
   Q_0} \kk e_i$-modules to $\bigoplus_{i\in Q_0}
   \big(\kk^{\dim_\kk(W_i)}\big) e_i$.
 \end{remark}

 For an algebra $A$, let $\modA$ denote the category of finitely
 generated left $A$-modules. If $A^{\opp}$ denotes the opposite
 algebra where the product satisfies $a\cdot b:= ba$, then $\modAop$
 is the category of finitely generated right $A$-modules.  If
 $Q^{\opp}$ denotes the quiver obtained from $Q$ be reversing the
 orientation of the arrows then $(\kk Q)^{\opp} \cong \kk (Q^{\opp})$.
 
 \subsection{Bound quiver representations}
 \label{sec:boundquiver}
 In most geometric contexts, the algebra of interest is not isomorphic
 to the path algebra of a quiver $Q$, but is isomorphic to the
 quotient of a path algebra by an ideal of relations
 (see Section~\ref{sec:qos} for the geometric construction). 
 
 Formally, a \emph{relation} in a quiver $Q$ (with coefficients in
 $\kk$) is a $\kk$-linear combination of paths of length at least two,
 each with the same head and the same tail.  Any finite set of
 relations $\rel$ in $Q$ determines a two-sided ideal $\langle
 \rel\rangle$ in the algebra $\kk Q$. A \emph{bound quiver}
 $(Q,\rel)$, or equivalently a \emph{quiver with relations}, is a
 quiver $Q$ together with a finite set of relations $\rel$. A
 \emph{representation of the bound quiver} $(Q,\rel)$ is a
 representation of $Q$ where each relation is satisfied in the sense
 that the corresponding $\kk$-linear combination of homomorphisms
 between the vector spaces $(W_{i})_{i\in Q_0}$ should given the zero
 map.  As before, finite-dimensional representations of $(Q,\rel)$
 form a category denoted $\rep_\kk(Q,\rel)$.  The presence of
 relations $\rel$ in a quiver $Q$ leads to a refinement of the
 equivalence of abelian categories constructed in
 Proposition~\ref{prop:isomcats}.

 \begin{proposition}
 \label{prop:boundisomcats}
 The category $\rep_\kk(Q,\rel)$ is equivalent to the category of left
   $\kk Q/\langle \rel \rangle$-modules. 
 \end{proposition}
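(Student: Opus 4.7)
The plan is to reduce everything to Proposition~\ref{prop:isomcats}, identifying $\rep_\kk(Q,\rel)$ as the full subcategory of $\rep_\kk(Q)$ consisting of representations that are annihilated (in the module sense) by the two-sided ideal $\langle \rel \rangle \subset \kk Q$, and then invoking the standard equivalence between modules over a quotient algebra and modules over the original algebra annihilated by the kernel.

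First I would recall that the equivalence $F\colon \rep_\kk(Q)\to \modA$ with $A=\kk Q$ constructed in Proposition~\ref{prop:isomcats} sends $W=\bigl((W_i),(w_a)\bigr)$ to the left $\kk Q$-module $M=\bigoplus_{i\in Q_0}W_i$ with the action defined by $a\cdot m = w_a(m)$ for $m\in W_{\tail(a)}$, extended $\kk$-linearly. The key computation is that for any path $p=a_1a_2\cdots a_\ell$ in $Q$ with $\tail(p)=i$ and $\head(p)=j$, and for any $m\in W_i$, one has $p\cdot m = (w_{a_\ell}\circ\cdots\circ w_{a_2}\circ w_{a_1})(m) \in W_j$, by iterating the definition and using the idempotents $e_i$ to kill contributions from the other summands. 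Consequently, for a relation $r=\sum_k c_k p_k$ with common tail $i$ and common head $j$, the element $r$ acts on $M$ by the $\kk$-linear map $W_i\to W_j$ given by $\sum_k c_k (w_{p_k})$, where $w_{p_k}$ denotes the composition associated to $p_k$.

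It follows directly that $W$ is a representation of $(Q,\rel)$, in the sense defined before the statement, if and only if the associated $\kk Q$-module $M=F(W)$ satisfies $r\cdot M = 0$ for every $r\in \rel$, and hence $\langle \rel\rangle \cdot M = 0$ since $\langle \rel\rangle$ is generated by $\rel$ as a two-sided ideal. This identifies $F$ with an equivalence between $\rep_\kk(Q,\rel)$ and the full subcategory of $\modA$ of modules annihilated by $\langle \rel\rangle$. Now, a standard fact for any two-sided ideal $I$ of an associative algebra $A$ is that a left $A$-module $M$ satisfies $I\cdot M=0$ if and only if the $A$-action factors uniquely through the quotient $A/I$, yielding a left $A/I$-module structure, and conversely every $A/I$-module is an $A$-module annihilated by $I$; moreover $A$-linear maps between such modules are automatically $A/I$-linear. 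Applying this with $A=\kk Q$ and $I=\langle \rel\rangle$ composes with $F$ to give the desired equivalence $\rep_\kk(Q,\rel)\simeq \operatorname{mod}(\kk Q/\langle \rel\rangle)$.

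There is no real obstacle here; the only delicate point is the translation between ``$\kk$-linear combination of compositions $w_{p_k}$ vanishes'' (the representation-theoretic formulation) and ``the element $r\in \kk Q$ annihilates $M$'' (the module-theoretic formulation), which is handled by the computation of $p\cdot m$ above together with the orthogonal idempotent decomposition $1=\sum_{i\in Q_0}e_i$ that separates contributions at distinct tails.
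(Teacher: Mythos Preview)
Your argument is correct: you reduce to Proposition~\ref{prop:isomcats}, verify that under that equivalence the relations $\rel$ act by the corresponding $\kk$-linear combinations of composites, and then invoke the standard bijection between $A/I$-modules and $A$-modules annihilated by $I$. The paper itself does not spell out a proof at all; it simply cites \cite[Theorem~III.1.6]{ARS}. So there is nothing to compare beyond noting that you have supplied the (standard) details that the paper delegates to a reference.
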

 \begin{proof}
 See \cite[Theorem~III.1.6]{ARS} for a proof.
 \end{proof}

 \begin{example}[The bound Be{\u\i}linson quiver]
 \label{ex:boundBeilinson}
 For $n\geq 2$, the Be{\u\i}linson quiver for $\mathbb{P}^n_\kk$ described
 in Example~\ref{ex:BeilinsonMcKay} admits a set of relations that has
 geometric significance. List the $n+1$ arrows from the $i$th vertex
 to the $(i+1)$st vertex as $a_{i,0},\dots, a_{i,n}$ and define
 \[
 \rel=\big\{a_{i,j}a_{(i+1),\ell} - a_{i,\ell}a_{(i+1),j} : 0\leq j<k\leq
 n, 0\leq i\leq n-1\big\}.
 \]
 Since the length two paths $a_{i,j}a_{(i+1),\ell}$ and
 $a_{i,\ell}a_{(i+1),j}$ share tail at vertex $i$ and head at vertex
 $i+1$, the difference defines a relation for $0\leq j<k\leq n$
 and $0\leq i\leq n-1$. The quotient $\kk Q/\langle \rel\rangle$ is
 isomorphic as a $\kk$-algebra to the endomorphism algebra of the
 vector bundle $\bigoplus_{0\leq i\leq n}
 \mathscr{O}_{\mathbb{P}^n_\kk}(i)$ on $\mathbb{P}^n_\kk$.
 \end{example}

 \begin{exercise}
 \label{ex:BeilinsonAlgebra}
   Prove the assertion from Example~\ref{ex:boundBeilinson},
   namely that the quotient algebra $\kk Q/\langle \rel\rangle$ is a
   isomorphic to the endomorphism algebra of $\bigoplus_{0\leq i\leq
     n} \mathscr{O}_{\mathbb{P}^n_\kk}(i)$.
 \end{exercise}

 \begin{example}[The bound McKay quiver]
 \label{ex:boundMcKay}
   Let $G\subset \GL(n,\kk)$ be a finite abelian subgroup. Irreducible
   representations of $G$ are one-dimensional and hence define
   elements of the character group $G^*\!  :=\Hom(G,\kk^\times)$. The given
   representation then decomposes as $\mathbb{A}^n_\kk=\rho_1\oplus
   \dots \oplus\rho_n$ for some $\rho_i\in G^*$.  The \emph{McKay
     quiver} of $G\subset \GL(n,\kk)$ is the quiver with a vertex for
   each character $\rho\in G^*$, and an arrow $a_i^\rho$ from
   $\rho\rho_i:=\rho\otimes\rho_i$ to $\rho$ for each $\rho\in G^*$
   and $1\leq i \leq n$. If we label the arrow
 $a_i^\rho$ by the monomial $x_i$, then the set
 \[
 \rel=\{a_{i}^{\rho\otimes \rho_j}a_j^\rho -
 a_{j}^{\rho\otimes\rho_i}a_i^\rho : \rho\in G^*, 1\leq i,j\leq n\}
 \]
 of relations in $\kk Q$ corresponds to the condition that the
 labelling monomials commute.  We call $(Q,\rel)$ the \emph{bound
   McKay quiver} of the abelian subgroup $G\subset \GL(n,\kk)$.  It is
 well known (see Craw--Maclagan--Thomas~\cite{CMT2}) that the quotient
 algebra $\kk Q/\langle \rel\rangle$ is isomorphic to the \emph{skew
   group algebra} $\kk[x_1,\dots,x_n]*G$, i.e, to the free
 $\kk[x_1,\dots,x_n]$-module with basis $G$, where the ring structure
 is given by
 \[
 (sg) \cdot(s'g'):=s(g \cdot s') gg' 
 \]
 for $s,s'\in \kk[x_1,\dots,x_n]$ and $g,g'\in G$. Under this
 isomorphism, the semisimple subalgebra $\bigoplus_{i\in Q_0}\kk e_i$
 that plays a role in Remark~\ref{rem:subreps} is isomorphic to the
 group algebra $\kk[G]$.
 \end{example}
 
 \begin{remark}
 \label{rem:BSW}
 Since these lectures were presented,
 Bocklandt--Schedler--Wemyss~\cite{BSW} provided an explicit
 description of an ideal of relations $\varrho$ in the path algebra of
 the McKay quiver $Q$ arising from an arbitrary finite subgroup
 $G\subset \SL(n,\kk)$ for which $\kk Q/\langle \rel\rangle$ is Morita
 equivalent to the skew group algebra $\kk[x_1,\dots,x_n]*G$.
 \end{remark}

  \subsection{Toric quiver varieties}
 \label{sec:tqv}
 We now restrict attention to those representations $W$ of a finite,
 connected quiver $Q$ with dimension vector $(1,1,\dots, 1)\in
 \ZZ^{Q_0}$. Such representations correspond via the equivalence of
 categories from Proposition~\ref{prop:isomcats} to finitely generated
 left $\kk Q$-modules that are isomorphic as $\bigoplus_{i\in Q_0} \kk
 e_i$-modules to $\bigoplus_{i\in Q_0} \kk e_i$. For simplicity, we
 sometimes refer to such modules as \emph{$Q$-constellations}.

 Let $W= \bigl((W_i)_{i\in Q_0}, (w_a)_{a\in Q_1}\bigr)$ be a
representation of $Q$.  If we pick a basis for each one-dimensional
vector space $W_i$ ($0\leq i\leq r$) then the representation space is
 \[
 \mathbb{A}_\kk^{Q_1} = \Spec \bigl( \kk[y_a : a\in Q_1]
\bigr) \cong \bigoplus\nolimits_{a \in Q_1} \Hom_{\kk}(W_{\tail(a)},
W_{\head(a)}).
 \]
 The isomorphism classes of such representations are orbits by the
 action of the change of basis group $(\kk^\times)^{Q_0} \cong
 \prod_{i \in Q_0} \operatorname{GL}(W_i)$, where $t=(t_i)_{i\in Q_0}$
 acts on $w=(w_a)_{a\in Q_1}$ as
 \[
 (t \cdot w)_{a} = t_{\head(a)}^{\,} w_{a}
 t_{\tail(a)}^{-1}.
 \]
 The weights of this action are encoded by the incidence map
 \eqref{eq:graphses}, and hence the corresponding $\ZZ^{Q_0}$-grading
 of $\kk[y_a : a\in Q_1]$ is obtained from the semigroup homomorphism
 $\inc\colon \NN^{Q_1}\to \ZZ^{Q_0}$.  Since the image of the incidence map is
 the weight lattice $\Wt(Q)$, the algebra $\kk[y_a : a\in Q_1]$ admits
 only a grading by $\Wt(Q)$. Geometrically, the diagonal
 one-dimensional subtorus of $(\kk^\times)^{Q_0}$ acts trivially,
 leading to a faithful action of the quotient torus $T:= \Hom_{\ZZ}
 \bigl( \Wt(Q),\kk^\times \bigr)\cong (\kk^\times)^{Q_0}/\kk^\times$
 on $\mathbb{A}_\kk^{Q_1}$. The effective characters $\theta\in
 T^*=\Wt(Q)$ are those in the subsemigroup $\inc(\NN^{Q_1})$, and for
 any such character the GIT quotient
 \[
 \mathbb{A}^{Q_1}_\kk\git_\theta T = \Proj\Big{(}\bigoplus_{j\geq 0}
 \kk[y_a : a\in Q_1]_{j\theta}\Big{)}
 \]
 is a normal, semiprojective toric variety by
 Proposition~\ref{prop:toricGIT} and Remark~\ref{rem:normal}.

 \begin{exercise}
   Use sequence \eqref{eq:graphses} to prove that
   $\mathbb{A}^{Q_1}_\kk\git_\theta T$ is projective if and only if
   $Q$ is acyclic.
 \end{exercise}
 
 Recall from Section~\ref{sec:projectiveGIT} that a weight $\theta\in
 \Wt(Q)$ is \emph{generic} if every $\theta$-semistable point of
 $\mathbb{A}^{Q_1}$ is $\theta$-stable. Generalising only slightly the
 work of Hille~\cite{Hille}, we call
 \[
 \mathcal{M}_\theta(Q):= \mathbb{A}^{Q_1}_\kk\git_\theta T
 \]
 a \emph{toric quiver variety} whenever $\theta\in \Wt(Q)$ is
 generic. As Hille remarks, the normal toric variety
 $\mathcal{M}_\theta(Q)$ is smooth, and in our case it is projective
 over $\Spec\big(\kk[\NN^{Q_1}\cap \Circuit(Q)]\big)$.  Moreover, the
 exact sequence \eqref{eq:graphses} arising from the incidence map of
 $Q$ coincides with the sequence \eqref{eq:fundiagram} from the Cox
 construction of $\mathcal{M}_\theta(Q)$ as a toric variety, giving
 $\Wt(Q)=\Pic(\mathcal{M}_\theta(Q))$.

 \begin{example}
 \label{ex:quiver3vertices}
   For the quiver $Q$ from Figure~\ref{fig:quiver3vertices}, consider
   the weight $\vartheta:=(-2,1,1)\in \Wt(Q)$.
  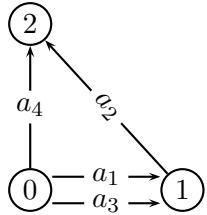
\begin{figure}[!ht]
    \centering
    \mbox{
        \psset{unit=1cm}
        \begin{pspicture}(-0.5,-0.5)(2.5,2.4)
          \cnodeput(0,0){A}{0}
          \cnodeput(2,0){B}{1} 
          \cnodeput(0,2.2){C}{2}
          \psset{nodesep=0pt}
          \ncline[offset=5pt]{->}{A}{B} \lput*{:U}{$a_1$}
          \ncline[offset=-5pt]{->}{A}{B} \lput*{:U}{$a_3$}
          \ncline{->}{A}{C} \lput*{:270}{$a_4$}
          \ncline{<-}{C}{B} \lput*{:U}{$a_2$}
        \end{pspicture}
    }
    \caption{A quiver defining the Hirzebruch surface $\mathbb{F}_{1}$ \label{fig:quiver3vertices}}
  \end{figure}
  The algebra $\bigoplus_{j\geq 0} \kk[y_1,y_2,y_3,y_4]_{j\vartheta}$
  is the semigroup algebra of $S_\vartheta = \bigoplus_{j\geq 0}
  \big(\NN^4\cap \inc^{-1}(j\vartheta)\big)$ which is generated in
  degree $j=1$. The computation of the set $\NN^4\cap
  \inc^{-1}(\vartheta)$ should be familiar from
  Example~\ref{ex:F1fan}. Indeed, the incidence matrix of the quiver
  $Q$ is obtained by adding a row at the top of the mztrix from that
  example, where the new entry in each column is added to ensure that
  the column sum is zero. The parameter $\chi=(1,1)$ from
  Example~\ref{ex:F1fan} corresponds to $\vartheta=(-2,1,1)$ here, and
  hence $\mathcal{M}_\vartheta(Q)\cong \mathbb{F}_1$.
 \end{example}

% \begin{remark}
%   In this example, the $\vartheta$-semistable locus is
%   obtained by removing the locus in $\mathbb{A}^{Q_1}$ cut out out by
%   those monomials supported on the spanning trees in $Q$.
% \end{remark}

 \begin{exercise}
 \label{ex:challenge}
   For the weight $\vartheta=(-2,1,1)$, identify the toric quiver
   variety defined by the quiver $Q$ with incidence matrix
 \[
 \begin{pmatrix}
 -1 & -1 & -1 & 0 & 0 \\ 1 & 1 & 0 & -1 & -1 \\ 0 & 0 & 1 & 1 & 1 
 \end{pmatrix}.
 \]
 [The solution to this exercise does appear later in these notes.]
 \end{exercise}

 \subsection{Moduli spaces of $\theta$-stable representations}
 \label{sec:thetastability}
 The importance of toric quiver varieties $\mathcal{M}_\theta(Q)$
 becomes apparent only after establishing that each represents a
 functor. To describe this construction we first recall King's notion
 of $\theta$-stability. Let $\theta\in T^*\otimes_\ZZ \QQ$ be a
 fractional character that is not necessarily generic. A
 representation $W$ of $Q$ with dimension vector $(1,\dots,1)\in
 \ZZ^{Q_0}$ is said to be \emph{$\theta$-semistable} if $\theta(W)=0$
 and if for every proper, nonzero subrepresentation $W^\prime \subset
 W$ we have $\theta(W^\prime) \geq 0$.  The notion of
 \emph{$\theta$-stability} is obtained by replacing $\geq$ with $>$.
 This translates immediately via the equivalence of categories from
 Proposition~\ref{prop:isomcats} into a notion of
 $\theta$-(semi)stability for $Q$-constellations.

 \begin{exercise}
 \label{ex:generic}
 Let $Q$ be a quiver with a distinguished source, denoted $0\in Q_0$,
 that admits a path to every other vertex in $Q$.  Consider any weight
 $\vartheta\in \Wt(Q)\otimes_\ZZ \QQ$ with $\vartheta_i>0$ for
 $i\neq 0$ (and hence $\vartheta_0<0$).  Show that:
 \begin{enumerate} 
 \item[\one] every $\vartheta$-semistable representation is
 $\vartheta$-stable.
\item[\two] a $Q$-constellation is $\vartheta$-stable if and only if
  it is a cyclic $\kk Q$-module with generator $e_0$.
 \end{enumerate}
 \end{exercise}
 
 An abelian category has finite length if every object $W$ has a
 Jordan-H{\"o}lder filtration
 \[
 0=E_0\subset E_1\subset \cdots\subset E_{n-1}\subset E_n=W
 \]
 such that each factor object $F_i=E_i/E_{i-1}$ is simple. The set of
 $\theta$-semistable representations form the objects in a
 finite-length abelian subcategory of $\rep_\kk(Q)$, where the simple
 objects are precisely the $\theta$-stable representations. Moreover,
 two $\theta$-semistable representations of $Q$ are said to be
 \emph{$S$-equivalent} (with respect to $\theta$) if their
 Jordan--H\"{o}lder filtrations have isomorphic composition factors.

 \begin{proposition}
  \label{prop:King}
 Let $W$ be a representation of $Q$ defining the point $w\in
   \mathbb{A}^{Q_1}_\kk$ in the space of representations and let
   $\theta\in \Wt(Q)\otimes_\ZZ \QQ$. Then:
 \begin{enumerate}
 \item[\one] the point $w\in \mathbb{A}^{Q_1}_\kk$ is
   $\theta$-semistable in the sense of GIT if and only if the
   representation $W$ is $\theta$-semistable in the sense defined above; and
 \item[\two] two $\theta$-semistable points of $\mathbb{A}^{Q_1}_\kk$
   are identified under the map $\pi\colon
   (\mathbb{A}^{Q_1})^{\textrm{ss}}_\theta\to
   \mathbb{A}^{Q_1}\git_\theta T$ from Diagram~\ref{diag:GITcharacter}
   if and only if the corresponding representations are $S$-equivalent
   (w.r.t.\ $\theta$).
 \end{enumerate}
 \end{proposition}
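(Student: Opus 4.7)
The plan is to follow King's original argument \cite{King1}, which invokes the Hilbert--Mumford numerical criterion to translate the GIT stability condition on a point $w \in \mathbb{A}^{Q_1}_\kk$ into a combinatorial condition on one-parameter subgroups of $T$, and then identifies those one-parameter subgroups with subrepresentations of $W$.

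For \one, I first recall that a one-parameter subgroup $\lambda \colon \kk^\times \to T$ lifts to a tuple $(\lambda_i)_{i \in Q_0} \in \ZZ^{Q_0}$, well-defined modulo the diagonal, and it acts on the coordinate $y_a$ of $\mathbb{A}^{Q_1}_\kk$ for $a \in Q_1$ with weight $\lambda_{\head(a)} - \lambda_{\tail(a)}$. Thus $\lim_{t \to 0} \lambda(t) \cdot w$ exists in $\mathbb{A}^{Q_1}_\kk$ if and only if this weight is nonnegative for every arrow $a$ with $w_a \neq 0$. I would then exhibit the correspondence between such limits and subrepresentations of $W$: a subrepresentation $W' \subset W$ with support $S \subseteq Q_0$ yields the one-parameter subgroup $\lambda_S$ defined by $\lambda_i = 1$ for $i \in S$ and $\lambda_i = 0$ otherwise, for which a direct computation gives $\langle \theta, \lambda_S \rangle = \theta(W')$; conversely, the upper level sets $\{i \in Q_0 : \lambda_i \geq c\}$ of an arbitrary such $\lambda$ define a filtration of $W$ by subrepresentations whose subquotients explain the pairing with $\theta$. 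The Hilbert--Mumford criterion, which states that $w$ is $\theta$-semistable in the GIT sense precisely when $\langle \theta, \lambda \rangle \geq 0$ for every nontrivial $\lambda$ whose limit exists, then translates verbatim into the $\theta$-semistability condition on subrepresentations, and analogously for stability with $>$ replacing $\geq$.

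For \two, I would invoke Theorem~\ref{thm:affineGIT} together with Diagram~\eqref{diag:GITcharacter}: two $\theta$-semistable points $w, w'$ have the same image under $\pi$ if and only if their $T$-orbit closures in the semistable locus intersect, and each such fibre contains a unique closed $T$-orbit. The key step is to identify the closed orbits in $(\mathbb{A}^{Q_1}_\kk)^{\textrm{ss}}_\theta$ with \emph{polystable} representations, namely direct sums $\bigoplus_j F_j$ of $\theta$-stable $Q$-constellations. To produce the polystable representative of a given $\theta$-semistable $W$, I would iteratively apply the one-parameter subgroup $\lambda_S$ constructed above, taking $S$ to be the support of a destabilising subrepresentation $W' \subset W$ with $\theta(W') = 0$: the limit $\lim_{t \to 0} \lambda_S(t) \cdot w$ replaces $W$ by the associated graded $W' \oplus W/W'$, and repeating the procedure on each graded piece terminates in a direct sum of $\theta$-stable factors that must coincide with the Jordan--H\"older composition factors of $W$. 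Hence $\overline{T \cdot w}$ contains a canonical polystable representative, and $\pi(w) = \pi(w')$ precisely when these polystable representatives are isomorphic, which is the definition of $S$-equivalence.

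The main obstacle, as is typical in this kind of argument, is the careful bookkeeping around the quotient $T = (\kk^\times)^{Q_0}/\kk^\times$: one must verify that the correspondence between one-parameter subgroups and subrepresentations genuinely descends modulo the diagonal, and that the pairing with $\theta$ is computed consistently using the constraint $\sum_{i \in Q_0} \theta_i = 0$ that defines $\Wt(Q)$. The dimension vector $(1,\dots,1)$ simplifies matters considerably, since every subrepresentation is determined by its support and every Jordan--H\"older factor is automatically a one-dimensional simple module; once this simplification is exploited, the rest follows from standard GIT machinery for the reductive group $T$.
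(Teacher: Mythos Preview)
The paper does not supply its own proof of this proposition: it is stated as King's result (the attribution appears explicitly in the subsequent Theorem~\ref{thm:Kingmoduli}, cited as \cite{King1}) and is used as a black box. Your proposal reproduces King's original argument via the Hilbert--Mumford criterion, and it is correct; the reduction to level sets of a one-parameter subgroup, the identification of the pairing $\langle\theta,\lambda_S\rangle$ with $\theta(W')$, and the degeneration to the associated graded for part~\two\ are exactly the ingredients King uses, and the dimension vector $(1,\dots,1)$ indeed makes the bookkeeping as simple as you indicate.

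One small point worth tightening: your statement of the Hilbert--Mumford criterion hides a sign convention that deserves to be made explicit. With the $T$-action on $\mathbb{A}^{Q_1}_\kk$ given by $(t\cdot w)_a = t_{\head(a)}w_a t_{\tail(a)}^{-1}$ and with $\lambda_S$ defined by $\lambda_i=1$ for $i\in S$, the limit $\lim_{t\to 0}\lambda_S(t)\cdot w$ exists precisely when $S$ supports a \emph{subrepresentation}, and the relevant pairing with the linearisation by $\theta$ is $\sum_{i\in S}\theta_i=\theta(W')$; but depending on whether one uses King's convention or Mumford's, the inequality in the numerical criterion may appear as $\langle\theta,\lambda\rangle\geq 0$ or $\langle\theta,\lambda\rangle\leq 0$. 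You flag this in your final paragraph, which is appropriate, but in a written proof you should fix the convention once and verify it on a single example (e.g.\ the subrepresentation supported at a single sink vertex) rather than leave it implicit.
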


 Combining this result with Theorem~\ref{thm:projectiveGIT} gives the
 following.

 \begin{corollary}
 The toric variety $\mathcal{M}_\theta(Q)$ parametrises
 $S$-equivalence classes of $\theta$-semistable representations of
 $Q$. If $\theta$ is generic then $\mathcal{M}_\theta(Q)$ parametrises
 isomorphism classes of $\theta$-stable representations of $Q$.
 \end{corollary}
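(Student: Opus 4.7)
The plan is to deduce the corollary as a near-immediate consequence of Theorem~\ref{thm:projectiveGIT} (which interprets points of a GIT quotient as equivalence classes of $G$-orbits) together with Proposition~\ref{prop:King} (which translates the GIT notions of semistability and orbit-closure identification into the representation-theoretic notions of $\theta$-semistability and $S$-equivalence). By construction, $\mathcal{M}_\theta(Q) = \mathbb{A}^{Q_1}_\kk\git_\theta T$, so the work reduces to transporting the content of Theorem~\ref{thm:projectiveGIT} across the equivalence of categories from Proposition~\ref{prop:isomcats} and checking that the two notions of stability agree on the nose.

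For the first assertion, I would argue as follows. Theorem~\ref{thm:projectiveGIT} says that $\mathcal{M}_\theta(Q)$ parametrises equivalence classes of $T$-orbit-closures of $\theta$-semistable points in the GIT sense, where two semistable points $w,w'\in \mathbb{A}^{Q_1}_\kk$ are identified if and only if $\overline{T\cdot w}\cap \overline{T\cdot w'}\neq \emptyset$. By Proposition~\ref{prop:King}\one, a point $w\in \mathbb{A}^{Q_1}_\kk$ is GIT $\theta$-semistable precisely when the corresponding quiver representation $W$ is $\theta$-semistable in the representation-theoretic sense. By Proposition~\ref{prop:King}\two, the GIT identification of $\theta$-semistable points coincides exactly with $S$-equivalence of the corresponding representations. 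Combining these two translations gives the first claim.

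For the second assertion, suppose $\theta$ is generic, so every $\theta$-semistable point in $\mathbb{A}^{Q_1}_\kk$ is $\theta$-stable. Theorem~\ref{thm:projectiveGIT} then asserts that $\mathcal{M}_\theta(Q)$ is a geometric quotient parametrising $T$-orbits of $\theta$-stable points. Under Proposition~\ref{prop:isomcats}, $T$-orbits in the representation space correspond bijectively with isomorphism classes of representations of dimension vector $(1,\dots,1)$, so it remains only to verify that every $\theta$-semistable representation is $\theta$-stable (which is immediate from Proposition~\ref{prop:King}\one\ and genericity) and that $S$-equivalence among $\theta$-stable representations reduces to isomorphism. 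The latter is a formal consequence of the fact that the $\theta$-semistable representations form a finite-length abelian subcategory of $\rep_\kk(Q)$ whose simple objects are the $\theta$-stable representations: the Jordan--H\"older filtration of a $\theta$-stable $W$ in this subcategory is $0\subset W$, so its unique composition factor is $W$ itself, and $S$-equivalence classes of $\theta$-stables are singletons.

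The genuine technical content is entirely packaged in Proposition~\ref{prop:King}, whose proof is the actual obstacle and relies on King's analysis in \cite{King1} of the Hilbert--Mumford criterion for the $T$-action on $\mathbb{A}^{Q_1}_\kk$: the 1-parameter subgroups of $T$ that test semistability correspond to $\ZZ$-gradings of the vertex spaces $(W_i)_{i\in Q_0}$, and these gradings in turn correspond to filtrations by subrepresentations, with the sign of the Mumford numerical function on a 1-PS matching the sign of $\theta$ evaluated on the corresponding subrepresentation. Assuming this identification, the corollary above is a straightforward bookkeeping exercise between GIT language and quiver language.
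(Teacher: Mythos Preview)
Your proposal is correct and matches the paper's approach exactly: the paper simply states that combining Proposition~\ref{prop:King} with Theorem~\ref{thm:projectiveGIT} gives the corollary, without spelling out the details. Your expansion of this into the explicit translation between GIT orbit-closure equivalence and $S$-equivalence, together with the observation that $S$-equivalence collapses to isomorphism on $\theta$-stables, is precisely the intended argument.
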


 For the moduli description of $\mathcal{M}_\theta(Q)$, we choose our
 favourite vertex of the quiver $Q$ and denote it $0\in Q_0$; at
 present this looks arbitrary, but we shall see in
 Section~\ref{sec:qos} that a natural candidate often presents
 itself. Consider the functor $\rep_\theta(Q)(-)$ that assigns to each
 connected scheme $B$ the set of locally-free sheaves on $B$ of the
 form
 \[
 \left\{\bigoplus_{i \in Q_0} \mathscr{W}_i : \begin{array}{c}
 \rank(\mathscr{W}_i)=1 \;\;\forall\; i\in Q_0, \text{ with
 }\mathscr{W}_0\cong \mathscr{O}_B\\ \exists \text{ morphism
 }\mathscr{W}_{\tail(a)}\to \mathscr{W}_{\head(a)}\;\;\forall\; a\in
 Q_1 \\ \text{each fibre }\bigoplus_{i\in Q_0}W_i \text{ is
 }\theta\text{-semistable (up to
 }S\text{-equiv)}\end{array}\right\}/\sim_{\text{isom}};
 \]
 the requirement on the existence of the morphisms
 $\mathscr{W}_{\tail(a)}\to \mathscr{W}_{\head(a)}$ for all $a\in Q_1$ is
 equivalent to the existence of a $\kk$-algebra homomorphism $\kk Q\to
 \End\big(\bigoplus_{i \in Q_0} \mathscr{W}_i\big)$.

\begin{theorem}[King~\cite{King1}]
 \label{thm:Kingmoduli}
 If $\theta$ is generic then $\mathcal{M}_\theta(Q)$ represents the
 functor $\rep_\theta(Q)$, i.e., $\mathcal{M}_\theta(Q)$ is the
 \emph{fine moduli space of $\theta$-stable representations of
 $Q$}. In particular, $\mathcal{M}_\theta(Q)$ carries 
 \begin{enumerate}
 \item[\one] a tautological
 locally free sheaf $\bigoplus_{i \in Q_0} \mathscr{W}_i$ with $\rank(\mathscr{W}_i)=1$ and $\mathscr{W}_0\cong
\mathscr{O}_{\mathcal{M}_\theta(Q)}$; and 
 \item[\two] a $\kk$-algebra homomorphism $\kk Q\to
 \End\big(\bigoplus_{i \in Q_0} \mathscr{W}_i\big)$.
 \end{enumerate}
\end{theorem}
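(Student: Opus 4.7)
The plan is to build a tautological family on the representation space $\mathbb{A}^{Q_1}_\kk$, descend it to the GIT quotient using the genericity of $\theta$, and then verify the universal property by reversing the construction locally on an arbitrary base. I would begin on $\mathbb{A}^{Q_1}_\kk$ itself: assign to each vertex $i\in Q_0$ the trivial line bundle $\mathscr{O}_{\mathbb{A}^{Q_1}_\kk}$, linearised by the character $\chi_i$ of $(\kk^\times)^{Q_0}$, and to each arrow $a\in Q_1$ the tautological section $y_a$ of the appropriate twist, viewed as a homomorphism between the $\tail(a)$ and $\head(a)$ summands. This data assembles into a $(\kk^\times)^{Q_0}$-equivariant locally free sheaf $\bigoplus_{i\in Q_0}\mathscr{O}\otimes \chi_i$ together with a $\kk$-algebra homomorphism $\kk Q\to \End\bigl(\bigoplus_{i\in Q_0}\mathscr{O}\otimes \chi_i\bigr)$.

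Next, because the diagonal one-parameter subgroup $\kk^\times\subset (\kk^\times)^{Q_0}$ acts trivially on $\mathbb{A}^{Q_1}_\kk$, the summand $\mathscr{O}\otimes\chi_0$ descends only as a twist by the character $\chi_0$; to rigidify, I would tensor the whole family by $\chi_0^{-1}$, so that the zeroth summand becomes the trivial equivariant sheaf while the others become $\mathscr{O}\otimes(\chi_i-\chi_0)$ with weights in $\Wt(Q)$. Restricting to the $\theta$-semistable open subset $(\mathbb{A}^{Q_1}_\kk)^{\textrm{ss}}_\theta$, genericity of $\theta$ forces every semistable point to be stable (Proposition~\ref{prop:King}), so the quotient $\pi\colon (\mathbb{A}^{Q_1}_\kk)^{\textrm{ss}}_\theta\to \mathcal{M}_\theta(Q)$ is a geometric quotient and a principal $T$-bundle. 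Standard equivariant descent then produces line bundles $\mathscr{W}_i$ on $\mathcal{M}_\theta(Q)$ with $\mathscr{W}_0\cong \mathscr{O}_{\mathcal{M}_\theta(Q)}$, together with a descended algebra homomorphism $\kk Q\to \End\bigl(\bigoplus_{i\in Q_0}\mathscr{W}_i\bigr)$, establishing the two enumerated items.

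For the universal property I would argue as follows. Given a connected scheme $B$ and a family $\bigoplus \mathscr{W}^B_i$ in $\rep_\theta(Q)(B)$ with $\mathscr{W}^B_0\cong \mathscr{O}_B$, choose a Zariski open cover on which each $\mathscr{W}^B_i$ is trivial; a choice of trivialisations turns the structure maps $\mathscr{W}^B_{\tail(a)}\to \mathscr{W}^B_{\head(a)}$ into scalar functions $y_a\in \mathscr{O}(U)$ and therefore gives a morphism $U\to \mathbb{A}^{Q_1}_\kk$. The fibrewise $\theta$-stability hypothesis together with Proposition~\ref{prop:King}\one\ ensures that this morphism factors through the stable locus, and composing with $\pi$ yields a morphism $U\to \mathcal{M}_\theta(Q)$. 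Two different trivialisations over $U$ differ by an element of $T$ acting on $\mathbb{A}^{Q_1}_\kk$ (after the rigidification $\mathscr{W}^B_0\cong \mathscr{O}_B$ has killed the diagonal $\kk^\times$), so the induced morphism to the quotient is independent of the trivialisation and the local morphisms glue to a global $\varphi_B\colon B\to \mathcal{M}_\theta(Q)$. A direct check shows $\varphi_B^*(\bigoplus \mathscr{W}_i)$ recovers the given family up to isomorphism, and uniqueness follows from the fact that any morphism $B\to \mathcal{M}_\theta(Q)$ pulling back the tautological data must, locally, factor through a lift to the stable locus agreeing with the trivialisations.

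The step I expect to be the main obstacle is the rigidification and descent: one must track carefully how the trivial character $\chi_0$ is used to kill the diagonal $\kk^\times$ so that $\mathscr{W}_0$ descends as the structure sheaf and the remaining $\mathscr{W}_i$ descend as genuine line bundles with classes $\chi_i-\chi_0\in \Wt(Q)=\Pic(\mathcal{M}_\theta(Q))$. Once this bookkeeping is in place, the rest is essentially formal, the only subtlety being that the local morphisms $U\to \mathbb{A}^{Q_1}_\kk$ depend on trivialisations but become canonical after passing to the $T$-quotient; this is exactly where the absence of strictly semistable points (hence a \emph{geometric} rather than merely categorical quotient) is essential.
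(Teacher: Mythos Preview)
Your approach is essentially the same as the paper's sketch: build the tautological family on $\mathbb{A}^{Q_1}_\kk$, restrict to the $\theta$-stable locus, and descend under the geometric quotient, with the key point being that the non-faithful action of $(\kk^\times)^{Q_0}$ is handled by rigidifying at the distinguished vertex $0$ so that $\mathscr{W}_0\cong\mathscr{O}$. Your write-up is in fact more detailed than the paper's (which omits the verification of the universal property), and your identification of the rigidification/descent step as the crux matches the paper's emphasis exactly.
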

\begin{sketch}
 Since $\theta$ is generic, the $S$-equivalence classes of
 $\theta$-semistable representations coincide with the isomorphism
 classes of $\theta$-stable representations. Write $\bigoplus_{i\in
 Q_0}\mathscr{O}_{\mathbb{A}^{Q_1}}$ for the universal locally free
 sheaf on the representation space $\mathbb{A}^{Q_1}_\kk$ whose fibre
 at a point is the corresponding representation of $Q$. One expects
 that the restriction of $\bigoplus_{i\in
 Q_0}\mathscr{O}_{\mathbb{A}^{Q_1}}$ to the $\theta$-stable locus in
 $\mathbb{A}^{Q_1}_\kk$ will descend under the geometric quotient map
 $(\mathbb{A}^{Q_1}_\kk)^{\textrm{s}}_\theta\to
 (\mathbb{A}^{Q_1}_\kk)^{\textrm{s}}_\theta/T$ to give the required
 tautological sheaf. This is not immediate since the change of basis
 group $(\kk^\times)^{Q_0}$ does not act faithfully on
 $\mathbb{A}^{Q_1}_\kk$. However, our choice of the distinguished
 vertex $0\in Q_0$ and our condition $\mathscr{W}_0\cong
 \mathscr{O}_B$ on the functor $\rep_\theta(Q)(-)$ removes this
 ambiguity, at the expense of imposing the restriction
 $\mathscr{W}_0\cong \mathscr{O}_{\mathcal{M}_\theta(Q)}$ on the
 resulting tautological bundle.
\end{sketch}

 \subsection{Moduli for bound quiver representations}
 \label{sec:moduliboundquiver}
 The GIT and moduli constructions for representations of $Q$ lead
naturally to similar constructions for representations of a bound
quiver $(Q,\rel)$ as follows. 

 Let $(Q,\rel)$ be a bound quiver (see Section~\ref{sec:boundquiver}).
The map sending a path $p = a_1 \dotsb a_j$ to the monomial $y_{a_1}
\dotsb y_{a_j} \in \kk[y_a : a\in Q_1]$ extends to a $\kk$-linear map
from $\kk Q$ to $\kk[y_a : a\in Q_1]$, and let $\Irel$ denote the ideal
in $\kk[y_a : a\in Q_1]$ generated by the image of $\langle \rel\rangle$
under this map.  A point in $\mathbb{A}_\kk^{Q_1}$ corresponds to a
representation of the bound quiver $(Q,\rel)$ if and only it lies in the
subscheme $\mathbb{V}(\Irel)$ cut out by $\Irel$. Since $\rel$ consists of
$\kk$-linear combinations of paths with the same head and tail, the
ideal $\Irel$ is homogeneous with respect to the $\Wt(Q)$-grading of
$\kk[y_a : a\in Q_1]$ and hence the subscheme $\mathbb{V}(\Irel)$ is
invariant under the action of $T$. For $\theta \in
\Wt(Q)\otimes_{\ZZ}\QQ$, let $(\kk[y_a : a\in Q_1]/\Irel)_{\theta}$
denote the $\theta$-graded piece. Then
 \begin{equation}
 \label{eqn:quiverGIT}
 \mathbb{V}(\Irel) \git_{\theta}T = \Proj \Big(\bigoplus_{j \in \NN} \bigl( \kk[y_a : a\in Q_1]/\Irel \bigr)_{j\theta} \Big)
 \end{equation}
 is the categorical quotient of the open subscheme
 $\mathbb{V}(\Irel)^{\text{ss}}_\theta \subseteq \mathbb{V}(\Irel)$
 parametrising $\theta$-semistable representations of $(Q,\rel)$.

 \begin{example}
 \label{exa:reducible}
 The ideal of relations $\Irel\subset \kk[y_a : a\in Q_1]$ need not be
 prime, so the scheme $\mathbb{V}(\Irel)$ may be reducible. For
 example, let $(Q,\rel)$ be the bound McKay quiver (see
 Example~\ref{ex:boundMcKay}) for the finite group action of type
 $\frac{1}{7}(1,2)$. Denote the arrows by $a_i^{\rho_j}$ for $1\leq
 i\leq 2$ and $0\leq j\leq 6$, so the representation space
 $\mathbb{A}^{14}_\kk$ has coordinate ring $\kk[y_i^{\rho_j}:1\leq
 i\leq 2, 0\leq j\leq 6]$. The set of relations $\rel$ from
 Example~\ref{ex:boundMcKay} defines the ideal
 \[ 
\Irel=\big\langle z_2^{\rho_j}z_1^{\rho_{j-1}} -
z_1^{\rho_{j+1}}z_2^{\rho_{j-1}} : 0\leq j\leq 6 \big\rangle,
\]
 where addition on the indices of exponents is taken modulo 7. This
 ideal has 8 associated primes, so $\mathbb{V}(\Irel)$ has 8
 irreducible components.
 \end{example}

 \begin{remark}
   The brief introduction to GIT in Section~\ref{sec:GIT} constructed
   quotient spaces arising from a diagonalisable group action on an
   affine variety. However, if the scheme $\mathbb{V}(\Irel)$ is
   reducible then the GIT quotient $\mathbb{V}(\Irel) \git_{\theta}T$
   may also be reducible.  This highlights a crucial difference
   between toric quiver varieties $\mathbb{A}_\kk^{Q_1}\git_\theta T$
   and the subschemes $\mathbb{V}(\Irel) \git_{\theta}T$. The former
   are normal semiprojective toric varieties, but the latter need not
   a priori be toric as Example~\ref{exa:reducible} shows. In fact,
   since the ideal $\Irel\subset \kk[y_a : a\in Q_1]$ is generated by
   binomials, Eisenbud--Sturmfels~\cite{EisenbudSturmfels} shows that
   each irreducible component of the scheme $\mathbb{V}(\Irel)$ is an
   affine toric variety, so each irreducible component of the GIT
   quotient $\mathbb{V}(\Irel)\git_\theta T$ is a semiprojective toric
   variety.  These components need not be normal as
   Example~\ref{exa:notnormal} demonstrates.
 \end{remark}

 If $\theta$ is generic then we write
 \[
 \mathcal{M}_\theta(Q,\rel):=
 \mathbb{V}(\Irel) \git_{\theta}T
 \]
 for the geometric quotient of the open subscheme
 $\mathbb{V}(\Irel)^{\text{s}}_\theta \subseteq \mathbb{V}(\Irel)$
 parametrising $\theta$-stable representations of $(Q,\rel)$ by the
 action of the torus $T=\Hom_\ZZ(\Wt(Q),\kk^\times)$. The
 analogue of Theorem~\ref{thm:Kingmoduli} holds for bound quiver
 representations, making $\mathcal{M}_\theta(Q,\rel)$ the \emph{fine
 moduli space of $\theta$-stable representations of $(Q,\rel)$}.

 \begin{exercise}
 \label{ex:boundfunctor}
 Establish the bound quiver analogue of Theorem~\ref{thm:Kingmoduli}
 by introducing a functor $\rep_\kk(Q,\varrho)$, and hence give the
 fine moduli space interpretation of $\mathcal{M}_\theta(Q,\rel)$.
 \end{exercise}

\begin{example}
 \label{exa:notnormal}
 Let $(Q,\rel)$ denote the bound McKay quiver associated to a finite
 abelian subgroup $G\subset \GL(n,\kk)$ as constructed in
 Example~\ref{ex:boundMcKay}, so $\kk Q/\langle \rel\rangle$ is
 isomorphic to the skew group algebra $\kk[x_1,\dots,x_n]*G$ and
 $\bigoplus_{i\in Q_0} \kk e_i$ is isomorphic to the group algebra
 $\kk[G]$. Write $0\in Q_0$ for the vertex arising from the trivial
 representation of $G$, and let $\vartheta\in \Wt(Q)$ be any weight
 satisfying $\vartheta_i>0$ for $i\neq 0$.
 Exercises~\ref{ex:generic}\one\ and \ref{ex:boundfunctor} show that
 $\mathcal{M}_\vartheta(Q,\rel)$ is the fine moduli space of
 $\vartheta$-stable $\kk[x_1,\dots,x_n]*G$-modules that are isomorphic
 as $\kk[G]$-modules to $\kk[G]$; in this context, such modules are
 called $\vartheta$-stable \emph{$G$-constellations} (see
 Craw~\cite{Crawthesis}). Craw--Maclagan--Thomas~\cite[Examples~4.12
 and~5.7]{CMT2} show that the scheme $\mathcal{M}_\vartheta(Q,\rel)$
 is:
 \begin{enumerate}
 \item[\one] reducible for the subgroup of $\GL(3,\kk)$ that defines
   the action of type $\frac{1}{14}(1,9,11)$; and
 \item[\two] not normal for a subgroup of $\GL(6,\kk)$ that is
   isomorphic to $(\ZZ/5)^{4}$.
 \end{enumerate}
 \end{example}

 \begin{remark}[The $G$-Hilbert scheme]
 \label{rem:G-Hilbert}
 For the bound McKay quiver $(Q,\rel)$ associated to a finite abelian
 subgroup $G\subset \GL(n,\kk)$, and for any weight $\vartheta\in
 \Wt(Q)$ satisfying $\vartheta_i>0$ for $i\neq 0$, the scheme
 $\mathcal{M}_\vartheta(Q,\rel)$ represents the same functor as the
 $G$-Hilbert scheme $\ghilb(\mathbb{A}^n_\kk)$.  Indeed, a
 $\kk[x_1,\dots,x_n]$-module $M$ is $G$-equivariant if and only if it
 carries a $G$-action such that
 \[
 g \cdot (sm)=(g \cdot s) (g \cdot m)
 \]
 for $g \in G, s \in \kk[x_1,\dots,x_n]$ and $m \in M$, from which it
 follows that left $\kk[x_1,\dots,x_n]*G$-modules are precisely
 $G$-equivariant $\kk[x_1,\dots,x_n]$-modules (compare also
 Lemma~\ref{lem:abelianequivs}). If we consider only
 $G$-constellations, then Exercise~\ref{ex:generic}\two\ asserts that
 a $G$-constellation is $\vartheta$-stable if and only it is a cyclic
 $\kk[x_1,\dots,x_n]*G$-module with generator the trivial
 representation. Combining these observations, we see that
 $\mathcal{M}_\vartheta(Q,\rel)$ is isomorphic to the
 \emph{$G$-Hilbert scheme}
  \[
 \ghilb(\mathbb{A}^n_\kk):= \Big\{G\text{-equivariant quotient modules
 }M = \kk[x_1,\dots,x_n]/I \text{ with }
 M \cong_{\kk[G]}\kk[G]\Big\}
 \]
 that parametrises (coordinate rings of) $G$-homogeneous ideals
 $I\subset \kk[x_1,\dots,x_n]$ for which the quotient module
 $\kk[x_1,\dots,x_n]/I$ is isomorphic as a $\kk[G]$-module to
 $\kk[G]$. Moreover, the universal bundles for each moduli space are
 isomorphic as $G$-equivariant locally free sheaves. This moduli space
 (and its nonabelian analogue) plays the key role in
 Section~\ref{sec:McKay}.
\end{remark}

 \section{Noncommutative construction of toric varieties}
 \label{sec:noncommutative}
 This section demonstrates how quivers arise naturally in the study of
 normal toric varieties, and provides a quiver-theoretic (and hence
 noncommutative) construction of normal semiprojective toric
 varieties.  Following Craw--Smith~\cite{CrawSmith}, we extend the
 classical notion of the linear series $\vert L\vert $ of a single
 line bundle $L$, obtaining the multilinear series $\vert
 \mathcal{L}\vert $ of a collection of line bundles
 $\mathcal{L}=(\mathscr{O}_X,L_1,\dots, L_r)$ on a normal
 semiprojective toric variety $X$. In particular, we show that every
 such toric variety is a fine moduli space of bound quiver
 representations for many sequences of line bundles.

 \subsection{Quivers of sections}
 \label{sec:qos}
 Let $X$ be a normal semiprojective toric variety with fan $\Sigma$
 and dense torus $T_X$.  For $r\geq 1$, let $\mathcal{L}:=
 (L_0,\dotsc, L_r)$ be a sequence of distinct, basepoint-free line
 bundles on $X$ with $L_0:=\mathscr{O}_X$. A $T_X$-invariant section
 $s$ of $L_j^{\;} \otimes L_{i}^{-1}$ is \emph{indecomposable} if the
 divisor of zeros $\div(s)$ cannot be expressed as $\div(s') +
 \div(s'')$ where $s'$ and $s''$ are nonzero $T_X$-invariant sections
 of $L_\ell^{\;} \otimes L_{i}^{-1}$ and $L_j^{\;} \otimes
 L_{\ell}^{-1}$ respectively, and $0 \leq \ell \leq r$.  The complete
 \emph{quiver of sections} of the sequence $\mathcal{L}$ is the quiver $Q$
 whose vertices $Q_0 = \{ 0, \dotsc, r \}$ correspond to the line
 bundles in $\mathcal{L}$, and the arrows from $i$ to $j$ correspond
 to the indecomposable $T_X$-invariant elements of $H^0(X,L_j^{\;}
 \otimes L_{i}^{-1})=\Hom(L_i,L_j)$. 

 \begin{lemma}
   Let $Q$ be a complete quiver of sections on $X$. For each $i\in
   Q_0$, there is a path in $Q$ from vertex 0 to vertex $i$. Moreover,
   $Q$ is acyclic if and only if $X$ is projective.
 \end{lemma}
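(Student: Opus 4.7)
I will interpret arrows and paths in the complete quiver $Q$ of sections as (indecomposable) factorisations of $T_X$-invariant sections of toric line bundles on $X$, so that a path from $i$ to $j$ in $Q$ corresponds to a factorisation of a $T_X$-invariant section of $L_j \otimes L_i^{-1}$ into indecomposable pieces.

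For (i), since each $L_i$ is toric, $H^0(X, L_i)$ has a basis of $T_X$-semi-invariant sections, so basepoint-freeness of $L_i$ yields a nonzero $T_X$-invariant section $s \in H^0(X, L_i) = \Hom(L_0, L_i)$. I would decompose $s$ into indecomposables by induction on a strictly decreasing invariant of the effective divisor $\div(s)$ (for instance, the sum of its multiplicities along the $T_X$-invariant prime divisors $D_\rho$): if $s$ is already indecomposable we obtain an arrow $0 \to i$, and otherwise a factorisation $s = s' \cdot s''$ through some vertex $\ell$ produces two $T_X$-invariant sections with strictly smaller divisors, which by induction correspond to paths $0 \to \ell$ and $\ell \to i$ whose concatenation gives the required path.

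For the forward implication in (ii), suppose $X$ is projective and, for contradiction, that $Q$ contains a cycle $i_0 \to i_1 \to \cdots \to i_k = i_0$ with $k \geq 1$. Composing the corresponding arrow sections yields a $T_X$-invariant element of $H^0(X, L_{i_0} \otimes L_{i_0}^{-1}) = H^0(X, \mathscr{O}_X) = \kk$, which is therefore a constant and has zero divisor. But each arrow contributes a nonzero effective divisor: an arrow between distinct vertices corresponds to a section of a nontrivial line bundle on projective $X$, which must vanish somewhere, while loops $i_j = i_{j+1}$ are impossible on projective $X$ because the only $T_X$-invariant sections of $\mathscr{O}_X$ are constants, which are trivially decomposable as $c = c \cdot 1$. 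Summing these nonzero effective divisors gives the (nonzero) divisor of the composition, contradicting vanishing. Conversely, if $X$ is semiprojective but not projective, then Remark~\ref{rem:chitrivial} yields $H^0(X, \mathscr{O}_X) \supsetneq \kk$, so there exists a non-constant $T_X$-invariant $f \in \Hom(L_0, L_0)$ with nonzero effective divisor; applying the decomposition procedure from (i) to $f$ realises it as a composition of indecomposable arrows $0 \to j_1 \to \cdots \to j_{k-1} \to 0$ in $Q$, that is, a cycle.

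The main obstacle will be making the inductive decomposition rigorous by ruling out trivial factorisations $s = c \cdot s$ with $c$ a nonzero constant section, which would not reduce the divisor. The key observation is that any such constant factor $s' \in H^0(X, L_\ell \otimes L_i^{-1})$ forces $L_\ell \cong L_i$, and the distinctness of the line bundles in $\mathcal{L}$ then forces $\ell = i$, making the decomposition trivial; symmetrically for $\ell = j$. Consequently, any genuine decomposition through a vertex $\ell \notin \{i,j\}$ splits $\div(s)$ as a sum of two strictly smaller effective divisors, which supplies the strict decrease needed to terminate the induction in finitely many steps.
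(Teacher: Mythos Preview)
Your overall strategy is sound and both parts go through, but there is an internal inconsistency in how you handle loops in the forward direction of (ii). You argue that constants $c\in H^0(\mathscr{O}_X)$ are ``trivially decomposable as $c=c\cdot 1$'', yet in your final paragraph you explicitly rule out precisely this kind of factorisation (with $\ell=i$) as not counting toward decomposability. You cannot exclude trivial factorisations to make the induction terminate and simultaneously invoke them to dispose of constants. The correct resolution is simpler: under the conventions of the paper, the scalar sections of $L_i\otimes L_i^{-1}\cong\mathscr{O}_X$ correspond to the trivial paths $e_i$ in $\kk Q$, not to arrows; so on projective $X$ there are no loops because $H^0(\mathscr{O}_X)=\kk$ contains nothing beyond the identities. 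With that fix, your divisor-counting argument is fine.

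By way of comparison, the paper's proof of (ii) is considerably shorter and avoids this issue entirely. Rather than tracking divisors of individual arrows, it uses the description $X=\Proj\big(\bigoplus_{j\geq 0}\kk[S]_j\big)$ to identify $H^0(L_i\otimes L_i^{-1})\cong\kk[S]_0$ for every $i$, so that directed cycles based at $i$ correspond exactly to the nontrivial monomial basis elements of $\kk[S]_0$; hence $Q$ is acyclic iff $\kk[S]_0=\kk$ iff $X$ is projective. Your argument is more elementary (it does not appeal to the semigroup presentation) and makes the correspondence between paths and factorisations explicit, which is pedagogically useful; the paper's version is cleaner because the semigroup grading packages both directions of the biconditional into a single identification. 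For part (i) the paper simply says ``each $L_i$ is effective'', which is exactly your induction spelled out in one line.
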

 \begin{proof}
   The first statement holds since each $L_i$ is effective. For the
   second, write $\nu\colon S\to \ZZ$ for the $\ZZ$-graded semigroup
   giving $X=\Proj\big(\bigoplus_{j\geq 0} \kk[S]_j\big)$, so $X$ is
   projective over $\Spec\big(\kk[S]_0\big)$. It follows for each
   $0\leq i\leq r$ that $H^0(L_i\otimes L_i^{-1}) \cong \kk[S]_0$, so
   directed cycles in $Q$ based at vertex $i\in Q_0$ correspond to
   elements of the $\kk$-vector space basis $S_0:=S\cap\Ker(\nu)$ of
   the algebra $\kk[S]_0$. In particular, $Q$ is acyclic if and only
   if $\kk[S]_0= \kk$.
  \end{proof}

 Each quiver of sections carries naturally a set of relations.  For
 each arrow $a\in Q_1$ we write $\div(a)$ for the divisor of zeros of
 the section $s \in H^0(X,L_j^{\;} \otimes L_{i}^{-1})$ that defines
 $a$. Extend this to paths $p = a_1 \dots a_\ell$ in
 $Q$ by setting $\div(p) := \sum_{j=1}^\ell \div(a_j)$, and hence
 define
 \begin{equation}
 \label{eqn:varrho}
 \varrho = \big\{ p-p'\in \kk Q : \tail(p) = \tail(p'),\head(p) =
 \head(p'), \div(p) = \div(p')\big\}.
 \end{equation}
 Only indecomposable sections determine arrows in $Q$, so the paths
 whose differences form the elements of $\varrho$ each comprise at
 least two arrows.  We call $(Q,\varrho)$ is the \emph{complete bound
   quiver of sections} for $\mathcal{L}$. The significance of the
 relations $\varrho$ is evident from the following result.

 \begin{proposition}
 \label{prop:algebra}
 The quotient algebra $\kk Q/\langle\varrho\rangle$ is isomorphic to
 $\End\big( \bigoplus_{i\in Q_0} L_i \big)$.
 \end{proposition}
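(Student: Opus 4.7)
The plan is to exhibit a $\kk$-algebra homomorphism $\Phi\colon \kk Q\to \End\bigl(\bigoplus_{i\in Q_0} L_i\bigr)$ that is surjective with kernel precisely $\langle\varrho\rangle$. The essential geometric input is that because $X$ is toric and each $L_i$ is $T_X$-equivariant, every summand $\Hom(L_i,L_j)=H^0(X,L_j\otimes L_i^{-1})$ admits a $\kk$-basis of $T_X$-invariant monomial sections, indexed by lattice points of a rational polytope, and with a coherent choice of $T_X$-linearisations on the $L_i$ the composition of two such monomials is the unique monomial whose divisor equals the sum of the two.

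I would first define $\Phi$ by $\Phi(e_i)=\id_{L_i}$ and $\Phi(a)=s_a$ for each arrow $a$, where $s_a$ is the indecomposable monomial section defining $a$; extending multiplicatively, the image $\Phi(a_1\cdots a_\ell)=s_{a_\ell}\circ\cdots\circ s_{a_1}$ of a path $p$ is the unique monomial in $\Hom(L_{\tail(p)},L_{\head(p)})$ with divisor $\div(p)$. Hence if $p-p'\in\varrho$ then $\Phi(p)=\Phi(p')$ exactly, not merely up to scalar, so $\Phi$ descends to $\overline{\Phi}\colon\kk Q/\langle\varrho\rangle\to\End(\bigoplus L_i)$. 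For surjectivity of $\overline{\Phi}$ it suffices to hit every monomial section of each $H^0(X,L_j\otimes L_i^{-1})$: indecomposable ones are arrows by construction, and any decomposable one factors as $s=s''\circ s'$ through some vertex $\ell\in Q_0$, so induction on the natural well-founded partial order on effective $T_X$-invariant divisors realises $s$ as a composition of arrows. The trivial paths $e_i$ give the identities $\id_{L_i}$, and in the non-projective case any non-constant $T_X$-invariant section of $\mathscr{O}_X$ is handled inductively via cycles in $Q$.

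For injectivity of $\overline{\Phi}$, suppose $\Phi(\sum_p c_p p)=0$ and partition the paths occurring in this sum by the triple $(\tail(p),\head(p),\div(p))$. Distinct classes map to linearly independent monomial sections, so the coefficient sum within each class must vanish; by indecomposability of arrows, no class mixes an arrow or trivial path with a path of length at least two, so each class of short paths consists of a single path whose coefficient must therefore vanish, while in any class of longer paths the relations $\varrho$ identify all members, making the class-sum congruent to $\bigl(\sum c_p\bigr)p_0$ and hence to zero modulo $\langle\varrho\rangle$. Summing over classes yields $\sum_p c_p p\in\langle\varrho\rangle$, so $\overline{\Phi}$ is an isomorphism. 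The main obstacle is the bookkeeping in the first step: one must fix compatible $T_X$-linearisations so that composing the distinguished sections $s_a$ yields the monomial section with summed divisor with no stray scalar factor, and choose a well-founded order on effective $T_X$-invariant divisors so that the factorisation induction in the surjectivity step terminates.
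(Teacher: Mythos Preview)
Your approach is essentially the same as the paper's: define the natural $\kk$-algebra map $\kk Q\to\End(\bigoplus_i L_i)$ sending each path to the corresponding composite of sections, and identify its image and kernel. The paper's proof is a two-line sketch that simply asserts surjectivity and that the kernel equals $\langle\varrho\rangle$; you have supplied the details it omits, and your arguments for both are correct. In particular, your observation that no class can mix an arrow with a longer path (by indecomposability) is exactly the content of the paper's remark that the paths appearing in $\varrho$ each have length at least two, and your concern about stray scalars is resolved once one works in the Cox ring $\kk[x_\rho:\rho\in\Sigma(1)]$, where each $T_X$-invariant section is the monomial $x^{\div(s)}$ and composition is literally multiplication.
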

 \begin{proof}
   Each path in $Q$ arises from an element of $\Hom(L_i,L_j)$ for some
   $i,j\in Q_0$.  The assignment sending each path to the
   corresponding homomorphism of line bundles extends to a surjective
   homomorphism of $\kk$-algebras $\eta \colon \kk Q \rightarrow
 \End\big(\bigoplus_{i\in Q_0} L_i\big)$ with kernel $\langle \varrho\rangle$.
 \end{proof}

 \begin{exercise}
  \label{ex:Beilinsoncqos}
 Prove that the bound Be{\u\i}linson quiver from
   Example~\ref{ex:boundBeilinson} is the complete bound quiver of
   sections of
   $\mathcal{L}=\big(\mathscr{O}_{\mathbb{P}^n_\kk},\mathscr{O}_{\mathbb{P}^n_\kk}(1),\dots,
   \mathscr{O}_{\mathbb{P}^n_\kk}(n)\big)$.
 \end{exercise}
  
 In light of Exercise~\ref{ex:Beilinsoncqos},
 Proposition~\ref{prop:algebra} provides a solution to
 Exercise~\ref{ex:BeilinsonAlgebra}.

 \begin{example}
 \label{exa:qos1/3(1,2)}
 Let $X$ denote the minimal resolution of the singularity of type
 $\frac{1}{3}(1,2)$ from Example~\ref{exa:res1/3(1,2)}. The bound
 McKay quiver $(Q,\varrho)$ for the action of type $\frac{1}{3}(1,2)$
 is the complete quiver of sections of
 $\mathcal{L}=(\mathscr{O}_X,L_1,L_2)$ on $X$, where $L_1 =
 \mathscr{O}_X(D_0)$ and $L_2=\mathscr{O}_X(D_3)$ form the $\ZZ$-basis
 of $\Pic(X)$ dual to the classes of the irreducible exceptional
 curves.
 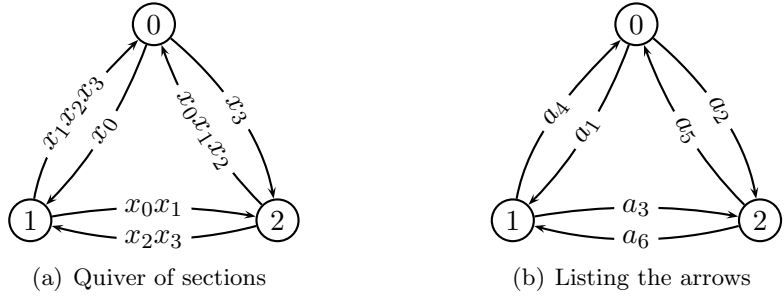
\begin{figure}[!ht]
    \centering \mbox{ \subfigure[Quiver of sections]{
    \psset{unit=1.3cm}
        \begin{pspicture}(-0.5,-0.25)(3,2.25)
        \cnodeput(0,0){A}{1} \cnodeput(2.5,0){B}{2}
        \cnodeput(1.25,2){C}{0} \psset{nodesep=0pt}
        \nccurve[angleA=10,angleB=170]{->}{A}{B}\lput*{:U}{$x_0x_1$}
        \nccurve[angleA=345,angleB=195]{<-}{A}{B}\lput*{:U}{$x_2x_3$}
        \nccurve[angleA=250,angleB=45]{->}{C}{A}\lput*{:180}{$x_0$}
        \nccurve[angleA=225,angleB=80]{<-}{C}{A}\lput*{:180}{$x_1x_2x_3$}
        \nccurve[angleA=320,angleB=100]{->}{C}{B}\lput*{:U}{$x_3$}
        \nccurve[angleA=290,angleB=135]{<-}{C}{B}\lput*{:U}{$x_0x_1x_2$}
        \end{pspicture}}
      \qquad \qquad
      \subfigure[Listing the arrows]{
        \psset{unit=1.3cm}
        \begin{pspicture}(-0.5,-0.25)(3,2.25)
        \cnodeput(0,0){A}{1}
        \cnodeput(2.5,0){B}{2} 
        \cnodeput(1.25,2){C}{0}
        \psset{nodesep=0pt}
        \nccurve[angleA=10,angleB=170]{->}{A}{B}\lput*{:U}{$a_3$}
        \nccurve[angleA=345,angleB=195]{<-}{A}{B}\lput*{:U}{$a_6$}
        \nccurve[angleA=250,angleB=45]{->}{C}{A}\lput*{:180}{$a_1$}
        \nccurve[angleA=225,angleB=80]{<-}{C}{A}\lput*{:180}{$a_4$}
         \nccurve[angleA=320,angleB=100]{->}{C}{B}\lput*{:U}{$a_2$}
        \nccurve[angleA=290,angleB=135]{<-}{C}{B}\lput*{:U}{$a_5$}
        \end{pspicture}}
    }
    \caption{McKay quiver for the action of type $\frac{1}{3}(1,2)$ \label{fig:qos1/3(1,2)}}
  \end{figure}
  Figure~\ref{fig:qos1/3(1,2)} shows the quiver, where each arrow
  $a\in Q_1$ is labelled with the monomial $x^{\div(a)}\in R_X$.
  Using sequence \eqref{eq:fundiagramres1/3(1,2)} from
  Example~\ref{exa:res1/3(1,2)}, we see that the sections
  $x_0,x_2x_3^2\in H^0(X,L_1)$ correspond to the pair of vertices of
  the polyhedron $\conv(\NN^4\cap \pi^{-1}(L_1))$, while
  $x_3,x_0^2x_1\in H^0(X,L_2)$ correspond to the pair of vertices of
  $\conv(\NN^4\cap \pi^{-1}(L_2))$. Clearly $x_0\in H^0(X,L_1)$ is
  indecomposable, while $x_2x_3^2\in H^0(X,L_1)$ decomposes as the
  product of $x_3\in H^0(L_2)$ and $x_2x_3\in H^0(L_1\otimes
  L_2^{-1})$, and similarly for the section of $L_2$. If we list the
  arrows as shown in Figure~\ref{fig:qos1/3(1,2)}(b), then the relations
  $\varrho=\{a_1a_4 - a_{2}a_5, a_{3}a_6 -
  a_{4}a_1, a_{5}a_2 - a_{6}a_3\}$ each derive from
  decomposing the section $x_0x_1x_2x_3\in H^0(\mathscr{O}_X)$ in two
  different ways when viewed as a cycle from vertex $i\in Q_0$.
 \end{example}

 \begin{exercise}
   \label{ex:qos1/r(1,r-1)}
   Let $X$ denote the minimal resolution of the singularity of type
   $\frac{1}{r}(1,r-1)$. By first generalising
   Example~\ref{exa:res1/3(1,2)}, show that the bound McKay quiver for
   the action of type $\frac{1}{r}(1,r-1)$ is the complete quiver of
   sections of $\mathcal{L}=(\mathscr{O}_X,L_1,\dots,L_{r-1})$ on $X$,
   where $L_1,\dots,L_{r-1}$ form the $\ZZ$-basis of $\Pic(X)$ dual to
   the classes of the irreducible exceptional curves.
  \end{exercise}

 \begin{remark}
   Exercise~\ref{ex:qos1/r(1,r-1)} can be generalised to the case
   $\frac{1}{r}(1,a)$ where $\gcd(a,r)=1$. Let $X$ denote the minimal
   resolution of the singularity of type $\frac{1}{r}(1,a)$ and let
   $L_1,\dots, L_m$ be the $\ZZ$-basis of $\Pic(X)$ dual to the
   irreducible exceptional curves in $X$. The complete bound quiver of
   sections $(Q,\varrho)$ for
   $\mathcal{L}=(\mathscr{O}_X,L_1,\dots,L_m)$ is the \emph{bound
   Special McKay quiver} of type $\frac{1}{r}(1,a)$ introduced in
   Craw~\cite{CrawSM} and the quotient algebra $\kk Q/\langle
   \varrho\rangle$ is the \emph{reconstruction algebra} of
   Wemyss~\cite{Wemyss}.
 \end{remark}

 \subsection{Multilinear series}
 Since the vertex $0\in Q_0$ in any complete quiver of sections $Q$
 provides a distinguished source admitting a path to every other
 vertex in $Q$, Exercise~\ref{ex:generic} shows that any weight
 $\vartheta\in \Wt(Q)\otimes_\ZZ \QQ$ satisfying $\vartheta_i>0$ for
 $i\neq 0$ (and hence $\vartheta_0<0$) is generic.  In this case, we
 write 
 \[
 \vert\mathcal{L}\vert:=
 \mathcal{M}_\vartheta(Q)=\mathbb{A}^{Q_1}\git_\vartheta T
 \]
 for the fine moduli space of $\vartheta$-stable representations of
 $Q$ of dimension vector $(1,\dots,1)\in \ZZ^{Q_0}$, and call this
 toric variety the \emph{multilinear series} of the sequence
 $\mathcal{L}$. Results from Section~\ref{sec:tqv} show that
 $\vert\mathcal{L}\vert$ is smooth and projective over
 $\Spec\big(\kk[\NN^{Q_1}\cap \Circuit(Q)]\big)$, and
 $\Pic(\vert\mathcal{L}\vert)\cong \Wt(Q)$. Moreover, our
 normalisation for the tautological bundle on the moduli space
 $\mathcal{M}_\vartheta(Q)$ (see Section~\ref{sec:thetastability})
 implies that the isomorphism $\Wt(Q)\to \Pic(\vert\mathcal{L}\vert)$
 identifies $\sum_{i\in Q_0} \theta_i\chi_i$ with $\bigotimes_{i\in
   Q_0} \mathscr{W}_i^{\otimes \theta_i}$. This leads to the
 following mild generalisation of Craw--Smith~\cite[Proposition~3.8]{CrawSmith}
 from the case when $X$ is a normal projective toric variety:

 \begin{proposition}
  \label{pro:smooth}
  Let $Q$ be the complete quiver of sections for a sequence $\mathcal{L}$
  as above on a normal semiprojective toric variety $X$. The multilinear series
  $\vert\mathcal{L}\vert$ is a smooth semiprojective toric variety
  obtained as the geometric quotient of $\mathbb{A}^{Q_1} \setminus
  \mathbb{V}(B_Q)$ by $T$ where
    \[
    B_Q:= \Biggl( \prod\limits_{a \in Q'_1} y_a : \text{$Q'$ is a
      spanning tree of $Q$ rooted at $0$} \Biggl).
    \]
    Moreover, the tautological line bundles $\mathscr{W}_i$ for $i\in
    Q_0$ on the fine moduli space $\vert \mathcal{L}\vert =
    \mathcal{M}_\vartheta(Q)$ correspond to the elements
    $\chi_i-\chi_0$ for $i\in Q_0$ in $\Wt(Q)$ under the isomorphism
    decribed above.
 \end{proposition}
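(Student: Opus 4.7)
The plan is to combine King's moduli construction (Theorem~\ref{thm:Kingmoduli}) with an explicit computation of the $\vartheta$-stable locus in $\mathbb{A}^{Q_1}$, then extract smoothness from the Cox-style description arising from the short exact sequence~\eqref{eq:graphses}. Since the vertex $0\in Q_0$ of a complete quiver of sections admits a path to every other vertex, Exercise~\ref{ex:generic}\one\ shows that the weight $\vartheta$ is generic, so Section~\ref{sec:tqv} already gives that $\mathcal{M}_\vartheta(Q)$ is a normal semiprojective toric variety with $\Pic\big(\mathcal{M}_\vartheta(Q)\big)\cong \Wt(Q)$, realised as the geometric quotient of the $\vartheta$-stable locus by $T$. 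It remains to identify this stable locus and establish smoothness.

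The central step is to show that the $\vartheta$-unstable locus in $\mathbb{A}^{Q_1}$ coincides with $\mathbb{V}(B_Q)$. By Proposition~\ref{prop:King} and Exercise~\ref{ex:generic}\two, a point $w=(w_a)\in \mathbb{A}^{Q_1}$ is $\vartheta$-stable if and only if the corresponding $Q$-constellation is a cyclic $\kk Q$-module with generator $e_0$; equivalently, for every vertex $i\in Q_0$ there is a directed path from $0$ to $i$ along arrows $a$ with $w_a\neq 0$. This is precisely the statement that the support subquiver $\{a\in Q_1 : w_a\neq 0\}$ contains a spanning tree rooted at $0$, that is, there exists such a tree $Q'$ with $\prod_{a\in Q'_1}y_a(w)\neq 0$. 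Taking complements gives $(\mathbb{A}^{Q_1})\smallsetminus(\mathbb{A}^{Q_1})^{\mathrm{s}}_\vartheta = \mathbb{V}(B_Q)$ as required.

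For smoothness I would cover $\vert\mathcal{L}\vert$ by the affine opens $U_{Q'}:=\Spec\big(\kk[y_a : a\in Q_1][y_a^{-1} : a\in Q'_1]^T\big)$ indexed by spanning trees $Q'$ rooted at $0$; these cover precisely because the stable locus is the union of the principal opens $\{y_a\neq 0 : a\in Q'_1\}$. The key observation is that for such a tree $Q'$, the vectors $\{\inc(\chi_a) : a\in Q'_1\}$ form a $\ZZ$-basis of $\Wt(Q)$: they span (since $Q'$ is connected and contains all vertices) and are linearly independent (any relation would produce a cycle in $Q'$, contradicting acyclicity), and primitivity follows by the standard incidence argument. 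Consequently the $T$-invariant subring of the localisation is freely generated by the $|Q_1|-|Q_0|+1$ Laurent monomials in the $y_a$ for $a\notin Q'_1$ whose $\inc$-weights vanish, giving $U_{Q'}\cong \mathbb{A}^{|Q_1|-|Q_0|+1}_\kk$, hence smoothness. The statement about tautological bundles then follows from the construction sketched after Theorem~\ref{thm:Kingmoduli}: the universal decomposition $\bigoplus_{i\in Q_0}\mathscr{O}_{\mathbb{A}^{Q_1}}$ carries the characters $\chi_i$ of $(\kk^\times)^{Q_0}$, and after descending to the geometric quotient and imposing the normalisation $\mathscr{W}_0\cong\mathscr{O}_{\vert\mathcal{L}\vert}$ to kill the diagonal, each $\mathscr{W}_i$ descends with weight $\chi_i-\chi_0\in\Wt(Q)$.

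The main obstacle is the careful translation between $\vartheta$-stability (an a priori GIT/representation-theoretic condition) and the combinatorial condition on supports, and then verifying that the monomials $\prod_{a\in Q'_1}y_a$ really do generate the relevant graded ideal of semi-invariants up to radical; this is where Proposition~\ref{prop:King} and Exercise~\ref{ex:generic} do most of the work, reducing the question to the elementary graph-theoretic reformulation above.
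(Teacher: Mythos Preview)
Your argument is correct. The identification of the $\vartheta$-stable locus with $\mathbb{A}^{Q_1}\setminus\mathbb{V}(B_Q)$ via Proposition~\ref{prop:King} and Exercise~\ref{ex:generic}\two\ is clean, and the equivalence ``cyclic with generator $e_0$'' $\Leftrightarrow$ ``support contains a spanning arborescence rooted at $0$'' is exactly right. Your direct smoothness argument via the observation that $\{\inc(\chi_a):a\in Q'_1\}$ is a $\ZZ$-basis of $\Wt(Q)$ for each rooted spanning tree $Q'$ is also correct; this is the unimodularity of tree incidence matrices.

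The paper takes a different path to the same $B_Q$ statement: rather than invoking King's stability criterion, it works directly with the GIT irrelevant ideal, observing that the monomials $y^u$ with $u\in\NN^{Q_1}\cap\inc^{-1}(\vartheta)$ (for $\vartheta=(-r,1,\dots,1)$) all have supports containing paths from $0$ to every vertex, and that passing to the radical replaces each such monomial by the squarefree product over a spanning tree. The two arguments meet at the same combinatorial fact, but yours reaches it from the representation-theoretic side (cyclic modules), while the paper stays on the GIT side (semi-invariants and radicals, in parallel with the proof of Theorem~\ref{thm:Cox}). The paper does not reprove smoothness here, deferring instead to Hille's result quoted in Section~\ref{sec:tqv}; your explicit chart computation is a welcome addition, and also makes transparent why the Cox sequence of $\vert\mathcal{L}\vert$ coincides with~\eqref{eq:graphses}.
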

 \begin{proof}
 It remains to establish that the $\vartheta$-unstable locus is
   $\mathbb{V}(B_Q)$. The irrelevant ideal for the quotient
   $\mathbb{A}^{Q_1}\git_\vartheta T$ is generated by those monomials
   $y^u\in \kk[y_a : a\in Q_1]$ with $u\in \NN^{Q_1}\cap
   \inc^{-1}(\vartheta)$. If we set $\vartheta:= (-r,1,1,\dots,1)\in
   \Wt(Q)$, then the set of arrows supporting every such exponent $u$
   also supports paths from the distinguished vertex 0 to every other
   vertex $i\in Q_0\setminus \{0\}$. Forgetting multiplicities and
   working only with the supporting arrows is equivalent to taking the
   radical of the ideal. Therefore, as in the proof of
   Theorem~\ref{thm:Cox}, the ideal $B_Q$ is the radical of the ideal
   that cuts out the $\vartheta$-unstable locus in $\mathbb{A}^{Q_1}$.
 \end{proof} 

\begin{example}
  \label{exa:2pt}
  Let $L_1$ be a basepoint-free line bundle on $X$ and set
  $\mathcal{L} = (\mathscr{O}_{X}, L_1)$. The complete quiver of
  sections of $\mathcal{L}$ has arrow set $Q_1 = \{ a_0, \dotsc, a_m
  \}$ corresponding to a $\kk$-vector space basis of $H^0(X,L_1)$.
  Since every arrow forms a spanning tree in $Q$, we have $B_Q= (
  y_{a_0}, \dotsc, y_{a_{m}})$ and hence, $\vert \mathcal{L}\vert$ is
  the geometric quotient of $\mathbb{A}^{m+1} \setminus \{ 0 \}$ by $T
  := \Hom_{\ZZ}\bigl( \Wt(Q), \kk^{*})$.  Choosing $\chi_1 - \chi_0$
  as a basis for $\Wt(Q)\cong \ZZ$, the action of $T \cong \kk^\times$
  on $\mathbb{A}^{m+1}$ is induced by 
 \[
  \begin{CD}   
    0 @>>> \Circuit(Q) @>>> \ZZ^{m+1} @>{\left[
    \begin{smallmatrix} 1 & \dotsb & 1 \end{smallmatrix} \right]}>> \ZZ @>>> 0.
  \end{CD}
 \]
 It follows that $|\mathcal{L}|$ is isomorphic to the classical linear
 series $|L_1|\cong\mathbb{P}^m_\kk$; whence the name. In our case,
 the tautological bundle is $L_1$ rather than $L_1^{-1}$; we view
 projective space as parametrising hyperplanes rather than lines.
\end{example}

\begin{example}
  \label{exa:F1asmultilinear}
  For $X = \mathbb{F}_1$, consider the sequence $\mathcal{L} = \bigl(
  \mathscr{O}_X, \mathscr{O}_{X}(D_1), \mathscr{O}_{X}(D_4) \bigr)$
  where we adopt the notation of Example~\ref{ex:fundiagramF1}. The
  complete quiver of sections for $\mathcal{L}$ appears in
  Figure~\ref{fig:quiver3vertices}, and
  Example~\ref{ex:quiver3vertices} shows that the
  $\vert\mathcal{L}\vert$ is isomorphic to $\mathbb{F}_1$. In this
  case, the tautological line bundles coincide with the bundles of
  $\mathcal{L}$.
\end{example}

\begin{example}
  \label{ex:3foldflop}
  Let $X$ be the smooth toric threefold determined by the following
  fan $\Sigma$ in $\QQ^3$: the rays $\Sigma(1)$ are generated by the
  vectors $v_1 := (1,0,0)$, $v_2 := (0,1,0)$, $v_3 := (-1,-1,-1)$,
  $v_4 := (0,1,1)$, $v_5 := (1,0,1)$ and the two-dimensional cones are
  represented in Figure~\ref{fig:3fold}~(a).  There is a flop $X
  \dashrightarrow X'$ where the toric variety $X'$ is the determined
  by the triangulation of $\Sigma(1)$ where the cone generated by
  $v_1,v_4$ is replaced by the cone with generators $v_2, v_5$. For
  $(k, \ell) \in \ZZ^2$, write $\mathscr{O}_{X}(k,\ell) :=
  \mathscr{O}_{X}(kD_3+\ell D_2) \in \Pic(X)$.  The complete quiver of
  sections for $\mathcal{L} = \bigl( \mathscr{O}_{X},
  \mathscr{O}_{X}(0,1), \mathscr{O}_{X}(1,0)\bigr)$ satisfies
  $X=\vert\mathcal{L}\vert$ in this case (compare Exercise~\ref{ex:challenge}).
  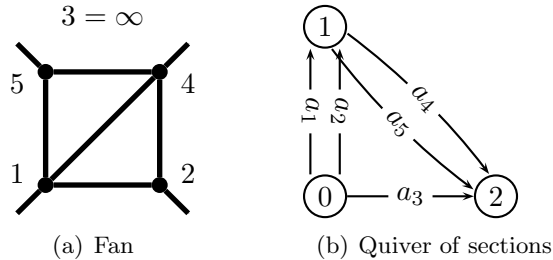
\begin{figure}[!ht]
    \centering
    \mbox{
     \subfigure[Fan]{
        \psset{unit=0.75cm}
        \begin{pspicture}(3,3){
            \cnode*[fillcolor=black](0,0){0pt}{Q1}
            \cnode*[fillcolor=black](3,0){0pt}{Q2}
            \cnode*[fillcolor=black](0,3){0pt}{Q3}
            \cnode*[fillcolor=black](3,3){0pt}{Q4}
            \cnode*[fillcolor=black](0.5,0.5){3pt}{P1}
            \cnode*[fillcolor=black](2.5,0.5){3pt}{P2}
            \cnode*[fillcolor=black](0.5,2.5){3pt}{P5}
            \cnode*[fillcolor=black](2.5,2.5){3pt}{P4}
            \ncline[linewidth=2pt]{-}{P1}{Q1}
            \ncline[linewidth=2pt]{-}{P2}{Q2}
            \ncline[linewidth=2pt]{-}{P5}{Q3}
            \ncline[linewidth=2pt]{-}{P4}{Q4}
            \ncline[linewidth=2pt]{-}{P1}{P2}
            \ncline[linewidth=2pt]{-}{P1}{P5}
            \ncline[linewidth=2pt]{-}{P2}{P4}
            \ncline[linewidth=2pt]{-}{P5}{P4}
            \ncline[linewidth=2pt]{-}{P1}{P4}
            \rput(0,0.7){\psframebox*{$1$}}
            \rput(3,0.7){\psframebox*{$2$}}
            \rput(0,2.3){\psframebox*{$5$}}
            \rput(3,2.3){\psframebox*{$4$}}
            \rput(1.5,3.5){\psframebox*{$3=\infty$}}
          }
      \end{pspicture}}
      \qquad \qquad
      \subfigure[Quiver of sections]{
        \psset{unit=0.75cm}
        \begin{pspicture}(4,3.5)
          \rput(0,0.3){
            \cnodeput(0,0){A}{0}
            \cnodeput(0,3){B}{1} 
            \cnodeput(3,0){C}{2}
             \psset{nodesep=0pt}
            \ncline[offset=5.5pt]{->}{A}{B} \lput*{:180}{$a_1$}
            \ncline[offset=5.5pt]{<-}{B}{A} \lput*{:U}{$a_2$}
            \ncline{->}{A}{C} \lput*{:U}{$a_3$}
             \ncarc[offset=3pt]{->}{B}{C} \lput*{:U}{$a_4$}
            \ncarc[offset=3pt]{<-}{C}{B} \lput*{:180}{$a_5$}
           }
      \end{pspicture}}
    }
    \caption{Projective threefold admitting a flop \label{fig:3fold}}
  \end{figure}
\end{example}

 \subsection{Morphism to the multilinear series}
 We can provide the multigraded analogue of the morphism
 $\varphi_{\vert L\vert}\colon X\to \vert L\vert\cong \mathbb{P}^m$ to
 the classical linear series for a basepoint-free line bundle $L$
 on a toric variety $X$.  To motivate our generalisation we reconsider
 Example~\ref{ex:O3}.
 
 \begin{example}
   For $X=\mathbb{P}^1$, consider $\mathcal{L}=
   \big(\mathscr{O}_{\mathbb{P}^1},\mathscr{O}_{\mathbb{P}^1}(3)\big)$
   with complete quiver of sections $Q$. The image of $\varphi_{\vert
     \mathscr{O}_{\mathbb{P}^1}(3)\vert}$ is obtained by taking
   $\Proj$ of the section ring
 \[
 \bigoplus_{j\geq 0}
 H^0\big{(}\mathbb{P}^1,\mathscr{O}_{\mathbb{P}^1}(3j)\big{)}\cong \kk[y_1,y_2,y_3,y_4]/I,
 \]
 where $I = \big(y^u-y^v : u,v\in \NN^4, u-v\in \Ker(\pi)\big)$ is the
 \emph{toric ideal} defined by the matrix
\[
 \pi = \begin{pmatrix} 1 & 1 & 1 & 1 \\ 3 & 2 & 1 & 0 \\ 0 & 1 & 2 & 3
   \end{pmatrix},
 \]
 where the grading is recorded in the first variable.
 \end{example}
 
 We now associate a toric ideal to every complete quiver of sections.
 The map sending $a \in Q_1$ to $\div(a) \in \ZZ^{\Sigma(1)}$ extends
 to give a $\ZZ$-linear map $\div \colon \ZZ^{Q_1} \rightarrow
 \ZZ^{\Sigma(1)}$.  The \emph{section lattice} $\ZZ(Q)$ is defined to
 be the image of the lattice map $\pi := (\inc,\div) \colon
 \ZZ^{Q_{1}} \rightarrow \Wt(Q)\oplus \ZZ^{\Sigma(1)}$ sending
 $\chi_a\in \ZZ^{Q_1}$ to $\pi(\chi_a) = \bigl( \chi_{\head(a)} -
 \chi_{\tail(a)}, \div(a) \bigr)$. The projections onto the components
 are denoted $\pi_{1} \colon \ZZ(Q) \to \Wt(Q)$ and $\pi_{2} \colon
 \ZZ(Q) \to \ZZ^{\Sigma(1)}$ respectively, and fit in to the
 commutative diagram
\begin{equation}
 \label{eqn:diagram}
\xymatrix{\ZZ(Q)\ar[r]^*{\text{\footnotesize{$\pi_1$}}}\ar[d]^*{\text{\footnotesize{$\pi_2$}}} & \Wt(Q)\ar[d]^*{\text{\footnotesize{$\pic$}}} \\
 \ZZ^{\Sigma(1)} \ar[r]^*{\text{\footnotesize{$\deg$}}} & \Pic(X)}
\end{equation}
where $\deg\colon \ZZ^{\Sigma(1)}\to \Pic(X)$ appears in
\eqref{eq:fundiagram}, and where $\pic\colon \Wt(Q)\to \Pic(X)$ sends
$\sum_{i\in Q_0} \theta_i\chi_i$ to $\bigotimes_{i\in Q_0}L_i^{\otimes
\theta_i}$. Write $\NN^{Q_1}$ and $\NN(Q)$ for the subsemigroups of
$\ZZ^{Q_1}$ and $\ZZ(Q)$ generated by $\{\chi_a : a\in Q_1\}$ and
$\{\pi(\chi_a) :a\in Q_1\}$ respectively.  The coordinate ring
$\kk[y_a : a\in Q_1]$ of $\mathbb{A}^{Q_1}_\kk$ is the semigroup
algebra of $\NN^{Q_1}$, and the map $\pi$ induces a surjective
$\kk$-algebras homomorphism $\pi_*\colon \kk[y_a: a\in Q_1]\to
\kk[\NN(Q)]$ with
 \[
 I_Q := \Ker(\pi_*)=\bigl( y^u- y^v \in \kk[y_a : a\in Q_1] : u - v
 \in \Ker(\pi) \bigr);
 \]
 this is the \emph{toric ideal} for the labeled quiver $Q$. The
 incidence map factors through $\NN(Q)$, so the action of $T$ on
 $\mathbb{A}_\kk^{Q_1}$ restricts to an action on the affine toric
 variety $\mathbb{V}(I_Q) = \Spec\big(\kk[\NN(Q)]\big)$ cut out by the prime
 $I_Q$, and we obtain
 \[
 \mathbb{V}(I_Q) \git_{\vartheta}T = \Proj
 \Bigl( \bigoplus_{j\geq 0} \kk[\NN(Q)]_{j \vartheta}
 \Bigr).
 \]
 The significance of this semiprojective toric variety is shown by the
 next result; the projective case is the first main result of
 Craw--Smith~\cite{CrawSmith}.
 
 \begin{theorem}
 \label{thm:image}
   Let $X$ be a semiprojective toric variety and let
   $\mathcal{L}=(\mathscr{O}_X,L_1,\dots, L_r)$ be a set of
   basepoint-free line bundles with complete quiver of sections $Q$.
   There is a morphism
 \[
 \varphi_{\vert \mathcal{L}\vert}\colon
   X\longrightarrow
   \vert\mathcal{L}\vert=\mathbb{A}_\kk^{Q_1}\git_\vartheta T
 \]
 whose image $\mathbb{V}(I_Q)\git_\vartheta T$ is equal to the
   geometric quotient of $\mathbb{V}(I_Q)\setminus\mathbb{V}(B_Q)$ by
   $T$.
 \end{theorem}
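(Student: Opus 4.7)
The plan is to construct $\varphi_{\vert\mathcal{L}\vert}$ via the Cox homogeneous coordinate ring $R_X=\kk[x_\rho:\rho\in\Sigma(1)]$ of $X$, and to analyse its image using the factorisation $\pi=(\inc,\div)$ of diagram~\eqref{eqn:diagram}. The $\kk$-algebra homomorphism $\psi\colon\kk[y_a:a\in Q_1]\to R_X$ defined by $y_a\mapsto x^{\div(a)}$ respects the $\Wt(Q)$-grading on the source and the $\Pic(X)$-grading on the target, with the change of grading given by $\pic\colon\Wt(Q)\to\Pic(X)$. Consequently, the induced morphism $\psi^*\colon\mathbb{A}^{\Sigma(1)}_\kk\to\mathbb{A}^{Q_1}_\kk$ is equivariant with respect to the homomorphism $G\to T$ dual to $\pic$.

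The first step is to show that $\psi^*$ carries the $L$-stable locus $\mathbb{A}^{\Sigma(1)}_\kk\setminus\mathbb{V}(B_X)$ from Theorem~\ref{thm:Cox} into the $\vartheta$-stable locus $\mathbb{A}^{Q_1}_\kk\setminus\mathbb{V}(B_Q)$ from Proposition~\ref{pro:smooth}. Given a lift $\tilde p\in\mathbb{A}^{\Sigma(1)}_\kk\setminus\mathbb{V}(B_X)$ of a point $p\in X$ and each vertex $i\in Q_0$, the basepoint-freeness of $L_i$ supplies a $T_X$-invariant section of $L_i$ nonzero at $p$; such a section is a monomial in $R_X$ that decomposes iteratively (using the indecomposability of arrow sections and induction on divisor degree) into a product of arrow monomials along some path $0\to i$ in $Q$. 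The union of these paths over $i\in Q_0$ is a connected subquiver through every vertex and hence contains a spanning tree rooted at $0$, so some generator of $B_Q$ is nonzero at $\psi^*(\tilde p)$. Passing to $\vartheta$-GIT quotients via the equivariance above produces the desired morphism $\varphi_{\vert\mathcal{L}\vert}\colon X\to\vert\mathcal{L}\vert$.

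That the image lies in $\mathbb{V}(I_Q)\git_\vartheta T$ is automatic: if $u-v\in\Ker(\pi)$ then $\div(u)=\div(v)$, so $\psi(y^u-y^v)=x^{\div(u)}-x^{\div(v)}=0$, and $\psi^*$ factors through the closed subscheme $\mathbb{V}(I_Q)$. For the reverse inclusion I would argue equality by noting that $\varphi_{\vert\mathcal{L}\vert}\colon X\to\mathbb{V}(I_Q)\git_\vartheta T$ is $T_X$-equivariant with respect to a surjection of dense tori induced by the bottom row of \eqref{eqn:diagram}. Since both sides are irreducible and share this common dense torus, and since $\varphi_{\vert\mathcal{L}\vert}$ is proper (as $X$ is proper over its affinisation, which maps to the affinisation $\Spec(\kk[\NN^{Q_1}\cap\Circuit(Q)])$ of $\vert\mathcal{L}\vert$), the image is closed and dense, hence equal to $\mathbb{V}(I_Q)\git_\vartheta T$. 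Finally, Proposition~\ref{pro:smooth} identifies $\mathbb{V}(B_Q)$ as the $\vartheta$-unstable locus in $\mathbb{A}^{Q_1}_\kk$, and Exercise~\ref{ex:generic}\one asserts that $\vartheta$ is generic; so $\mathbb{V}(I_Q)\git_\vartheta T$ coincides with the geometric quotient of $\mathbb{V}(I_Q)\setminus\mathbb{V}(B_Q)$ by $T$.

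The main obstacle will be surjectivity onto $\mathbb{V}(I_Q)\git_\vartheta T$: one must verify carefully that $\varphi_{\vert\mathcal{L}\vert}$ is a proper, $T_X$-equivariant dominant morphism of toric varieties sharing a dense open torus orbit. In the projective case this equality is essentially the first main result of Craw--Smith~\cite{CrawSmith}, and the semiprojective generalisation should follow by the same argument combined with the inner normal fan construction of $X$ from Section~\ref{sec:dual}.
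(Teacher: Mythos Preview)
Your construction of $\varphi_{\vert\mathcal{L}\vert}$ is exactly the paper's: the same algebra map $\psi$ (which the paper calls $\Phi_Q$), the same equivariance via the commutative square~\eqref{eqn:diagram}, and the same use of basepoint-freeness to show that the preimage of $\mathbb{V}(B_Q)$ lies in $\mathbb{V}(B_X)$. Your spanning-tree argument for this last point is more explicit than the paper's sketch, which simply asserts it and defers to \cite{CrawSmith}.

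The genuine difference is in how you establish that the image is all of $\mathbb{V}(I_Q)\git_\vartheta T$. The paper argues directly at the level of affine cones: the scheme-theoretic image of $\psi^*$ is $\mathbb{V}(\Ker\Phi_Q)$, which need not be $T$-invariant, but $\mathbb{V}(I_Q)$ is precisely its $T$-saturation, so after passing to $T$-orbits the two agree. Your approach instead argues that the induced morphism $X\to\mathbb{V}(I_Q)\git_\vartheta T$ is proper and dominant (via the torus surjection), hence surjective. Both are valid; the paper's version is more algebraic and avoids any properness check, while yours is more geometric and makes the toric structure of the target transparent.

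One point to tighten in your properness step: saying that the affinisation of $X$ \emph{maps to} $\Spec\big(\kk[\NN^{Q_1}\cap\Circuit(Q)]\big)$ is not by itself enough, since a morphism between schemes projective over different affine bases need not be proper. What you actually need is that the affinisation of $X$ agrees with that of $\mathbb{V}(I_Q)\git_\vartheta T$; this holds because cycles in a complete quiver of sections at any vertex $i$ biject with the $T_X$-invariant basis of $H^0(\mathscr{O}_X)$, so $\kk[\NN(Q)]_0\cong H^0(\mathscr{O}_X)$. With that in hand, $\varphi_{\vert\mathcal{L}\vert}$ is a morphism between schemes projective over the same affine base, hence projective, and your argument goes through. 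Also, the surjection of dense tori comes from the injection $\div(\Circuit(Q))\hookrightarrow M$ induced by $\pi_2$, not from the bottom row $\deg$ of~\eqref{eqn:diagram} as you wrote.
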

 \begin{sketch}
   The sheaf $\bigoplus_{i\in Q_0}L_i$ on $X$ defines a family of
   $\vartheta$-stable representations over $X$, and the morphism to
   the multilinear series is induced by the universal property of
   $\mathcal{M}_\vartheta(Q)=|\mathcal{L}|$. More concretely, the
   morphism $\varphi_{\vert \mathcal{L}\vert}$ is induced by the
   $\kk$-algebra homomorphism
 \[
 \Phi_Q\colon \kk[y_a : a\in Q_1]\to \kk[x_\rho : \rho\in \Sigma(1)]
 \]
 between the total coordinate rings of $\vert \mathcal{L}\vert$ and
 $X$ that sends $y_a$ for $a\in Q_1$ to the monomial
 $x^{\div(a)}$. The subscheme of $\mathbb{A}^{Q_1}_\kk$ cut out by the
 kernel of $\Phi_Q$ need not be invariant under the action of $T$, but
 by shrinking the defining ideal from $\Ker(\Phi_Q)$ to $I_Q$, we
 enlarge the subscheme and hence capture in $\mathbb{V}(I_Q)$ the
 $T$-orbits of all points from $\mathbb{V}\big(\Ker(\Phi_Q)\big)$.
 The $\Wt(Q)$-grading of the coordinate ring $\kk[\NN(Q)]$ of
 $\mathbb{V}(I_Q)$ and the $A_{n-1}(X)$-grading of the total
 coordinate ring of $X$ are each encoded by a horizontal map in
 Diagram~\eqref{eqn:diagram}. Commutativity of \eqref{eqn:diagram}
 shows that the map $\mathbb{A}^{\Sigma(1)}\to\mathbb{V}(I_Q)\subseteq
 \mathbb{A}^{Q_1}_\kk$ induced by $\Phi_Q$ is equivariant with
 respect to the actions of $G=\Hom(A_{n-1}(X),\kk^\times)$ on the
 domain and $T=\Hom(\Wt(Q),\kk^\times)$ on the target, and by the
 discussion above it is surjective after passing to group orbits. The
 statement of \one\ follows after verifying that the preimage of the
 irrelevant subscheme $\mathbb{V}(B_Q)$ is contained in
 $\mathbb{V}(B_X)$. This condition follows from the assumption that
 each line bundle in the sequence $\mathcal{L}$ is basepoint-free. For
 details, see
 Craw--Smith~\cite[Corollary~4.1,Proposition~4.3]{CrawSmith}.\qed
 \end{sketch}

 \begin{exercise}
   For the sequence $\mathcal{L} = (\mathscr{O}_X, L_1)$ with complete
   quiver of sections $\mathcal{L}$ from Example~\ref{exa:2pt}, compute
   the generators of the semigroup $\NN(Q)$ and hence show that
   $\varphi_{|\mathcal{L}|} = \varphi_{|L_1|}$.
 \end{exercise}
 
 In order to reconstruct the toric variety $X$ using
 Theorem~\ref{thm:image} we must establish when the morphism to the
 multilinear series is a closed immersion. The following result
 provides the multigraded analogue of the classical result that
 relates very ample line bundles with closed immersions.

 \begin{theorem}
 \label{thm:surjcriterion}
 With the assumptions of Theorem~\ref{thm:image}, assume in addition
 that the multiplication map $H^0(X,L_1)\otimes \dots \otimes
 H^0(X,L_r)\longrightarrow H^0\big{(}X,\textstyle{\bigotimes_{i\in
 Q_0} L_i}\big{)}$ is surjective. Then
 \begin{enumerate}
 \item[\one] the morphism $\varphi_{\vert \mathcal{L}\vert}$ is a closed
 immersion if and only if $\bigotimes_{i\in Q_0} L_i$ is very ample; and
 \item[\two] for each $i\in Q_0$, the tautological line bundle
  $\mathscr{W}_i$ on $\vert\mathcal{L}\vert=\mathcal{M}_\vartheta(Q)$
  satisfies $ \varphi_{\vert \mathcal{L}\vert}^*(\mathscr{W}_i)=L_i$.
 \end{enumerate}
 \end{theorem}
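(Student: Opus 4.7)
I would prove \two\ first, since it feeds into \one. The morphism $\varphi_{\vert\mathcal{L}\vert}$ is induced by the universal property of the fine moduli space $\vert\mathcal{L}\vert=\mathcal{M}_\vartheta(Q)$ applied to the family $\bigoplus_{i\in Q_0}L_i$ on $X$ whose arrow morphisms are the indecomposable sections defining the arrows of $Q$; this family is $\vartheta$-stable fibrewise because $L_0=\mathscr{O}_X$ together with Exercise~\ref{ex:generic}\two\ supplies the cyclicity condition. Representability of $\rep_\vartheta(Q)$ identifies the pullback of the tautological bundle $\bigoplus_{i\in Q_0}\mathscr{W}_i$ with this family, giving $\varphi_{\vert\mathcal{L}\vert}^*\mathscr{W}_i=L_i$ summand by summand. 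Equivalently, one may read the result off from the Cox presentation: the induced map on Picard groups is the map $\pic\colon \Wt(Q)\to\Pic(X)$ from diagram~\eqref{eqn:diagram}, under which $\mathscr{W}_i\leftrightarrow\chi_i-\chi_0\mapsto L_i\otimes L_0^{-1}=L_i$.

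For \one, set $\mathcal{N}:=\bigotimes_{i\in Q_0}\mathscr{W}_i$ on $\vert\mathcal{L}\vert$ and $N:=\bigotimes_{i\in Q_0}L_i$ on $X$, so that $\varphi_{\vert\mathcal{L}\vert}^*\mathcal{N}=N$ by \two. The crux is to establish that the induced pullback
\[
\varphi_{\vert\mathcal{L}\vert}^*\colon H^0\bigl(\vert\mathcal{L}\vert,\mathcal{N}\bigr)\longrightarrow H^0(X,N)
\]
is surjective. In Cox coordinates this is the $\vartheta$-graded piece of $\Phi_Q\colon \kk[y_a : a\in Q_1]\to R_X$, $y_a\mapsto x^{\div(a)}$, so a basis element of the target is a $T_X$-semi-invariant monomial $x^v\in R_X$; the multiplication hypothesis, being $T_X$-equivariant and hence surjective on semi-invariants, factors $x^v$ as a product $s_1\dotsm s_r$ of semi-invariant sections $s_i\in H^0(X,L_i)$, each of which further decomposes into a product of indecomposable sections corresponding to arrows of a path from $0$ to $i$ in $Q$. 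Collecting these arrow-sections yields $u\in\NN^{Q_1}$ with $\inc(u)=\vartheta$ and $\Phi_Q(y^u)=x^v$, giving the surjectivity.

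Because $\mathcal{N}$ is the ample GIT polarisation on $\vert\mathcal{L}\vert$, the surjectivity above lets one construct a morphism $\psi\colon\vert\mathcal{L}\vert\to\vert N\vert$ (using sections of $\mathcal{N}$ that pull back to a basis of $H^0(X,N)$) satisfying $\varphi_{\vert N\vert}=\psi\circ\varphi_{\vert\mathcal{L}\vert}$. The direction ``$N$ very ample $\Rightarrow$ $\varphi_{\vert\mathcal{L}\vert}$ a closed immersion'' is then immediate from this factorisation together with the properness of $\varphi_{\vert\mathcal{L}\vert}$. For the converse, one must propagate the surjectivity of the pullback to every tensor power $\mathcal{N}^{\otimes j}$, which follows inductively from the multiplication hypothesis combined with the fact that $\bigoplus_{j\geq 0}H^0(\vert\mathcal{L}\vert,\mathcal{N}^{\otimes j})$ is generated in degree one (by the Proj definition of $\vert\mathcal{L}\vert$). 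The resulting isomorphism of section rings on $X$ and on the image $\varphi_{\vert\mathcal{L}\vert}(X)$ then forces $\varphi_{\vert N\vert}$ itself to be a closed immersion. The main technical obstacle is precisely this propagation step in the converse direction.
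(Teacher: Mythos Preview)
Your argument for \two\ via the universal property is correct and is exactly the mechanism behind the citation the paper gives; note, incidentally, that \two\ does not use the multiplication hypothesis at all.

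For \one, the forward implication (very ample $\Rightarrow$ closed immersion) is essentially the paper's argument. The paper records that $\varphi_{|\mathcal{L}|}$ is a closed immersion if and only if the map from $X$ to projective space determined by the subspace $V:=\kk\bigl\langle\pi_2\bigl(\NN(Q)\cap\pi_1^{-1}(\vartheta)\bigr)\bigr\rangle\subseteq H^0(X,L)$ is a closed immersion, where $L=\bigotimes_i L_i$; the multiplication hypothesis forces $V=H^0(X,L)$, so this map is $\varphi_{|L|}$. Your factorisation $\varphi_{|N|}=\psi\circ\varphi_{|\mathcal{L}|}$ encodes exactly this identification, and the easy direction of the biconditional (composite a closed immersion, second factor separated, hence first factor a closed immersion) gives the forward implication.

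The converse, however, has a real gap. Your ``propagation'' step rests on the claim that $\bigoplus_{j\geq 0}H^0(|\mathcal{L}|,\mathcal{N}^{\otimes j})$ is generated in degree one ``by the Proj definition of $|\mathcal{L}|$'', but the Proj construction imposes no such condition on its input ring; you would need an independent argument that $\mathcal{N}$ is very ample on $|\mathcal{L}|$ (equivalently, that the degree-$\vartheta$ monomials generate the ring of $\vartheta$-semi-invariants). Even granting propagation, the concluding sentence (``isomorphism of section rings \dots forces $\varphi_{|N|}$ to be a closed immersion'') does not follow: once $\varphi_{|\mathcal{L}|}$ is a closed immersion the section rings of $N$ on $X$ and of $\mathcal{N}$ on the image are \emph{tautologically} equal, so no new information is extracted. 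What you actually need is that $\psi$ restricts to a closed immersion on $\varphi_{|\mathcal{L}|}(X)$, and this is precisely the nontrivial direction of the biconditional the paper imports from \cite[Proposition~4.9]{CrawSmith}. That result is proved by a direct comparison of the affine charts of $\mathbb{V}(I_Q)\git_\vartheta T$ with those of the image of the linear-series map, and does not come for free from the factorisation alone. A workable alternative route for you is to argue that $|\mathcal{L}|$ is a smooth toric variety on which ample line bundles are very ample, so $\mathcal{N}$ is very ample and its restriction $N$ along a closed immersion is very ample; but that is a different argument from the one you sketched.
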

 \begin{sketch}
   The proof of \cite[Proposition~4.9]{CrawSmith} shows that the
   morphism $\varphi_{|\mathcal{L}|}$ is a closed immersion if and
   only if the map to projective space (over the ring
   $\kk[\NN^{Q_1}\cap \Circuit(Q)]$) determined by the line bundle
   $L=\bigotimes_{i\in Q_0} L_i$ and the subspace
   $\kk\langle\pi_2\big{(}\NN(Q)\cap
   \pi_1^{-1}(\vartheta)\big{)}\rangle \subseteq H^0(X,L)$ is a closed
   immersion.  The sections $ \pi_2\big{(}\NN(Q)\cap
   \pi_1^{-1}(\chi_i-\chi_0)\big{)}$ that label the set of paths in
   $Q$ from $0\in Q_0$ to $i\in Q_0$ form a $\kk$-vector space basis
   of $H^0(X,L_i)$. By taking the product of these sections over all
   vertices in $Q$, we see that the divisors labelling paths in the
   quiver encode the basis $\pi_2\big{(}\NN(Q)\cap
   \pi_1^{-1}(\vartheta)\big{)}$ of $\bigotimes_{i\in Q_0}H^0(X,L_i)$.
   By the assumption, these sections span $H^0(X,\bigotimes_{i\in Q_0}
   L_i)$, and part \one\ follows. For part \two, see
   Craw--Smith~\cite[Theorem~4.15]{CrawSmith}.\qed
 \end{sketch}

 \begin{example}
   For a sequence $\mathcal{L}=(\mathscr{O}_X,L_1)$ with $L_1$ very ample
   of $X$, Theorem~\ref{thm:surjcriterion}\two\ shows that the
   restriction of the tautological bundle
   $\mathscr{W}_1=\mathscr{O}_{\vert \mathcal{L}_1\vert}(1)$ on $\vert
   L_1\vert$ to the image of $X$ under the closed immersion
   $\varphi_{\vert L_1\vert}\colon X\to \vert L_1\vert$ is equal to
   $L_1$. 
 \end{example}

 \begin{remark}
   Theorem~\ref{thm:surjcriterion} enables one to show that every
   projective toric variety $X$ admits many collections $\mathcal{L}$
   for which $\varphi_{\vert \mathcal{L}\vert}$ is a closed immersion.
   The key is the notion of \emph{multigraded regularity} introduced
   by Maclagan--Smith~\cite{MaclaganSmith}. Using an application of
   regularity due to Hering-Schenck--Smith~\cite{HSS}, one can show
   that if $L_1,\dots, L_{r-1}$ are basepoint-free line bundles that
   do not all lie in the same face of the nef cone of $X$, then there
   exists $L_r\in \Pic(X)$ that is $\mathscr{O}_X$-regular with
   respect to $L_1,\dots, L_{r-1}$ such that $\varphi_{\vert
     \mathcal{L}\vert}$ is a closed immersion for
   $\mathcal{L}=(\mathscr{O}_X,L_1,\dots, L_r)$.
 \end{remark}

 \subsection{Toric varieties as fine moduli spaces}
 We now use the morphism to the multilinear series $\varphi_{\vert
 \mathcal{L}\vert}\colon X\to \vert\mathcal{L}\vert$ to give the fine
 moduli space construction of $X$. Recall from
 Section~\ref{sec:moduliboundquiver} that for any bound quiver
 $(Q,\varrho)$, the set of relations $\varrho$ determine an ideal
 $I_\varrho\subseteq \kk[y_a : a\in Q_1]$ that cuts out the fine
 moduli space
 \[
 \mathcal{M}_\vartheta(Q,\varrho)=
 \mathbb{V}(\Irel)\git_\vartheta T
 \]
 of $\vartheta$-stable bound representations of $(Q,\varrho)$ of
 dimension vector $(1,\dots,1)\in \ZZ^{Q_0}$ . For a complete bound
 quiver of sections, the set $\varrho$ of relations comprises path
 differences $p-p'\in \kk Q$, so the ideal $I_\varrho$ is a binomial
 ideal. It is easy to see that the toric ideal of equations $I_Q$ and
 the binomial ideal of relations $I_\varrho$ satisfy
 $I_\varrho\subseteq I_Q$, and hence we obtain
 \begin{equation}
 \label{eqn:inclusions}
 \mathbb{V}(I_Q)\subseteq \mathbb{V}(I_\varrho)\subseteq
 \mathbb{A}^{Q_1}_\kk.
 \end{equation}
 It follows from Theorem~\ref{thm:image} that the image
 $\mathbb{V}(I_Q)\git_\vartheta T$ of the morphism $\varphi_{\vert
 \mathcal{L}\vert}\colon X\to\vert\mathcal{L}\vert$ is a subscheme of
 $ \mathcal{M}_\vartheta(Q,\varrho)$. Both inclusions from
 \eqref{eqn:inclusions} can be proper, both can be equality, and
 either one but not the other can be proper; in short, anything goes!

 \begin{exercise}
    For each sequence below, decide which of the inclusions
    from \eqref{eqn:inclusions} is proper: 
 \begin{enumerate}
 \item[\one] $\mathcal{L}=
   \big(\mathscr{O}_{\mathbb{P}^1},\mathscr{O}_{\mathbb{P}^1}(3)\big)$
   on $\mathbb{P}^1_\kk$ defining the rational
   normal curve of degree three;
 \item[\two] $\mathcal{L}=
   \big(\mathscr{O}_{\mathbb{P}^2},\mathscr{O}_{\mathbb{P}^2}(1),\mathscr{O}_{\mathbb{P}^2}(2)\big)$
   on $\mathbb{P}^2$ that defines the bound Be{\u\i}linson quiver for
   $\mathbb{P}^2_\kk$;
 \item[\three] $\mathcal{L} = \big(\mathscr{O}_{X},
  \mathscr{O}_{X}(0,1), \mathscr{O}_{X}(1,0)\big)$ on the 3-fold $X$
  from Example~\ref{ex:3foldflop};
 \end{enumerate}
 \end{exercise}
 
 For a more involved example we augment the sequence $\mathcal{L}$ on
 $\mathbb{F}_1$ from Example~\ref{exa:F1asmultilinear} by adding an
 extra line bundle.

 \begin{example}
 \label{ex:F1four}
 For $X=\mathbb{F}_1$ and $\mathcal{L} = \bigl( \mathscr{O}_{X},
 \mathscr{O}_{X}(1,0), \mathscr{O}_{X}(0,1), \mathscr{O}_{X}(1,1)
 \bigr)$, the complete quiver of sections is shown in
 Figure~\ref{fig:F1tilting}: the sections from the total coordinate
 ring of $X$ that determines the arrows are illustrated in
 Figure~\ref{fig:F1tilting}(a); the arrows are listed in
 Figure~\ref{fig:F1tilting}(b);
 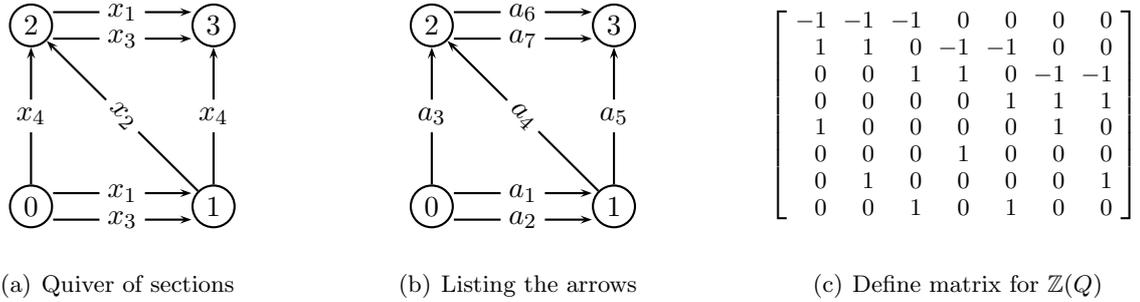
\begin{figure}[!ht]
    \centering
    \mbox{
      \subfigure[Quiver of sections]{
        \psset{unit=1.2cm}
        \begin{pspicture}(-0.5,-0.5)(2.5,2.2)
          \cnodeput(0,0){A}{0}
          \cnodeput(2,0){B}{1} 
          \cnodeput(0,2){C}{2}
          \cnodeput(2,2){D}{3}
          \psset{nodesep=0pt}
          \ncline[offset=5pt]{->}{A}{B} \lput*{:U}{$x_1$}
          \ncline[offset=-5pt]{->}{A}{B} \lput*{:U}{$x_3$}
          \ncline{->}{A}{C} \lput*{:270}{$x_4$}
          \ncline{->}{B}{D} \lput*{:270}{$x_4$}
%          \ncarc[offset=1pt]{<-}{C}{B} \lput*{:U}{$x_1x_2$}
          \ncline{->}{B}{C} \lput*{:180}{$x_2$}
          \ncline[offset=5pt]{->}{C}{D} \lput*{:U}{$x_1$}
          \ncline[offset=-5pt]{->}{C}{D} \lput*{:U}{$x_3$}
        \end{pspicture}}\qquad \qquad  
      \subfigure[Listing the arrows]{
        \psset{unit=1.2cm}
        \begin{pspicture}(-0.5,-0.5)(2.5,2.2)
          \cnodeput(0,0){A}{0}
          \cnodeput(2,0){B}{1} 
          \cnodeput(0,2){C}{2}
          \cnodeput(2,2){D}{3}
          \psset{nodesep=0pt}
          \ncline[offset=5pt]{->}{A}{B} \lput*{:U}{$a_1$}
          \ncline[offset=-5pt]{->}{A}{B} \lput*{:U}{$a_2$}
          \ncline{->}{A}{C} \lput*{:270}{$a_3$}
          \ncline{->}{B}{D} \lput*{:270}{$a_5$}
%          \ncarc[offset=1pt]{<-}{C}{B} \lput*{:U}{$a_4$}
          \ncline{->}{B}{C} \lput*{:180}{$a_4$}
          \ncline[offset=5pt]{->}{C}{D} \lput*{:U}{$a_6$}
          \ncline[offset=-5pt]{->}{C}{D} \lput*{:U}{$a_7$}
        \end{pspicture}}\qquad \qquad\quad
      \subfigure[Define matrix for $\ZZ(Q)$]{
 \renewcommand{\arraystretch}{0.8}
   \renewcommand{\arraycolsep}{3pt}
\psset{unit=1.2cm}
        \begin{pspicture}(-0.5,-0.5)(2.5,2.2)
  \rput(0.8,1){$\left[ \text{\footnotesize $\begin{array}{rrrrrrr}
        -1 & -1 & -1 &  0 &  0 &  0 &  0\\
         1 &  1 &  0 & -1 & -1 &  0 &  0\\
         0 &  0 &  1 &  1 &  0 & -1 & -1\\
         0 &  0 &  0 &  0 &  1 &  1 &  1\\ 
         1 &  0 &  0 &  0 &  0 &  1 &  0\\
         0 &  0 &  0 &  1 &  0 &  0 &  0\\
         0 &  1 &  0 &  0 &  0 &  0 &  1\\
         0 &  0 &  1 &  0 &  1 &  0 &  0
      \end{array}$} \right]$}
 \end{pspicture}}
    }    \caption{A tilting quiver on the Hirzebruch surface $\mathbb{F}_{1}$ \label{fig:F1tilting}}
  \end{figure}
  and the section lattice $\ZZ(Q)$ is generated by the columns of the
  matrix presented in Figure~\ref{fig:F1tilting}(c).  The toric ideal
  is $I_Q = \big{(}y_6y_3-y_5y_1, y_7y_3-y_5y_2, y_2y_6 - y_1y_7
  \big{)}$, and $\mathbb{V}(I_Q)\subset\mathbb{A}^7_\kk$ is a normal
  affine toric variety of dimension 5.  In this case, the ideal of
  relations is
 \[
 I_\varrho = \big{(}y_3y_6-y_1y_5, y_3y_7-y_2y_5, y_2y_4y_6 -
 y_1y_4y_7\big{)}
  = I_Q \cap (y_3, y_4, y_5),
 \]
 so $I_Q$ is a primary component of $I_\varrho$.  Geometrically, the
 toric variety $\mathbb{V}(I_Q)$ is the unique irreducible component
 of the binomial subscheme $\mathbb{V}(I_\varrho)\subseteq
 \mathbb{A}^{7}_\kk$ that does not lie in the $\vartheta$-unstable
 locus $\mathbb{V}(B_Q)$ in $\mathbb{A}^7_\kk$ for
 $\vartheta=(-3,1,1,1)$. In particular, we have that
 $\mathbb{V}(I_Q)\git_\vartheta
 T\cong\mathbb{V}(I_\varrho)\git_\vartheta T$ even although
 $\mathbb{V}(I_Q)\neq \mathbb{V}(I_\varrho)$.
\end{example}

For $I_\varrho \subset \kk[y_a : a\in Q_1]$, the saturation of
$I_\varrho$ by $B_Q$ is
 \[
 (I_\varrho : B_Q^\infty):= \big{\{} f\in\kk[y_a : a\in Q_1] : b^nf\in
 I_\varrho\text{ for some }b\in B_Q, n\geq 0\big{\}}.
 \]
 The important point for our construction is that the ideal $B_Q$ is
 given explicitly, so one can check when the image is a fine moduli
 space of stable representations of a bound quiver:
 
 \begin{theorem}
  \label{thm:finequotient}
  Let $(Q,\varrho)$ be the complete bound quiver of sections for a
  sequence $\mathcal{L}$ of basepoint-free line bundles on $X$, and let
  $\vartheta\in \Wt(Q)$ satisfy $\vartheta_i>0$ for $i\neq 0$. The
  following are equivalent:
  \begin{enumerate}
  \item[\one] the image of $\varphi_{|\mathcal{L}|}$ is equal to the
    moduli space $\mathcal{M}_{\vartheta}(Q,\varrho)$;
  \item[\two] the ideal $I_Q$ equals the ideal quotient $(I_\varrho :
    B_Q^\infty)$.
  \end{enumerate}
  Moreover, every projective toric variety $X$ admits many sequences
  $\mathcal{L}$ for which the morphism $\varphi_{\vert \mathcal{L}\vert}$ is a
  closed immersion, and where one and hence both of the above
  conditions holds.
 \end{theorem}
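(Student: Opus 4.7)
The equivalence of \one\ and \two\ should reduce to an exercise in translating between schemes and their defining ideals, while accounting for the fact that the GIT quotient only sees points away from the $\vartheta$-unstable locus. My plan is to first recall from Proposition~\ref{pro:smooth} that, since $\vartheta\in \Wt(Q)$ satisfies $\vartheta_i>0$ for $i\neq 0$ (and hence is generic by Exercise~\ref{ex:generic}), the $\vartheta$-unstable locus in $\mathbb{A}^{Q_1}_\kk$ is precisely $\mathbb{V}(B_Q)$. Combined with Theorem~\ref{thm:image}, which presents the image of $\varphi_{|\mathcal{L}|}$ as $\mathbb{V}(I_Q)\git_\vartheta T$ and identifies it with the geometric quotient of $\mathbb{V}(I_Q)\setminus \mathbb{V}(B_Q)$ by $T$, this reduces condition \one\ to the scheme-theoretic equality
\[
\mathbb{V}(I_Q)\cap\bigl(\mathbb{A}^{Q_1}_\kk\setminus \mathbb{V}(B_Q)\bigr) = \mathbb{V}(I_\varrho)\cap\bigl(\mathbb{A}^{Q_1}_\kk\setminus \mathbb{V}(B_Q)\bigr).
\]

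Next I would translate this geometric equality into ideal theory using the standard correspondence between open subschemes cut out by a monomial ideal and ideal saturation: two ideals $J_1\subseteq J_2$ agree on the complement of $\mathbb{V}(B_Q)$ if and only if $(J_1\colon B_Q^\infty)=(J_2\colon B_Q^\infty)$. Applied to the chain $I_\varrho\subseteq I_Q$ from \eqref{eqn:inclusions}, condition \one\ becomes $(I_\varrho\colon B_Q^\infty) = (I_Q\colon B_Q^\infty)$. To finish, I would observe that $I_Q$ is the toric (prime) ideal of the integral semigroup $\NN(Q)$, so $(I_Q\colon B_Q^\infty)$ is either $I_Q$ itself or the unit ideal; the latter is excluded because the image of $\varphi_{|\mathcal{L}|}$ is a nonempty subscheme of $\vert\mathcal{L}\vert$. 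Hence $(I_Q\colon B_Q^\infty)=I_Q$, and the displayed condition collapses to $I_Q=(I_\varrho\colon B_Q^\infty)$, which is precisely \two.

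For the existence statement in the second sentence, the natural strategy is to invoke the remark after Theorem~\ref{thm:surjcriterion}: given a projective toric variety $X$, one chooses basepoint-free line bundles $L_1,\dots,L_{r-1}\in\Pic(X)$ not lying in a common face of the nef cone, and then applies Hering--Schenck--Smith~\cite{HSS} to produce $L_r\in\Pic(X)$ which is $\mathscr{O}_X$-regular with respect to $L_1,\dots,L_{r-1}$ in the sense of multigraded regularity of Maclagan--Smith~\cite{MaclaganSmith}. By Theorem~\ref{thm:surjcriterion}, the associated sequence $\mathcal{L}=(\mathscr{O}_X,L_1,\dots,L_r)$ makes $\varphi_{|\mathcal{L}|}$ a closed immersion. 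The task is then to verify condition \two\ for this $\mathcal{L}$.

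The main obstacle, and the heart of the proof, is this last verification. The inclusion $I_\varrho\subseteq I_Q$ is automatic, so the content is to show that, modulo saturation by $B_Q$, the binomial relations in $\varrho$ generate the entire toric ideal of $\NN(Q)$. My plan would be to use the multigraded regularity hypothesis to show that every binomial $y^u - y^v\in I_Q$ (equivalently, every lattice relation $u - v\in\Ker(\pi)$) can be reduced, after multiplying by a suitable monomial supported on a spanning tree of $Q$ rooted at $0$, to a sum of elements of $\langle\varrho\rangle$; in quiver-theoretic terms, every pair of parallel paths in $Q$ with the same divisor class becomes equivalent modulo $\varrho$ after prepending or appending sufficiently many arrows. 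This lifting-along-trees argument is the delicate step, and it is exactly where regularity enters: it ensures that the multiplication maps on sections of appropriate twists are surjective, which geometrically guarantees that the relations in $\varrho$ capture all toric syzygies visible in the $\vartheta$-stable locus. The required details appear in~\cite{CrawSmith}.
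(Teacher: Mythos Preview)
Your argument for the equivalence \one$\iff$\two\ matches the paper's proof essentially line for line: identify the image as $\mathbb{V}(I_Q)\git_\vartheta T$ via Theorem~\ref{thm:image}, the moduli space as $\mathbb{V}(I_\varrho)\git_\vartheta T$, compare $\vartheta$-stable loci using $\mathbb{V}(B_Q)$ from Proposition~\ref{pro:smooth}, and translate via saturation. Your extra remark that $(I_Q:B_Q^\infty)=I_Q$ because $I_Q$ is prime and the image is nonempty is a useful clarification that the paper leaves implicit.

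For the existence statement, both you and the paper defer the hard work to Craw--Smith~\cite{CrawSmith}, but your sketch of the construction is slightly off. You propose choosing $L_1,\dots,L_{r-1}$ and then a single regular $L_r$, following the Remark after Theorem~\ref{thm:surjcriterion}; that Remark, however, only guarantees a closed immersion, not condition~\two. The paper's construction instead fixes $L_1,\dots,L_{r-2}$ (whose subsemigroup contains an ample bundle) and produces \emph{two} further bundles $L_{r-1},L_r$, and the method described is not your lifting-along-trees reduction but rather a regularity argument forcing a certain ideal to have only quadratic generators, followed by an efficient saturation technique. Your intuition that multigraded regularity is the engine is correct, but the mechanism you outline is not the one actually used.
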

 \begin{proof}
   The image of $\varphi_{|\mathcal{L}|}$ is $\mathbb{V}(I_Q)
   \git_\vartheta T$ by Theorem~\ref{thm:image}, while the moduli
   space $\mathcal{M}_{\theta}(Q,\varrho)$ is defined to be
   $\mathbb{V}(I_\varrho) \git_\vartheta T$. These GIT quotients
   coincide precisely when the $\vartheta$-stable loci in
   $\mathbb{V}(I_Q)$ and $\mathbb{V}(I_\varrho)$ coincide. The former
   is $\mathbb{V}(I_Q)\setminus \mathbb{V}(B_Q)$, the latter is
   $\mathbb{V}(I_\varrho)\setminus \mathbb{V}(B_Q)$, and since
   $\mathbb{V}(I_Q)\subseteq \mathbb{V}(I_\varrho)$ holds by
   \eqref{eqn:inclusions}, these $\vartheta$-stable loci coincide precisely
   when $I_Q = (I_\varrho : B_Q^\infty)$.
   
   The existence result is the hardest part of
   Craw--Smith~\cite{CrawSmith}.  To obtain appropriate sequences
   $\mathcal{L}$, we provide an explicit construction as follows: for
   basepoint-free line bundles $L_1, \dotsc, L_{r-2}$ on $X$, if
   the subsemigroup of $\Pic(X)$ generated by $L_1, \dotsc, L_{r-2}$
   contains an ample line bundle, then there exist line bundles
   $L_{r-1}$ and $L_r$ such that $\mathcal{L} = (\mathscr{O}_X, L_1,
   \dotsc, L_r)$ satisfies the condition. The proof use multigraded
   regularity to show that a particular ideal has only quadratic
   generators, and then employs an efficient saturation technique.
\end{proof}
 
\begin{corollary}
  Every normal semiprojecive toric variety $X$ is isomorphic to a fine
  moduli space $\mathcal{M}_\vartheta(Q,\varrho)$ for some complete
  bound quiver of sections and for $\vartheta\in \Wt(Q)$ satisfying
  $\vartheta_i>0$ for $i\neq 0$.
 \end{corollary}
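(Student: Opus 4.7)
The proof will combine Theorem~\ref{thm:image} with the equivalent conditions of Theorem~\ref{thm:finequotient}. My plan is to exhibit on $X$ a sequence $\mathcal{L}=(\mathscr{O}_X,L_1,\dots,L_r)$ of basepoint-free line bundles whose complete bound quiver of sections $(Q,\varrho)$ satisfies both of the following: the morphism $\varphi_{\vert\mathcal{L}\vert}\colon X\to\vert\mathcal{L}\vert$ is a closed immersion, and the saturation identity $I_Q=(I_\varrho:B_Q^\infty)$ holds. Granted such a sequence, Theorem~\ref{thm:finequotient} identifies the image of $\varphi_{\vert\mathcal{L}\vert}$ with $\mathcal{M}_\vartheta(Q,\varrho)$ for any $\vartheta\in\Wt(Q)$ with $\vartheta_i>0$ for $i\neq 0$, and the closed immersion then delivers the required isomorphism $X\cong\mathcal{M}_\vartheta(Q,\varrho)$.

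In the projective case this is precisely the final existence statement of Theorem~\ref{thm:finequotient}, so the substantive task is to extend it to the semiprojective setting. Writing $\pi\colon X\to\Spec(R_0)$ for the structure morphism to the affine base, the first step is to select basepoint-free line bundles $L_1,\dots,L_{r-2}$ whose subsemigroup in $\Pic(X)$ contains a $\pi$-ample class; since $X$ is semiprojective, relatively ample basepoint-free bundles exist in abundance. I would then invoke the Hering--Schenck--Smith application of multigraded Castelnuovo--Mumford regularity (cited in the remark following Theorem~\ref{thm:surjcriterion}) to produce further line bundles $L_{r-1}$ and $L_r$ so that $\bigotimes_{i\in Q_0}L_i$ is $\pi$-very ample and the multiplication map $\bigotimes_{i=1}^{r} H^0(X,L_i)\to H^0\big(X,\bigotimes_{i\in Q_0} L_i\big)$ surjects. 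Theorem~\ref{thm:surjcriterion}\one\ then guarantees that $\varphi_{\vert\mathcal{L}\vert}$ is a closed immersion.

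The principal obstacle, and where genuine care is required, is the saturation identity $I_Q=(I_\varrho:B_Q^\infty)$ in the semiprojective case. I expect the projective-case argument of Craw--Smith to carry across essentially verbatim: the ideals $I_Q$, $I_\varrho$ and $B_Q$ are defined purely from the combinatorial data of $(Q,\varrho)$ and the section lattice $\ZZ(Q)$ appearing in diagram~\eqref{eqn:diagram}, and do not depend on whether $X$ is globally projective or only projective over the affine base $\Spec(R_0)$. The efficient saturation technique ultimately rests on cohomology-vanishing estimates for sufficiently positive twists, which pass to the relative projective setting provided one tracks the $R_0$-module structures on the graded pieces of the total coordinate ring of $X$ and of $\kk[\NN(Q)]$. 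With the saturation identity in hand, the corollary follows at once from Theorem~\ref{thm:finequotient}.
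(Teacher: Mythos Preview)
The paper gives no separate proof of the Corollary; it is placed immediately after Theorem~\ref{thm:finequotient} and is evidently meant to be read as a direct consequence. You are right to notice the tension: the existence clause in Theorem~\ref{thm:finequotient} (``every projective toric variety $X$ admits many sequences\dots'') is stated only for projective $X$, so the semiprojective case is not literally covered by what is written. Your strategy---carry the Craw--Smith existence argument through in the relative setting over $\Spec(R_0)$---is the natural one and is presumably what the author has in mind, given that Theorems~\ref{thm:image} and~\ref{thm:surjcriterion} are already stated in the semiprojective generality and that semiprojective examples (e.g.\ Example~\ref{exa:GHilb1/3(1,2)}) are worked out explicitly.

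That said, your argument is more of an outline than a proof. The two substantive steps are (a) producing $L_{r-1},L_r$ via the Hering--Schenck--Smith regularity application, and (b) the saturation identity $I_Q=(I_\varrho:B_Q^\infty)$. For (a), the remark you cite after Theorem~\ref{thm:surjcriterion} is itself phrased for projective $X$, so you would need to verify that the relevant vanishing statements from \cite{MaclaganSmith,HSS} go through over an affine base; this is plausible since multigraded regularity is ultimately a local-cohomology statement on the Cox ring, but it is not automatic and should be checked rather than asserted. For (b), your claim that the saturation argument ``carries across essentially verbatim'' is reasonable---the ideals $I_Q$, $I_\varrho$, $B_Q$ are combinatorially defined from $(Q,\varrho)$ and $\ZZ(Q)$---but the ``efficient saturation technique'' in Craw--Smith relies on quadratic generation coming from the regularity bounds, so (b) is coupled to (a). In short: you have correctly identified where the work lies and sketched the right route, matching what the paper leaves implicit, but the relative-regularity input needs to be pinned down rather than waved through.
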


% \begin{example}
%   For $X=\mathbb{P}^1$ and
%   $\mathcal{L}=(\mathscr{O}_X,\mathscr{O}_X(3))$, the complete quiver
%   of sections has four arrows.  Examples~\ref{ex:O3}
%   and~\ref{exa:2ptimage} show that the toric ideal of equations is
% \[
% I_Q= \big{(}y^u-y^v : u,v\in \NN^4, u-v\in \Ker(\pi)\big{)} =
% \big{(}y_1y_3-y_2^2, y_1y_4-y_2y_3, y_2y_4-y_3^2\big{)}.
% \]
% However,  $I_R=(0)$ because $Q$ admits no
% paths of length at least two.
% \end{example}

\begin{example}
 \label{exa:GHilb1/3(1,2)}
  For the minimal resolution $X$ of the quotient singularity of type
  $\frac{1}{3}(1,2)$, the complete bound quiver of sections
  $(Q,\varrho)$ of the sequence $\mathcal{L}=(\mathscr{O}_X,L_1,L_2)$ on
  $X$ from Example~\ref{exa:qos1/3(1,2)} is the bound McKay quiver
  shown in Figure~\ref{fig:qos1/3(1,2)}(a). Using sequence
  \eqref{eq:fundiagramres1/3(1,2)}, compute the lattice points of the
  polyhedra $\conv(\NN^4\cap\pi^{-1}(L_i))$ for $i=1,2$ and compare
  $\conv(\NN^4\cap\pi^{-1}(L_1\otimes L_2))$ from
  Figure~\ref{fig:res1/3(1,2)}(b). It follows that the multiplication
  map $H^0(L_1)\otimes_\kk H^0(L_2)\to H^0(L_1\otimes L_2)$ is
  surjective.  Since $L_1\otimes L_2$ is very ample,
  Theorem~\ref{thm:surjcriterion} implies that $\varphi_{\vert
    \mathcal{L}\vert}$ is a closed immersion. The section lattice
  $\ZZ(Q)$ is generated by the columns of the matrix
 \[
 \left[ \text{\footnotesize $\begin{array}{rrrrrrr}
        -1 & -1 &  0 &  1 &  1 &  0 \\
         1 &  0 & -1 & -1 &  0 &  1 \\
         0 &  1 &  1 &  0 & -1 & -1 \\
         1 &  0 &  1 &  0 &  1 &  0 \\ 
         0 &  0 &  1 &  1 &  1 &  0 \\
         0 &  0 &  0 &  1 &  1 &  1 \\
         0 &  1 &  0 &  1 &  0 &  1 \\
      \end{array}$} \right]
 \]
 and the toric ideal of equations $I_Q=(y_1y_4-y_2y_5, y_3y_6-y_1y_4,
 y_2y_5-y_3y_6)$ coincides with the binomial ideal $I_\varrho$ cut out
 by the relations, so $\mathbb{V}(I_Q)\git_\vartheta T =
 \mathcal{M}_\vartheta(Q,\varrho)$ for the weight $\vartheta =
 (-2,1,1)\in \Wt(Q)$.  This establishes that the morphism
 $\varphi_{\vert \mathcal{L}\vert}\colon X\to \vert\mathcal{L}\vert$
 identifies the minimal resolution $X$ with the fine moduli space $
 \mathcal{M}_\vartheta(Q,\varrho)$. Since $(Q,\varrho)$ is the bound
 McKay quiver, Remark~\ref{rem:G-Hilbert} shows that the
 minimal resolution $X$ is isomorphic to the $G$-Hilbert scheme. This
 result is due originally to Ito--Nakamura~\cite{ItoNakamura}
 \end{example}

 \begin{remark}
 \label{rem:CMT}
 For the bound McKay quiver $(Q,\varrho)$ of a finite abelian subgroup
 $G\subset \GL(n,\kk)$, Craw--Maclagan--Thomas~\cite{CMT1,CMT2}
 constructed a toric ideal of equations $I_Q\subset \kk[y_a : a\in
 Q_1]$ using an especially simple matrix, namely, the incidence matrix
 of $X$ augmented by $\vert G\vert$ blocks of $n\times n$ identity
 matrices.  For any weight $\vartheta\in \Wt(Q)$ satisfying
 $\vartheta_i>0$ for $i\neq 0$, the GIT quotient $\mathbb{V}(I_Q)
 \git_{\vartheta}T\subseteq \mathcal{M}_\vartheta(Q,\varrho)$ is the
 irreducible component of the $G$-Hilbert scheme that contains free
 $G$-orbit. Some properties of this \emph{coherent component} of
 $\mathcal{M}_\vartheta(Q,\varrho)$ are described in
 Section~\ref{sec:VGIT1}.
 \end{remark} 

 \section{Tilting bundles and exceptional collections}
 \label{sec:tilting}
 This section describes how the bounded derived category of coherent
 sheaves on a smooth projective toric variety can, in certain cases,
 be described via the category of finitely-generated representations
 of a bound quiver $(Q,\varrho)$.  We establish the relation between
 tilting bundles and full strong exceptional collections, and describe
 how one might hope to extend some of the ideas to certain smooth
 toric DM stacks.  We emphasise that, in light of the study of tilting
 bundles on rational surfaces by
 Hille--Perling~\cite{HillePerling,HillePerling2}, tilting bundles appear to
 exist (on toric varieties) only in rather special cases.

 \subsection{On derived categories of coherent sheaves}
 The sections on derived categories that follow are based on lectures
 that were presented in parallel with a sequence of lectures by Andrei
 C{\u{a}}ld{\u{a}}raru~\cite{Caldararu}. We omit in these notes the
 necessary introduction to derived categories, referring instead to
 \cite{Caldararu} and to the excellent book by
 Huybrechts~\cite{Huybrechts} for the background material. It is
 nevertheless appropriate to highlight one fundamental result that has
 special relevance for us.

 To this end, let $X$ be a smooth projective
 variety over $\kk$. Write $\coh(X)$ for the abelian category of
 coherent sheaves on $X$, and $D^b\big(\!\coh(X)\big)$ for the bounded
 derived category of coherent sheaves on $X$.

  \begin{proposition}
 \label{prop:diagonal}
 Let $X$ be a smooth projective variety and write $\pi_1, \pi_2 \colon
 X\times X\to X$ for the first and second projections. Let $\iota
 \colon \Delta\hookrightarrow X\times X$ denote the diagonal
 embedding. The functor 
 \[
 \Rderived(\pi_2)_*\big{(} \pi_1^*(-)\ltensor
 \mathscr{O}_{\Delta}\big{)}\colon D^b\big(\!\coh(X)\big) \to
 D^b\big(\!\coh(X)\big)
 \]
 is naturally isomorphic to the identity.
 \end{proposition}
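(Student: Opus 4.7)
The plan is to exploit the fact that the diagonal embedding $\iota\colon X\cong \Delta\hookrightarrow X\times X$ provides a section of both projections, so that $\pi_1\circ\iota=\pi_2\circ\iota=\id_X$. With this in hand, the two operations on the outside of the functor collapse, and the derived tensor product with the structure sheaf of $\Delta$ just realises the pushforward along $\iota$.

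More concretely, I would first identify $\mathscr{O}_\Delta$ with $\iota_*\mathscr{O}_X$ (this is the standard convention since $\iota$ is a closed immersion with reduced image). For $F\in D^b\big(\!\coh(X)\big)$, the next step is to apply the derived projection formula to the closed immersion $\iota$, which gives a natural isomorphism
\[
\pi_1^*(F)\ltensor \iota_*\mathscr{O}_X\;\cong\;\iota_*\big(\Lderived\iota^*\pi_1^*(F)\ltensor\mathscr{O}_X\big)\;\cong\;\iota_*\Lderived\iota^*\pi_1^*(F).
\]
Since $\pi_1$ is flat (it is a projection from a product), $\pi_1^*$ is already exact, and the composition $\pi_1\circ\iota=\id_X$ shows $\Lderived\iota^*\pi_1^*(F)\cong F$ naturally. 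Thus $\pi_1^*(F)\ltensor\mathscr{O}_\Delta\cong\iota_*(F)$.

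Finally I would apply $\Rderived(\pi_2)_*$ and use the composition of derived pushforwards, $\Rderived(\pi_2)_*\circ\iota_*\cong\Rderived(\pi_2\circ\iota)_*=\id$, where the first isomorphism holds because $\iota$ is a closed immersion, so $\iota_*$ is exact and already equal to its derived functor. Putting the steps together yields a natural isomorphism between the stated Fourier--Mukai-type functor and the identity on $D^b\big(\!\coh(X)\big)$.

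The main obstacle is the bookkeeping of derived functors and verifying that each natural isomorphism really is natural in $F$: the projection formula is the subtle step, since to promote the well-known underived projection formula to a statement in the derived category one needs either that $\iota$ has finite Tor-dimension (which holds, since $X$ is smooth so $\mathscr{O}_\Delta$ has a finite locally free resolution on $X\times X$) or a direct argument via the adjunction $\iota^*\dashv\iota_*$. Once this is set up carefully, all the other identifications ($\pi_1\iota=\iota_*$ being exact, composition of $\Rderived\pi_{2*}$ with $\iota_*$, and the identity $\pi_2\iota=\id_X$) are formal consequences of the universal property of the diagonal and produce the required natural isomorphism of endofunctors.
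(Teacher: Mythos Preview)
Your proposal is correct and follows essentially the same argument as the paper: both apply the derived projection formula for the closed immersion $\iota$ to rewrite $\pi_1^*(E)\ltensor\mathscr{O}_\Delta$ as $\Rderived\iota_*\big(\Lderived\iota^*\pi_1^*(E)\big)$, and then use $\pi_1\circ\iota=\pi_2\circ\iota=\id_X$ together with composition of derived pushforwards to conclude. Your additional remarks on why the projection formula is available (finite Tor-dimension from smoothness of $X$) and on $\iota_*$ being exact are welcome amplifications, but the underlying route is the same.
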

 \begin{proof}
   For any object $F\in D^b\big(\!\coh(X\times X)\big)$, the
   projection formula for $\iota$ gives
 \[
 \Rderived\iota_*\big{(}\Lderived\iota^*(F)\ltensor
 \mathscr{O}_X\big{)} \cong F\ltensor \Rderived\iota_*(\mathscr{O}_X)
 \cong F\ltensor \mathscr{O}_\Delta.
 \]
 Therefore, for any object $E\in D^b\big(\!\coh(X)\big)$ we have 
 \[
 \Rderived(\pi_2)_*\big{(}\pi_1^*(E)\ltensor
 \mathscr{O}_{\Delta}\big{)} \cong \Rderived(\pi_2)_*\Big{(}\Rderived\iota_*\big{(}\Lderived
 \iota^*(\pi_1^*(E))\ltensor \mathscr{O}_X\big{)}\Big{)} \cong \Rderived(\pi_2\circ \iota)_*\Big{(}\Lderived \big(\pi_1\circ
   \iota\big)^*(E)\ltensor \mathscr{O}_X\big{)}\Big{)}
 \]
 This is isomorphic to $E$ since both $\pi_2\circ \iota$ and
$\pi_1\circ \iota$ coincide with the identity.
 \end{proof}
 
 Many other natural isomorphisms will be useful. The following
 exercise highlights a pair of these, the proofs of which can be found in
 Huybrechts~\cite{Huybrechts} and Hartshorne~\cite{Hartshorne}.

 \begin{exercise}
 \label{ex:naturalisoms}
 Establish the following natural isomorphisms of functors:
 \begin{enumerate}
 \item[\one] for any object $E\in D^b\big(\!\coh(X)\big)$, $\Rderived
 \Hom(E,\mathscr{O}_X)\cong \Rderived\Gamma\circ
 \Rderived\mathcal{H}om(E,\mathscr{O}_X)$;
 \item[\two] $\Rderived(\pi_2)_* \circ \pi_1^*(-)\cong \Rderived \Gamma(-)\otimes \mathscr{O}_X$;
% \item[\three] for a flat morphism $g\colon Y\to X$ of projective
% varieties and for objects $E,F\in D^b\big(\!\coh(X)\big)$, 
% \[
% g^*\big(\!E\ltensor F\big) \cong g^*(E)\ltensor g^*(F).
% \]
 \end{enumerate}
 \end{exercise}

%\begin{proof}
% For \one, apply flat base change for the square
% \[
% \begin{CD}
% X\times X @>\pi_2>> X \\
% @V\pi_1 VV  @VgVV \\
% X @>f>> \Spec(\kk)
% \end{CD}
% \]
% to obtain an isomorphism of functors $\Rderived(\pi_2)_* \circ
% \pi_1^*(-)\cong g^*\Rderived f_*(-)$. Since both $f$ and $g$ are the
% structure morphism for $X$, we have $g^*(-) \cong -\otimes
% \mathscr{O}_X$ and $\Rderived f_*(-) \cong \Rderived \Gamma(-)$. Part
% \three\ follows from the isomorphism $\Gamma \circ
% \mathcal{H}om(E,-)\cong \Hom(E,-)$ and the fact that right derived
% functors are compatible with composition under appropriate
% assumptions.
%\end{proof}

 \subsection{Tilting sheaves}
 Let $F$ be a coherent sheaf on a smooth variety $X$ that is
 projective over an affine variety $\Spec(R)$. A coherent sheaf of the
 form $T:=\mathscr{O}_X\oplus F$ is \emph{tilting} if
 \begin{enumerate}
 \item[(T1)] the algebra $A:=
   \End_{\mathscr{O}_X}(T)$ has finite
   global dimension, i.e., the maximal projective dimension of any
   object in $\modA$ is finite;
 \item[(T2)] we have $\Ext^k_{\mathscr{O}_X}(T,T)=0$ for all $k>0$;
   and 
 \item[(T3)] the sheaf $T$ classically generates
   $D^b\big(\!\coh(X)\big)$, i.e., the smallest triangulated
   subcategory of $D^b\big(\!\coh(X)\big)$ containing $T$ and all of
   its direct summands is $D^b\big(\!\coh(X)\big)$.
 \end{enumerate}
 We call $A=\End_{\mathscr{O}_X}(T)$ the associated \emph{tilting
 algebra}, and $T$ a \emph{tilting bundle} if it is locally-free (we
 assume $\mathscr{O}_X$ is a summand, but one need not). The main
 result for tilting sheaves on smooth projective varieties is due
 independently to Baer~\cite{Baer} and Bondal~\cite{Bondal}:

 \begin{theorem}
 \label{thm:baerbondal}
 Let $X$ be a smooth variety that is projective over an affine
 variety.  For a tilting sheaf $T$ on $X$ with tilting algebra $A=
 \End(T)$, the functor $\Hom_{\mathscr{O}_X}(T,-)\colon
 \coh(X)\to\modAop$ induces an equivalence of triangulated categories
 \[
 \Rderived\Hom_{\mathscr{O}_X}(T,-)\colon
 D^b\big(\!\coh(X)\big)\to D^b\big(\!\modAop\big)
 \]
 with quasi-inverse $(-)\ltensor_A T\colon D^b\big(\!\modAop\big)\to
 D^b\big(\!\coh(X)\big)$.
 \end{theorem}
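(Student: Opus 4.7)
The plan is to derive the underlying tensor-hom adjunction $\bigl((-)\otimes_A T,\; \Hom_{\mathscr{O}_X}(T,-)\bigr)$, verify its unit and counit are isomorphisms on the natural generators $T$ and $A$ (which uses (T2)), then propagate to all objects by a d\'evissage argument (which uses (T1) and (T3)). I sketch the three stages in order, then flag where I expect the genuine technical work.

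First, derive each functor: set $F := \Rderived\Hom_{\mathscr{O}_X}(T,-)$, computed via injective resolutions in $\operatorname{Qcoh}(X)$, and $G := (-)\ltensor_A T$, computed via projective resolutions over $A$. The classical tensor-hom adjunction then lifts to $G \dashv F$ on the derived level. Next restrict to bounded coherent subcategories: since $T$ is a locally free coherent sheaf and $X$ is smooth and projective over an affine base, $F$ takes $D^b(\coh(X))$ to $D^b(\modAop)$, while (T1) ensures every object of $D^b(\modAop)$ admits a finite resolution by finitely generated projective right $A$-modules (each a summand of some $A^n$, with $G(A^n) = T^n$ coherent), so $G$ correspondingly restricts to $D^b(\modAop) \to D^b(\coh(X))$.

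Next I would evaluate the adjunction morphisms on the natural generators. The counit at $T$ reads
$$GF(T) = \Rderived\Hom_{\mathscr{O}_X}(T,T)\ltensor_A T \;\cong\; A \ltensor_A T \;\cong\; T,$$
where the middle isomorphism uses (T2) to collapse $\Rderived\Hom(T,T)$ to $A$ concentrated in degree zero. Symmetrically, (T2) yields $FG(A) \cong A$ for the unit at $A$, and a short diagram chase confirms these are the adjunction morphisms rather than merely abstract isomorphisms. Let $\mathcal{C} \subseteq D^b(\coh(X))$ be the full subcategory of objects on which the counit $\epsilon_E \colon GF(E) \to E$ is an isomorphism. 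Exactness of $F$ and $G$, together with the five lemma applied to morphisms of distinguished triangles, shows $\mathcal{C}$ is a triangulated subcategory closed under direct summands; it contains $T$, so (T3) forces $\mathcal{C} = D^b(\coh(X))$. A parallel argument on the $A$-module side, using that $A$ classically generates $D^b(\modAop)$ via the finite projective resolutions supplied by (T1), shows the unit $\eta_M \colon M \to FG(M)$ is an isomorphism for every $M$. Hence $F$ and $G$ are quasi-inverse equivalences.

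The step I expect to require the most care is the finiteness analysis underpinning the restrictions of $F$ and $G$ to bounded coherent derived categories in the semiprojective setting, where $X$ is merely projective over an affine base $\Spec(R)$ rather than projective over $\kk$. One must check that $A = \End(T)$ is a Noetherian $R$-algebra so that kernels between finitely generated right $A$-modules remain finitely generated, and that the bound on $\Rderived\Gamma$-cohomology (for projective-over-affine morphisms) together with local freeness of $T$ genuinely yields cohomological boundedness of $F$ on all of $\coh(X)$. Once these finiteness prerequisites are secured, the d\'evissage of the previous paragraph is essentially formal.
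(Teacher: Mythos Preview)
Your proposal is correct and follows essentially the same route as the paper: derive both functors, use (T2) to verify the composites are the identity on the generators $T$ and $A$, then propagate via triangulated d\'evissage using (T3) on the geometric side and (T1) on the algebraic side. The finiteness issue you flag is exactly the point the paper singles out, resolving it by observing that the centre of $A$ contains $R = \Hom(\mathscr{O}_X,\mathscr{O}_X)$ and invoking Van~den~Bergh~\cite[proof of Corollary~3.2.8]{VDB} to conclude that the cohomology modules $\Ext^i_{\mathscr{O}_X}(T,E)$ are finitely generated over $A$ whenever $E$ is coherent.
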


 \begin{proof}
   To simplify notation, write the functors as
   $F(-):=\Hom_{\mathscr{O}_X}(T,-)$ and $G(-):= -\otimes_A T$.  The
   first step is to construct the functors $\Rderived F$ and
   $\Lderived G$. Let $E$ be a quasicoherent sheaf on $X$.  The vector
   space $\Hom_{\mathscr{O}_X}(T,E)$ becomes a right $A$-module by
   precomposition, that is, for $a\in\Hom_{\mathscr{O}_X}(T,T)$ and
   $f\in \Hom_{\mathscr{O}_X}(T,E)$ set $a\cdot f = f\circ a\in
   \Hom_{\mathscr{O}_X}(T,E)$. Since $\Hom_{\mathscr{O}_X}(T,-)$ is a
   covariant left-exact functor and since the category of
   quasicoherent sheaves has enough injectives, one obtains a
   right-derived functor
 \[
 \Rderived F(-)=\Rderived \Hom_{\mathscr{O}_X}(T,-)\colon
 D^b\big(Q\!\coh(X)\big)\to D\big(\!\ModAop\big)
 \]
 from the bounded derived category of quasicoherent sheaves on $X$ to
 the derived category of the category $\ModAop$ of (not necessarily
 finitely-generated) right $A$-modules. The cohomology modules of the
 image are
 \begin{equation}
 \label{eqn:cohmods}
 H^i\big(\Rderived \Hom_{\mathscr{O}_X}(T,E)\big) =
 \Rderived^i\Hom_{\mathscr{O}_X}(T,E) =
 \Ext^i_{\mathscr{O}_X}(T,E). 
 \end{equation}
 Smoothness of $X$ implies that $\Ext^i_{\mathscr{O}_X}(T,E) = 0$ for
 $i<0$ and $i>\dim(X)$, so the image of the functor $\Rderived F$ lies
 in $D^b\big(\!\ModAop\big)$. Since $D^b\big(\!\coh(X)\big)$ is
 equivalent to the full subcategory of $D^b\big(Q\!\coh(X)\big)$ whose
 objects have coherent cohomology sheaves, we may consider the
 restriction of $\Rderived F$ to $D^b\big(\!\coh(X)\big)$.  We claim
 that the $A$-module $\Ext^i_{\mathscr{O}_X}(T,E)$ is finitely
 generated for any coherent sheaf $E$. Indeed, $X$ is projective over
 an affine variety $\Spec(R)$ and $T=\mathscr{O}_X\oplus F$, so the centre of $A$ contains $\Hom(\mathscr{O}_X,\mathscr{O}_X)\cong R$ as a subalgebra. The
 observation of Van den Bergh~\cite[proof of Corollary 3.2.8]{VDB} now
 shows that the cohomology modules from \eqref{eqn:cohmods} actually
 lie in $\modA$ whenever $E$ is coherent. Thus, we obtain by
 restriction a functor
 \[
 \Rderived F(-)=\Rderived \Hom_{\mathscr{O}_X}(T,-)\colon
 D^b\big(\!\coh(X)\big)\to D^b\big(\!\modAop\big).
 \]
 Similarly, since the module category $\ModAop$ has enough
 projectives and the functor $-\otimes_A T$ is right-exact, we
 obtain a left-derived functor
 \[
 \Lderived G(-)= -\ltensor_A T\colon D^b\big(\!\ModAop\big)\to D\big(Q\!\coh(X)\big)
 \]
 to the a priori unbounded derived category of quasicoherent sheaves.
 For an $A$-module $B$, the cohomology sheaves of the image are
 \[
 \mathcal{H}^j\Big{(}B\ltensor_A T\Big{)}= \mathcal{T}\text{or}^A_{-j}(B,T),
 \]
 and these vanish off a finite range since $A$ has finite global
 dimension. Furthermore, restricting to finitely generated $A$-modules
 ensures that these cohomology sheaves are coherent, giving
  \[
 \Lderived G(-)= -\ltensor_A T\colon D^b\big(\!\modAop\big)\to D^b\big(\!\coh(X)\big).
 \]
 Since $T$ satisfies property (T2), the composite functor satisfies 
 \[
 \Rderived F\circ \Lderived G(A) = \Rderived F\Big{(}A\ltensor_A
 T\Big{)}= \Rderived \Hom_{\mathscr{O}_X}(T,T) =
 \Hom_{\mathscr{O}_X}(T,T) = A,
 \]
 so $\Rderived F\circ \Lderived G$ is the identity on $A$. The
 smallest triangulated subcategory of $D^b\big(\!\modAop\big)$
 containing $A$ and its direct summands contains finitely-generated
 free $A$-modules, and hence finitely-generated projective
 $A$-modules.  Since $A$ has finite global dimension, every finitely
 generated $A$-module admits a finite projective resolution. This
 implies that smallest triangulated subcategory of
 $D^b\big(\!\modAop\big)$ containing $A$ and its direct summands is
 $D^b\big(\!\modAop\big)$. It follows that $\Rderived F\circ \Lderived
 G$ is the identity on the whole of $D^b\big(\!\modAop\big)$, so
 $\Lderived G(-)$ identifies $D^b\big(\!\modAop\big)$ with the
 triangulated subcategory of $D^b\big(\!\coh(X)\big)$ generated by
 \[
 \Lderived G(A) = A\ltensor_{A}T
 = T.
 \]
 Property (T3) now gives the derived equivalences.
 \end{proof}
 
 In the presence of a tilting sheaf, the derived equivalence from
 Theorem~\ref{thm:baerbondal} provides a significant simplication, and
 is the most that one can hope for in a triangulated category.  For
 example, the module category $\modA$ has finite length whereas the
 abelian category of coherent sheaves on a projective variety does
 not.

 \begin{example}
 \label{exa:affinetilting}
 The trivial bundle always provides a tilting
 bundle on an affine variety $X$.  Indeed, $\mathscr{O}_X$ determines
 both the $\kk$-algebra $A=\Hom(\mathscr{O}_X,\mathscr{O}_X)\cong
 H^0(\mathscr{O}_X)$ and the equivalence of abelian categories
 $\Gamma(X,-)\colon \coh(X)\to \modA$, where $\modA$ is the category
 of finite dimensional $H^0(\mathscr{O}_X)$-modules. Clearly
 $\Rderived \Gamma(X,-)$ is a derived equivalence.
 \end{example}

 \begin{remark}
   See Hille-Van den Bergh~\cite[Theorem~7.6]{HilleVdB} for a stronger
   form of Theorem~\ref{thm:baerbondal}.
 \end{remark}

  \subsection{Strongly exceptional sequences}
  The representation-theoretic notion of tilting sheaf on a smooth
  projective variety is closely related to the more algebro-geometric
  notion of a full strong exceptional sequence.  First, recall from
  Huybrechts~\cite[\S1.4]{Huybrechts} that an object $E$ in a
  $\kk$-linear triangulated category $D$ is \emph{exceptional} if
 \[
 \Hom_{D}(E,E) =\kk \text{ and }\Hom_{D}(E,E[\ell])=0 \text{ for }\ell\neq 0.
 \]
 A sequence $(E_0,E_1,\dots, E_m)$ of exceptional objects is
 \emph{exceptional} if $\Rderived \Hom_{D}(E_i,E_j)=0$ for
 $i>j$, and it is \emph{strongly exceptional} if in addition
 $\Hom_{D}(E_i,E_j[\ell])=0$ for $i<j$ and $\ell\neq 0$.  A sequence
 of objects is \emph{full} if $E_0, E_1,\dots, E_m$ classically
 generates $D$. If $D$ is the bounded derived category of coherent
 sheaves on a variety $X$ over $\kk$, we have
 \[
 \Hom_{D^b(\coh(X))}(E,F[\ell])=\Ext^\ell_{\mathscr{O}_X}(E,F).
 \]
 
 \begin{proposition}
 \label{prop:exceptionaltilting}
 Let $(E_0=\mathscr{O}_X,E_1,\dots, E_m)$ be a sequence of locally-free sheaves on a
 smooth projective variety $X$ with
 $\Hom_{\mathscr{O}_X}(E_i,E_i)=\kk$ for $0\leq i\leq m$, e.g., each
 $E_i$ is a line bundle. 
 \begin{enumerate}
 \item[\one] If $\bigoplus_{i=0}^m E_i$ satisfies conditions (T1)
   and (T2) then, by reordering if necessary, the sequence
   $(E_0,E_1,\dots,E_m)$ is strongly exceptional; if in addition
   $\bigoplus_{i=0}^m E_i$ satisfies (T3) then $(E_0,E_1,\dots,E_m)$ is a full strongly
   exceptional sequence.
 \item[\two] Conversely, every such full strongly exceptional sequence
   defines a tilting bundle.
 \end{enumerate}
 \end{proposition}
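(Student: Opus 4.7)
The plan is to translate both directions through the direct-sum decomposition $\Ext^k(T,T) = \bigoplus_{i,j}\Ext^k(E_i,E_j)$ and control the combinatorics via the Gabriel quiver of the algebra $A=\End(T)$.

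For part \one, condition (T2) immediately forces $\Ext^k(E_i,E_j)=0$ for all $i,j$ whenever $k>0$; combined with $\Hom(E_i,E_i)=\kk$ this makes each $E_i$ exceptional and supplies the higher-Ext vanishing required for strong exceptionality. The substantive task is to produce the correct ordering. I would introduce the Gabriel quiver $\Gamma$ of the finite-dimensional algebra $A$, whose vertices are the idempotents $e_i$ and whose arrows record a basis of the off-diagonal pieces of $\mathrm{rad}\,A/\mathrm{rad}^2\,A$. Condition (T1) forces $\Gamma$ to be acyclic, since a directed cycle based at a vertex produces a non-terminating minimal projective resolution of the corresponding simple $A$-module. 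Any nonzero element of $\Hom(E_i,E_j)$ with $i\neq j$ lies in $\mathrm{rad}\,A$ and thus factors as a sum of products of arrow generators, yielding a directed path between the corresponding vertices in $\Gamma$. Choosing a linear extension of the partial order induced by $\Gamma$ and relabeling accordingly gives $\Hom(E_i,E_j)=0$ for $i>j$, which together with the earlier Ext vanishing is strong exceptionality. Fullness of the resulting sequence is precisely the classical generation hypothesis (T3).

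For part \two, set $T=\bigoplus_{i=0}^m E_i$ and verify the three tilting conditions. Condition (T2) follows at once from the decomposition: for $k>0$ the summand $\Ext^k(E_i,E_j)$ vanishes by exceptionality when $i=j$, by the strong-exceptional hypothesis when $i<j$, and by the exceptional-collection condition $\Rderived\Hom(E_i,E_j)=0$ when $i>j$. Condition (T3), that $T$ classically generates $D^b\big(\!\coh(X)\big)$, is the fullness hypothesis verbatim.

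The expected main obstacle is condition (T1), finite global dimension of $A$. The algebra is finite dimensional since each $\Hom(E_i,E_j)$ is finite dimensional on the projective $X$. The strong-exceptional ordering forces $\Hom(E_i,E_j)=0$ for $i>j$ and $\Hom(E_i,E_i)=\kk$, so $A$ has a triangular block structure and its Gabriel quiver is acyclic. I would then invoke the standard fact that a finite-dimensional basic $\kk$-algebra with acyclic Gabriel quiver has finite global dimension: after choosing a topological ordering $v_1,\dots,v_n$ of the vertices, the simple $S_{v_1}$ at a source is projective, and for $i>1$ the first syzygy of $S_{v_i}$ is filtered by $S_{v_j}$ with $j<i$, all of which have finite projective dimension by induction. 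As a sanity check from the geometric side, smoothness of $X$ gives $\Ext^k_X(F,G)=0$ for $k>\dim X$ on coherent sheaves, and this bound can be transported through $\Hom(T,-)$ (once (T2) and (T3) are in hand) to give an alternative route to bounding the global dimension of $A$.
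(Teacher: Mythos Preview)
For part \two\ your argument is essentially the paper's: strong exceptionality makes $A$ a finite-dimensional algebra that can be realised in lower-triangular form (equivalently, one with acyclic Gabriel quiver), and such algebras have finite global dimension; (T2) and (T3) are immediate from the definitions.

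For part \one\ your route differs from the paper's and contains a gap. The paper never invokes (T1) to produce the ordering; it argues directly from the hypothesis $\Hom(E_i,E_i)=\kk$ that for distinct $i,j$ at most one of $\Hom(E_i,E_j)$, $\Hom(E_j,E_i)$ can be nonzero (otherwise a nonzero composition would force $E_i\cong E_j$), and then relabels. Your claim that condition (T1) forces the Gabriel quiver $\Gamma$ to be acyclic, on the grounds that ``a directed cycle \dots\ produces a non-terminating minimal projective resolution'', is false for finite-dimensional algebras in general: the path algebra of the two-cycle $1\rightleftarrows 2$ modulo just one of the two length-two closed paths has global dimension $2$ despite the oriented cycle in its Gabriel quiver. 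That counterexample is excluded in the present setting only because $\End(E_i)=\kk$ forces $e_iAe_i=\kk$, killing \emph{every} closed path in $A$; but you do not invoke this, and even with it the implication requires justification beyond the one you give. The paper's direct use of $\End(E_i)=\kk$ is more elementary and avoids this detour through global dimension entirely.
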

 \begin{proof}
 For \one, since $T:=\bigoplus_{i=0}^m E_i$ satisfies (T2) we have
 \begin{equation}
 \label{eqn:T2}
 0 = \Ext^\ell_{\mathscr{O}_X}(T,T) =
 \bigoplus_{i,j}\Ext^\ell_{\mathscr{O}_X}(E_i,E_j) 
 \end{equation}
 for $\ell>0$, which gives the required higher Ext-vanishing.
 The assumption $\Hom_{\mathscr{O}_X}(E_i,E_i)=\kk$ implies that each
 $E_i$ is exceptional. For $0\leq i\neq j\leq m$, one of the vector
 spaces $\Hom_{\mathscr{O}_X}(E_i,E_j)$,
 $\Hom_{\mathscr{O}_X}(E_j,E_i)$ must be trivial, otherwise
 $\Hom_{\mathscr{O}_X}(E_i,E_i)\neq \kk$ which is absurd. We may
 relabel if necessary to ensure that $i<j$ whenever
 $\Hom_{\mathscr{O}_X}(E_i,E_j)\neq 0$. The resulting sequence is
 strongly exceptional. The second statement from part \one\ is
 immediate.
 
 For \two, equation \eqref{eqn:T2} guarantees that a strongly
 exceptional sequence $(E_0,E_1,\dots,E_m)$ gives a locally free sheaf
 $T:= \bigoplus_i E_i$ satisfying (T2).  Set $A:=
 \End_{\mathscr{O}_X}(T)$. Since $X$ is projective, the $\kk$-vector
 spaces $\Hom(E_i,E_j) \cong H^0(E_j\otimes E_i^{-1})$ each have
 finite dimension, and hence $A$ is a finite dimensional
 $\kk$-algebra. Then $A$ is isomorphic to the quotient algebra $\kk
 Q/\langle \varrho\rangle$ of an acyclic bound quiver $(Q,\varrho)$
 with vertex set $Q_0=\{0,1,\dots,m\}$. Since $\Hom(E_i,E_j)=0$ for
 $i> j$, this algebra can be realised as an algebra of lower
 triangular matrices, from which it follows that $A$ has finite global
 dimension. Given (T1) and (T2), the fullness of the sequence then
 implies that $T= \bigoplus_i E_i$ satisfies (T3).
 \end{proof}

 \subsection{Resolution of the diagonal}
  Let $T$ be a locally-free sheaf on $X$ and set
 $A=\End_{\mathscr{O}_X}(T)$. If we regard $T$ as a sheaf of left
 $A$-modules then the (derived) dual $T^\vee=
 \mathcal{H}om(T,\mathscr{O}_X)$ is a sheaf of right $A$-modules. By
 pulling back via the first and second projections $\pi_i\colon
 X\times X\to X$, the sheaves $\pi_2^*(T)$ and $\pi_1^*(T^\vee)$ are
 naturally sheaves of left and right $A$-modules
 respectively. Consider the object of $D^b\big(\!\coh(X\times X)\big)$
 given by
 \[
 T^\vee\overset{\mathbf{L}}{\boxtimes}_AT:=
 \pi_1^*(T^\vee)\ltensor_A\pi_2^*(T).
 \]
 To understand this object, consider an object $M\in \modA$ and let
 $P^\bullet_M$ be a projective resolution of $M$ in $\modA$. Then we have
 \[
 \pi_1^*(T^\vee)\ltensor_A M = \pi_1^*(T^\vee)\otimes_A P^\bullet_M.
 \]
 Similarly, for an object $N\in \modAop$ we calculate
 $N\ltensor_A\pi_2^*(T)$ by replacing $N$ by a projective resolution
 of $N$ in $\modAop$. Putting this together, if we let $P^\bullet_A$
 be a projective resolution of $A$ in the category of left
 $A\otimes_\kk A^{\opp}$-modules then the isomorphism $M\otimes_A
 N\cong M\otimes_A A\otimes_A N$ enables us to compute
 \[
 T^\vee\overset{\mathbf{L}}{\boxtimes}_AT:=
 \pi_1^*(T^\vee)\otimes_A P^\bullet_A\otimes_A\pi_2^*(T).
 \]
 Armed with this observation, King~\cite{King2} established the
 relation between tilting bundles on smooth projective varieties and
 the celebrated resolution of the diagonal by
 Be{\u\i}linson~\cite{Beilinson}.

 \begin{proposition}
 \label{prop:resdiagonal}
 Let $T$ be a locally-free sheaf on $X$ satisfying (T1) and (T2).  If
 $T^\vee\overset{\mathbf{L}}{\boxtimes}_AT\rightarrow
 \mathscr{O}_\Delta$ is an isomorphism in $D^b\big(\!\coh(X\times
 X)\big)$ then $T$ is a tilting bundle.
 \end{proposition}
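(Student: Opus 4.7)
The plan is to verify condition (T3) using the hypothesis as an enhanced resolution of the diagonal, then transport any object $E\in D^b\big(\!\coh(X)\big)$ into the triangulated subcategory generated by $T$. First I would apply Proposition~\ref{prop:diagonal} to write
\[
E \;\cong\; \Rderived(\pi_2)_*\big(\pi_1^*(E)\ltensor\mathscr{O}_\Delta\big),
\]
and then substitute the assumed isomorphism $T^\vee\overset{\mathbf{L}}{\boxtimes}_AT\cong \mathscr{O}_\Delta$ to obtain
\[
E \;\cong\; \Rderived(\pi_2)_*\Big(\pi_1^*(E)\ltensor\pi_1^*(T^\vee)\ltensor_A\pi_2^*(T)\Big).
\]

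The next step is to slide the $\pi_2^*(T)$ factor out of the pushforward via the projection formula. Since $T$ is locally free, $\pi_1^*(E)\ltensor\pi_1^*(T^\vee)\cong \pi_1^*\Rderived\mathcal{H}om(T,E)$, so I would rewrite the expression as
\[
E \;\cong\; \Rderived(\pi_2)_*\big(\pi_1^*\Rderived\mathcal{H}om(T,E)\big)\ltensor_A T,
\]
keeping track that $\pi_1^*(T^\vee)$ has the structure of a sheaf of right $A$-modules (so the $\ltensor_A$ survives the pushforward). Then Exercise~\ref{ex:naturalisoms}\two\ identifies $\Rderived(\pi_2)_*\circ\pi_1^*(-)$ with $\Rderived\Gamma(-)\otimes\mathscr{O}_X$, and Exercise~\ref{ex:naturalisoms}\one\ combined with local-freeness of $T$ yields $\Rderived\Gamma\Rderived\mathcal{H}om(T,E)\cong \Rderived\Hom_{\mathscr{O}_X}(T,E)$. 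Assembling these gives the key formula
\[
E \;\cong\; \Rderived\Hom_{\mathscr{O}_X}(T,E)\ltensor_A T.
\]

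To finish, I would invoke (T1) and (T2). As in the proof of Theorem~\ref{thm:baerbondal}, the right $A$-module complex $\Rderived\Hom_{\mathscr{O}_X}(T,E)$ lies in $D^b(\modAop)$, and (T1) guarantees that every object of $D^b(\modAop)$ is quasi-isomorphic to a bounded complex of finitely generated projective $A$-modules. Such projectives are direct summands of free modules $A^n$, and $A\ltensor_A T=T$, so after tensoring with $T$ over $A$ we land in the triangulated subcategory of $D^b\big(\!\coh(X)\big)$ classically generated by $T$. Therefore $E$ lies in this subcategory, establishing (T3).

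The main obstacle is the bookkeeping in the second paragraph: one must ensure that the left/right $A$-module structures carried by $\pi_1^*(T^\vee)$ and $\pi_2^*(T)$ interact correctly with the projection formula, so that the derived tensor over $A$ can be extracted from under $\Rderived(\pi_2)_*$. Once that is handled cleanly, the remaining steps are essentially formal consequences of the natural isomorphisms recorded earlier and of the finite global dimension hypothesis.
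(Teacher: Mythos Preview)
Your proof is correct and follows essentially the same approach as the paper: both establish the key identity $E\cong \Rderived\Hom_{\mathscr{O}_X}(T,E)\ltensor_A T$ via the same chain of natural isomorphisms (projection formula for sheaves of $A$-modules, Exercise~\ref{ex:naturalisoms}\one--\two, Proposition~\ref{prop:diagonal}), with the only difference that you run the chain starting from $E$ while the paper starts from $\Rderived\Hom(T,E)\ltensor_A T$. Your final paragraph spells out explicitly the argument that (T1) forces $\Rderived\Hom(T,E)\ltensor_A T$ into the subcategory classically generated by $T$, whereas the paper simply refers back to the proof of Theorem~\ref{thm:baerbondal} for this step; the content is the same.
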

 \begin{proof}
 If (T1) and (T2) hold then the proof of Theorem~\ref{thm:baerbondal}
 shows (T3) holds if and only if
 \[
 \Rderived\Hom_{\mathscr{O}_X}(T,E)\ltensor_A
 T \cong E
 \quad\text{for every object }E\in
 D^b\big(\!\coh(X)\big).
 \]
 Since $T^\vee=\Rderived \Hom(T,\mathscr{O}_X)$,
 Exercise~\ref{ex:naturalisoms} parts \one\ and \two\ give
 \[
 \Rderived \Hom(T,E)\ltensor_A
 T\cong \Rderived \Gamma(E\ltensor_{\mathscr{O}_{X}} T^\vee)\ltensor_{A}T\cong  \Rderived(\pi_2)_*
   \Big{(}\pi_1^*(E\ltensor_{\mathscr{O}_{X}} T^\vee)\Big{)}\ltensor_{A}T.
 \]
 The projection formula (for sheaves of $A$-modules) and a standard
 result for derived tensor products under pullback gives
 \begin{eqnarray*}
 \Rderived(\pi_2)_* \Big{(}\pi_1^*(E\ltensor_{\mathscr{O}_{X}}
 T^\vee)\Big{)}\ltensor_{A}T & \cong & \Rderived(\pi_2)_*
 \Big{(}\pi_1^*(E\ltensor_{\mathscr{O}_{X}} T^\vee)\ltensor_{A}\pi_2^*(T)\Big{)} \\
 & \cong &
 \Rderived(\pi_2)_*\Big{(}\pi_1^*(E)\ltensor_{\mathscr{O}_{X\times X}}
 \pi_1^*(T^\vee)\overset{\mathbf{L}}{\otimes}_A \pi_2^*(T) \Big{)},
 \end{eqnarray*}
 giving $\Rderived \Hom(T,E)\ltensor_A T\cong
 \Rderived(\pi_2)_*\Big{(}\pi_1^*(E)\ltensor_{\mathscr{O}_{X\times X}}
 T^\vee\overset{\mathbf{L}}{\boxtimes}_AT\Big{)}$.  Since
 $T^\vee\overset{\mathbf{L}}{\boxtimes}_AT$ is isomorphic to
 $\mathscr{O}_\Delta$, the result follows from
 Proposition~\ref{prop:diagonal}.
 \end{proof}

 \begin{example}[Be{\u\i}linson's resolution]
 \label{exa:Beilinson}
 For $X=\mathbb{P}_\kk^n$, consider the sequence
 \[
 \mathcal{L}=\big(\mathscr{O}_{\mathbb{P}^n_\kk},\mathscr{O}_{\mathbb{P}^n_\kk}(1),\dots,
   \mathscr{O}_{\mathbb{P}^n_\kk}(n)\big)
 \]
 and the locally free sheaf $T=\bigoplus_{i=0}^n
   \mathscr{O}_{\mathbb{P}^n_\kk}(i)$.  Proposition~\ref{prop:algebra}
   and Exercise~\ref{ex:Beilinsoncqos} establish that the algebra
   $A=\End_{\mathbb{P}^n_\kk}\big(T\big)$ is isomorphic to the
   quotient algebra $\kk Q/\langle \varrho\rangle$ for the bound
   Be{\u\i}linson quiver $(Q,\varrho)$ from
   Example~\ref{ex:boundBeilinson}. Conditions (T1) and (T2) can both
   be verified explicitly. Condition (T3) also holds, but this is
   harder to verify; it follows from the celebrated result of
   Be{\u\i}linson~\cite{Beilinson}, but it also follows from
   Proposition~\ref{prop:resdiagonal}. We now sketch this argument for
   $\mathbb{P}^2_\kk$, leaving aside the (admittedly important)
   details of the maps below.  Using Butler--King~\cite{ButlerKing} we
   calculate the projective resolution of $A$ in the category of left
   $A^e:=A\otimes_\kk A^{\opp}$-modules
 \[
 \begin{array}{ccccc}
 & &                                                      & & A^e(e_0 \otimes e_0^{\opp}) \\
 & & \big{(} A^e(e_0 \otimes e_1^{\opp})\big{)}^{\oplus 3}& & \oplus \\
 \big{(}A^e(e_2\otimes e_0^{\opp} )\big{)}^{\oplus 3} & \stackrel{\delta_2}{\longrightarrow} & \oplus &
 \stackrel{\delta_1}{\longrightarrow} & A^e (e_1 \otimes e_1^{\opp})\\
 & & \big{(}A^e(e_2 \otimes e_1^{\opp})\big{)}^{\oplus 3}& & \oplus \\
 &&&& A^e(e_2 \otimes e_2^{\opp}) 
 \end{array}
 \]
 Having $T=\mathscr{O}_{\mathbb{P}^2}\oplus
 \mathscr{O}_{\mathbb{P}^2}(1)\oplus\mathscr{O}_{\mathbb{P}^2}(2)$
 gives $T^\vee= \mathscr{O}_{\mathbb{P}^2}\oplus
 \mathscr{O}_{\mathbb{P}^2}(-1)\oplus \mathscr{O}_{\mathbb{P}^2}(-2)$,
 and hence the complex $\pi_1^*(T^\vee)\otimes_A P^\bullet_A\otimes_A
 \pi_2^*(T)$ is
 \[
 \begin{array}{ccccc}
 &&&&\mathscr{O}\boxtimes \mathscr{O} \\
& & \big{(}\mathscr{O}(-1)\boxtimes \mathscr{O}\big{)}^{\oplus
   3}& & \oplus \\
 \big{(}\mathscr{O}(-2)\boxtimes \mathscr{O}\big{)}^{\oplus
   3}&\stackrel{\delta_2}{\longrightarrow} & \oplus &
 \stackrel{\delta_1}{\longrightarrow} &\mathscr{O}(-1)\boxtimes
 \mathscr{O}(1) \\
 & & \big{(}\mathscr{O}(-2)\boxtimes \mathscr{O}(1)\big{)}^{\oplus
   3}& & \oplus \\
 &&&&\mathscr{O}(-2)\boxtimes \mathscr{O}(2) 
 \end{array}
 \]
 One can verify that this complex provides a resolution of
 $\mathscr{O}_\Delta$, so (T3) holds by
 Proposition~\ref{prop:resdiagonal}. Thus, $T$ is tilting and the
 derived equivalences
 \[
 D^b\big(\!\coh(X)\big)\cong D\big(\!\modAop\big)\cong D\big(\!\rep_\kk(Q,\varrho)\big)
 \]
 hold by Theorem~\ref{thm:baerbondal} and
 Proposition~\ref{prop:boundisomcats} (with the duality
 equivalence $\modA\cong \modAop$).
 \end{example}

 \begin{remark}
 To compare the resolution
 $T^\vee\overset{\mathbf{L}}{\boxtimes}_AT\longrightarrow
 \mathscr{O}_\Delta$ on $\mathbb{P}^2_\kk$ with that given by
 Be{\u\i}linson, use the Euler sequences
 \begin{equation*}
 \xymatrix{ 0\ar[r] &  \mathscr{O}(-1)\boxtimes \Omega^1(1)\ar[r]   &
   \mathscr{O}(-1)\boxtimes \mathscr{O}^{\oplus 3}\ar[r]   &  \mathscr{O}(-1)\boxtimes \mathscr{O}(1)\ar[r]   & 0 }
 \end{equation*}
 and
 \begin{equation*}
 \xymatrix{ 0\ar[r] &  \mathscr{O}(-2)\boxtimes \Omega^2(2)\ar[r]   &
   \mathscr{O}(-2)\boxtimes \mathscr{O}^{\oplus 3}\ar[r]   &  \mathscr{O}(-2)\boxtimes \Omega^1(2)\ar[r]   & 0 }
 \end{equation*}
 to see that
 $T^\vee\overset{\mathbf{L}}{\boxtimes}_AT\rightarrow
 \mathscr{O}_\Delta$ is quasi-isomorphic to Be{\u\i}linson's resolution
  \begin{equation*}
 \xymatrix{0\ar[r] &  
 \mathscr{O}(-2)\boxtimes \Omega^2(2)\ar[r] &  
 \mathscr{O}(-1)\boxtimes \Omega^1(1)\ar[r] &  
 \mathscr{O}\boxtimes \mathscr{O}\ar[r] &   \mathscr{O}_\Delta\ar[r] &   0.}
 \end{equation*}
 The calculation for the tilting bundle on $\mathbb{P}^n_\kk$ is similar. 
 \end{remark}

 \subsection{Smooth toric Fanos}
 The techniques introduced above were exploited by King~\cite{King2}
 to construct tilting bundles on each of the 5 smooth toric del Pezzo
 surfaces (see Example~\ref{ex:delPezzo}). The bound Be{\u\i}linson quiver
 that encodes $D^b\big(\!\coh(\mathbb{P}^2_\kk)\big)$ is described in
 Example~\ref{exa:Beilinson}, and the bound quivers $(Q,\varrho)$
 encoding the derived categories for the other 4 examples are
 shown below.

 We first note several factors that are common to each example. For
 every smooth toric del Pezzo surface $X$, King's tilting bundle is a
 direct sum of basepoint-free line bundles $\bigoplus_{i=0}^r L_i$ on
 $X$ with $L_0=\mathscr{O}_X$. If we write $(Q,\varrho)$ for the
 complete bound quiver of sections for the sequence
 $\mathcal{L}=(\mathscr{O}_X, L_1,\dots, L_r)$ as in
 Section~\ref{sec:qos}, then Proposition~\ref{prop:algebra} shows that
 the noncommutative algebra $A=\End_{\mathscr{O}_X}(T)$ is isomorphic
 to the quotient algebra $\kk Q/\langle\varrho\rangle$. Derived
 equivalences
 \begin{equation}
 \label{eqn:tilting}
 D^b\big(\!\coh(X)\big)\cong D\big(\!\modAop\big)\cong D\big(\!\rep_\kk(Q,\varrho)\big).
 \end{equation}
 follow from Theorem~\ref{thm:baerbondal} and
 Proposition~\ref{prop:boundisomcats} (with the equivalence
 $\modA\cong \modAop$ coming from duality). We refer to the bound
 quiver $(Q,\varrho)$ as a \emph{tilting quiver}.

 \begin{example}
 For $X= \mathbb{P}^1\times \mathbb{P}^1$, consider the sequence
 $\mathcal{L}=\big(\mathscr{O}_X,\mathscr{O}_X(1,0),\mathscr{O}_X(0,1),\mathscr{O}_X(1,1)\big)$,
 where $\mathscr{O}_X(1,0)$ and $\mathscr{O}_X(0,1)$ are the pullbacks
 of $\mathscr{O}_{\mathbb{P}^1_\kk}(1)$ via the first and second
 projections respectively. The complete quiver of sections $Q$ for
 $\mathcal{L}$ is shown in Figure~\ref{fig:tiltingquivers}(a);
 \begin{figure}[!ht]
    \centering \mbox{ \subfigure[$\mathbb{P}^{1}\times \mathbb{P}^1$]{ \psset{unit=1.2cm}
        \begin{pspicture}(-0.5,-0.3)(2.5,2.3)
          \cnodeput(0,0){A}{0}
          \cnodeput(2,0){B}{1} 
          \cnodeput(0,2){C}{2}
          \cnodeput(2,2){D}{3}
          \psset{nodesep=0pt}
          \ncline[offset=5pt]{->}{A}{B} \lput*{:U}{$x_1$}
          \ncline[offset=-5pt]{->}{A}{B} \lput*{:U}{$x_3$}
          \ncline[offset=7pt]{->}{A}{C} \lput*{:270}{$x_2$}
          \ncline[offset=-7pt]{->}{A}{C} \lput*{:270}{$x_4$}
          \ncline[offset=7pt]{->}{B}{D} \lput*{:270}{$x_2$}
          \ncline[offset=-7pt]{->}{B}{D} \lput*{:270}{$x_4$}
          \ncline[offset=5pt]{->}{C}{D} \lput*{:U}{$x_1$}
          \ncline[offset=-5pt]{->}{C}{D} \lput*{:U}{$x_3$}
        \end{pspicture} 
 }\quad\quad
 \subfigure[$\text{Bl}_{p,q}(\mathbb{P}^2)$]{ 
 \psset{unit=1.2cm}
        \begin{pspicture}(-0.5,-0.3)(6,2.3)
          \cnodeput(0,1){A}{0}
          \cnodeput(2,0){B}{1} 
          \cnodeput(2,2){C}{2}
          \cnodeput(4,1){D}{3}
          \cnodeput(5.5,1){E}{4}
          \psset{nodesep=0pt}
          \ncline[offset=6pt]{->}{A}{B} \lput*{:U}{$x_1$}
          \ncline[offset=-6pt]{->}{A}{B} \lput*{:U}{$x_3x_4$}
          \ncline[offset=6pt]{->}{A}{C} \lput*{:U}{$x_5$}
          \ncline[offset=-6pt]{->}{A}{C} \lput*{:U}{$x_2x_3$}
          \ncline{->}{B}{D} \lput*{:U}{$x_2$}
          \ncline{->}{C}{D} \lput*{:U}{$x_4$}
          \nccurve[angleA=0,angleB=220]{->}{B}{E} \lput*{:U}{$x_5$}
          \nccurve[angleA=0,angleB=140]{->}{C}{E} \lput*{:U}{$x_1$}
          \ncline{->}{D}{E} \lput*{:U}{$x_3$}
        \end{pspicture} }
    }
    \caption{Tilting quivers on a pair of toric del Pezzo surfaces}\label{fig:tiltingquivers}
  \end{figure}
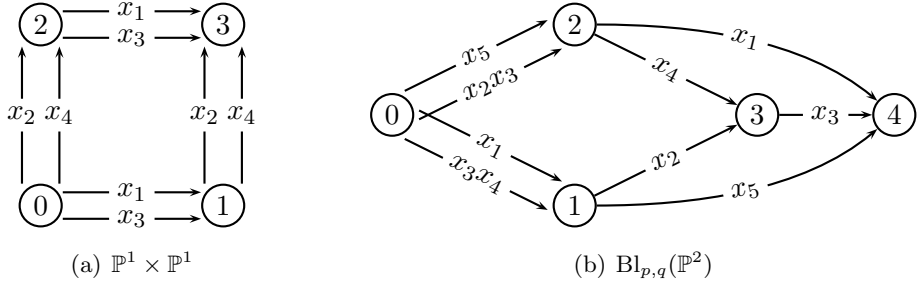
 here we label each arrow by the monomial $x^{\div(a)}$ in the Cox
 ring of $X$ associated to the toric fan from
 Figure~\ref{fig:Fanosurfaces}(a). This labelling makes it
 straightforward to read off the natural set of relations $\varrho$
 defined in \eqref{eqn:varrho}.
 \end{example}

 \begin{example}
   For $X= \mathbb{F}_1$, a tilting quiver $(Q,\varrho)$ is provided
   by the complete bound quiver of sections of the sequence
   $\mathcal{L}$ from Example~\ref{ex:F1four}.
 \end{example}
 
 \begin{example}
For $X= \text{Bl}_{p,q}(\mathbb{P}^2)$, list the $T_X$-invariant
 divisors on $X$ according to the fan of
 Figure~\ref{fig:Fanosurfaces}(c) and consider the sequence
 $\mathcal{L}=\big(\mathscr{O}_X,\mathscr{O}_X(D_1),\mathscr{O}_X(D_5),\mathscr{O}_X(D_1+D_2),\mathscr{O}_X(D_1+D_5)\big)$. The
 complete quiver of sections $Q$ for $\mathcal{L}$ is shown in
 Figure~\ref{fig:tiltingquivers}(b), and the natural relations $\varrho$
 can be read off from the labelling by monomials using
 \eqref{eqn:varrho}.
 \end{example}
 
 \begin{example}
 For $X= \text{Bl}_{p,q,r}(\mathbb{P}^2)$, list the $T_X$-invariant
 divisors on $X$ according to the fan in
 Figure~\ref{fig:Fanosurfaces}(d) and consider the sequence
 $\mathcal{L}=\big(\mathscr{O}_X,\mathscr{O}_X(D_1+D_2),\mathscr{O}_X(D_3+D_4),\mathscr{O}_X(D_5+D_6),$ $\mathscr{O}_X(D_1+D_2+D_3),\mathscr{O}_X(D_4+D_5+D_6)\big)$
 of basepoint-free line bundles on $X$.  The complete quiver of
 sections $Q$ for $\mathcal{L}$ is shown in
 Figure~\ref{fig:BlpqrP2tilting}, and the relations $\varrho$ can be
 read off from the labelling by monomials using \eqref{eqn:varrho}.
 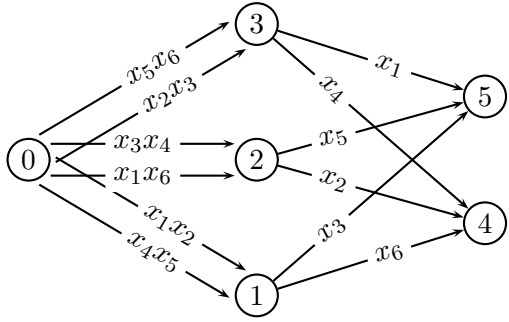
\begin{figure}[!ht]
    \centering \psset{unit=1.2cm}
        \begin{pspicture}(0,0)(6,3.5)
          \cnodeput(0,1.5){A}{0} \cnodeput(2.5,0){B}{1}
          \cnodeput(2.5,1.5){C}{2} \cnodeput(2.5,3){D}{3}
          \cnodeput(5,0.8){E}{4} \cnodeput(5,2.2){F}{5}
          \psset{nodesep=0pt} \ncline[offset=6pt]{->}{A}{B}
          \lput*{:U}(0.6){$x_1x_2$} \ncline[offset=-6pt]{->}{A}{B}
          \lput*{:U}(0.6){$x_4x_5$} \ncline[offset=6pt]{->}{A}{C}
          \lput*{:U}{$x_3x_4$} \ncline[offset=-6pt]{->}{A}{C}
          \lput*{:U}{$x_1x_6$} \ncline[offset=6pt]{->}{A}{D}
          \lput*{:U}(0.6){$x_5x_6$} \ncline[offset=-6pt]{->}{A}{D}
          \lput*{:U}(0.6){$x_2x_3$} \ncline{->}{B}{E}
          \lput*{:U}(0.6){$x_6$} \ncline{->}{B}{F}
          \lput*{:U}(0.3){$x_3$} \ncline{->}{C}{E}
          \lput*{:U}(0.3){$x_2$} \ncline{->}{C}{F}
          \lput*{:U}(0.3){$x_5$} \ncline{->}{D}{E}
          \lput*{:U}(0.3){$x_4$} \ncline{->}{D}{F}
          \lput*{:U}(0.6){$x_1$}
        \end{pspicture} 
   \caption{A tilting quiver on $\text{Bl}_{p,q,r}(\mathbb{P}^2)$}
 \label{fig:BlpqrP2tilting}
  \end{figure}
 \end{example}
 
 \begin{exercise}
 Use the results of Section~\ref{sec:noncommutative} to show that each
 smooth toric del Pezzo surface $X$ is isomorphic to
 $\mathcal{M}_\vartheta(Q,\varrho)$ for the relevant bound quiver
 $(Q,\varrho)$ and for the weight $\vartheta\in \Wt(Q)$ satisfying
 $\vartheta_i>0$ for $i\neq 0$. (The solution for $X=\mathbb{F}_1$ is
 the content of Example~\ref{ex:F1four}).
 \end{exercise}

  Costa--Mir\'{o}-Roig~\cite{CMR} extended King's results to construct
 tilting bundles on 12 of the 18 smooth toric Fano threefolds, and
 this list was extended by Bondal to 16 of the 18. At
 about the same time, Greg~Smith and I exhibited tilting bundles on
 all 18 smooth toric Fano threefolds and many of the 126 smooth toric
 Fano fourfolds.  This suggests the following:

 \begin{conjecture}
 \label{conj:toricFano}
 Let $X$ be a smooth toric Fano variety. There is a sequence of
 basepoint-free line bundles $\mathcal{L} = (\mathscr{O}_X,L_1,\dots,
 L_r)$ on $X$ with bound quiver of sections $(Q,\varrho)$ such that
 \begin{enumerate}
 \item[\one] $X$ is isomorphic to $\mathcal{M}_\vartheta(Q,\varrho)$ for
   the weight $\vartheta\in \Wt(Q)$ satisfying $\vartheta_i>0$ for
   $i\neq 0$;
 \item[\two] the tautological bundle $\bigoplus_{0\leq i\leq r} L_i$ on
   $X$ is tilting, giving the derived
   equivalences \eqref{eqn:tilting}.
 \end{enumerate}
 \end{conjecture}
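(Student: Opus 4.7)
My plan proceeds in three stages, roughly mirroring the machinery built up in Sections~\ref{sec:noncommutative} and~\ref{sec:tilting}. First, I would construct a candidate sequence $\mathcal{L}=(\mathscr{O}_X,L_1,\dots,L_r)$ of basepoint-free line bundles on $X$ using the existence portion of Theorem~\ref{thm:finequotient}, which relies on multigraded regularity (Maclagan--Smith, Hering--Schenck--Smith). Starting from an ample line bundle and enlarging by line bundles from disparate faces of the nef cone, one obtains $\mathcal{L}$ such that $\varphi_{|\mathcal{L}|}$ is a closed immersion and $I_Q = (I_\varrho : B_Q^\infty)$. Since the distinguished source $0 \in Q_0$ admits a path to every other vertex (the quiver of sections is connected and each $L_i$ is effective), Exercise~\ref{ex:generic} guarantees that any weight $\vartheta \in \Wt(Q)$ with $\vartheta_i>0$ for $i \neq 0$ is generic, and Theorem~\ref{thm:finequotient} then yields $X \cong \mathcal{M}_\vartheta(Q,\varrho)$, establishing part \one.

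Second, for the tilting conditions on $T = \bigoplus_{i} L_i$, the algebra $A = \End_{\mathscr{O}_X}(T) \cong \kk Q/\langle \varrho\rangle$ arising from a complete bound quiver of sections is a finite-dimensional $\kk$-algebra by Proposition~\ref{prop:algebra}. The Fano hypothesis is crucial here: one can order the summands $L_i$ compatibly with an ample class so that $\Hom(L_i,L_j) = 0$ for $i > j$, realising $A$ as an algebra of lower triangular matrices whose global dimension is then finite, yielding (T1). Condition (T2), the vanishing of $\Ext^k_{\mathscr{O}_X}(T,T) = \bigoplus_{i,j} H^k(X, L_j \otimes L_i^{-1})$ for $k>0$, would be verified by exploiting Bott--Danilov vanishing on smooth toric varieties together with the ampleness of $-K_X$, provided the $L_j \otimes L_i^{-1}$ are chosen so that their classes avoid certain bad regions of $\Pic(X)$.

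The hard part — and this is where I expect the main obstacle — is verifying the classical generation (T3), and doing so simultaneously with part \one. The most promising strategy is via Proposition~\ref{prop:resdiagonal}: given (T1) and (T2), it suffices to exhibit a quasi-isomorphism $T^\vee \overset{\mathbf{L}}{\boxtimes}_A T \to \mathscr{O}_\Delta$. For $\mathbb{P}^n_\kk$ this reduces to Be{\u\i}linson's resolution (Example~\ref{exa:Beilinson}); on a toric Fano $X$ one would hope to build an analogous Koszul-type resolution of the diagonal from the combinatorial data of the fan, or alternatively invoke Kawamata's theorem on full exceptional collections for smooth projective toric Deligne--Mumford stacks and use the Fano condition to descend to $X$. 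The fundamental difficulty is that no single systematic recipe is known which, in every dimension, produces an $\mathcal{L}$ satisfying \emph{both} the moduli condition $I_Q = (I_\varrho : B_Q^\infty)$ of Theorem~\ref{thm:finequotient} \emph{and} the generation condition (T3); indeed, the results of Hille--Perling~\cite{HillePerling,HillePerling2} indicate that tilting bundles on toric varieties are rather rare, and existing evidence (all smooth toric Fano threefolds, many fourfolds) rests on case-by-case verification. A conceptual proof would likely require either a new general construction of resolutions of the diagonal adapted to the Fano polytope, or a theorem showing that the $\vartheta$-stability conditions defining the moduli space automatically force the tilting generation property — a link whose existence remains speculative.
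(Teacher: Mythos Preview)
This statement is a \emph{conjecture} in the paper, not a theorem: the paper offers no proof whatsoever. What the paper does provide is supporting evidence --- King's tilting bundles on the five smooth toric del Pezzo surfaces, the Costa--Mir\'{o}-Roig and Bondal extensions to most smooth toric Fano threefolds, and the author's own work with Smith covering all 18 threefolds and many fourfolds --- together with the remark that the Fano hypothesis is not necessary but some restriction is (Hille--Perling). So there is nothing to compare your proposal against.

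That said, your proposal is not really a proof either, and you are candid about this. Your sketch correctly identifies the available machinery (Theorem~\ref{thm:finequotient} for part~\one, Proposition~\ref{prop:resdiagonal} for (T3)) and correctly locates the genuine obstruction: no known construction simultaneously delivers the saturation condition $I_Q=(I_\varrho:B_Q^\infty)$ and the resolution of the diagonal, and the existing evidence is case-by-case. One technical point worth flagging: your argument for (T1) via ``ordering compatibly with an ample class so that $\Hom(L_i,L_j)=0$ for $i>j$'' presupposes that the sequence is strongly exceptional, which already requires the vanishing you are trying to establish; the paper's Proposition~\ref{prop:exceptionaltilting} shows (T1) follows from (T2) plus acyclicity of $Q$, but (T2) itself is not a consequence of the Fano condition alone and is part of what must be arranged in the choice of $\mathcal{L}$. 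In short, your assessment that a conceptual proof would require genuinely new input is exactly the state of affairs the paper is recording by labelling this a conjecture.
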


\begin{remark}
 A couple of important remarks are in order:
 \begin{enumerate}
 \item[\one] The Fano condition is not necessary. For example,
 Costa--Mir\'{o}-Roig~\cite{CMR} constructed tilting bundles on
 several families of smooth toric varieties, none of which consists
 entirely of Fano varieties. Nevertheless, some restriction is
 required, since the example of Hille--Perling~\cite{HillePerling}
 shows that even smooth projective toric surfaces need not admit a
 tilting bundle obtained as a direct sum of line bundles.
\item[\two] If Conjecture~\ref{conj:toricFano}\two\ holds, then one
  expects part \one\ to follow.
  Bergman--Proudfoot~\cite{BergmanProudfoot} show that if $X$ admits a
  tilting bundle for which the corresponding bound quiver
  $(Q,\varrho)$ admits a character `great' $\theta$, then $X$ is
  isomorphic to a connected component of
  $\mathcal{M}_\theta(Q,\varrho)$. On the other hand, getting the
  natural moduli construction correct first can also help enormously
  (see the $G$-Hilbert scheme and Theorem~\ref{thm:BKR}).
 \end{enumerate}
 \end{remark}

 \subsection{Smooth toric DM stacks}
 Since these lectures on derived categories were presented at the Utah
 summer school, Borisov--Hua~\cite{BorisovHua} extended
 Conjecture~\ref{conj:toricFano}\two\ further to include all nef-Fano
 toric DM stacks with trivial generic stabiliser. 
 
 That derived category questions for a projective variety $X$ with at
 worst orbifold singularities should be tackled using a stack
 $\mathcal{X}$ whose coarse moduli space is $X$ is due largely to a
 sequence of papers by
 Kawamata including \cite{Kawamata3,Kawamata4}. He
 generalised many fundamental results on derived categories of smooth
 projective varieties due to Orlov and Bondal--Orlov by replacing
 $\coh(X)$ by the abelian category $\coh(\mathcal{X})$ of coherent
 orbifold sheaves on $\mathcal{X}$. By applying these results with the
 minimal model programme for toric varieties,
 Kawamata~\cite{Kawamata5} established the strongest results known at
 present for the derived category of a general class of toric
 varieties.

 To describe the result, let $X$ be a projective toric variety whose
 fan $\Sigma\subset N\otimes_\ZZ \QQ$ is simplicial, so $X$ has at
 worst orbifold singularities by Remark~\ref{rem:smooth}.  Recall from
 Theorem~\ref{thm:Cox} that $X$ is obtained as the geometric quotient
 of $U:= \mathbb{A}^{\Sigma(1)}_\kk\setminus \mathbb{V}(B_X)$ by an action
 of $\Hom(A_{n-1}(X),\kk^\times)$. Borisov--Chen--Smith~\cite{BCS}
 define a collection of smooth Deligne--Mumford stacks each with
 coarse moduli space $X$ as follows. Let $\beta\colon
 (\ZZ^{\Sigma(1)})^\vee\to N$ be a $\ZZ$-linear map that picks out a
 lattice point $v_\rho=\beta(e_\rho)$ on each ray $\rho\in \Sigma(1)$,
 and let $\Gale(N)$ denote the cokernel of the dual map $\beta^\vee
 \colon N^\vee \to \ZZ^{\Sigma(1)}$. Note that $\Gale(N)=A_{n-1}(X)$
 if each $v_\rho$ is the primitive generator of $\rho\cap N$. The
 diagonalisable algebraic group $G:=\Hom(\Gale(N),\kk^\times)$ acts on
 $U$, and the \emph{smooth toric DM stack} of $(\Sigma,\beta)$ is
 defined to be the stack quotient $\mathcal{X}(\Sigma,\beta):=
 [U/G]$. Our assumption that the abelian group $N$ is free implies
 that $\mathcal{X}(\Sigma,\beta)$ has trivial generic stabiliser. 

 The main result of Kawamata~\cite{Kawamata5} can be stated as
 follows (compare Proposition~\ref{prop:exceptionaltilting}).
 
 \begin{theorem}
 \label{thm:KawamataToric}
 Let $X$ be a projective toric variety with simplicial fan $\Sigma$,
 choose $\beta$ as above and set $\mathcal{X}:=
 \mathcal{X}(\Sigma,\beta)$. Then $D^b\big(\!\coh(\mathcal{X})\big)$
 admits a full exceptional sequence of sheaves.
 \end{theorem}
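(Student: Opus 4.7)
The plan is to induct on a complexity invariant of $\mathcal{X}$ (for instance $\dim\mathcal{X}$ plus the Picard rank) via the toric minimal model programme, applied in the stacky GIT setting of Theorem~\ref{thm:Cox}.

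First I would apply the toric MMP to $\mathcal{X}$. A sequence of divisorial contractions, flips, and (eventually) a Mori fibration relates $\mathcal{X}$ to a toric Mori fibre space $\pi\colon\mathcal{X}_0\to\mathcal{B}$ of relative Picard rank one. In the stacky toric context each such elementary step corresponds to crossing a single wall in the GIT chamber decomposition described in Sections~\ref{sec:projectiveGIT} and~\ref{sec:Cox}. Kawamata's work~\cite{Kawamata3,Kawamata4} attaches to each wall-crossing a semiorthogonal decomposition
\begin{equation*}
D^b(\coh(\mathcal{X}_+)) = \langle D^b(\coh(\mathcal{Z}_1)),\dots, D^b(\coh(\mathcal{Z}_k)), D^b(\coh(\mathcal{X}_-))\rangle
\end{equation*}
(with the analogue for flips) in which each piece $\mathcal{Z}_i$ is itself a smooth toric DM stack of strictly smaller complexity---typically a weighted projective stack fibred over the centre of the wall-crossing, which is again a smooth toric DM stack by Remark~\ref{rem:smooth}. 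For the Mori fibration $\pi$, whose fibre is a weighted projective stack $\mathbb{P}(a_0,\dots,a_r)$, the relative Beilinson collection $\mathscr{O}_\pi,\mathscr{O}_\pi(1),\dots,\mathscr{O}_\pi(N)$ yields a semiorthogonal decomposition of $D^b(\coh(\mathcal{X}_0))$ into finitely many copies of $D^b(\coh(\mathcal{B}))$.

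Next I would set up the induction. The base of the induction is a weighted projective stack $\mathbb{P}(a_0,\dots,a_n)$, on which Beilinson's argument via the resolution of the diagonal (compare Example~\ref{exa:Beilinson} and Proposition~\ref{prop:resdiagonal}) extends to give a full strong exceptional collection of line bundles. Starting from this case and reversing the sequence of MMP/wall-crossing steps, each semiorthogonal decomposition above splices a full exceptional collection on every $\mathcal{Z}_i$ and on $\mathcal{X}_-$ (all available by induction) into a full exceptional collection on $\mathcal{X}_+$, and hence ultimately on $\mathcal{X}$.

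The main obstacle, and the hardest step, is ensuring that the exceptional objects produced by this procedure are honestly coherent sheaves rather than arbitrary complexes. At each wall-crossing the fully faithful embedding $D^b(\coh(\mathcal{Z}_i))\hookrightarrow D^b(\coh(\mathcal{X}_+))$ is a Fourier--Mukai functor whose kernel must be chosen so that line bundles on $\mathcal{Z}_i$ are sent to twists of push-forwards of line bundles supported on the exceptional or flipping locus, and then concentrated in a single degree after an appropriate shift. The toric hypothesis is essential here: each wall-crossing is locally modelled on a weighted blow-up, the simplicial assumption keeps this model explicit, and the fact that every exceptional piece $\mathcal{Z}_i$ is again a smooth toric DM stack lets the induction close.
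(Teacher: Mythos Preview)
The paper does not prove this theorem: it is stated as Kawamata's result with a bare citation to~\cite{Kawamata5}, followed only by the remark that the sheaves need not be line bundles and the sequence need not be strong. There is therefore no ``paper's own proof'' to compare against.

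That said, your outline is an accurate summary of Kawamata's actual argument: run the toric MMP for the pair $(X,B)$ corresponding to the stack, obtain at each divisorial contraction, flip, or Mori fibration a semiorthogonal decomposition whose extra pieces are derived categories of strictly simpler smooth toric DM stacks, and conclude by induction. Your identification of the genuine technical difficulty---arranging that the exceptional objects produced at each step are honest sheaves and not merely complexes---is exactly right, and is the reason the theorem as stated only guarantees an exceptional sequence of sheaves rather than a strong one of line bundles. One small correction: the cleanest base case for the induction is a point (dimension zero), not a weighted projective stack; the latter is already handled by one application of the Mori-fibration step over $\Spec\kk$.
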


 The sheaves in the exceptional sequence need not be line bundles, and
 the sequence need not a priori be strong, so we do not in general obtain the
 derived equivalences \eqref{eqn:tilting}. 

 \begin{exercise}
   This exercise exhibits a simple link between the minimal model
   programme and derived categories. We start with
   $\text{Bl}_{p,q,r}(\mathbb{P}^2)$ whose fan is shown in
   Figure~\ref{fig:Fanosurfaces}(d).
 \begin{enumerate}
 \item[\one] Consider the blow-down of the $(-1)$-curve labelled
   $D_6$ in $\text{Bl}_{p,q,r}(\mathbb{P}^2)$. In terms of the tilting
   quiver $Q$ from Figure~\ref{fig:BlpqrP2tilting}, remove $x_6$
   whenever it appears in the label on an arrow; this forces us to
   contract the arrow
   from $1\in Q_0$ to $4\in Q_0$ and hence
   identify vertices 1 and 4.  After removing the arrows in the
   resulting quiver that are not irreducible (see
   Section~\ref{sec:qos}), show that we obtain the tilting quiver on
   $\text{Bl}_{p,q}(\mathbb{P}^2)$ from
   Figure~\ref{fig:tiltingquivers}.
 \item[\two] Continue by contracting successively the $(-1)$-curves to
 produce the tilting quiver from Figures~\ref{fig:F1tilting} and then
 the bound Be{\u\i}linson quiver for $\mathbb{P}^2_\kk$ (up to a
 relabelling of the divisors).
 \end{enumerate}
 \end{exercise}

 \begin{remark}
 Kawamata~\cite{Kawamata5} defines his stack $\mathcal{X}$ in terms of
 pairs $(X,B)$, where $B$ is a $T_X$-invariant $\QQ$-divisor of the
 form $\sum_{\rho\in \Sigma(1)} (1-\frac{1}{r_\rho})D_\rho$ for some
 positive integers $\{r_\rho : \rho\in \Sigma(1)\}$. This choice of
 $B$ is equivalent to choosing the map $\beta\colon
 (\ZZ^{\Sigma(1)})^\vee\to N$ for which $v_\rho$ is precisely $r_\rho$
 times the primitive generator of $\rho\cap N$. Indeed, if we let
 $\mathcal{D}_\rho$ denote the prime divisor in $\mathcal{X}$
 corresponding to $\rho\in \Sigma(1)$, then the morphism to the coarse
 moduli space $\pi\colon \mathcal{X}\to X$ satisfies $\pi^*(D_\rho) =
 r_\rho \mathcal{D}_\rho$ in each case. In particular, $B=0$ if and
 only if $\beta$ chooses the primitive lattice generator in each ray,
 in which case $\mathcal{X}(\Sigma,\beta)$ is the canonical covering stack
 of $X$.
 \end{remark}
 
 Borisov--Hua~\cite{BorisovHua} propose that an analogue of
 Conjecture~\ref{conj:toricFano}\two\ should hold for toric DM stacks
 $\mathcal{X}(\Sigma,\beta)$ that are \emph{nef-Fano}, i.e., a
 positive power of the anticanonical bundle on $\mathcal{X}$ is the
 pullback of a nef and big line bundle on $X$.  As evidence, they
 construct full strong exceptional collections of line bundles on all
 smooth Fano toric DM stacks of dimension two and Picard rank at most
 three, as well as on all smooth Fano toric DM stacks of Picard number
 at most two (generalising the result for toric varieties of
 Costa--Mir\'{o}-Roig~\cite{CMR}).

 \section{The derived McKay correspondence for $\ghilb$}
 \label{sec:McKay}
 In this section we apply derived category methods to provide an
 elegant explanation for the McKay correspondence in dimension $n\leq
 3$.  Rather than pursue further the construction of tilting bundles
 explicitly (see Van den Bergh~\cite{VDB}), we describe in some detail
 the original method of Bridgeland--King--Reid~\cite{BKR} that uses
 the universal sheaf on the $G$-Hilbert scheme to construct an
 equivalence of derived categories as a Fourier--Mukai transform. The
 varieties that we consider in this section are all normal
 semiprojective toric varieties if and only the finite subgroup
 $G\subset \SL(n,\kk)$ is abelian. Imposing this restriction does not
 simply the proof greatly, so we do not restrict to the toric case
 here.

 \subsection{A few words on the McKay correspondence}  
 Let $G\subset \SL(n,\kk)$ be a finite subgroup. We may assume that
 $G$ acts without quasireflections, in which case the quotient
 $X:=\mathbb{A}^n_\kk/G$ is Gorenstein, i.e., the canonical sheaf
 $\omega_X$ is locally free. The globally defined nonvanishing form
 $dx_1\wedge \dots \wedge dx_n$ on $\mathbb{A}^n_\kk$ is $G$-invariant
 and hence it descends to $X$, forcing $\omega_X$ to be trivial. A
 resolution of singularities $\tau\colon Y\to X$ is \emph{crepant} if
 $\tau^*(\omega_X)=\omega_Y$.

 \begin{example}
   The motivating examples of crepant resolutions are the minimal
   resolutions of Kleinian surface singularities, i.e., quotients
   $\mathbb{A}^2_\kk/G$ for a finite subgroup $G\subset \SL(2,\kk)$.
   These crepant resolutions provide an ADE classification for
   Kleinian singularities; the resolution of the $A_2$-singularity is
   described in Example~\ref{exa:res1/3(1,2)}.
 \end{example}
 
 In higher dimensions, crepant resolutions need not exist a priori,
 and when they do they are typically nonunique. Nevertheless, by
 introducing the $G$-Hilbert scheme $Y:= \ghilb(\mathbb{A}^n_\kk)$,
 Ito--Nakamura~\cite{ItoNakamura} and Nakamura~\cite{Nakamura}
 provided a candidate for a crepant resolution, at least in dimension
 two and three. Recall from Remark~\ref{rem:G-Hilbert} that the
 $G$-Hilbert scheme parametrises (coordinate rings of) $G$-homogeneous
 ideals $I\subset \kk[x_1,\dots,x_n]$ for which the quotient module
 $\kk[x_1,\dots,x_n]/I$ is isomorphic as a $\kk[G]$-module to
 $\kk[G]$. The subschemes $\mathbb{V}(I)\subset \mathbb{A}^n_\kk$
 defined by such ideals are called \emph{$G$-clusters}. Examples
 include any free $G$-orbit, all of which define points in a single
 irreducible component of $Y$ (see \cite[\S5]{CMT1}). The map sending
 a $G$-cluster to its supporting $G$-orbit defines a projective and
 surjective Hilbert--Chow morphism $\tau\colon Y \rightarrow
 \mathbb{A}_\kk^n/G$ and hence, irrespective of whether $Y$ is a
 crepant resolution of $X=\mathbb{A}^n_\kk/G$, the map $\tau$ fits into
 a commutative diagram of the form
 \begin{equation}
 \label{eqn:cdFM}
 \xymatrix@C=.6em{&&& Y\times
   \mathbb{A}_\kk^n\ar[dlll]_{\pi_Y}\ar[drrr]^{\pi_V} \\
   Y\ar[drrr]^{\tau} &&&&&&\mathbb{A}_\kk^n\ar[dlll]_{\pi}\\&&& X }
 \end{equation}
 \noindent 
 where $\pi\colon V:=\mathbb{A}^n_\kk \to X$ is the quotient morphism
 and \(\pi_Y\) and \(\pi_V\) are the projections to the first and
 second factors.  Let \(G\) act trivially on both \(Y\) and \(X\), so
 that all morphisms in the above diagram are \(G\)-equivariant. The
 universal sheaf on \(Y\times \mathbb{A}_\kk^n\) is the structure
 sheaf \(\mathscr{O}_{\mathcal{Z}}\) of the universal closed subscheme
 \(\mathcal{Z}\subset Y\times \mathbb{A}_\kk^n\) whose restriction to
 the fibre over a point \([I]\in Y\) is the subscheme
 \(\mathbb{V}(I)\subset \mathbb{A}_\kk^n\). The fibre of $\pi_Y$ over
 each closed point is isomorphic to $\kk[G]$ as a $\kk[G]$-module. If
 $\Irr(G)$ denotes the set of isomorphism classes of the irreducible
 representations of $G$, then $(\pi_Y)_*(\mathcal{O}_\mathcal{Z})$ is
 a $G$-equivariant locally free sheaf on $Y$ that decomposes into a
 sum of locally free sheaves
 \[
 \mathscr{W}=\bigoplus_{\rho\in \Irr(G)} \mathscr{W}_\rho
   ^{\oplus \dim(\rho)}
 \]
 according to the irreducible decomposition of $\kk[G]$, where
   $\rank(\mathscr{W}_\rho)=\dim(\rho)$. 

 \begin{remark}
   Recall from Remark~\ref{rem:G-Hilbert} (compare
   Remark~\ref{rem:BSW}) that the tautological bundle $\mathscr{W}$ on
   the $G$-Hilbert scheme $Y$ is isomorphic a a $G$-equivariant
   locally free sheaf to the tautological sheaf on the fine moduli space
   $\mathcal{M}_\vartheta(Q,\varrho)$ of $\vartheta$-stable
   representations of the bound McKay quiver. An analogous statement
   holds if $G$ is not abelian, see
   Ito--Nakajima~\cite[Section~2]{ItoNakajima}.
 \end{remark}

 \begin{exercise}
 Read Reid~\cite[Theorem~4.11]{Reid1} and deduce that the cyclic
 quotient singularity of type $\frac{1}{2}(1,1,1,1)$ does not admit a
 crepant resolution. Thus, crepant resolutions of Gorenstein quotient
 singularities need not exist in dimension 4 and higher.
 \end{exercise}

 Motivated by the geometric explanation of the McKay correspondence in
 dimension two by Gonzalez--Sprinberg and Verdier~\cite{GSV}, Reid~\cite{Reid2, Reid3}
 formulated a derived category version of the McKay correspondence
 (see Aspinwall et.al.~\cite{ClayMonograph} for background on the
 McKay correspondence):

 \begin{conjecture}
 \label{conj:McKay}
 Let $G\subset \SL(n,\kk)$ be a finite subgroup, and let
 $\Gcoh(\mathbb{A}^n_\kk)$ denote the abelian category of
 $G$-equivariant coherent sheaves on $\mathbb{A}^n_\kk$.  If a crepant
 resolution \(\tau\colon Y\rightarrow \mathbb{A}_\kk^{n}/G\) exists
 then there is an equivalence of triangulated categories 
 \begin{equation}
 \label{eqn:derivedcats}
 \Phi\colon D^b(\coh(Y))\rightarrow D^b\big(\Gcoh(\mathbb{A}^n_\kk)\big).
 \end{equation}
% In particular, any two crepant resolutions of
% \(\mathbb{A}_\kk^{n}/G\) should have equivalent derived categories.
 \end{conjecture}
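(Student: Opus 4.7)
The plan is to construct the equivalence as a Fourier--Mukai transform with kernel the structure sheaf $\mathscr{O}_\mathcal{Z}$ of the universal closed subscheme on $Y\times \mathbb{A}^n_\kk$, taking $Y=\ghilb(\mathbb{A}^n_\kk)$ (which, by the work of Nakamura, is a crepant resolution when $n\leq 3$). Letting $G$ act trivially on $Y$ so that every morphism in diagram \eqref{eqn:cdFM} is $G$-equivariant, define
\[
\Phi(E) := \Rderived(\pi_V)_*\!\big(\pi_Y^*(E) \ltensor \mathscr{O}_\mathcal{Z}\big)\colon D^b\big(\!\coh(Y)\big) \longrightarrow D^b\big(\Gcoh(\mathbb{A}^n_\kk)\big).
\]
This is the $G$-equivariant analogue of the setup of Proposition~\ref{prop:resdiagonal} and Example~\ref{exa:Beilinson}, with $\mathscr{O}_\mathcal{Z}$ playing the role of a ``resolution of the diagonal'' relating the two non-isomorphic varieties $Y$ and $\mathbb{A}^n_\kk$.

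To show $\Phi$ is fully faithful I would invoke a Bondal--Orlov-style criterion adapted to the equivariant setting: it suffices to verify that for all closed points $y_1,y_2\in Y$,
\begin{enumerate}
\item[(a)] $\Hom\big(\Phi(\mathscr{O}_y),\Phi(\mathscr{O}_y)\big) = \kk$ and $\Ext^i\big(\Phi(\mathscr{O}_y),\Phi(\mathscr{O}_y)\big)=0$ for $i\notin[0,\dim Y]$, with the appropriate Serre-duality pairing between the extreme degrees; and
\item[(b)] $\Ext^i\big(\Phi(\mathscr{O}_{y_1}),\Phi(\mathscr{O}_{y_2})\big) = 0$ for all $i$ whenever $y_1\neq y_2$.
\end{enumerate}
Since $\Phi(\mathscr{O}_y)$ is the $G$-equivariant structure sheaf of the cluster $\mathbb{V}(I_y)\subset\mathbb{A}^n_\kk$ parametrised by $y\in Y$, conditions (a) and (b) reduce to disjointness statements and local $\Ext$-calculations between pairs of $G$-clusters, which can in turn be organised by pushing down to $X$ and applying flat base change to read these off from the geometry of the fibre product $Y\times_X Y$.

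The main obstacle, and the technical heart of the Bridgeland--King--Reid argument, is to establish the dimension bound
\[
\dim\big(Y\times_X Y\big) \leq n = \dim Y.
\]
This is where both the crepancy hypothesis $\tau^*\omega_X=\omega_Y$ and the restriction $n\leq 3$ become essential: one uses triviality of $\omega_Y$ (so that a ``numerically crepant'' condition holds) combined with a delicate analysis of how two $G$-clusters lying over the same orbit can overlap. In low dimension this forces the diagonal to be the only $n$-dimensional component of $Y\times_X Y$; in dimension four or higher the estimate can fail, which is consistent with the well-known fact that $\ghilb$ need not even be a crepant resolution there. Granted this bound, the required $\Ext$-vanishings follow from a Koszul-type resolution of $\mathscr{O}_\mathcal{Z}$ combined with projection formula and base change.

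Once $\Phi$ is fully faithful, essential surjectivity follows from a standard argument: the image is a triangulated subcategory closed under the $\otimes$-action of pullbacks of line bundles from $X$, and triviality of $\omega_Y$ shows that the left and right adjoints of $\Phi$ agree up to shift, so the orthogonal of the image is preserved by a Serre functor whose only invariants vanish. This forces the image to be all of $D^b\big(\Gcoh(\mathbb{A}^n_\kk)\big)$, yielding the derived equivalence \eqref{eqn:derivedcats}. As a bonus, transporting the decomposition of $\mathscr{O}_\mathcal{Z}$ through $\Phi$ identifies the tautological bundle $\mathscr{W}=\bigoplus_{\rho\in\Irr(G)}\mathscr{W}_\rho^{\oplus\dim\rho}$ on $Y$ as a tilting bundle whose endomorphism algebra recovers the skew group algebra $\kk[x_1,\dots,x_n]\ast G$, linking this result cleanly to Section~\ref{sec:tilting}.
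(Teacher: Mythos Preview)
First, note that the statement is a \emph{conjecture}: it is not proved in full generality in the paper, and your outline addresses only the Bridgeland--King--Reid case $Y=\ghilb(\mathbb{A}^n_\kk)$ with $n\leq 3$, which is Theorem~\ref{thm:BKR}. Your overall strategy matches the paper's, but there are two concrete gaps.

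The dimension bound you propose, $\dim(Y\times_X Y)\leq n$, is too strong and already fails for $n=3$: the exceptional locus of $\tau$ has dimension $2$, so the fibre product over the origin can have dimension $4=n+1$. The correct BKR hypothesis is $\dim(Y\times_X Y)\leq n+1$, and the argument is calibrated precisely to this: Serre duality together with the global-dimension bound force $\GExt^i(\mathscr{O}_{Z_y},\mathscr{O}_{Z_{y'}})=0$ outside the range $1\leq i\leq n-1$ when $y\neq y'$, so the homological dimension of the relevant object $\mathcal{Q}\vert_{Y\times Y\setminus\Delta}$ is at most $n-2$. The contradiction with its codimension, which is at least $2n-(n+1)=n-1$, then comes from the \emph{Intersection Theorem} (Theorem~\ref{thm:BM2}\one). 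Your proposed mechanism---a ``Koszul-type resolution of $\mathscr{O}_{\mathcal{Z}}$ combined with projection formula and base change''---is not what drives this step; the Intersection Theorem is the essential commutative-algebra input, and it should be named.

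Second, you underestimate the case $y=y'$. Condition~(a) in your Bondal--Orlov criterion is, in the paper's words, ``by far the hardest'' step. It is not automatic from stability or local computations: one must show $\Psi(\Phi(\mathscr{O}_y))\cong\mathscr{O}_y$ via the adjunction triangle, injectivity of the Kodaira--Spencer map for the universal family, and a second application of the Intersection Theorem (part~\two\ of Theorem~\ref{thm:BM2}), which simultaneously yields smoothness of $Y$ at $y$. Finally, in the genuine BKR argument crepancy of $\tau$ is a \emph{conclusion}, not a hypothesis: smoothness of $Y$ is extracted from the Intersection Theorem, and $\omega_Y\cong\mathscr{O}_Y$ is deduced only after the derived equivalence is in place by transporting triviality of the Serre functor across $\Phi$.
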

 
 \subsection{The statement for the $G$-Hilbert scheme}  
 A coherent sheaf $\mathscr{F}$ on $\mathbb{A}_\kk^n$ is
 $G$-equivariant if and only if the space of sections
 $\Gamma(\mathscr{F})$ is a finitely-generated
 $\kk[x_1,\dots,x_n]$-module that carries an action by $G$ such that
 $g \cdot (sm)=(g \cdot s) (g \cdot m)$ for all $g \in G, m \in
 \Gamma(\mathscr{F})$ and $s \in \kk[x_1,\dots, x_n]$. Combining this
 with Proposition~\ref{prop:algebra} and Example~\ref{ex:boundMcKay}
 (or Remark~\ref{rem:BSW}) establishes the next result:
 
 \begin{lemma}
 \label{lem:abelianequivs}
 The following abelian categories are equivalent: the category
 \begin{enumerate}
 \item[\one] $\Gcoh(\mathbb{A}^n_\kk)$ of $G$-equivariant coherent
 sheaves on $\mathbb{A}^n_\kk$;
 \item[\two] $\modA$ of left-modules over the skew group algebra
 $A:=\kk[x_1,\dots,x_n]*G$;
 \item[\three] $\rep_\kk(Q,\varrho)$ of
 finite-dimensional representations of the bound McKay quiver.
 \end{enumerate}
 \end{lemma}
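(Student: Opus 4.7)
The statement is a packaging lemma: two of the three equivalences have essentially been proved already, so my plan is to identify precisely where each one comes from and then glue.

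For the equivalence \one~$\Leftrightarrow$ \two, I would start from the standard fact that a $G$-equivariant quasicoherent sheaf on the affine scheme $\mathbb{A}^n_\kk$ is, under the global sections functor, the same as a $\kk[x_1,\dots,x_n]$-module $M$ together with a $\kk$-linear $G$-action satisfying $g\cdot(sm)=(g\cdot s)(g\cdot m)$ for $s\in \kk[x_1,\dots,x_n]$, $m\in M$, $g\in G$. The point is then to check that giving such data is the same as giving a left module over the skew group algebra $A=\kk[x_1,\dots,x_n]*G$: the multiplication $(sg)\cdot(s'g')=s(g\cdot s')gg'$ in $A$ is manufactured precisely so that the module axiom $(sg)(s'g')\cdot m = (sg)\cdot((s'g')\cdot m)$ is equivalent to the compatibility condition above. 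Once this dictionary is set up, morphisms on each side match on the nose, finite generation corresponds to coherence, and one reads off an equivalence of abelian categories.

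For the equivalence \two~$\Leftrightarrow$ \three, nothing new is required. Example~\ref{ex:boundMcKay} supplies a concrete isomorphism of $\kk$-algebras $\kk Q/\langle\varrho\rangle \cong \kk[x_1,\dots,x_n]*G$, sending the arrow $a_i^\rho$ (labelled by $x_i$) to the element $x_i\cdot e_\rho$ of the skew group algebra and matching the commutation relations with the path-difference relations. Then Proposition~\ref{prop:boundisomcats} (applied to the McKay quiver) gives $\rep_\kk(Q,\varrho)\simeq \operatorname{mod}(\kk Q/\langle\varrho\rangle)$, and transport of structure along the algebra isomorphism identifies this with $\modA$.

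Chaining the two equivalences finishes the proof. The only real subtlety to flag is the one I deferred in the first paragraph: verifying that the abstract bijection ``$G$-action compatible with the module structure'' $\leftrightarrow$ ``skew-group action'' is functorial and respects kernels, cokernels, and direct sums, so that the resulting bijection of objects upgrades to an equivalence of \emph{abelian} categories. This is routine but is the single place where one has to actually do some checking rather than quote an earlier result; I would simply verify that a morphism of sheaves is $G$-equivariant iff the corresponding module map is $A$-linear, and note that both categories inherit their abelian structure from that of $\kk[x_1,\dots,x_n]$-modules, so exact sequences agree.
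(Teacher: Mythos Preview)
Your proposal is correct and matches the paper's approach essentially verbatim: the paper gives no separate proof but simply notes that the paragraph preceding the lemma (the compatibility condition $g\cdot(sm)=(g\cdot s)(g\cdot m)$) yields \one$\Leftrightarrow$\two, and then invokes the algebra isomorphism of Example~\ref{ex:boundMcKay} (or Remark~\ref{rem:BSW} in the nonabelian case) together with the bound-quiver equivalence to get \two$\Leftrightarrow$\three. Your citation of Proposition~\ref{prop:boundisomcats} for the latter step is in fact more apt than the paper's own reference to Proposition~\ref{prop:algebra}, which concerns quivers of sections rather than the abstract equivalence $\rep_\kk(Q,\varrho)\simeq\operatorname{mod}(\kk Q/\langle\varrho\rangle)$.
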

 
 Given this result, one approach to Conjecture~\ref{conj:McKay} is to
 construct a tilting bundle $T$ on a crepant resolution $Y$ for which
 $\End_{\mathscr{O}_Y}(T)$ is isomorphic to the algebra $A$.
 Reid~\cite{Reid2} proposed the tautological sheaf $\mathscr{W}$ on the
 $G$-Hilbert scheme as a candidate, at least in low dimensions, and
 King began to investigate whether the resolution of the diagonal from
 Proposition~\ref{prop:resdiagonal} could be established explicitly in
 such cases. However, with the timely completion of Bridgeland's
 thesis in the summer of 1998, Bridgeland, King and Reid together
 realised that an alternative approach is to establish the equivalence
 by using the universal sheaf for the $G$-Hilbert scheme to construct a
 $G$-equivariant version of a Fourier--Mukai transform (the same
 observation was made independently and carried out in dimension two
 by Kapranov--Vasserot~\cite{KapranovVasserot}).
 
 Define a functor $\Phi^{\mathscr{O}_{\mathcal{Z}}}\colon
   D^b\big(\!\coh(Y)\big)\rightarrow D^b\big(\modA\big)$ by setting
 \begin{equation}
 \label{eqn:FMformula}
 \Phi^{\mathscr{O}_{\mathcal{Z}}}(-) = {\Rderived}(\pi_V)_*\Big{(}\mathscr{O}_{\mathcal{Z}}\ltensor (\pi_Y)^*(-\otimes\rho_0)\Big{)}.
 \end{equation}
 The tensor product with the trivial representation acknowledges that
 \(G\) acts trivially on $Y$, enabling us to take the
 \(G\)-equivariant pullback via \(\pi_Y\). Therefore,
 $\Phi^{\mathscr{O}_{\mathcal{Z}}}$ is an equivariant version of an
 integral functor (see Huybrechts~\cite{Huybrechts}).  The main result
 of Bridgeland--King--Reid~\cite{BKR}, also known as
 ``$\text{Mukai}\implies\text{McKay}$'', can be stated as follows.

 \begin{theorem}
 \label{thm:BKR}
 Let $G\subset \SL(n,\kk)$ be a finite subgroup and let
 $A=\kk[x_1,\dots,x_n]*G$ denote the skew group algebra. If the irreducible
 component $Y\subseteq \ghilb(\mathbb{A}^n_\kk)$ containing the free $G$-orbits
 satisfies $\dim\big{(}Y\times_X Y\big{)}\leq n+1$, then:
 \begin{enumerate}
 \item[\one] the morphism \(\tau\colon
 Y\rightarrow X\) is a crepant resolution; and 
\item[\two] the functor $\Phi^{\mathscr{O}_{\mathcal{Z}}}\colon
  D^b\big(\!\coh(Y)\big)\rightarrow D^b(\modA)$ is an equivalence of
  derived categories.
 \end{enumerate}
 \end{theorem}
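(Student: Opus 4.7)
The plan is to deduce both statements together by showing that $\Phi:=\Phi^{\mathscr{O}_{\mathcal{Z}}}$ is fully faithful and then an equivalence, with smoothness and crepancy of $Y$ falling out as consequences of the equivalence. The first step is to verify that $\Phi$ admits both a left and a right adjoint. Since $\pi_V\colon Y\times \mathbb{A}^n_\kk \to \mathbb{A}^n_\kk$ is proper when restricted to the support of the universal sheaf $\mathscr{O}_{\mathcal{Z}}$ (as $Y$ is projective over $X$), Grothendieck--Serre duality gives an explicit formula for the right adjoint of the form $\Rderived(\pi_Y)_*\big(\mathscr{O}_{\mathcal{Z}}^\vee \otimes \pi_V^*(-)\otimes \pi_V^*\omega_{\mathbb{A}^n_\kk}[n]\big)$ in the $G$-equivariant setting, and an analogous formula for the left adjoint using $\omega_Y$. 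Both adjoints exist as honest $G$-equivariant integral functors; this is where it matters that $G\subset \SL(n,\kk)$, since $\omega_{\mathbb{A}^n_\kk}$ then carries the trivial $G$-character.

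The main step is to prove fully faithfulness using Bridgeland's criterion (an adaptation of the Bondal--Orlov intersection theorem to the equivariant setting): $\Phi$ is fully faithful provided that for every pair of closed points $y_1,y_2\in Y$ one has $\Hom^i\big(\Phi(\mathscr{O}_{y_1}),\Phi(\mathscr{O}_{y_2})\big)=0$ unless $y_1=y_2$ and $0\leq i\leq \dim Y$, and equals $\kk$ precisely when $y_1=y_2$ and $i=0$. The object $\Phi(\mathscr{O}_y)$ is (up to a shift and a twist) the derived restriction of the universal sheaf to the fibre $\mathbb{V}(I_y)\subset \mathbb{A}^n_\kk$, so these $\Hom$-spaces are supported on the scheme-theoretic intersection of $\mathbb{V}(I_{y_1})$ and $\mathbb{V}(I_{y_2})$ inside $\mathbb{A}^n_\kk$. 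The hardest part will be controlling these $\Ext$-groups off the diagonal. The critical input is the hypothesis $\dim(Y\times_X Y)\leq n+1$: points $y_1\neq y_2$ with non-disjoint supports must lie over a common point of $X$, and the dimension hypothesis bounds the locus where this fails. One combines this with the observation that $\Phi$ takes $\mathscr{O}_y$ to an object whose cohomology is concentrated in degree zero for generic $y$ (the free-orbit locus), and a spectral sequence/local-to-global argument to bootstrap the generic vanishing to a vanishing for all $y$, using a standard support-bound argument of the form ``if the cohomology of $\Phi(\mathscr{O}_y)$ were supported in codimension $>$ expected, the corresponding $\Ext$-groups would violate the dimension bound''.

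Once $\Phi$ is fully faithful, promoting it to an equivalence is formal. Since $\Phi$ has a right adjoint $\Phi^R$, the adjunction counit $\Phi\circ\Phi^R \to \id$ being an isomorphism on a spanning class yields the equivalence; the spanning class is provided by the simple $A$-modules, equivalently the simple objects of $\Gcoh(\mathbb{A}^n_\kk)$ concentrated at the origin, which lie in the essential image because $\Phi(\mathscr{O}_y)$ for $y$ corresponding to the $G$-cluster at the origin recovers (a shift of) the regular representation and its composition factors. Alternatively one checks that the left and right adjoints agree up to a shift, by comparing Serre functors: on the $A$-side the Serre functor is $(-)[n]$ since $\omega_{\mathbb{A}^n_\kk}$ is $G$-trivially isomorphic to $\mathscr{O}$, while on the $Y$-side it is $-\otimes \omega_Y[n]$, and fully faithful $\Phi$ intertwines these, forcing $\omega_Y\cong \mathscr{O}_Y$.

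Finally, statement \one\ is extracted from \two. The equivalence $D^b(\coh(Y))\cong D^b(\modA)$ forces $Y$ to be smooth: $A=\kk[x_1,\dots,x_n]*G$ has finite global dimension (since $\kk[x_1,\dots,x_n]$ is regular and $|G|$ is invertible in $\kk$), so $D^b(\modA)$ has a strong generator, and the equivalence transports this to a strong generator of $D^b(\coh(Y))$, which by a theorem of Bondal--Van den Bergh implies $Y$ is smooth. The triviality $\omega_Y\cong \mathscr{O}_Y$ noted above, combined with the projective birational morphism $\tau\colon Y\to X$ and triviality of $\omega_X$, gives $\tau^*\omega_X\cong\omega_Y$, so $\tau$ is crepant.
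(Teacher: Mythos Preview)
Your overall architecture is right---construct adjoints, test Bridgeland's criterion on skyscrapers, upgrade to an equivalence via Serre functors, then read off crepancy---but there are two genuine gaps at exactly the points where the real work lies.

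First, your treatment of the off-diagonal vanishing misidentifies the object under analysis and omits the key input. The claim that ``$\Phi(\mathscr{O}_y)$ has cohomology concentrated in degree zero for generic $y$'' is true for \emph{every} $y$ (it is the structure sheaf $\mathscr{O}_{Z_y}$ of a $G$-cluster), so there is nothing to bootstrap from the free-orbit locus. The actual argument works not with $\Phi(\mathscr{O}_y)$ but with the object $\mathcal{Q}$ on $Y\times Y$ representing the composite $\Psi\circ\Phi$, and the crucial tool is the \emph{Intersection Theorem} from commutative algebra: for a nonzero object $E$ one has $\codim(\supp E)\leq \homdim(E)$. The easy vanishings for $i=0$ (stability) and $i=n$ (Serre duality plus $G\subset\SL$) give $\homdim(\mathcal{Q}|_{Y\times Y\setminus\Delta})\leq n-2$, while the fibre-product hypothesis gives $\codim(\supp\,\mathcal{Q}|_{Y\times Y\setminus\Delta})\geq n-1$; the Intersection Theorem then forces $\mathcal{Q}|_{Y\times Y\setminus\Delta}=0$. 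This is not a ``standard support-bound'' but a deep theorem, and without naming it your sketch has no mechanism for the vanishing.

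Second, and more seriously, your deduction of smoothness is circular. You invoke Bridgeland's criterion, which needs $\{\mathscr{O}_y : y\in Y\}$ to be a spanning class; that holds only once $Y$ is already known to be smooth. You then propose to extract smoothness from the resulting equivalence via Bondal--Van den Bergh, but by that point you have assumed it. In the paper the logic runs the other way: smoothness of $Y$ at each $y$ is obtained \emph{before} the spanning-class step, as a byproduct of the diagonal case. One forms the adjunction triangle $C\to\Psi\Phi(\mathscr{O}_y)\to\mathscr{O}_y\to C[1]$, uses injectivity of the Kodaira--Spencer map to kill low-degree $\Hom$'s into $\mathscr{O}_y$, and then applies the pointwise form of the Intersection Theorem to conclude simultaneously that $\Psi\Phi(\mathscr{O}_y)\cong\mathscr{O}_y$ and that $Y$ is smooth at $y$. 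Your proposal does not address the diagonal case $y_1=y_2$ for $1\leq i\leq n$ at all, and this is in fact the hardest step.
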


 \begin{corollary}
   Conjecture~\ref{conj:McKay} holds for surfaces, and for 3-folds
   when $Y=\ghilb(\mathbb{A}^3_\kk)$.
 \end{corollary}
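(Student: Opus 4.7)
The strategy in both cases is to verify the hypothesis $\dim(Y\times_X Y)\leq n+1$ of Theorem~\ref{thm:BKR} for the component $Y\subseteq\ghilb(\mathbb{A}^n_\kk)$ containing the free $G$-orbits. Once this holds, Theorem~\ref{thm:BKR} delivers both the crepant resolution property of $\tau\colon Y\to X$ and the derived equivalence $\Phi^{\mathscr{O}_{\mathcal{Z}}}\colon D^b(\!\coh(Y))\to D^b(\modA)$. Combining with Lemma~\ref{lem:abelianequivs}, which identifies $\modA$ with $\Gcoh(\mathbb{A}^n_\kk)$, this gives the equivalence \eqref{eqn:derivedcats} required in Conjecture~\ref{conj:McKay}.

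The key reduction is the bound
\[
\dim(Y\times_X Y)\;\leq\;\dim Y+\max_{x\in X}\dim\tau^{-1}(x),
\]
obtained by decomposing $Y\times_X Y$ via its first projection, whose fibres are precisely the scheme-theoretic fibres of $\tau$. Since $Y$ contains the free $G$-orbits as a dense open, $\tau$ is birational and $\dim Y=n$; so it suffices to bound the dimension of every fibre of $\tau$ by $1$.

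For $n=2$, the scheme $\ghilb(\mathbb{A}^2_\kk)$ is irreducible, so $Y=\ghilb(\mathbb{A}^2_\kk)$. Since $\tau$ is a projective birational morphism of surfaces with $Y$ normal, every fibre has dimension at most one, and hence $\dim(Y\times_X Y)\leq 2+1=n+1$. Theorem~\ref{thm:BKR} now yields that $Y$ is a crepant resolution and that $\Phi^{\mathscr{O}_{\mathcal{Z}}}$ is an equivalence. Since any crepant resolution of a Kleinian singularity coincides with the minimal resolution and hence with $\ghilb(\mathbb{A}^2_\kk)$, this settles the conjecture for surfaces. For $n=3$ with $Y=\ghilb(\mathbb{A}^3_\kk)$, the analogous inequality $\dim(Y\times_X Y)\leq 3+1=n+1$ again reduces to showing that each fibre of $\tau$ has dimension at most one. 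One argues that a fibre $\tau^{-1}(x)$ parametrises $G$-clusters supported on the $G$-orbit $\pi^{-1}(x)$, and estimates the dimension of this ``punctual'' locus by a direct analysis of $G$-homogeneous ideals in $\kk[x_1,x_2,x_3]$ whose quotient realises the regular representation.

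The main obstacle is precisely this punctual dimension bound in the three-dimensional case: unlike the general bound $\dim\tau^{-1}(x)\leq n-1$ available for any birational morphism of normal $n$-folds (which is too weak for $n\geq 3$), one needs the sharper estimate $\dim\tau^{-1}(x)\leq 1$, which uses specifically the $G$-Hilbert geometry. Once this dimension count is carried out, the verification of the hypothesis of Theorem~\ref{thm:BKR} is immediate and the corollary follows. It is also worth noting that this is exactly where the restriction to $n\leq 3$ becomes essential: in higher dimensions the fibre bound generally fails, mirroring the fact that $\ghilb(\mathbb{A}^n_\kk)$ need not even be a crepant resolution.
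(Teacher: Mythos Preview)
Your surface case is fine, but the threefold argument has a genuine error: the claimed bound $\dim\tau^{-1}(x)\leq 1$ is simply false for $\ghilb(\mathbb{A}^3_\kk)$. Already for the cyclic action of type $\tfrac{1}{3}(1,1,1)$, the $G$-Hilbert scheme is the blow-up of $\mathbb{A}^3_\kk/G$ at the image of the origin, and the fibre over that point is the exceptional divisor $\mathbb{P}^2$, which is $2$-dimensional. More generally, whenever the crepant resolution has an irreducible exceptional divisor contracted to a point, the corresponding fibre is a surface. So no ``punctual'' analysis of $G$-homogeneous ideals will rescue the estimate $\dim\tau^{-1}(x)\leq 1$; it is not true. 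With only the honest bound $\dim\tau^{-1}(x)\leq 2$, your inequality gives $\dim(Y\times_X Y)\leq 3+2=5$, which misses $n+1=4$.

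The paper's argument sidesteps this by bounding $Y\times_X Y$ differently. Off the diagonal, if $(y,y')\in Y\times_X Y$ with $y\neq y'$ then $\tau$ is not injective near $y$ and $y'$, so both lie in the exceptional locus $E\subset Y$. Hence $(Y\times_X Y)\setminus\Delta\subseteq E\times E$ and $\dim\big((Y\times_X Y)\setminus\Delta\big)\leq 2\dim E\leq 2(n-1)$. Together with the $n$-dimensional diagonal this gives $\dim(Y\times_X Y)\leq\max(n,2(n-1))$, which is $2$ for $n=2$ and $4$ for $n=3$, in both cases at most $n+1$. Note that this uses only the coarse bound $\dim E\leq n-1$ available for any birational morphism, no fibre-by-fibre estimate at all. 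Replace your fibre-dimension reduction with this exceptional-locus bound and the proof goes through.
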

 \begin{proof}
   The dimension of \(Y\times_X Y\) is at most twice the dimension of
   the exceptional locus; this is two for \(n = 2\) and four for
   \(n=3\).  Since crepant resolutions of of canonical surfaces are
   unique, Theorem~\ref{thm:BKR} settles completely the case $n=2$
   (this is due to Kapranov--Vasserot~\cite{KapranovVasserot}).
 \end{proof}

 \begin{exercise}
 Show that the condition on the dimension of the fibre product fails
 to hold for the unique crepant resolution of the singularity of type
 $\frac{1}{4}(1,1,1,1)$.
 \end{exercise}
 
 \begin{remark}
 Bridgeland--King--Reid~\cite{BKR} also show that $\ghilb(\mathbb{A}^3_\kk)$
 is connected for a finite subgroup $G\subset \SL(3,\kk)$, giving $Y\cong \ghilb(\mathbb{A}^3_\kk)$.
 \end{remark}

\subsection{First steps of the proof}  
 In this section and the next we describe much of the proof of
 Theorem~\ref{thm:BKR}. The original proof from~\cite{BKR} is very
 well written. My aim is simply to present a roadmap for the
 construction as I learned it from the authors while the paper was
 being written (and to shed more light on the rather tightly written
 \cite[\S6, \textrm{Step 5}]{BKR}). As in Section~\ref{sec:tilting},
 we refer to \cite{Caldararu, Huybrechts} for the necessary
 triangulated category definitions.
 
 To simplify the discussion, we make a couple of rather outrageous
 assumptions: assume
 \begin{enumerate}
 \item[(A1)] that $Y$ is projective and that Serre duality holds on
   $\mathbb{A}^n_\kk$. This is patently false (!);
 \item[(A2)] that $\tau\colon Y\rightarrow X$ is a crepant resolution.
   While this forms half of the result (!), it was known prior to
   \cite{BKR} for finite abelian subgroups $G\subset \SL(3,\kk)$ by
   Nakamura~\cite{Nakamura}.
 \end{enumerate}

 \begin{roadmap}
   The strategy of the proof is to invoke a key result that gives a
   criteria for an integral functor to be an equivalence. Since $Y$ is
   smooth by assumption (A2), the skyscraper sheaves $\{\mathscr{O}_y
   : y\in Y\}$ form a spanning class in $D^b\big(\!\coh(Y)\big)$. If
   we accept that the triangulated category $D^b\big(\!\modA\big)$ is
   indecomposable \cite[Lemma~4.2]{BKR}, then assumption (A1) enables
   us to state the result \cite[Theorem~2.4]{BKR} in the following
   form:

 \begin{theorem}
 \label{thm:Bridgeland}
 Let $Y$ be a smooth projective variety and $\Phi\colon
 D^b\big(\!\coh(Y)\big)\to D^b\big(\!\modA\big)$ an exact functor that
 admits a left adjoint. Then $\Phi$ is fully faithful if and only if
  \[
  \Hom(\Phi(\mathscr{O}_y),\Phi(\mathscr{O}_{y^\prime})[i]) \cong
  \Hom\big(\mathscr{O}_y,\mathscr{O}_{y^\prime}[i]\big)\quad\quad \text{for
  all }i\in \ZZ\text{ and }y, y^\prime \in Y.
 \]  
 If, in addition, $\Phi$ commutes with the Serre functors on the
 triangulated categories $D^b\big(\!\coh(Y)\big)$ and
 $D^b\big(\!\modA\big)$, then $\Phi$ is a derived equivalence.
 \end{theorem}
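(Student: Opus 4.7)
\begin{roadmap}
My plan is to reduce the statement to a standard criterion for fully faithfulness via a spanning class, and then upgrade fully faithfulness to an equivalence using the Serre functor hypothesis together with the indecomposability of the target.

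First, I recall that on a smooth projective variety $Y$ the set $\Omega = \{\mathscr{O}_y : y\in Y\}$ of skyscraper sheaves forms a \emph{spanning class} in $D^b\big(\!\coh(Y)\big)$: any object $E$ with $\Hom(E,\mathscr{O}_y[i])=0$ for all $y,i$ must be zero (compute cohomology sheaves pointwise and use the local-to-global spectral sequence for $\Rderived\Hom$), and dually for $\Hom(\mathscr{O}_y[i],E)=0$. The hypothesis that $\Phi$ admits a left adjoint $\Phi^*$ is essential, since then the natural adjunction morphism $\eta_E\colon E\to \Phi^*\circ \Phi(E)$ is defined for every $E\in D^b\big(\!\coh(Y)\big)$, and $\Phi$ is fully faithful precisely when $\eta_E$ is an isomorphism for all $E$. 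The main step is then the spanning-class criterion of Bondal--Orlov/Bridgeland: if $\Phi$ has a left adjoint and the induced maps
\[
\Hom\big(a,b[i]\big)\longrightarrow \Hom\big(\Phi(a),\Phi(b)[i]\big)
\]
are isomorphisms for all $a,b\in \Omega$ and all $i\in \ZZ$, then $\Phi$ is fully faithful. I would prove this by letting $\mathcal{C}\subseteq D^b\big(\!\coh(Y)\big)$ denote the full subcategory of objects $a$ such that $\eta_a$ is an isomorphism; $\mathcal{C}$ is triangulated and closed under direct summands, and the hypothesis says $\Omega\subseteq\mathcal{C}$, so a two-step spanning-class argument (fix $a\in\Omega$, vary $b$ over $D^b$; then vary $a$) yields $\mathcal{C}=D^b\big(\!\coh(Y)\big)$.

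For the "only if" direction, fully faithfulness gives the Hom-isomorphisms tautologically on all of $D^b\big(\!\coh(Y)\big)$, so in particular on $\Omega$. For the final clause, I would appeal to the standard upgrade lemma: an exact fully faithful functor $\Phi\colon D\to D'$ between triangulated categories equipped with Serre functors $S_D,S_{D'}$ with $\Phi\circ S_D\cong S_{D'}\circ \Phi$, and with $D'$ indecomposable, is automatically an equivalence. The proof sketch is that the right orthogonal $\Phi(D)^{\perp}\subseteq D'$ is preserved by $S_{D'}$ (by the Serre intertwining and fully faithfulness), which forces it to be both left- and right-admissible; this yields a semi-orthogonal decomposition $D' = \langle \Phi(D)^\perp, \Phi(D)\rangle$, and indecomposability of $D'$ then kills one factor. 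Since we are given that $D^b\big(\!\modA\big)$ is indecomposable and that $\Phi$ commutes with Serre functors, applying this upgrade completes the proof.

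The main technical obstacle I anticipate is not the spanning-class bookkeeping itself but rather verifying the hypotheses of the upgrade lemma in the intended application: one must know that $D^b\big(\!\coh(Y)\big)$ and $D^b\big(\!\modA\big)$ both admit Serre functors, which under the working assumption (A1) that Serre duality holds is fine, but in the true setting of \cite{BKR} (where $Y$ and $\mathbb{A}^n_\kk$ are not projective) this requires careful modification via relative Serre functors and local versions of the criterion. A secondary subtlety is the existence of the left adjoint: one needs to know that $\Phi$ is an integral-type functor arising from a perfect kernel so that $\Phi^*$ exists and is given by an explicit kernel, which is where the specific form of $\Phi^{\mathscr{O}_{\mathcal{Z}}}$ and properness of $\pi_Y$ enter. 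With these adjustments in place, the proof reduces to the clean spanning-class plus Serre-intertwining argument outlined above.
\end{roadmap}
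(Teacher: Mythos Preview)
The paper does not prove this theorem at all: it is stated as a black box, quoted from \cite[Theorem~2.4]{BKR} under the simplifying assumption (A1) and after accepting that $D^b(\modA)$ is indecomposable (\cite[Lemma~4.2]{BKR}). The subsequent remark merely notes the provenance (Mukai, Bondal--Orlov) and flags that this version requires testing the $\Hom$ groups for \emph{all} $i\in\ZZ$ and $y,y'\in Y$.

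Your outline is the standard argument and is essentially what \cite{BKR} does: skyscrapers form a spanning class on smooth $Y$, the Bondal--Orlov/Bridgeland spanning-class criterion gives fully faithfulness, and Serre-intertwining plus indecomposability of the target upgrades to an equivalence. Two small points are worth tightening. First, the theorem as stated asserts only an abstract isomorphism of $\Hom$ groups, whereas the spanning-class criterion needs the \emph{natural} map induced by $\Phi$ to be an isomorphism; in the application this is harmless (most of the groups vanish, and in degree zero the map sends identities to identities), but you should say so explicitly rather than silently strengthen the hypothesis. Second, the version of the spanning-class criterion you invoke typically uses both a left and a right adjoint; here only a left adjoint is assumed, so you should remark that the existence of Serre functors on both sides (built into the second clause, and into assumption (A1)) manufactures the right adjoint as $S_{D'}\circ\Phi^*\circ S_D^{-1}$, or alternatively that for the integral functor $\Phi^{\mathscr{O}_{\mathcal{Z}}}$ both adjoints are given by explicit kernels. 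Your closing paragraph already anticipates exactly the genuine difficulties (failure of (A1), relative Serre duality, existence of adjoints in the non-compact setting), which is where \cite{BKR} does the real work.
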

  
 \begin{remark}
 This result is motivated by earlier results of Mukai and
 Bondal--Orlov, though note that the form of the result used here
 requires that the $\Hom$ groups be tested for all $i\in \ZZ$ and all
 $y,y^\prime \in Y$ (compare
 Huybrechts~\cite[Remark~7.3]{Huybrechts}).
 \end{remark}

The first step is to ensure that the exact functor
 $\Phi^{\mathscr{O}_{\mathcal{Z}}}$ admits a left adjoint. This
 follows from the projectivity assumption (A1), since Neeman's
 $G$-equivariant version of Grothendieck duality enables us to repeat
 the original construction of adjoints due to Mukai (see
 \cite[Proposition~5.9]{Huybrechts}) to deduce that the functor
 $\Psi\colon D^b\big(\!\modA\big)\rightarrow D^b\big(\!\coh(Y)\big)$
 defined by
 \[
 \Psi(-):=
 \Big{[}\Rderived(\pi_Y)_*\Big{(}\mathscr{O}_{\mathcal{Z}}^\vee\ltensor
 \pi^*_V(-)[n]\Big{)}\Big{]}^G
 \]
 is left-adjoint to $\Phi^{\mathscr{O}_{\mathcal{Z}}}$ (we use the
 fact that $G\subset \SL(n,\kk)$ to deduce that $\pi_V^*(\omega_V)$ is
 trivial as a $G$-equivariant sheaf). Note that taking the
 $G$-invariant part is adjoint to the functor obtained by taking the
 tensor product with the trivial representation of $G$.

 \begin{exercise}
   For $\rho\in \Irr(G)$, let $\rho^*$ denote the contragradient
   representation and $\mathscr{O}_{\mathbb{A}_\kk^n}\otimes \rho^*$
   the $G$-equivariant sheaf on $\mathbb{A}^n_\kk$ for which
   $\Gamma(\mathscr{O}_{\mathbb{A}_\kk^n}\otimes \rho^*)$ is the
   $\kk[x_1,\dots,x_n]^G$-module consisting of the
   $\rho^*$-semiinvariant functions. Use Grothendieck duality to show
   that $\Psi(\mathscr{O}_{\mathbb{A}_\kk^n}\otimes \rho^*) =
   \mathscr{W}_\rho^\vee$.
 \end{exercise}

% Since
% \(\mathcal{U}^{\vee}=\Rderived\mathcal{H}om_{\mathscr{O}_{Y\times
% {\CC^3}}}(\mathscr{U},\mathscr{O}_{Y\times {\CC^3}})\), we have
% \begin{eqnarray*}
% \Psi(\mathscr{O}_{\mathbb{C}^n}\otimes \rho) & = &
% \bigl[\Rderived(\pi_Y)_*(\mathscr{U}^{\vee}[n]
%   \otimes (\mathscr{O}_{Y\times \CC^n}\otimes \rho))\bigr]^G \\
% & = &
% \bigl[\Rderived(\pi_Y)_*(\Rderived\mathcal{H}om(\mathscr{U},\mathscr{O}_{Y\times \CC^n}[n]))\otimes \rho)\bigr]^G  \\ 
%   & = & \bigl[\Rderived\mathcal{H}om(
%   \Rderived(\pi_Y)_*(\mathscr{U}),\mathscr{O}_{Y})\otimes
%   \rho\bigr]^G  \\
% & = & \bigl[\Rderived\mathcal{H}om(
% \mathscr{R},\mathscr{O}_{Y})\otimes \rho\bigr]^G  \\
% & = & \mathscr{R}^\vee_\rho,
% \end{eqnarray*}
% where the key step from line two to three invokes Grothendieck
% duality. Thus $\Phi(\mathscr{R}^\vee_\rho) =
% \mathscr{O}_{\mathbb{C}^n}\otimes \rho$.

 As a second step, applying Theorem~\ref{thm:Bridgeland} shows that
 $\Phi^{\mathscr{O}_{\mathcal{Z}}}$ is fully faithful if and only if
 \begin{equation}
 \label{eqn:BO}
 \Hom\big{(}\Phi(\mathscr{O}_y),\Phi(\mathscr{O}_{y^\prime})[i]\big{)}  =
 \left\{\begin{array}{cl} 0 & \text{if }y\neq y^\prime\text{ or
   }i\not\in [0,n] \\ \Ext^i_{\mathscr{O}_Y}\!\big(\mathscr{O}_y,\mathscr{O}_y\big) & \text{otherwise }, \end{array}\right.
 \end{equation}
 for all $i\in \ZZ$ and $y, y^\prime \in Y$.  Using formula
 \eqref{eqn:FMformula} for the Fourier--Mukai transform, we obtain
 $\Phi(\mathscr{O}_{y})=\mathscr{O}_{Z_y}$ where $Z_y\subset
 \mathbb{A}_\kk^n$ is the $G$-cluster corresponding to the point $y\in
 Y$. Moreover, under the equivalence $\modA\cong \Gcoh(\mathbb{A}^n_\kk)$, the
 $\Hom$-groups in $D^b\big(\!\modA\big)$ correspond to the
 $G$-invariant part of $\Ext$-groups, so we work with the groups
 \[
 \Hom\big{(}\Phi(\mathscr{O}_y),\Phi(\mathscr{O}_{y^\prime})[i]\big{)}
 =\GExt^i_{\mathscr{O}_{\mathbb{A}^n_\kk}}(\mathscr{O}_{Z_y},\mathscr{O}_{Z_{y^\prime}}).
 \]

 The third step is to show that these groups vanish for $y\neq
 y^\prime$; this requires the assumption on the dimension on the fibre
 product. The $G$-clusters $Z_y$ and $Z_{y^\prime}$ define different
 points of the classical Hilbert scheme of points, giving
 $\Hom(\mathscr{O}_{Z_y},\mathscr{O}_{Z_{y'}}) =0$ and hence
 $\GHom(\mathscr{O}_{Z_y},\mathscr{O}_{Z_{y'}}) =0$. The $G$-clusters
 $Z_y$ and $Z_{y^\prime}$ are disjoint when their supporting
 $G$-orbits $\tau(y), \tau(y^\prime)\in \mathbb{A}_\kk^n$ satisfy
 $\tau(y)\neq \tau(y^\prime)$, so
 \begin{equation}
 \label{eqn:extvanish}
 (y,y^\prime)\in Y\times Y \text{ with }
 \tau(y)\neq\tau(y^\prime)\implies
 \GExt^i(\mathscr{O}_{Z_y},\mathscr{O}_{Z_{y^\prime}}) =0 \quad
 \text{for all }i\in \ZZ.
 \end{equation}
 Otherwise $y\neq y^\prime$ and $\tau(y)=\tau(y^\prime)$, so the pair
 $y, y^\prime \in Y$ satisfies $(y,y^\prime)\in Y\times_X
 Y\smallsetminus \Delta$, where $\Delta$ is the diagonal.  Our
 spurious assumption (A1) that Serre duality holds on $V=\mathbb{A}^n_\kk$,
 combined with the fact that $\omega_V$ is trivial as a
 $G$-equivariant sheaf implies that
 \[
 \GExt^n(\mathscr{O}_{Z_y},\mathscr{O}_{Z_{y^\prime}}) \cong
 \GHom(\mathscr{O}_{Z_{y^\prime}},\mathscr{O}_{Z_{y}})=0
 \]
 (omitting the dual). Since the global dimension of the skew group
 algebra is $n$, we obtain
 \begin{equation}
 \label{eqn:extvanish2}
 (y,y^\prime)\in Y\times_X Y \text{ with }y\neq y^\prime \implies
 \GExt^i(\mathscr{O}_{Z_y},\mathscr{O}_{Z_{y^\prime}}) =0 \quad
 \text{unless }1\leq i\leq n-1.
 \end{equation}
 We're now in a position to make the following important claim:
  \end{roadmap}

 \begin{claim}
 \label{cla:assumption}
   The assumption $\dim(Y\times_X Y)\leq n+1$ in Theorem~\ref{thm:BKR}
   is imposed to ensure that the groups
   $\GExt^i(\mathscr{O}_{Z_y},\mathscr{O}_{Z_{y^\prime}})$ vanish for
   the remaining values $1\leq i\leq n-1$.
 \end{claim}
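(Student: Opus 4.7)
The plan is to globalise the fibrewise $\GExt$ groups into a single perfect complex on $Y\times Y$, and then to exploit the codimension bound that the hypothesis $\dim(Y\times_X Y)\leq n+1$ provides, via an intersection theorem in the spirit of Peskine--Szpiro and Roberts.

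To set up the globalisation: since $G\subset \SL(n,\kk)$, the equivariant canonical sheaf on $V=\mathbb{A}^n_\kk$ is trivial, and Grothendieck--Serre duality on $V$ gives a perfect pairing
\[
\GExt^i(\mathscr{O}_{Z_y},\mathscr{O}_{Z_{y'}}) \cong \GExt^{n-i}(\mathscr{O}_{Z_{y'}},\mathscr{O}_{Z_y})^\vee,
\]
which is already implicit in \eqref{eqn:extvanish2} and which exhibits an $i\leftrightarrow n-i$ symmetry of the problem. Writing $p_{12}, p_{23}, p_{13}$ for the projections from $Y\times V\times Y$, I would form the complex
\[
\mathcal{K}^\bullet := \bigl(\Rderived(p_{13})_*\,\Rderived\mathcal{H}om(p_{12}^*\mathscr{O}_{\mathcal{Z}},\, p_{23}^*\mathscr{O}_{\mathcal{Z}})\bigr)^G
\]
on $Y\times Y$. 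By flat base change, its cohomology $\mathcal{H}^i(\mathcal{K}^\bullet)$ has fibre at $(y,y')$ equal to $\GExt^i(\mathscr{O}_{Z_y},\mathscr{O}_{Z_{y'}})$. Accordingly, \eqref{eqn:extvanish} says that $\mathcal{K}^\bullet$ is supported inside $Y\times_X Y$, while \eqref{eqn:extvanish2} says that off the diagonal its cohomology lives in the range $[1,n-1]$.

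The crux is a dimension/codimension argument. Since $\mathcal{Z}\to Y$ is finite and flat of degree $|G|$ over the smooth $Y$, the complex $\mathcal{K}^\bullet$ is perfect on $Y\times Y$. The hypothesis $\dim(Y\times_X Y)\leq n+1$ implies that the off-diagonal locus $(Y\times_X Y)\setminus \Delta$ has codimension at least $n-1$ inside $Y\times Y$, whose dimension is $2n$. I would then invoke the intersection theorem in the following form: a nonzero perfect complex on a smooth variety cannot have all its cohomology concentrated in a range of consecutive degrees whose length (highest minus lowest non-zero degree) is strictly smaller than the codimension of its support. Since the range $[1,n-1]$ has length $n-2$, which is strictly less than $n-1$, this forces $\mathcal{K}^\bullet$ to vanish off the diagonal, yielding the required $\GExt$-vanishing.

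The main obstacle will be making the intersection-theoretic step precise in the equivariant setting, in particular verifying that passage to $G$-invariants preserves perfectness and that the Tor-amplitude bookkeeping is unaffected by the group action. A further delicate point is that in practice one must apply the estimate stratum by stratum along $Y\times_X Y$, since the codimension of the exceptional fibres of $\tau$ jumps; this inductive version of the argument, running over the strata of $\tau$, is precisely what occupies \cite[\S 6, Step 5]{BKR}. Finally, the ostensible assumptions (A1)--(A2) should be removable: the statement is local on $X$, so Serre duality on $V$ may be used in its local form, and crepancy of $\tau$ (part (A2)) is in the actual proof established simultaneously with the equivalence rather than assumed beforehand.
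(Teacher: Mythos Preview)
Your overall strategy --- globalise to a perfect object on $Y\times Y$ and confront a codimension lower bound with an amplitude upper bound via the Intersection Theorem --- is exactly the paper's strategy, and your $\mathcal{K}^\bullet$ is, up to a shift and a swap of factors, the very object $\mathcal{Q}$ that the paper obtains as the Fourier--Mukai kernel of $\Psi\circ\Phi$.

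There is, however, a genuine slip in how you invoke the Intersection Theorem. You state it as: a nonzero perfect complex cannot have its \emph{cohomology sheaves} concentrated in a range shorter than the codimension of its support. That is false --- the skyscraper sheaf $\mathscr{O}_p$ on a smooth $n$-fold is perfect, has cohomology only in degree $0$, yet has support of codimension $n$. The correct inequality (Theorem~\ref{thm:BM2}\one) bounds codimension by the \emph{homological dimension} (equivalently Tor-amplitude), not by the cohomological amplitude. Correspondingly, your base-change claim is off: flat base change identifies $\GExt^i(\mathscr{O}_{Z_y},\mathscr{O}_{Z_{y'}})$ with the cohomology of the \emph{derived} fibre $\Lderived\iota_{(y,y')}^*\mathcal{K}^\bullet$, not with the ordinary fibre of $\mathcal{H}^i(\mathcal{K}^\bullet)$. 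The paper sidesteps this by working dually: it characterises $\mathcal{Q}$ via $\Hom(\mathcal{Q},\mathscr{O}_{(y,y')}[i])\cong\GExt^i$, which by \eqref{eqn:homdim} gives $\homdim(\mathcal{Q}\vert_{Y\times Y\setminus\Delta})\leq n-2$ directly, and then applies Theorem~\ref{thm:BM2}\one. Your argument can be repaired along the same lines once you replace ``cohomology in $[1,n-1]$'' by ``Tor-amplitude in $[1,n-1]$''.

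One further inaccuracy: the paper does \emph{not} argue stratum by stratum over $Y\times_X Y$. The bound $\dim(Y\times_X Y)\leq n+1$ is used once, globally on $Y\times Y\setminus\Delta$, to get $\codim\geq n-1$; a single application of the Intersection Theorem then kills $\mathcal{Q}$ off the diagonal. The material you attribute to \cite[\S6, Step 5]{BKR} concerns the separate case $y=y'$ and the deduction that $\Psi(\Phi(\mathscr{O}_y))\cong\mathscr{O}_y$ (the paper's Step~4), not any stratification.
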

 
 \subsection{On the Intersection theorem}
 To justify Claim~\ref{cla:assumption} and hence complete step three
 of our discussion, we first collect a few results from
 Bridgeland--Maciocia~\cite{BridgelandMaciocia}. The \emph{support} of
 an object $E\in D^b\big(\!\coh(Y)\big)$, denoted
 $\supp(E)$, is the closed subset of $Y$ obtained as the
 union of the supports of the cohomology sheaves
 $\mathcal{H}^i(E)$.  The proof of the
 following lemma is a simple spectral sequence
 argument~\cite[Lemma~5.3]{BridgelandMaciocia}.

 \begin{lemma}
 \label{lem:BM1}
 Let $E\in D^b\big(\!\coh(Y)\big)$ be an object and $y\in Y$
 a point. Then
 \[
 y\in \supp(E)\iff \exists \; i\in \ZZ \text{ such that }
 \Hom(E,\mathscr{O}_y[i])\neq 0.
 \]
 \end{lemma}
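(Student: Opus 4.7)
The plan is to apply the standard hypercohomology spectral sequence
\[
E_2^{p,q} = \Ext^p\big(\mathcal{H}^{-q}(E),\mathscr{O}_y\big) \Longrightarrow \Hom\big(E,\mathscr{O}_y[p+q]\big),
\]
which is available because $E$ has bounded cohomology, together with the elementary observation that for any coherent sheaf $\mathcal{F}$ on $Y$, one has $y\in\supp(\mathcal{F})$ iff $\mathcal{F}_y\neq 0$ iff $\Hom(\mathcal{F},\mathscr{O}_y)\neq 0$ (the second equivalence is Nakayama applied to the stalk), and moreover $y\notin\supp(\mathcal{F})$ implies $\Ext^p(\mathcal{F},\mathscr{O}_y)=0$ for every $p\geq 0$, since $\mathcal{F}$ vanishes on a Zariski neighbourhood of $y$ while $\mathscr{O}_y$ is supported at $\{y\}$.

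For the implication ($\Leftarrow$) I would argue by contrapositive: if $y\notin\supp(E)$ then every cohomology sheaf satisfies $\mathcal{H}^i(E)_y=0$, so every $E_2$-term vanishes by the preliminary observation, and the spectral sequence forces $\Hom(E,\mathscr{O}_y[i])=0$ for all $i\in\ZZ$.

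The forward direction ($\Rightarrow$) is the substantive half. Set $i_0:=\max\{i : y\in\supp(\mathcal{H}^i(E))\}$, which exists because $E$ has bounded cohomology and $y\in\supp(E)$. The preliminary observation shows that the corner term
\[
E_2^{0,-i_0}=\Hom\big(\mathcal{H}^{i_0}(E),\mathscr{O}_y\big)
\]
is nonzero, and the task is to show that a nonzero class there survives to $E_\infty$. Incoming differentials originate at $E_r^{-r,\,r-1-i_0}=\Ext^{-r}(\cdots,\mathscr{O}_y)$, which vanishes for $r\geq 1$ for homological degree reasons; outgoing differentials land in $E_r^{r,\,-i_0-r+1}=\Ext^r\big(\mathcal{H}^{i_0+r-1}(E),\mathscr{O}_y\big)$, and for $r\geq 2$ the sheaf $\mathcal{H}^{i_0+r-1}(E)$ has zero stalk at $y$ by maximality of $i_0$, so the target vanishes by the preliminary observation. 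Hence a nonzero class at $E_2^{0,-i_0}$ persists to $E_\infty$, yielding $\Hom(E,\mathscr{O}_y[-i_0])\neq 0$.

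The only real obstacle is bookkeeping: one must set up the spectral sequence indexing so that the outgoing differentials attacking $E_2^{0,-i_0}$ really do land in $\Ext$ groups involving cohomology sheaves of \emph{strictly higher} cohomological degree, which is exactly where the maximality of $i_0$ is used to conclude that they vanish.
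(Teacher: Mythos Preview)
Your argument is correct and is exactly the ``simple spectral sequence argument'' the paper alludes to in citing \cite[Lemma~5.3]{BridgelandMaciocia}; the paper does not spell out the details, but the proof there runs precisely along the lines you give. Your bookkeeping is right: the $E_2$-page convention means the first outgoing differential is $d_2$, and for $r\geq 2$ the target involves $\mathcal{H}^{i_0+r-1}(E)$ with $i_0+r-1>i_0$, so maximality of $i_0$ kills it.
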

 
 The \emph{homological dimension} of a nonzero object $E\in
 D^b\big(\!\coh(Y)\big)$ is the smallest integer $\homdim(E):=d$ such
 that $E$ is quasi-isomorphic to a complex of locally-free sheaves of
 length $d$. An upper bound on the homological dimension was provided
 by Bridgeland--Maciocia~\cite{BridgelandMaciocia}:
 $\homdim(E)\leq d$ if and only if there exists $j\in \ZZ$
 such that for all $y\in Y$,
 \begin{equation}
 \label{eqn:homdim}
 \Hom(E, \mathscr{O}_{y}[i]) = 0
 \quad\text{unless }j\leq i\leq j+d;
 \end{equation}
 
 The next result, however, is the geometric interpretation of a deep
 result from commutative algebra called the Intersection Theorem.

 \begin{theorem}
 \label{thm:BM2}
 Let $E\in D^b\big(\!\coh(Y)\big)$ be a nonzero object. 
 \begin{enumerate}
 \item[\one] We have $\codim(\supp(E))\leq \homdim(E)$.
 \item[\two] Suppose that $H^0(E)\cong \mathscr{O}_y$ for some closed
   point $y\in Y$. Suppose further that for any point $y'\in Y$ and
   integer $i\in \ZZ$ we have
 \[
 \Hom(E, \mathscr{O}_{y'}[i]) = 0
 \quad\text{unless }y=y'\text{ and }0\leq i\leq n.
 \]
 Then $E\cong\mathscr{O}_y$, and $Y$ is smooth at $y$.
 \end{enumerate}
 \end{theorem}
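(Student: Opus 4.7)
The plan is to reduce both parts to the Peskine--Szpiro--Roberts New Intersection Theorem, which asserts that if $(R,\mathfrak{m})$ is a Noetherian local ring and $P^\bullet$ is a bounded complex of finitely generated free $R$-modules whose total cohomology is nonzero and of finite length, then $\dim(R)$ is bounded above by the length of $P^\bullet$.

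For Part \one, the strategy is localization at a generic point. Let $c := \codim(\supp(E))$, choose an irreducible component $Z \subseteq \supp(E)$ of codimension $c$, and let $p \in Y$ be its generic point. After localizing at $p$, the stalk $E_p$ is quasi-isomorphic to a bounded complex of finitely generated free $\mathscr{O}_{Y,p}$-modules of length at most $\homdim(E)$. Since $p$ is a generic point of $\supp(E)$, every cohomology module $\mathcal{H}^i(E)_p$ is supported only at the closed point of $\Spec\mathscr{O}_{Y,p}$ and hence has finite length, so the Intersection Theorem gives $c = \dim\mathscr{O}_{Y,p} \leq \homdim(E_p) \leq \homdim(E)$.

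For Part \two, the vanishing hypothesis at points $y' \neq y$ combined with Lemma~\ref{lem:BM1} forces $\supp(E) \subseteq \{y\}$, and then $H^0(E) = \mathscr{O}_y \neq 0$ gives equality. The hypothesis also implies $\homdim(E) \leq n$ via \eqref{eqn:homdim}, so Part \one\ yields $\dim_y Y \leq n$ and $E_y$ is a perfect complex over $R := \mathscr{O}_{Y,y}$. The existence of a finite free resolution of any nonzero cohomology sheaf of $E$ (these are Artinian $R$-modules) then forces the residue field to have finite projective dimension over $R$; by Auslander--Buchsbaum--Serre, $R$ is regular, so $Y$ is smooth at $y$.

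The main obstacle I anticipate is promoting $H^0(E) \cong \mathscr{O}_y$ to $E \cong \mathscr{O}_y$. The plan is to exploit the hyper-Ext spectral sequence $E_2^{p,q} = \Ext^p_R(\mathcal{H}^{-q}(E), \mathscr{O}_y) \Rightarrow \Hom(E, \mathscr{O}_y[p+q])$. If $k_+$ is the largest integer with $\mathcal{H}^{k_+}(E) \neq 0$, then the term $E_2^{0,-k_+} = \Hom(\mathcal{H}^{k_+}(E), \mathscr{O}_y)$ is nonzero and survives all differentials by degree reasons, contributing to $\Hom(E, \mathscr{O}_y[-k_+])$; the hypothesis then forces $-k_+ \in [0,n]$, hence $k_+ = 0$. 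The symmetric corner $E_2^{d,-k_-}$ with $d = \dim_y Y$ similarly survives and contributes to degree $d - k_-$, which together with the regularity established above should bound $k_- \geq 0$ and complete the proof. Tracking these spectral-sequence differentials and the interplay between $\dim_y Y$ and $n$ is the delicate step.
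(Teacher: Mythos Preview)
The paper gives no proof here, deferring entirely to Bridgeland--Maciocia and Bridgeland--Iyengar. Your argument for part~\one\ via localisation at a generic point of an irreducible component of $\supp(E)$ and the New Intersection Theorem is correct and is essentially the argument in those references.

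For part~\two\ there is a genuine gap. The claim that each nonzero cohomology sheaf of a perfect complex admits a finite free resolution is false: over $R=\kk[x]/(x^2)$ the minimal complex $[R\xrightarrow{\,x\,}R]$ is perfect with both cohomology modules equal to the residue field, which has infinite projective dimension. So you cannot extract regularity of $\mathscr{O}_{Y,y}$ from perfection of $E_y$ alone, and your spectral-sequence bound on $k_-$ is then circular: the term $E_2^{d,-k_-}=\Ext^d(\mathcal{H}^{k_-}(E),\mathscr{O}_y)$ is a surviving corner only once you already know that $\Ext^{>d}$ vanishes and $\Ext^d\neq 0$, both of which require regularity. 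Note too that the statement implicitly assumes $n=\dim_y Y$ (as holds throughout Section~\ref{sec:McKay}); otherwise $E=\mathscr{O}_y\oplus\mathscr{O}_y[1]$ on $Y=\mathbb{A}^1_\kk$ with $n=2$ satisfies every hypothesis yet is not isomorphic to $\mathscr{O}_y$. The argument in the cited references uses this equality in an essential way: the minimal free complex $P^\bullet$ representing $E_y$ sits in degrees $[-n,0]$ with $P^0\cong R:=\mathscr{O}_{Y,y}$ and $H^0(P^\bullet)=\kk$, so $P^\bullet$ has length exactly $\dim R$. The equality case of the New Intersection Theorem then forces $R$ to be Cohen--Macaulay, after which the Peskine--Szpiro Acyclicity Lemma applies and yields $H^i(P^\bullet)=0$ for $i<0$. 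Hence $P^\bullet$ is a finite free resolution of $\kk$, giving simultaneously $E\cong\mathscr{O}_y$ and regularity of $R$ via Auslander--Buchsbaum--Serre. The missing idea in your sketch is precisely this Cohen--Macaulay step, which replaces the incorrect assertion about cohomology of perfect complexes.
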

 \begin{proof}
 See Bridgeland--Maciocia~\cite[Section 5]{BridgelandMaciocia}
 (compare also Bridgeland--Iyengar~\cite{BridgelandIyengar}).
 \end{proof}

 \subsection{Conclusion of the proof}
 We now return to the discussion of the third step. 

 \begin{roadmap2}
   To describe the logic in the rest of the proof, suppose that we can
   find an object $\mathcal{Q}\in D^b\big(\!\coh(Y\times Y)\big)$ such
   that
 \begin{equation}
 \label{eqn:Qobject}
 \Hom_{D^b(\coh(Y\times Y))}(\mathcal{Q}, \mathscr{O}_{(y,y^\prime)}[i]) =
 \GExt^i(\mathscr{O}_{Z_y},\mathscr{O}_{Z_{y^\prime}}).
 \end{equation}
 In light of the vanishing from \eqref{eqn:extvanish}, applying
 Lemma~\ref{lem:BM1} to the object $\mathcal{Q}\vert_{Y\times
   Y\smallsetminus \Delta}$ shows that the support of
 $\mathcal{Q}\vert_{Y\times Y\smallsetminus \Delta}$ is contained in
 $Y\times_X Y\smallsetminus \Delta$. The assumption on the dimension
 of the fibre product from Theorem~\ref{thm:BKR} now implies that
 $\dim\big{(}\supp(\mathcal{Q}\vert_{Y\times Y\smallsetminus
   \Delta})\big{)}\leq n+1$, i.e.,
 \[
 \codim\big{(}\supp(\mathcal{Q}\vert_{Y\times Y\smallsetminus
   \Delta})\big{)}\geq n-1.
 \]
 On the other hand, if the object $\mathcal{Q}\vert_{Y\times
   Y\smallsetminus \Delta}$ is nonzero, then substituting the
 vanishing from \eqref{eqn:extvanish2} into formula
 \eqref{eqn:homdim} gives
 \[
 \homdim(\mathcal{Q}\vert_{Y\times Y\smallsetminus \Delta})\leq n-2.
 \]
 This contradicts the inequality from Theorem~\ref{thm:BM2}\one\ 
 unless $\mathcal{Q}\vert_{Y\times Y\smallsetminus \Delta}\cong 0$.
 Now chase the logic backwards: since $\mathcal{Q}$ is supported on
 $\Delta$, Lemma~\ref{lem:BM1} together with \eqref{eqn:Qobject} imply
 that
 \[
  \GExt^i(\mathscr{O}_{Z_y},\mathscr{O}_{Z_{y'}}) =0 \quad
  \text{ for all } (y,y^\prime)\in Y\times_X Y\smallsetminus \Delta \text{ and } i\in \ZZ.
 \]
 This completes the required $\GExt$-vanishing in the
 case $y\neq y^\prime$ and hence completes step three.
 
 The fourth step, and by far the hardest, is to establish the equality
 from \eqref{eqn:BO} for the case $y=y'$. Our goal is to show that for
 all $y\in Y$ and $i\in \ZZ$, we have
 \[
 \GExt^i_{\mathscr{O}_{\mathbb{A}^n_\kk}}(\mathscr{O}_{Z_y},\mathscr{O}_{Z_{y}})
 = \Ext^i_{\mathscr{O}_Y}\!\big(\mathscr{O}_y,\mathscr{O}_y\big).
 \]
 Note that both vanish for $i\not\in [0,n]$. As for $0\leq i\leq n$,
 we add a little to \cite[\S6,
 \textrm{Step 5}]{BKR}.  For a point $y\in Y$, the adjunction
 $\Psi\dashv \Phi$ gives a canonical map
 $\Psi(\Phi(\mathscr{O}_y))\to\mathscr{O}_y$ and hence a triangle
 \begin{equation}
 \label{eqn:triangle}
 C\to \Psi(\Phi(\mathscr{O}_y))\to \mathscr{O}_y \to C[1]
 \end{equation}
 for some object $C\in D^b\big(\!\coh(Y)\big)$.  Using the adjunction
 $\Psi\dashv \Phi$ again, we claim that the long exact sequence
 obtained by applying $\Hom_{D^b(\coh(Y))}(-,\mathscr{O}_y)$ to the
 triangle \eqref{eqn:triangle} is
\[
 0 \longrightarrow
 \Hom(C,\mathscr{O}_y[-1]) \longrightarrow \Hom(\mathscr{O}_y,\mathscr{O}_y) \stackrel{\alpha}{\longrightarrow}\Hom\big(\Phi(\mathscr{O}_y),\Phi(\mathscr{O}_y)\big) \longrightarrow\]
 \[
 \Hom(C,\mathscr{O}_y)\longrightarrow
 \Hom\!\big(\mathscr{O}_y,\mathscr{O}_y[1]\big)\stackrel{\varepsilon}{\longrightarrow}
 \Hom\big(\Phi(\mathscr{O}_y),\Phi(\mathscr{O}_y)[1]\big)
 \longrightarrow \dots
 \]
 Indeed, both
 $\Hom(\mathscr{O}_y,\mathscr{O}_y[i])=\Ext^i(\mathscr{O}_y,\mathscr{O}_y)$
 and $\Hom\big(\Phi(\mathscr{O}_y),\Phi(\mathscr{O}_y)[i]\big)=
 \GExt^i(\mathscr{O}_{Z_y},\mathscr{O}_{Z_{y}})$ vanish for $i<0$.
 Moreover, $\GHom(\mathscr{O}_{Z_y},\mathscr{O}_{Z_y})\cong \kk$
 because $G$-invariance forces the generator $1\in
 H^0(\mathscr{O}_{Z_y})\cong \kk[G]$ to be sent to a scalar multiple
 of itself. This forces $\alpha$ to be injective, hence $
 \Hom(C,\mathscr{O}_y[-1])=0$.  Since $Y$ is the fine moduli space of
 $G$-clusters, Bridgeland~\cite[Lemma~5.3]{Bridgeland1} shows that the
 Kodaira--Spencer map $\varepsilon$ for the universal sheaf
 $\mathcal{O}_{\mathcal{Z}}$ over the product $Y\times
 \mathbb{A}^n_\kk$ is injective (see
 Huybrechts~\cite[Example~5.4(\textrm{vii})]{Huybrechts}). This gives
 $\Hom(C,\mathscr{O}_y)=0$.  Now that we have
 $\Hom\big(C,\mathscr{O}_y[i]\big) = 0$ for all $i\leq 0$, the
 spectral sequence argument from \cite[Example~2.2]{Bridgeland1} gives
 $H^0(C)=0$. Taking cohomology sheaves in the triangle
 \eqref{eqn:triangle} implies
 \[
 \mathcal{H}^0\big(\Psi(\Phi(\mathscr{O}_y))\big)=\mathscr{O}_y.
 \]
 Therefore $\Psi(\Phi(\mathscr{O}_y))$ satisfies the
 first condition of Theorem~\ref{thm:BM2}\two. Moreover, applying
 adjunction again, our third step shows that the groups
 \[
 \Hom\big(\Psi(\Phi(\mathscr{O}_y)),\mathscr{O}_{y'}[i]\big) =
 \Hom\big{(}\Phi(\mathscr{O}_y),\Phi(\mathscr{O}_{y^\prime})[i]\big{)}
 = \GExt^i(\mathscr{O}_{Z_y},\mathscr{O}_{Z_{y'}})
 \]
 vanish unless $y=y'$ and $0\leq i\leq n$. This vanishing is the
 second condition of Theorem~\ref{thm:BM2}\two, so we conclude
 $\Psi(\Phi(\mathscr{O}_y))\cong \mathscr{O}_y$ and hence that the
 object $C$ in the triangle \eqref{eqn:triangle} is zero.
 Substituting $\Hom\big(C,\mathscr{O}_y[i]\big) = 0$ in to the long
 exact sequence above gives the required isomorphisms
 \[
 \Hom\big(\Phi(\mathscr{O}_y),\Phi(\mathscr{O}_y)[i]\big)\cong
 \Hom\!\big(\mathscr{O}_y,\mathscr{O}_y[i]\big)
 \]
 for $0\leq i\leq n$. This clears up the remaining cases in
 \eqref{eqn:BO} and concludes step four. 
 
 Our fifth and final step is to construct an object $\mathcal{Q}$
 satisfying \eqref{eqn:Qobject}, after which we conclude that
 $\Phi^{\mathscr{O}_{\mathcal{Z}}}$ is fully faithful. In fact, since
 $\omega_{V}$ is trivial as a $G$-equivariant sheaf,
 $\Phi^{\mathscr{O}_{\mathcal{Z}}}$ commutes with the Serre functors
 and hence Theorem~\ref{thm:Bridgeland} implies that
 $\Phi^{\mathscr{O}_{\mathcal{Z}}}$ is a derived equivalence. We
 conclude, then, by constructing the crucial object $\mathcal{Q}$.  As
 with the functor arising in the proof of
 Theorem~\ref{thm:baerbondal}, we compose $\Phi$ with its left-adjoint
 $\Psi$ to obtain the composite
 \[
 \big{(}\Psi\circ \Phi\big{)}(-) = \Rderived (\pi_2)_*
 \big{(}\mathcal{Q}\ltensor \pi_1^*(-)\big{)},
 \]
 where $\pi_1,\pi_2\colon Y\times Y\to Y$ are the first and second
 projections and where $\mathcal{Q}\in D^b(\coh(Y\times Y))$
 is obtained by composition of correspondences. For the closed
 immersion $\iota_y\colon \{y\}\times Y\hookrightarrow Y\times Y$ we
 have $\Lderived \iota^*_y(\mathcal{Q})= \Psi(\Phi(\mathscr{O}_{y}))= \mathscr{O}_{y}$.
 Thus, for $i\in \ZZ$, we have
 \begin{align*}
\Hom_{D^b(\coh(Y\times Y))}\big(\mathcal{Q},
 \mathscr{O}_{(y,y^\prime)}[i]\big) & \cong
  \Hom_{D^b(\coh(Y\times Y))}\big{(}\mathcal{Q},
 (\iota_{y})_*(\mathscr{O}_{y^\prime})[i]\big{)} \\
 & \cong \Hom_{D^b(\coh(Y))}\big(\Lderived \iota^*_y(\mathcal{Q}),
 \mathscr{O}_{y^\prime}[i]\big)
 & \text{by adjunction} \\.
& =  \Hom_{D^b(\coh(Y))}\big(\Psi(\Phi(\mathscr{O}_{y})),\mathscr{O}_{y^\prime}[i]\big) &  \\
& \cong \Hom_{D^b(\modA)}(\Phi(\mathscr{O}_{y}),\Phi(\mathscr{O}_{y^\prime})[i]) & \text{by adjunction} \\
& \cong  \GExt^i(\mathscr{O}_{Z_y},\mathscr{O}_{Z_{y^\prime}}) & 
 \end{align*}
 as required. This completes the proof given our outrageous
 assumptions (A1) and (A2).
 \end{roadmap2}

 \begin{remark}
   We describe briefly how \cite{BKR} proves the
   result without (A1) and (A2).  
 \begin{enumerate}
 \item[(A1)] The trick is to consider a projective closure of
   $\mathbb{A}^n_\kk$ and hence a projective closure of the
   $G$-Hilbert scheme $Y$.  One then restricts the resulting derived
   equivalence to a functor on compactly supported objects. An
   alternative approach to the lack of projectivity was given more
   recently by Logvinenko~\cite{Logvinenko}.
 \item[(A2)] The statement of Theorem~\ref{thm:Bridgeland} uses the
   fact that the skyscraper sheaves $\{\mathscr{O}_y : y\in Y\}$ form
   a spanning class in $D^b\big(\!\coh(Y)\big)$, but this follows only
   from smoothness of $Y$. In fact, smoothness follows when we apply
   the Intersection Theorem in Step 4 above, after which one can apply
   Theorem~\ref{thm:Bridgeland} in the given form.  Finally, that
   $\tau$ is crepant is proven only after establishing the derived
   equivalence. A local argument shows that triviality of the Serre functor on
   $D^b\big(\!\modA\big)$ carries across the equivalence to gives $\omega_Y\cong \mathscr{O}_Y$.
 \end{enumerate}
 \end{remark}

 \begin{remark}
 Van den Bergh~\cite{VDB} observes that the
 object $\mathcal{Q}\in D^b\big(\!\coh(Y\times Y)\big)$ which plays the
 crucial role in the proof of Theorem~\ref{thm:BKR} satisfies
 \[
 \mathcal{Q} = \mathscr{W}^\vee\overset{\mathbf{L}}{\boxtimes}_A\mathscr{W}
 \]
 (compare Proposition~\ref{prop:resdiagonal}). This links the
 Fourier--Mukai construction of Bridgeland, King and Reid with the
 tilting approach to derived equivalences described in Section~\ref{sec:tilting}.
 \end{remark}

 \section{Derived McKay by variation of GIT quotient}
 \label{sec:VGIT}
 The derived McKay Correspondence was proposed by Reid~\cite{Reid3}
 (see Conjecture~\ref{conj:McKay}) for an arbitrary crepant resolution
 $Y$ of $\mathbb{A}^n_\kk/G$ for a finite subgroup $G\subset
 \SL(n,\kk)$, assuming one exists.  For $n=3$, Theorem~\ref{thm:BKR}
 establishes that $\ghilb(\mathbb{A}^3_\kk)$ always provides a crepant
 resolution, but what if $\mathbb{A}^3_\kk/G$ admits more than one
 crepant resolution? And what if $\ghilb(\mathbb{A}^n_\kk)$ is
 singular or the Hilbert--Chow morphism fails to be crepant for $n\geq
 4$? Can a crepant resolution exist and if so, does the McKay
 correspondence still hold? This section answers these questions for
 abelian group actions using toric techniques.

 \subsection{Variation of GIT quotient}
 \label{sec:VGIT1}
 Let $G\subset \SL(n,\kk)$ be a finite abelian subgroup with group
 algebra $\kk[G]$ and skew goup algebra $A=\kk[x_1,\dots,x_n]*G$. The
 vertices of the bound McKay quiver $(Q,\varrho)$ (see Example~\ref{ex:boundMcKay})
 correspond to elements of the character group
 $G^*=\Hom(G,\kk^\times)$. Recall from Example~\ref{exa:notnormal}
 that a \emph{$G$-constellation} is a left $A$-module that is
 isomorphic as a $\kk[G]$-module to $\kk[G]$. For any generic weight
 $\theta\in \Wt(Q)\otimes_\ZZ \QQ$, the GIT quotient
 \[
 \mathcal{M}_\theta:=\mathcal{M}_\theta(Q,\varrho) =
 \mathbb{V}(I_\varrho)\git_\theta T
 \]
 constructed in Section~\ref{sec:moduliboundquiver} is the fine moduli
space of $\theta$-stable $G$-constellations (it represents the functor
from Exercise~\ref{ex:boundfunctor}). Thus, $\mathcal{M}_\theta$
carries a tautological bundle $\mathscr{W}_\theta:= \bigoplus_{\rho\in
G^*} (\mathscr{W}_\theta)_\rho$ obtained as a direct sum of line
bundles with $(\mathscr{W}_\theta)_{\rho_0}$ trivial and, moreover, there is a
natural homomorphism of $\kk$-algebras $A\to
\End(\mathscr{W}_\theta)$. 

 As with the $G$-Hilbert scheme, we will restrict to the irreducible
 component of $\mathcal{M}_\theta$ containing the $G$-constellations
 arising as the structure sheaves of the free $G$-orbits in
 $\mathbb{A}^n_\kk$. This \emph{coherent component} was constructed
 explicitly by Craw--Maclagan--Thomas~\cite{CMT1, CMT2} in terms of a
 toric ideal $I_Q$ in the coordinate ring of $\mathbb{A}^{Q_1}_\kk$
 (compare Remark~\ref{rem:CMT}):

 \begin{theorem}
 \label{thm:CMT1}
   The binomial scheme $\mathbb{V}(I_\varrho)$ has a unique
   irreducible component $\mathbb{V}(I_Q)$ that lies in no coordinate
   hyperplane of $\mathbb{A}^{Q_1}_\kk$. Moreover, the affine toric
   variety $\mathbb{V}(I_Q)$ is such that
 \begin{enumerate}
 \item[\one] the semiprojective toric variety $Y_\theta:=
 \mathbb{V}(I_Q)\git_\theta T$ is the unique irreducible component of
 $\mathcal{M}_\theta$ containing the free $G$-orbits;
 \item[\two] if we write $\overline{\mathcal{M}_0}$ for the
 categorical quotient of the $0$-semistable points of
 $\mathbb{V}(I_\varrho)$ by the action of $T$, then there is a
 commutative diagram
 \[\xymatrix@C=1.2em{
\mathcal{M}_\theta \ar[rr]^-{\tau} && \overline{\mathcal{M}_0} \\
 Y_\theta \ar[u]\ar[rr]^-{\tau\vert_{Y_\theta}} &&
 X=\mathbb{A}^n_\kk/G\ar[u] \\ } \] where the vertical maps are closed
 immersions and the horizontal maps are projective
 morphisms obtained by variation of GIT quotient sending
 $\theta\rightsquigarrow 0$.
 \end{enumerate}
 \end{theorem}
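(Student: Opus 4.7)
The plan is to identify $\mathbb{V}(I_Q)$ as the Zariski closure of the image of free $G$-orbits in the representation space, and then derive both the uniqueness statement and the GIT-theoretic claims \one\ and \two\ from this identification. A point $p\in \mathbb{A}^n_\kk$ with free $G$-orbit determines a $G$-constellation $\mathscr{O}_{G\cdot p}$; after fixing a $\kk[G]$-basis adapted to the isotypic decomposition $\bigoplus_{\rho\in G^*}\rho$, each arrow $a_i^\rho$ of the bound McKay quiver acts on the $\rho\rho_i$-summand by a nonzero scalar, since the labelling monomial $x_i$ is invertible along $G\cdot p$. Hence the image of the free locus lies in the dense torus $(\kk^\times)^{Q_1}\subset \mathbb{A}^{Q_1}_\kk$. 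Since the free locus in $\mathbb{A}^n_\kk$ is open and irreducible, so is its image in $\mathbb{V}(I_\varrho)$, and its closure $Z$ is an irreducible subvariety of $\mathbb{V}(I_\varrho)$ contained in no coordinate hyperplane.

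Next I would identify $Z$ with $\mathbb{V}(I_Q)$. Since $I_\varrho$ is a binomial ideal, Eisenbud--Sturmfels shows each associated prime of $I_\varrho$ is binomial and hence toric; the primes containing no variable $y_a$ are exactly those defining components meeting the dense torus, and the ideal cutting out the union of these components is the saturation $(I_\varrho : (\prod_{a\in Q_1}y_a)^\infty)$. The substantive step is to prove this saturation is prime and equals the toric ideal $I_Q$ attached to the labelling map. I would verify this combinatorially by showing that the semigroup $\NN(Q)$ records paths in $Q$ up to the commuting squares of $\varrho$ once arrow variables are inverted, so that every cycle in the McKay quiver has a unique label in $\kk[x_1,\dots,x_n]$ modulo $\varrho$; this identifies $Z$ with the affine toric variety $\mathbb{V}(I_Q)$ and proves the uniqueness assertion. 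This combinatorial identification is where the abelian hypothesis on $G$ enters decisively and constitutes the main obstacle of the argument.

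For \one, the $T$-equivariant closed immersion $\mathbb{V}(I_Q)\hookrightarrow \mathbb{V}(I_\varrho)$ induces a closed immersion of GIT quotients $Y_\theta = \mathbb{V}(I_Q)\git_\theta T \hookrightarrow \mathcal{M}_\theta$, because the surjection $\kk[y_a]/I_\varrho \twoheadrightarrow \kk[y_a]/I_Q$ restricts to a surjection on $\theta$-semi-invariants. The irreducible variety $Y_\theta$ contains the images of all free $G$-orbits; any other component of $\mathcal{M}_\theta$ containing a free $G$-orbit would give rise to a component of $\mathbb{V}(I_\varrho)$ meeting the dense torus and distinct from $\mathbb{V}(I_Q)$, contradicting the uniqueness proved above, so $Y_\theta$ is the unique such component. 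For \two, the projective morphism $\mathcal{M}_\theta\to \overline{\mathcal{M}_0}$ is the standard variation-of-GIT morphism obtained by sending $\theta\rightsquigarrow 0$, and its restriction to $Y_\theta$ yields a map to $\Spec\big((\kk[y_a]/I_Q)^T\big)$. I would identify this target with $X = \mathbb{A}^n_\kk/G = \Spec\big(\kk[x_1,\dots,x_n]^G\big)$ via the labelling, which matches $T$-invariant monomials in the $y_a$ with $G$-invariant monomials in the $x_j$ through the reading of closed walks in the McKay quiver based at the trivial representation. Commutativity of the diagram and the closed-immersion property of the vertical maps then follow from functoriality of GIT applied to the $T$-equivariant closed immersion of affine schemes.
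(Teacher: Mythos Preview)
The paper does not actually prove this theorem: it is stated with attribution to Craw--Maclagan--Thomas~\cite{CMT1,CMT2} and no proof is given in the text. So there is no in-paper argument to compare your proposal against, only the cited references (see also Remark~\ref{rem:CMT}, where the paper records that the toric ideal $I_Q$ in \cite{CMT1,CMT2} is built from the incidence matrix of $Q$ augmented by $\vert G\vert$ blocks of $n\times n$ identity matrices, which is precisely your map $(\inc,\div)$ for the McKay quiver).

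Your outline is in line with the strategy of \cite{CMT1,CMT2}. The identification of $\mathbb{V}(I_Q)$ as the closure of the free-orbit locus, the use of Eisenbud--Sturmfels to reduce uniqueness to primeness of the saturation $(I_\varrho:(\prod_a y_a)^\infty)$, and the GIT functoriality arguments for \one\ and \two\ are all correct in shape. Your computation of the $T$-invariants of $\kk[\NN(Q)]$ as $\kk[x_1,\dots,x_n]^G$ is right: closed walks have vanishing $\inc$-degree, and on that degree-zero piece the projection $\pi_2=\div$ is injective by definition of $\NN(Q)$, with image exactly the semigroup of $G$-invariant monomials.

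The one place where you have only a gesture rather than an argument is the ``substantive step'' you flag yourself: showing that the saturation of $I_\varrho$ by the product of all variables is exactly $I_Q$ and in particular is prime. Your proposed verification (``the semigroup $\NN(Q)$ records paths up to commuting squares once arrow variables are inverted'') is the right slogan, but turning it into a proof requires showing that the sublattice of $\ZZ^{Q_1}$ generated by the exponent differences coming from $\varrho$ is saturated and equals $\Ker(\pi)$. This is the genuine combinatorial content of \cite{CMT2}, and it is not automatic: one must check that every binomial $y^u-y^v$ with $u-v\in\Ker(\pi)$ can be reached from the commuting-square relations after allowing denominators in the $y_a$. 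Without this, Eisenbud--Sturmfels only tells you the saturation is a lattice ideal, not which one.
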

 \begin{corollary}
 For any weight $\vartheta\in \Wt(Q)\otimes_\ZZ \QQ$ such that
 $\mathcal{M}_\vartheta\cong \ghilb(\mathbb{A}^n_\kk)$, the coherent
 component $Y_\vartheta$ is isomorphic to the irreducible scheme
 $\hilbg(\mathbb{A}^n_\kk)$ introduced by Nakamura.
 \end{corollary}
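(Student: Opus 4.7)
The plan is to exploit the fact that both $Y_\vartheta$ and $\hilbg(\mathbb{A}^n_\kk)$ are, by construction, the unique irreducible components of certain ambient moduli spaces that contain the dense open locus parametrising free $G$-orbits. Once the given isomorphism $\mathcal{M}_\vartheta\cong \ghilb(\mathbb{A}^n_\kk)$ is shown to match these loci, the desired isomorphism $Y_\vartheta\cong \hilbg(\mathbb{A}^n_\kk)$ will follow by restriction, since an isomorphism of schemes carries irreducible components to irreducible components, and the irreducible component containing a given dense open subset is uniquely determined.

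First I would set up the characterisation: by Theorem~\ref{thm:CMT1}\one, the coherent component $Y_\vartheta$ is the unique irreducible component of $\mathcal{M}_\vartheta = \mathcal{M}_\vartheta(Q,\varrho)$ containing the closed points that correspond to the structure sheaves of free $G$-orbits in $\mathbb{A}^n_\kk$; dually, Nakamura's scheme $\hilbg(\mathbb{A}^n_\kk)$ is by definition the unique irreducible component of $\ghilb(\mathbb{A}^n_\kk)$ containing the open locus whose closed points parametrise the free $G$-orbits.

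Next I would invoke Remark~\ref{rem:G-Hilbert} to check that the hypothesised isomorphism $\mathcal{M}_\vartheta \cong \ghilb(\mathbb{A}^n_\kk)$ identifies the two free-orbit loci. The two moduli spaces represent the same functor and their universal families are isomorphic as $G$-equivariant locally free sheaves, so a closed point $[\mathbb{V}(I)]\in \ghilb(\mathbb{A}^n_\kk)$ corresponding to a free $G$-orbit $G\cdot p$ goes to the point of $\mathcal{M}_\vartheta$ represented by the $G$-constellation $\kk[x_1,\dots,x_n]/I$, whose $\kk Q/\langle\varrho\rangle$-module structure is manifestly $\vartheta$-stable and whose support is $G\cdot p$; the converse is immediate. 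Restricting the scheme isomorphism to these matched dense opens and taking closures (equivalently, invoking the uniqueness of the irreducible component through a dense open) identifies $Y_\vartheta$ with $\hilbg(\mathbb{A}^n_\kk)$ as closed subschemes of the isomorphic ambient spaces.

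I do not expect a serious obstacle: the argument reduces entirely to the moduli-theoretic identification of a single Zariski dense open subset, plus the standard fact that an isomorphism of schemes restricts to an isomorphism on each irreducible component. The only point requiring mild care is ensuring that the identification of free-orbit loci is genuinely scheme-theoretic (not merely set-theoretic), but this is guaranteed by the fact that Remark~\ref{rem:G-Hilbert} gives an isomorphism of representable functors, hence of universal families, and therefore of the tautological subschemes $\mathcal{Z}\subset \mathcal{M}_\vartheta\times \mathbb{A}^n_\kk$ on which the free-orbit stratum is cut out as a common open subscheme.
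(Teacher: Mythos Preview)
Your argument is correct and is precisely the reasoning the paper has in mind: the corollary is stated without proof, as an immediate consequence of Theorem~\ref{thm:CMT1}\one\ together with the functorial identification of Remark~\ref{rem:G-Hilbert}. Your care in checking that the isomorphism $\mathcal{M}_\vartheta\cong\ghilb(\mathbb{A}^n_\kk)$ matches the free-orbit loci (via the identification of functors and universal families) is exactly the point that makes the restriction to irreducible components legitimate.
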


 \begin{remark}
 \label{rem:notnormal2}
 Recall from Example~\ref{exa:notnormal} that the $G$-Hilbert scheme
 can be nonnormal. In fact, it is the coherent component
 $Y_\vartheta=\hilbg(\mathbb{A}^n_\kk)$ that is not normal in that
 example. Thus, even if one hopes to work only with normal toric
 varities (defined by fans), this example forces one to drop the
 normality assumption.
 \end{remark}
 
 As we vary the stability parameter $\theta$, the geometry of
 $Y_\theta$ may change as $\theta$ varies between GIT chambers. Even
 if the variety does not change, the tautological bundle
 $\mathscr{W}_\theta$ on $\mathcal{M}_\theta$ and its restriction to
 $Y_\theta$ does vary between chambers.

 \begin{example}
 \label{exa:VGIT1/3(1,2)}
 For the action of type $\frac{1}{3}(1,2)$, the McKay quiver $Q$ is
 shown in Figure~\ref{exa:qos1/3(1,2)}. For the weight $\vartheta =
 (-2,1,1)\in \Wt(Q)$, Example~\ref{exa:GHilb1/3(1,2)} shows directly
 that $\mathcal{M}_\vartheta$ is isomorphic to the minimal resolution
 $Y$ of the $A_2$-singularity $\mathbb{A}^2_\kk/G$. The GIT chamber
 decomposition of $\Wt(Q)\otimes_\ZZ \QQ$ coincides with the Weyl
 chamber decomposition of type $A_2$ by Kronheimer~\cite{Kronheimer}
 (see also Cassens--Slodowy~\cite{CassensSlodowy}). Here we compute
 this directly.

 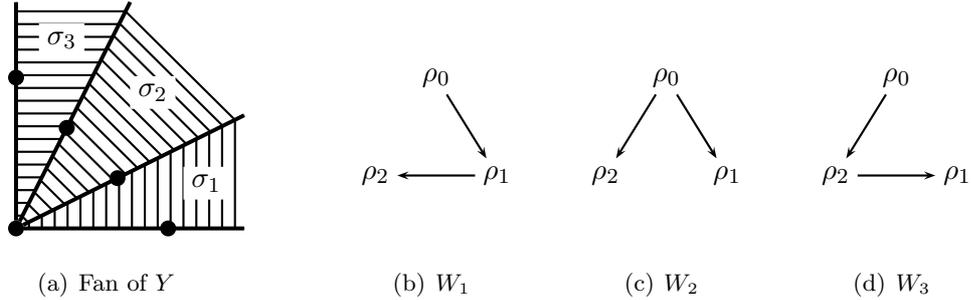
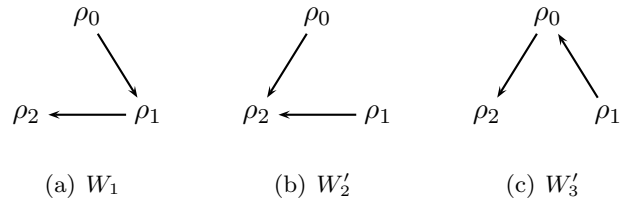
\begin{figure}[!ht]
    \centering \mbox{ \subfigure[Fan of $Y$]{ \psset{unit=1cm}
        \begin{pspicture}(-1.0,-0.3)(3.5,3.2)
        \pspolygon[linecolor=white,fillstyle=hlines*,
          hatchangle=0](0,0)(0,3)(1.5,3)
          \pspolygon[linecolor=white,fillstyle=hlines*,
          hatchangle=135](0,0)(1.5,3)(3,1.5)
          \pspolygon[linecolor=white,fillstyle=hlines*,
          hatchangle=90](0,0)(3,1.5)(3,0)
          \cnode*(0,0){3pt}{P1}
          \cnode*(0.6667,1.3333){3pt}{P2}
          \cnode*(2,0){3pt}{P3}
          \cnode*(1.3333,0.6667){3pt}{P4}
          \cnode*(0,2){3pt}{P5}
          \cnode*(1.5,3){0pt}{P7}
          \cnode*(3,1.5){0pt}{P8}
          \cnode*(0,3){0pt}{P9}
          \cnode*(3,0){0pt}{P10}
          \ncline[linewidth=1.5pt]{-}{P1}{P7}
          \ncline[linewidth=1.5pt]{-}{P1}{P8}
          \ncline[linewidth=1.5pt]{-}{P1}{P9}
          \ncline[linewidth=1.5pt]{-}{P1}{P10}
          \rput(0.6,2.5){\psframebox*{$\sigma_3$}}
          \rput(1.8,1.8){\psframebox*{$\sigma_2$}}
          \rput(2.5,0.6){\psframebox*{$\sigma_1$}}
         \end{pspicture}}
 \qquad
      \subfigure[$W_1$]{
        \psset{unit=1cm}
        \begin{pspicture}(0,0)(2,3.2)
        \rput(1,2.3){\rnode{A}{$\rho_0$}}
        \rput(0.2,1){\rnode{B}{$\rho_2$}}
        \rput(1.8,1){\rnode{C}{$\rho_1$}}
        \ncline[nodesep=3pt]{->}{A}{C}
        \ncline[nodesep=3pt]{->}{C}{B}
        \end{pspicture}}
 \qquad
      \subfigure[$W_2$]{
        \psset{unit=1cm}
        \begin{pspicture}(0,0)(2,3.2)
        \rput(1,2.3){\rnode{A}{$\rho_0$}}
        \rput(0.2,1){\rnode{B}{$\rho_2$}}
        \rput(1.8,1){\rnode{C}{$\rho_1$}}
        \ncline[nodesep=3pt]{->}{A}{B}
        \ncline[nodesep=3pt]{->}{A}{C}
        \end{pspicture}}
 \qquad
      \subfigure[$W_3$]{
        \psset{unit=1cm}
        \begin{pspicture}(0,0)(2,3.2)
        \rput(1,2.3){\rnode{A}{$\rho_0$}}
        \rput(0.2,1){\rnode{B}{$\rho_2$}}
        \rput(1.8,1){\rnode{C}{$\rho_1$}}
        \ncline[nodesep=3pt]{->}{A}{B}
        \ncline[nodesep=3pt]{->}{B}{C}
        \end{pspicture}}
    }
    \caption{$T_Y$-invariant $\vartheta$-stable quiver representations}
    \label{fig:quiverreps1/3(1,2)}
  \end{figure}

 The three $\vartheta$-stable representations of $(Q,\varrho)$ that
 correspond to the $T_Y$-invariant points of $Y$ are shown in
 Figure~\ref{fig:quiverreps1/3(1,2)}; here, the representation $W_i$
 corresponds to the origin in the toric chart $\Spec
 \big(\kk[\sigma_i^\vee\cap M]\big)$ in $Y$ for $i=1,2,3$. One can
 perform this calculation can either by computing the $T_Y$-invariant
 $G$-clusters in $\mathbb{A}^2_\kk$, or directly using
 Craw--Maclagan--Thomas~\cite[Theorem~1.3]{CMT1}. Notice that the
 representation $W_2$ is $\vartheta$-stable for all parameters in the
 cone
 \[
 C=\big\{(\theta_0,\theta_1,\theta_2)\in \QQ^3 : \theta_0
 +\theta_1 + \theta_2 = 0, \theta_1>0, \theta_2>0\big\};
 \]
 this is the cone from Example~\ref{ex:generic}. Now perform variation
 of GIT quotient, pushing $\theta$ through the wall $\theta_1=0$ to
 produce a weight $\vartheta'$ in an adjacent chamber
 \[
 C'=\big\{(\theta_0,\theta_1,\theta_2)\in \QQ^3 :
 \theta_0 +\theta_1 + \theta_2 = 0, \theta_1+\theta_2>0,
 \theta_1<0\big\}.
 \]
 The representation $W_1$ remains $\vartheta'$-stable since
 $\vartheta'_1+\vartheta'_2>0$. However, both $W_2$ and $W_3$ admit
 submodules supported on vertex $\rho_1$, so they become
 $\vartheta'$-unstable for $\vartheta'_1\leq 0$. In their place we
 obtain the new pair of $\vartheta'$-stable representations $W_2'$ and
 $W_3'$ shown in Figure~\ref{fig:quiverrepsprime1/3(1,2)}.
 \begin{figure}[!ht]
    \centering \mbox{\subfigure[$W_1$]{ \psset{unit=1cm}
        \begin{pspicture}(0,0.5)(2,2.5)
        \rput(1,2.3){\rnode{A}{$\rho_0$}}
        \rput(0.2,1){\rnode{B}{$\rho_2$}}
        \rput(1.8,1){\rnode{C}{$\rho_1$}}
        \ncline[nodesep=3pt]{->}{A}{C}
        \ncline[nodesep=3pt]{->}{C}{B}
        \end{pspicture}}
 \qquad
      \subfigure[$W'_2$]{
        \psset{unit=1cm}
        \begin{pspicture}(0,0.5)(2,2.5)
        \rput(1,2.3){\rnode{A}{$\rho_0$}}
        \rput(0.2,1){\rnode{B}{$\rho_2$}}
        \rput(1.8,1){\rnode{C}{$\rho_1$}}
        \ncline[nodesep=3pt]{->}{A}{B}
        \ncline[nodesep=3pt]{->}{C}{B}
        \end{pspicture}}
 \qquad
      \subfigure[$W'_3$]{
        \psset{unit=1cm}
        \begin{pspicture}(0,0.5)(2,2.5)
        \rput(1,2.3){\rnode{A}{$\rho_0$}}
        \rput(0.2,1){\rnode{B}{$\rho_2$}}
        \rput(1.8,1){\rnode{C}{$\rho_1$}}
        \ncline[nodesep=3pt]{->}{A}{B} 
        \ncline[nodesep=3pt]{->}{C}{A}
        \end{pspicture}}
    }
    \caption{$T_Y$-invariant $\vartheta'$-stable quiver representations}
 \label{fig:quiverrepsprime1/3(1,2)}
  \end{figure}
 If we now pass through the wall $\theta_1+\theta_2=0$, it's clear
that $W_3'$ remains $\theta$-stable as long as $\theta_2>0$, while
both $W_1$ and $W_2'$ becomes $\theta$-unstable for
$\theta_1+\theta_2\leq 0$.
 \end{example} 

 \begin{exercise}
   Continue the calculation from Example~\ref{exa:VGIT1/3(1,2)} to
   produce all six GIT chambers. The result coincides
   precisely\footnote{I cannot resist pointing out the the toric del
   Pezzo surface $\text{Bl}_{p,q,r}(\mathbb{P}^2)$ and the crepant
   resolution $\mathcal{M}_\vartheta$ of the $A_2$-singularity are
   linked by the Duality Theorem for affine toric quotients of
   Craw--Maclagan~\cite[Theorem~1.2]{CrawMaclagan}.} with the fan from
   Figure~\ref{fig:Fanosurfaces}(d). The symmetry in the
   $A_2$-decomposition is lost only because we project
   $\Wt(Q)\otimes_\ZZ\QQ\subset \QQ^3$ onto the plane $(\theta_0=0)$.
 \end{exercise}

 \subsection{Derived equivalences from universal bundles}
 The projective morphism constructed in Theorem~\ref{thm:CMT1}\two\ 
 can be used to construct a commutative diagram of the form
 \eqref{eqn:cdFM}, and hence by using the universal $\theta$-stable
 $G$-constellation $\mathscr{U}_\theta$ on the product
 \(Y_\theta\times \mathbb{A}^n_\kk\) we define a functor
 $\Phi^{\mathscr{U}_\theta}\colon
 D^b\big(\!\coh(Y_\theta)\big)\rightarrow D^b\big(\!\modA\big)$ via
 \begin{equation}
 \label{eqn:thetaFMformula}
 \Phi^{\mathscr{U}}(-) = {\Rderived}(\pi_V)_*\Big{(}\mathscr{U}_\theta\ltensor (\pi_{Y_\theta})^*(-\otimes\rho_0)\Big{)}.
 \end{equation}
 The results of Bridgeland, King and Reid~\cite{BKR} described in
 Section~\ref{sec:McKay} require only that the scheme $Y$ is the
 coherent component of a fine moduli space of $\theta$-stable
 $G$-constellations for some generic weight $\theta\in
 \Wt(Q)\otimes_\ZZ \QQ$, so the results extend from
 $\ghilb(\mathbb{A}^n_\kk)$ to $\mathcal{M}_\theta$. Stability
 preserves the vanishing of
 $\GHom(\mathscr{F}_y,\mathscr{F}_{y^\prime})$ for distinct points
 $y,y^\prime \in Y_\theta$, since $\theta$-stable $G$-constellations
 $\mathscr{F}_y$ are simple objects in the full category of
 $\theta$-semistable $A$-modules. Thus, we obtain the following (the
 abelian assumption is not necessary here, see
 \cite[Theorem~2.5]{CrawIshii}):
 
 \begin{theorem}
 \label{thm:CrawIshii1}
   Let $G\subset \SL(n,\kk)$ be a finite abelian subgroup and let
   $\theta\in\Wt(Q)\otimes_\ZZ \QQ$ be generic. If the coherent
   component $Y_\theta \subseteq \mathcal{M}_\theta$ satisfies
   $\dim\big{(}Y_\theta\times_X Y_\theta\big{)}\leq n+1$, then:
 \begin{enumerate}
 \item the morphism \(\tau\colon Y_\theta\rightarrow X\) is a crepant
 resolution; and
\item the functor $\Phi^{\mathscr{U}_{\theta}}\colon
  D^b\big(\!\coh(Y_\theta)\big)\rightarrow D^b(\modA)$ is an equivalence of
  derived categories.
 \end{enumerate}
 \end{theorem}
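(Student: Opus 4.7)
The strategy is to follow the roadmap of Theorem~\ref{thm:BKR} verbatim, replacing $\ghilb(\mathbb{A}^n_\kk)$ by $Y_\theta$ and the universal $G$-cluster $\mathscr{O}_{\mathcal{Z}}$ by the universal $\theta$-stable $G$-constellation $\mathscr{U}_\theta$, and isolating the two places where $\theta$-stability (rather than the Hilbert scheme structure) is used. First I would construct the left adjoint $\Psi_\theta \dashv \Phi^{\mathscr{U}_\theta}$ via a $G$-equivariant Grothendieck duality argument as in Mukai's construction, using that $G\subset\SL(n,\kk)$ renders $\pi_V^*\omega_V$ trivial as a $G$-equivariant sheaf, so that $\Phi^{\mathscr{U}_\theta}$ will automatically commute with Serre functors once we know $Y_\theta$ is smooth. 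Theorem~\ref{thm:Bridgeland} then reduces the problem to verifying that, for all $y,y'\in Y_\theta$ and $i\in\ZZ$,
\[
\Hom\bigl(\Phi^{\mathscr{U}_\theta}(\mathscr{O}_y),\Phi^{\mathscr{U}_\theta}(\mathscr{O}_{y'})[i]\bigr)=\GExt^i_{\mathscr{O}_{\mathbb{A}^n_\kk}}(\mathscr{F}_y,\mathscr{F}_{y'})
\]
behaves correctly, where $\mathscr{F}_y$ is the $\theta$-stable $G$-constellation parametrised by $y$.

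The $\GExt$-vanishing for $y\neq y'$ proceeds exactly as in Section~\ref{sec:McKay}. When $\tau(y)\neq\tau(y')$ the constellations have disjoint set-theoretic support and all $\GExt^i$ vanish. When $\tau(y)=\tau(y')$ but $y\neq y'$, the key input is that $\mathscr{F}_y$ and $\mathscr{F}_{y'}$ are non-isomorphic \emph{simple} objects in the abelian subcategory of $\theta$-semistable $A$-modules, so $\GHom(\mathscr{F}_y,\mathscr{F}_{y'})=0$; Serre duality combined with the triviality of the $G$-equivariant canonical bundle of $\mathbb{A}^n_\kk$ forces $\GExt^n(\mathscr{F}_y,\mathscr{F}_{y'})=0$, and global dimension $n$ of $A$ confines nonvanishing to $1\le i\le n-1$. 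I would then form the composition object $\mathcal{Q}\in D^b\bigl(\!\coh(Y_\theta\times Y_\theta)\bigr)$ representing $\Psi_\theta\circ\Phi^{\mathscr{U}_\theta}$ and use Lemma~\ref{lem:BM1} together with the hypothesis $\dim(Y_\theta\times_X Y_\theta)\le n+1$ to bound $\codim\bigl(\supp\mathcal{Q}|_{Y_\theta\times Y_\theta\setminus\Delta}\bigr)\ge n-1$, contradicting the homological dimension bound $\le n-2$ coming from \eqref{eqn:homdim} and the Intersection Theorem (Theorem~\ref{thm:BM2}\one) unless $\mathcal{Q}|_{Y_\theta\times Y_\theta\setminus\Delta}=0$.

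The hard part, as in \cite{BKR}, is the diagonal case $y=y'$. I would study the adjunction triangle $C\to\Psi_\theta(\Phi^{\mathscr{U}_\theta}(\mathscr{O}_y))\to\mathscr{O}_y\to C[1]$ and prove $\Hom(C,\mathscr{O}_y[i])=0$ for $i\le 0$. Injectivity of the map on $\Hom(\mathscr{O}_y,\mathscr{O}_y)$ follows from $\theta$-simplicity, which forces $\GHom(\mathscr{F}_y,\mathscr{F}_y)=\kk$; injectivity on $\Ext^1(\mathscr{O}_y,\mathscr{O}_y)$ is the Kodaira--Spencer map for the universal family $\mathscr{U}_\theta$, which is injective because $Y_\theta$ is a fine moduli space by Theorem~\ref{thm:CMT1}. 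A spectral sequence argument then yields $H^0(C)=0$ and $\mathcal{H}^0(\Psi_\theta\Phi^{\mathscr{U}_\theta}\mathscr{O}_y)=\mathscr{O}_y$, so that Theorem~\ref{thm:BM2}\two\ applies to conclude that $\Psi_\theta\Phi^{\mathscr{U}_\theta}\mathscr{O}_y\cong\mathscr{O}_y$ and, crucially, that $Y_\theta$ is \emph{smooth} at $y$. This smoothness is what legitimises the use of skyscraper sheaves as a spanning class and the Serre-duality comparison retroactively.

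With smoothness in hand, the derived equivalence follows from Theorem~\ref{thm:Bridgeland}, and crepancy of $\tau\colon Y_\theta\to X$ is deduced from the equivalence: triviality of the Serre functor on $D^b(\modA)$ (again using $G\subset\SL(n,\kk)$) transports through $\Phi^{\mathscr{U}_\theta}$ to give $\omega_{Y_\theta}\cong\mathscr{O}_{Y_\theta}$. I anticipate two technical obstacles worth flagging. First, $\mathbb{A}^n_\kk$ is not projective, so Serre duality and the existence of adjoints require the compactification trick used in \cite{BKR}, reducing to the compactly supported derived category; this is a routine adaptation but must be done carefully. Second, and more substantively, one must check that the Kodaira--Spencer injectivity argument goes through on the coherent component $Y_\theta$ even though $\mathcal{M}_\theta$ may have other components: since $Y_\theta$ is itself the fine moduli space representing the restriction of the functor to $\theta$-stable $G$-constellations arising as deformations of free $G$-orbits (Theorem~\ref{thm:CMT1}), the universal family $\mathscr{U}_\theta$ on $Y_\theta$ retains the universal property needed for injectivity, so this presents no new obstacle beyond bookkeeping.
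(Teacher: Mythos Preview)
Your proposal is correct and follows precisely the approach the paper takes: the paper's argument is simply that the Bridgeland--King--Reid roadmap of Section~\ref{sec:McKay} goes through verbatim with $\ghilb(\mathbb{A}^n_\kk)$ replaced by $Y_\theta$ and $\mathscr{O}_{\mathcal{Z}}$ by $\mathscr{U}_\theta$, the only new ingredient being that $\GHom(\mathscr{F}_y,\mathscr{F}_{y'})=0$ for $y\neq y'$ follows from $\theta$-stability since $\theta$-stable $G$-constellations are simple in the category of $\theta$-semistables. You have supplied considerably more detail than the paper itself, which dispatches the result in a single paragraph preceding the theorem statement, and the two technical caveats you flag (compactification, Kodaira--Spencer on the coherent component) are exactly the points that require care.
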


 \begin{corollary}
 The $\kk$-algebra $\End(\mathscr{W}_\theta)$ is isomorphic to the skew group algebra $A$. 
 \end{corollary}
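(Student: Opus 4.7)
The natural $\kk$-algebra homomorphism $A \to \End(\mathscr{W}_\theta)$ arises from the universal family, since the $G$-action and multiplication by each $x_i$ on the universal $\theta$-stable $G$-constellation $\mathscr{U}_\theta$ descend to endomorphisms of the pushforward $\mathscr{W}_\theta = (\pi_{Y_\theta})_*\mathscr{U}_\theta$. The plan is to show this map is an isomorphism by transporting the computation of $\End(\mathscr{W}_\theta)$ across the Fourier--Mukai equivalence $\Phi^{\mathscr{U}_\theta}$ established in Theorem~\ref{thm:CrawIshii1}, and identifying it with $\End_A(A) \cong A^{op}$ before dualising.

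The first step is to construct the left adjoint $\Psi \colon D^b(\modA) \to D^b(\!\coh(Y_\theta))$ of $\Phi^{\mathscr{U}_\theta}$ using Neeman's $G$-equivariant Grothendieck duality, exactly as in the BKR construction in Section~\ref{sec:McKay}. A Grothendieck-duality calculation, mimicking the exercise following the roadmap after Theorem~\ref{thm:BKR}, should yield
\[
\Psi\big(\mathscr{O}_{\mathbb{A}^n_\kk} \otimes \rho^*\big) \cong (\mathscr{W}_\theta)_\rho^\vee \quad \text{for each }\rho \in G^*.
\]
Summing over $\rho \in G^*$ gives $\Psi(A) \cong \mathscr{W}_\theta^\vee$, where I use the identification of the left $A$-module $A$ with $\bigoplus_{\rho \in G^*} \mathscr{O}_{\mathbb{A}^n_\kk} \otimes \rho^*$: both sides agree as $\kk[x_1,\dots,x_n]$-modules with their diagonal $G$-action, since $\kk[G]$ is the regular representation.

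Since $\Psi$ is an equivalence of triangulated categories (being an adjoint to an equivalence), it induces a ring isomorphism on endomorphism algebras:
\[
\End_{Y_\theta}(\mathscr{W}_\theta^\vee) \cong \End_A(A) \cong A^{\opp},
\]
the last isomorphism identifying each left $A$-module endomorphism of $A$ with right multiplication by its value at $1$. As dualising is contravariant on the locally-free sheaf $\mathscr{W}_\theta$, we have $\End(\mathscr{W}_\theta)^{\opp} \cong \End(\mathscr{W}_\theta^\vee) \cong A^{\opp}$, hence $\End(\mathscr{W}_\theta) \cong A$. The resulting isomorphism should then be identified with the natural map $A \to \End(\mathscr{W}_\theta)$ coming from the universal family.

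The main obstacle will be precisely this last identification: verifying that the $A$-algebra structure on $\End(\mathscr{W}_\theta)$ produced by transporting $\End_A(A)$ across $\Psi$ agrees with the canonical one defined via the universal sheaf $\mathscr{U}_\theta$. This is a naturality check, tracing how the $A$-action on $\mathscr{U}_\theta$ (by $G$-equivariant multiplication by coordinate functions pulled back along $\pi_V$) interacts with Grothendieck duality and the isomorphisms $\Psi(\mathscr{O}_{\mathbb{A}^n_\kk} \otimes \rho^*) \cong (\mathscr{W}_\theta)_\rho^\vee$; in practice one would verify it on a dense open subset, e.g.\ the preimage of the free locus in $X$ where $\mathscr{W}_\theta$ is manifestly the regular representation bundle and both $A$-algebra structures are transparently the same.
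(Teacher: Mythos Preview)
Your proposal is correct and follows essentially the same route as the paper: transport the endomorphism computation across the Fourier--Mukai equivalence, using that $\Psi$ sends $\mathscr{O}_{\mathbb{A}^n_\kk}\otimes\rho^*$ to $(\mathscr{W}_\theta)_\rho^\vee$, and then identify $\End_A(A)$ with $A$ (the paper writes this chain as $A\cong\Hom_A(A,A)\cong\GHom(\bigoplus_\rho\mathscr{O}\otimes\rho,\bigoplus_\rho\mathscr{O}\otimes\rho)\cong\Hom_{Y_\theta}(\bigoplus_\rho\mathscr{W}_{\rho^*}^{-1},\bigoplus_\rho\mathscr{W}_{\rho^*}^{-1})\cong\End(\mathscr{W})$, invoking the equivalence $\Phi^{\mathscr{U}_\theta}$ rather than naming $\Psi$ explicitly). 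Two minor remarks: first, you are more careful than the paper about the $A$ versus $A^{\opp}$ issue---the paper simply writes $A\cong\Hom_A(A,A)$, implicitly using that the skew group algebra admits an anti-involution $sg\mapsto(g^{-1}\cdot s)g^{-1}$; second, your final paragraph about identifying the abstract isomorphism with the canonical map $A\to\End(\mathscr{W}_\theta)$ is not addressed in the paper's proof, which is content with an abstract $\kk$-algebra isomorphism as stated in the Corollary.
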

 \begin{proof}
 To simplify notation, write $\mathscr{W} = \bigoplus_{\rho\in G^*}
 \mathscr{W}_\rho$ thereby omitting the dependence on $\theta$.  Then
 \begin{align*}
 A & \cong \Hom_A(A,A) & \\
 & \cong \GHom_{\mathscr{O}_{\mathbb{A}^n_\kk}}\Big(\bigoplus_{\rho\in G^*}
 \mathscr{O}_{\mathbb{A}^n_\kk}\otimes \rho,\bigoplus_{\rho\in G^*}
 \mathscr{O}_{\mathbb{A}^n_\kk}\otimes \rho\Big) & \text{by the equivalence from Lemma~\ref{lem:abelianequivs}}\\
 & \cong \Hom_{Y_\theta}\Big(\bigoplus_{\rho\in G^*} \mathscr{W}_{\rho^*}^{-1},\bigoplus_{\rho\in G^*} \mathscr{W}_{\rho^*}^{-1}\Big) & \text{by applying the equivalence }\Phi^{\mathscr{U}_{\theta}}
 \end{align*}
 which is isomorphic to $\End_{Y_\theta}(\mathscr{W})$ as claimed.
 \end{proof} 

  \subsection{The 3-fold case}
 In dimension 3, the $G$-Hilbert scheme is a
 projective crepant resolution of $\mathbb{A}^3_\kk/G$, but in general
 there may be more than one. We now describe how to establish the
 derived equivalence from Conjecture~\ref{conj:McKay} for the other
 projective crepant resolutions when the finite subgroup $G\subset \SL(3,\kk)$ is abelian.
 
 First, we describe how to construct crepant resolutions of
 $\mathbb{A}^3_\kk/G$ using toric geometry. The abelian quotient
 singularity $\mathbb{A}^3_\kk/G$ is the normal affine toric variety
 $\Spec\!\big(\kk[\sigma^\vee\cap M]\big)$ from
 Example~\ref{exa:toricabelian}. The \emph{junior simplex} is the
 lattice triangle in $N\otimes_\ZZ \QQ$ obtained as the intersection
 of the positive octant with the hyperplane normal to the vector
 $(1,1,1)$. Every basic triangulation of the junior simplex determines
 uniquely a fan $\Sigma$ supported on the positive orthant such that
 every three-dimensional cone $\sigma\in \Sigma$ is spanned by a
 lattice basis of $N$.  If the triangulation is \emph{coherent} (see
 Sturmfels~\cite{Sturmfels}) then the smooth toric variety $Y$ defined
 by the fan $\Sigma$ is projective over $\mathbb{A}^3_\kk/G$.
 Moreover, the resolution $Y\to \mathbb{A}^3_\kk/G$ is crepant and,
 conversely, every projective crepant resolution arises in this way.

 \begin{example}
 \label{exa:Z2xZ2toric}
   Consider the subgroup $G\cong \ZZ/2\times\ZZ/2$ of $\SL(3,\kk)$,
   where the action is of type
   $\frac{1}{2}(1,1,0)\oplus\frac{1}{2}(1,0,1)$. Following the toric
   recipe described in Example~\ref{exa:toricabelian}, the junior
   simplex is shown in Figure~\ref{fig:Z2xZ2}(a). In this case there
   are four basic triangulations, all of which are coherent. The
   junior simplex for two such fans are shown in
   Figure~\ref{fig:Z2xZ2}(b) and (c).
 \begin{figure}[!ht]
    \centering \mbox{ \subfigure[The junior simplex]{ \psset{unit=1cm}
        \begin{pspicture}(-1.0,-0.5)(3,2.2)
          \cnode*[fillcolor=black](0,0){3pt}{P1}
          \cnode*[fillcolor=black](1,0){3pt}{P2}
          \cnode*[fillcolor=black](2,0){3pt}{P3}
          \cnode*[fillcolor=black](0,1){3pt}{P4}
          \cnode*[fillcolor=black](1,1){3pt}{P5}
          \cnode*[fillcolor=black](0,2){3pt}{P7}
          \ncline[linewidth=2pt]{-}{P1}{P3}
          \ncline[linewidth=2pt]{-}{P3}{P7}
          \ncline[linewidth=2pt]{-}{P1}{P7}
        \end{pspicture}}
      \qquad 
       \subfigure[Fan defining $\ghilb(\mathbb{A}^3_\kk)$]{
        \psset{unit=1cm}
        \begin{pspicture}(-1.0,-0.5)(3,2.2)
          \cnode*[fillcolor=black](0,0){3pt}{P1}
          \cnode*[fillcolor=black](1,0){3pt}{P2}
          \cnode*[fillcolor=black](2,0){3pt}{P3}
          \cnode*[fillcolor=black](0,1){3pt}{P4}
          \cnode*[fillcolor=black](1,1){3pt}{P5}
          \cnode*[fillcolor=black](0,2){3pt}{P7}
          \ncline[linewidth=2pt]{-}{P1}{P3}
          \ncline[linewidth=2pt]{-}{P3}{P7}
          \ncline[linewidth=2pt]{-}{P1}{P7}
         \ncline[linewidth=2pt]{-}{P2}{P4}
         \ncline[linewidth=2pt]{-}{P2}{P5}
         \ncline[linewidth=2pt]{-}{P4}{P5}
        \end{pspicture}}
      \qquad 
       \subfigure[An alternative resolution]{
        \psset{unit=1cm}
        \begin{pspicture}(-1.0,-0.5)(3,2.2)
          \cnode*[fillcolor=black](0,0){3pt}{P1}
          \cnode*[fillcolor=black](1,0){3pt}{P2}
          \cnode*[fillcolor=black](2,0){3pt}{P3}
          \cnode*[fillcolor=black](0,1){3pt}{P4}
          \cnode*[fillcolor=black](1,1){3pt}{P5}
          \cnode*[fillcolor=black](0,2){3pt}{P7}
          \ncline[linewidth=2pt]{-}{P1}{P3}
          \ncline[linewidth=2pt]{-}{P3}{P7}
          \ncline[linewidth=2pt]{-}{P1}{P7}
         \ncline[linewidth=2pt]{-}{P2}{P7}
         \ncline[linewidth=2pt]{-}{P2}{P4}
         \ncline[linewidth=2pt]{-}{P2}{P5}
        \end{pspicture}}}
    \caption{Resolving the quotient by $\ZZ/2\times\ZZ/2$ in $\SL(3,\kk)$} \label{fig:Z2xZ2}
  \end{figure}
 \end{example}
 
 \begin{remark}
   Using the description of the toric fan of
   $\ghilb(\mathbb{A}^3_\kk)$ from Craw--Reid~\cite{CrawReid}, one can
   show that $\mathbb{A}^3_\kk/G$ admits a unique crepant resolution
   if and only if the action of $G$ is either \one\ of type
   $\frac{1}{r}(1,r-1,0)$ for some $r\geq 1$; \two\ of type
   $\frac{1}{r}(1,1,r-2)$ for some $r\geq 1$; or \three\ of type
   $\frac{1}{7}(1,2,4)$. In particular, quotients $\mathbb{A}^3_\kk/G$
   that admit a unique crepant resolution are very rare.
 \end{remark}
 
 The following result of Craw--Ishii~\cite{CrawIshii} addresses the
 question of the McKay correspondence for quotients
 $\mathbb{A}^3_\kk/G$ that admit more than one projective crepant
 resolution.

  \begin{theorem}
 \label{thm:CrawIshii2}
 Let \(G\subset \SL(3,\kk)\) be a finite abelian subgroup.  For every
 projective crepant resolution \(\tau\colon Y\to \mathbb{A}^3_\kk/G\),
 there exists a generic weight $\theta\in \Wt(Q)\otimes_\ZZ \QQ$ such
 that \(Y \cong \mathcal{M}_\theta\).
 \end{theorem}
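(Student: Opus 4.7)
The strategy is variation of GIT in the parameter space $\Wt(Q)\otimes_\ZZ\QQ$, matched against the birational geometry of projective crepant resolutions of $X=\mathbb{A}^3_\kk/G$. Starting from the base-point $\mathcal{M}_\vartheta\cong\ghilb(\mathbb{A}^3_\kk)$ of Theorem~\ref{thm:BKR}, I propose to show that wall-crossings in the GIT chamber decomposition sweep out every projective crepant resolution.

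First I would establish that for \emph{every} generic $\theta\in\Wt(Q)\otimes_\ZZ\QQ$, the scheme $\mathcal{M}_\theta$ is irreducible (hence equal to the coherent component $Y_\theta$ of Theorem~\ref{thm:CMT1}) and is a projective crepant resolution of $X$. The dimension hypothesis $\dim(Y_\theta\times_X Y_\theta)\leq 4$ of Theorem~\ref{thm:CrawIshii1} holds automatically in dimension three, since the exceptional locus of the birational morphism $\tau|_{Y_\theta}\colon Y_\theta\to X$ has dimension at most two. Theorem~\ref{thm:CrawIshii1} then gives a derived equivalence $D^b(\!\coh(Y_\theta))\cong D^b(\modA)$; irreducibility of $\mathcal{M}_\theta$ can be deduced from this equivalence by a variant of the argument of Bridgeland--King--Reid, since any other irreducible component would produce a non-zero object in $D^b(\modA)$ inconsistent with the equivalence on $Y_\theta$.

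Next I would analyse the GIT chamber decomposition following Thaddeus and Dolgachev--Hu. Inside each open chamber $C$ the variety $\mathcal{M}_\theta$ is constant, so let us write $\mathcal{M}_C$. Crossing a codimension-one wall between adjacent chambers $C$ and $C'$ induces a birational map $\mathcal{M}_C \dashrightarrow \mathcal{M}_{C'}$ over $X$. Since both sides are crepant resolutions of the same Gorenstein singularity, this map is an isomorphism in codimension one, so it is either an isomorphism or a flop over $X$. The natural identification $\Wt(Q)\cong\Pic(\mathcal{M}_C)$ from Section~\ref{sec:tqv} sends each chamber $C$ into the relative ample cone of $\mathcal{M}_C$ over $X$, so the GIT walls map to walls of the relative ample cone.

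The main obstacle, and the final step, is the converse: every projective crepant resolution $Y\to X$ must be reached from $\ghilb(\mathbb{A}^3_\kk)$ by a finite sequence of such wall-crossings. For this I would combine two ingredients. The first is the theorem (Kawamata, Koll\'ar--Mori) that any two projective crepant resolutions of a three-dimensional Gorenstein quotient are connected by a finite chain of flops over $X$. The second is a matching between the Mori chamber decomposition of the relative movable cone of $Y$ over $X$ and the GIT chamber decomposition of $\Wt(Q)\otimes_\ZZ\QQ$: concretely, each flopping ray in the relative movable cone of some $\mathcal{M}_C$ must be realised as a GIT wall. To produce this matching, I would use the tautological bundle $\mathscr{W}_\theta=\bigoplus_\rho(\mathscr{W}_\theta)_\rho$ on $\mathcal{M}_C$ and the derived equivalence $\Phi^{\mathscr{U}_\theta}$: the line bundles $(\mathscr{W}_\theta)_\rho$ transform under a wall-crossing flop by a controlled twist (a mutation of the tilting collection), and the images in $\Pic(\mathcal{M}_C)$ of the vertices of the weight simplex $\{\theta\in\Wt(Q)\otimes_\ZZ\QQ:\theta_\rho\geq 0\text{ for }\rho\neq\rho_0\}$ are shown to generate the relative movable cone of $\mathcal{M}_C$ over $X$. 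Once this generation statement is established, standard variation-of-GIT arguments identify the given resolution $Y$ with $\mathcal{M}_\theta$ for any generic $\theta$ lying in the chamber whose associated nef cone contains an ample class of $Y$ over $X$.
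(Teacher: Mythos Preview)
Your overall strategy---start from $\ghilb(\mathbb{A}^3_\kk)$, use that any two projective crepant resolutions are linked by flops, and realise each flop by a GIT wall-crossing---matches the paper's. But there is a genuine gap at the crucial step, and it is precisely the obstacle that the paper spends most of its effort overcoming.

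You write that ``the natural identification $\Wt(Q)\cong\Pic(\mathcal{M}_C)$ from Section~\ref{sec:tqv} sends each chamber $C$ into the relative ample cone \dots\ so the GIT walls map to walls of the relative ample cone''. This is false on two counts. First, Section~\ref{sec:tqv} concerns the unbound moduli $\mathcal{M}_\theta(Q)$; for the bound quiver moduli $\mathcal{M}_\theta(Q,\varrho)$ the rank of $\Wt(Q)$ is $|G|-1$ while $\Pic(Y_\theta)$ has much smaller rank, so the map $L_\theta\colon\Wt(Q)\otimes_\ZZ\QQ\to\Pic(Y_\theta)\otimes_\ZZ\QQ$ sending $\eta\mapsto\bigotimes_\rho(\mathscr{W}_\theta)_\rho^{\eta_\rho}$ is far from an isomorphism. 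Second, and more seriously, $L_\theta$ depends on the tautological bundle $\mathscr{W}_\theta$, which changes from chamber to chamber; the resulting map on the union of chambers is only \emph{piecewise} linear. In particular a GIT wall can map to the \emph{interior} of the ample cone of $Y_\theta$: crossing it leaves $\mathcal{M}_\theta$ unchanged as a variety but alters $\mathscr{W}_\theta$. The paper calls these ``Type~0'' walls (Proposition~\ref{prop:chamberstrong}\two). So it is not true that every wall of the chamber containing $\theta$ corresponds to a face of the nef cone, and conversely you have not shown that the face of the nef cone giving your chosen flop is visible as a wall of \emph{that} GIT chamber.

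The missing idea is this: when the desired flop is not a wall of the current chamber $C$, one must cross a Type~0 wall into an adjacent chamber $C'$ with $\mathcal{M}_C\cong\mathcal{M}_{C'}$. The paper shows that the composite $(\Phi^{\mathscr{U}_{\theta'}})^{-1}\circ\Phi^{\mathscr{U}_\theta}$ across such a wall is a spherical twist (up to a line-bundle tensor), and uses this to track exactly how the images $L_\theta(C)$ and $L_{\theta'}(C')$ sit as adjacent cones inside $\Pic(Y)$. This control is what guarantees that after finitely many Type~0 crossings one reaches a chamber whose boundary \emph{does} contain the flopping wall. Your proposed ``generation of the movable cone'' argument does not supply this; without it there is no reason the inductive step terminates.
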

 
 Together with Theorem~\ref{thm:CrawIshii1}, this implies the derived
 McKay correspondence Conjecture~\ref{conj:McKay} for the class of
 projective crepant resolutions of $\mathbb{A}^3_\kk/G$.  Moreover, by
 considering the compositions
 \[
 (\Phi^{\mathscr{U}_{\theta^\prime}})^{-1}\circ
 \Phi^{\mathscr{U}_\theta}\colon
 D^b\big(\!\coh(Y_\theta)\big)\rightarrow
 D^b\big(\!\coh(Y_{\theta^\prime})\big)
 \]
 arising from any pair of generic weights $\theta, \theta^\prime\in
 \Wt(Q)\otimes_\ZZ \QQ$, we see that any two projective crepant
 resolutions of $\mathbb{A}^3_\kk/G$ have equivalent derived
 categories. 
 
 \begin{remark}
   For any finite subgroup $G\subset \SL(3,\kk)$,
   Bridgeland~\cite{Bridgeland2} constructs derived equivalences
   between the bounded derived categories of coherent sheaves on any
   pair of projective crepant resolutions of $\mathbb{A}^3_\kk/G$.
   This establishes Conjecture~\ref{conj:McKay} in dimension \(n = 3\)
   without having to restrict to abelian subgroups. More recently,
   Logvinenko~\cite{Logvinenko} established
   Conjecture~\ref{conj:McKay} for a specific nonprojective crepant resolution.
  \end{remark}
  
 \subsection{Sketch of the proof}
 The $G$-Hilbert scheme provides one crepant resolution of
 $\mathbb{A}^3_\kk/G$, and every other crepant resolution is obtained
 from $\ghilb(\mathbb{A}^3_\kk)$ by a finite sequence of flops.  Since
 $\ghilb(\mathbb{A}^3_\kk)=\mathcal{M}_\vartheta$ for the weight
 $\vartheta$ from Remark~\ref{rem:G-Hilbert}, it is enough to prove
 that if $\mathcal{M}_\theta$ is a crepant resolution for generic
 $\theta\in \Wt(Q)\otimes_\ZZ \QQ$, and if $Y^\prime$ is another
 crepant resolution that is obtained from $\mathcal{M}_\theta$ by the
 flop of a toric curve, then $Y^\prime=\mathcal{M}_{\theta^\prime}$
 for some other generic $\theta^\prime$. Thus, we choose one such flop
 $\mathcal{M}_\theta \dasharrow Y^\prime$ and must show that we can
 induce the flop by variation of GIT quotient in the weight space
 $\Wt(Q)\otimes_\ZZ \QQ$, thereby obtaining
 $Y^\prime=\mathcal{M}_{\theta^\prime}$.
 
 Before explaining the logic of the proof we first describe the link
 between the space of weights and birational geometry of the moduli
 spaces.  For generic $\theta\in \Wt(Q)\otimes_\ZZ \QQ$, write $C$ for
 the open GIT chamber containing $\theta$ (see
 Section~\ref{sec:projectiveGIT}).  For simplicity, write
 $\mathscr{W}=\bigoplus_{\rho\in G^*} \mathscr{W}_\rho$ for the
 tautological bundle on the fine moduli space $Y=\mathcal{M}_\theta$.  It
 follows tautologically from the GIT construction of
 $\mathcal{M}_\theta$ that the map
 \begin{equation}
 \label{eqn:Ltheta}
 L_\theta\colon \Wt(Q)\otimes_\ZZ \QQ \longrightarrow
 \Pic(Y)\otimes_\ZZ \QQ
 \end{equation}
 sending $\eta = (\eta_\rho)_{\rho\in G^*}\in \Wt(Q)\otimes_\ZZ \QQ$
 to the (fractional) line bundle $\bigotimes_{\rho\in G^*}
 \mathscr{W}_\rho^{\eta_\rho}$ sends every weight in $C$ to an ample
 bundle. Thus, there are two options for a weight $\theta_0$ in
 a codimension-one face $W$ of the boundary of the closure
 $\overline{C}$: the line bundle $L:= L_\theta(\theta_0)$ either lies
 in the interior of the ample cone, in which case it is ample; or it
 lies on the boundary of the closure, in which case it is nef but not
 ample. If we could move $L$ freely around the ample cone of $Y$, then
 we would simply procede towards the codimension-one face of the nef
 cone that defines our flop $\mathcal{M}_\theta =Y\dasharrow Y^\prime$
 and pass $L$ through the wall, thereby inducing the flop.
 
 However, while we may move freely in $\Wt(Q)\otimes_\ZZ \QQ$, this
 does not mean we may move the corresponding line bundle freely in
 $\Pic(Y)$. To see this, suppose that passing $\theta$ through the
 wall $W$ defines an isomorphism rather than the desired flop (so
 $L_\theta(\theta_0)$ was in the interior of the ample cone).  We have
 $\mathcal{M}_\theta=Y\cong \mathcal{M}_{\theta'}$ for $\theta'$ in the
 adjacent chamber $C'$, but the new tautological bundle is
 $\mathscr{W}'=\bigoplus_{\rho\in G^*}
 (\mathscr{W}^\prime)_\rho$. Thus, the line bundle associated to
 $\theta'\in C'$ is determined by
 \[
 L_{\theta'}\colon \Wt(Q)\otimes_\ZZ \QQ
 \longrightarrow \Pic(Y)\otimes_\ZZ
 \QQ,
 \] where $L_{\theta'}(\eta) = \bigotimes_{\rho\in G^*}
 (\mathscr{W}^\prime)_\rho^{\eta_\rho}$.  The maps $L_\theta$ and
 $L_{\theta'}$ are $\QQ$-linear, but together they form only a
 piecewise $\QQ$-linear map on the union $C\cup C'$. Thus, while we
 may move $L_\theta(\theta)$ to the boundary of $L_\theta(C)$ and
 beyond, we do not know whether the line bundles
 $L_{\theta'}(\theta')$ for $\theta'\in C'$ lie any closer to the face
 of the nef cone of $Y$ that gives our chosen flop. As
 Example~\ref{exa:VGIT1/3(1,2)} shows, crossing walls in
 $\Wt(Q)\otimes_\ZZ \QQ$ may even define a reflection in
 $\Pic(Y)$. The majority of Craw--Ishii~\cite{CrawIshii} is devoted to
 a careful analysis of how wall crossings affect $\mathcal{M}_\theta$
 and $\mathscr{W}_\theta$.

 \begin{exercise}
 For the action of type $\frac{1}{2}(1,1,0)\oplus\frac{1}{2}(1,0,1)$
   from Example~\ref{exa:Z2xZ2toric}. Compute explicitly the quiver
   representations that define the torus-invariant points of the
   $G$-Hilbert scheme, and hence show that $\ghilb(\mathbb{A}^3_\kk)
   =\mathcal{M}_\vartheta$ if and only if $\vartheta$ lies in the cone 
 \[
 C=\big\{(\theta_0,\theta_1,\theta_2, \theta_3)\in \QQ^3 : \theta_0
 +\theta_1 + \theta_2 +\theta_3= 0, \theta_1>0, \theta_2>0,
 \theta_3>0\big\};
 \]
  (compare Example~\ref{exa:VGIT1/3(1,2)}). Show that all three flops
  of $\ghilb(\mathbb{A}^3_\kk)$ can be produced by crossing a single
  wall of this chamber.
 \end{exercise}

 We now sketch the proof the Theorem~\ref{thm:CrawIshii2}. The key to
 understanding the wall crossings is to interpret geometrically the
 weight space $\Wt(Q)\otimes_\ZZ \QQ$ using the Fourier--Mukai
 transform. For a generic weight $\theta\in C$ with
 $Y=\mathcal{M}_\theta$, restricting $\Phi^{\mathscr{U}_\theta}$ to
 the full subcategory $D^b_0\big(\!\coh(Y)\big)$ consisting of objects
 supported on the subscheme $\tau^{-1}(\pi_V(0))$ induces a
 $\ZZ$-linear isomorphism
 \[
 \varphi_C \colon K_0(Y)\longrightarrow
 K_0(\modA)\cong\textstyle{ \bigoplus_{\rho\in G^*} \ZZ \rho}
 \]
 between the Grothendieck groups of coherent sheaves supported on
 $\tau^{-1}(\pi_V(0))$ and finitely generated left nilpotent
 $A$-modules. There is a natural isomorphism 
 \[
 \Wt(Q)\otimes_\ZZ \QQ \cong \big\{\theta\in \Hom_\ZZ(K_0(\modA),\QQ)
: \theta(\kk[G])=0\big\}, 
 \]
 and hence for any compactly supported sheaf $\mathscr{F}$ on
 $Y$ we may compute $\theta(\varphi_C(\mathscr{F}))\in
 \QQ$. The geometric interpretation of the walls of any given chamber
 can be described thus:

 \begin{proposition}
 \label{prop:chamberstrong}
 Let \(C \subset\Wt(Q)\otimes_\ZZ \QQ \) be a GIT chamber.  Then \(\theta\in C\) if
 and only if
 \begin{enumerate}
 \item[\one] for every exceptional (e.g., flopping) curve \(\ell\) in
 $Y$ we have \(\theta(\varphi_{C}(\mathscr{O}_{\ell})) > 0\); and
 \item[\two] for every compact reduced divisor \(D\) in $Y$ and
 \(\rho\in G^*\) we have
 \[
 \theta(\varphi_{C}(\mathscr{R}_{\rho}^{-1}\otimes \omega_{D})) <
 0\quad\text{and}\quad\theta(\varphi_{C}(\mathscr{R}_{\rho}^{-1}\vert_D))
 > 0.
 \]
 \end{enumerate}
 \end{proposition}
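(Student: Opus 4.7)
The plan is to characterize membership in the chamber $C$ via the Fourier--Mukai equivalence and the stability condition. First, recall that a generic weight $\theta$ lies in $C$ if and only if the set of $\theta$-stable $G$-constellations is constant as $\theta$ varies in $C$. A wall of $C$ is crossed precisely when some $\theta$-stable $A$-module $E$ acquires a nontrivial sub-$A$-module $E'$ with $\theta(E') = 0$, so that $E$ becomes strictly $\theta$-semistable on the wall. Consequently $\theta \in C$ if and only if, for every $A$-submodule class $[E']$ of a $\theta$-stable $G$-constellation arising from a wall of $C$, the strict inequality $\theta([E']) > 0$ holds with the correct sign. Transporting along $\varphi_C^{-1} \colon K_0(\modA) \to K_0(Y)$, this becomes positivity of $\theta(\varphi_C(-))$ on a specific finite set of classes in $K_0(Y)$ supported on $\tau^{-1}(\pi_V(0))$.

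Next, I would identify which K-theory classes in $K_0(Y)$ can arise as destabilizing subobjects. The FM transform $\Phi^{\mathscr{U}_\theta}$ sends the structure sheaf $\mathscr{O}_y$ of a point $y \in Y$ to the $G$-constellation parametrised by $y$, so destabilizing subobjects come from sub-$A$-modules of these fibres. Restrictions of $\Phi^{\mathscr{U}_\theta}$ to natural subcategories --- objects supported on curves, objects supported on divisors --- produce canonical short exact sequences whose middle term is $\Phi(\mathscr{O}_y)$ for $y$ lying on an exceptional curve or a compact divisor. One shows that for a toric curve $\ell$, the class $[\mathscr{O}_\ell]$ yields precisely the destabilizing subobject for the flopping wall (when $\ell$ flops) or isomorphism wall (when $\ell$ is preserved); for a compact reduced divisor $D$ and a character $\rho$, the classes $[\mathscr{R}_\rho^{-1} \otimes \omega_D]$ and $[\mathscr{R}_\rho^{-1}\vert_D]$ arise from the two divisorial contractions associated with $D$ (contracting $D$ to a point versus to a curve, and their opposites). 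The sign conventions in \two\ are forced by Serre duality on $D$ and the requirement that $\theta$ remains positive on effective subobjects inside $C$.

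The forward direction --- assuming $\theta \in C$ and deducing \one\ and \two --- then follows because the indicated classes correspond to genuine sub-$A$-modules of $\theta$-stable constellations parametrised by points on the relevant curve or divisor, so stability immediately gives the inequalities. For the converse, one must argue that the list of classes in \one--\two\ is exhaustive: every wall of $C$ is defined by the vanishing of $\theta$ on one of the displayed classes. This follows by unpacking the structure of $\tau^{-1}(\pi_V(0))$ in the three-fold setting, where every compact cycle is a sum of exceptional curves and compact divisors, combined with the observation that the K-theory class of any subobject of $\Phi(\mathscr{O}_y)$ decomposes into pieces supported on such curves and divisors.

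The main obstacle will be this last point: establishing that the wall-defining classes are exactly those listed, with no exotic walls. This requires the explicit classification of wall types in the three-fold case developed in Craw--Ishii~\cite{CrawIshii}, where walls are divided into types (0, I, III) according to the birational behaviour of $\mathcal{M}_\theta \dashrightarrow \mathcal{M}_{\theta'}$ across them, and each type is matched with one of the K-theory classes in the statement. The technical heart of the argument is therefore the wall classification together with the computation of $\varphi_C$ on the three families $\mathscr{O}_\ell$, $\mathscr{R}_\rho^{-1}\otimes \omega_D$, and $\mathscr{R}_\rho^{-1}\vert_D$, verifying that these exhaust the destabilizing classes.
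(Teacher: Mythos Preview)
The paper does not actually prove this proposition: it is stated without proof, as part of the expository sketch of the argument from Craw--Ishii~\cite{CrawIshii}, and the surrounding text simply explains the geometric meaning of the two families of inequalities before moving on. So there is no ``paper's own proof'' to compare against; the result is imported wholesale from the cited reference.

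Your outline is a reasonable summary of how the argument runs in \cite{CrawIshii}: walls of $C$ are detected by destabilising sub-$A$-modules of the $G$-constellations $\Phi(\mathscr{O}_y)$, and transporting these classes back through $\varphi_C^{-1}$ lands on the sheaves listed in \one\ and \two. You are also right that the substantive content is the exhaustiveness of the list, which rests on the classification of wall types (0, I, III) carried out in \cite{CrawIshii}. One small caution: your phrasing ``for every $A$-submodule class $[E']$ of a $\theta$-stable $G$-constellation arising from a wall of $C$, the strict inequality $\theta([E'])>0$ holds with the correct sign'' is slightly loose, since the inequalities in \two\ come in pairs of opposite sign precisely because one records a subobject and the other the complementary quotient; you should make explicit which direction each class arises from rather than leaving ``the correct sign'' undefined.
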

 
 The inequalities listed in \one, i.e., those of the form
 \(\theta(\varphi_{C}(\mathscr{O}_{\ell})) > 0\) for curves $\ell$ in
 $Y$, are good in the sense that they lift via the map $L_\theta$ from
 \eqref{eqn:Ltheta} inequalities defining the walls of the ample cone
 of $Y$. In particular, when we pass through these walls then we
 induce the corresponding birational change in $Y$. However, those
 listed in \two, dubbed \emph{walls of Type 0} in \cite{CrawIshii},
 are a priori problematic since they are not present in $\Pic(Y)$ and
 hence, as described above, we may lose track completely of the
 polarising line bundle as $\theta$ passes through such a wall. It is
 important therefore to understand what happens to the variety
 $\mathcal{M}_\theta$ and its tautological bundle $\mathscr{W}_\theta$ as
 $\theta$ passes through a wall of type 0.
  
 To describe the changes, we recall that an object \(E \in
 D^b\big(\!\coh(Y)\big)\) that is supported on the subscheme
 $\tau^{-1}(\pi_V(0))$ is \emph{spherical} if
 \[
 \Hom_{D_0^b(\coh(Y))}(E,E[j]) = \left\{\begin{array}{cc} \kk & \text{
 if } j = 0,3; \\ 0 & \text{otherwise} \end{array}\right.
 \]
 (see Huybrechts~\cite[Section~8.1]{Huybrechts}; we require no
 additional condition since $\omega_Y\cong \mathscr{O}_Y$). The
 \emph{twist} along a spherical object \(E\) is defined via the
 distinguished triangle
 \[
 \Rderived\Hom_{\mathscr{O}_{Y}}(E, F) \ltensor_{\CC} E
 \stackrel{ev}{\longrightarrow} F \longrightarrow T_{E}(F)
 \]
 for any object \(F \in D^b\big(\!\coh(Y)\big)\), where ev is the
 evaluation morphism.  The corresponding \emph{spherical twist
 functor} $T_{E}\colon D^b\big(\!\coh(Y)\big)\rightarrow
 D^b\big(\!\coh(Y)\big)$ is an autoequivalence.

 \begin{lemma}
 Let $C, C'$ be adjacent chambers in $\Wt(Q)\otimes_\ZZ \QQ$ separated
 by a wall of type 0, and choose $\theta\in C$ and $\theta'\in
 C'$. Then $\mathcal{M}_{\theta}$ is isomorphic to
 $\mathcal{M}_{\theta^\prime}$, and the composition
 \[
 (\Phi^{\mathscr{U}_{\theta^\prime}})^{-1}\circ
 \Phi^{\mathscr{U}_\theta}\colon
 D^b\big(\!\coh(\mathcal{M}_{\theta})\big)\rightarrow
 D^b\big(\!\coh(\mathcal{M}_{\theta'})\big)
 \]
 is a spherical twist functor (up to tensoring by a line bundle).
 Moreover, the changes in the tautological bundles
 $\mathscr{W}_\theta$ can be tracked across the equivalence.
 \end{lemma}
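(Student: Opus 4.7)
The plan is to treat the three assertions in sequence: first the isomorphism $\mathcal{M}_\theta \cong \mathcal{M}_{\theta'}$, then the identification of the composite Fourier--Mukai transform with a spherical twist (modulo a line bundle), and finally the explicit tracking of the tautological summands.

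For the isomorphism of moduli spaces, I would invoke Proposition~\ref{prop:chamberstrong} directly. A wall of type~0 is defined by an equality of the form $\theta(\varphi_{C}(\mathscr{R}_{\rho}^{-1}\otimes\omega_D))=0$, not by any inequality of type \one\ involving an exceptional curve. Crossing such a wall therefore does not flip the sign of $\theta(\varphi_C(\mathscr{O}_\ell))$ for any curve class $\ell$, so under the map $L_\theta$ from \eqref{eqn:Ltheta} the ample cone of $\mathcal{M}_\theta$ is not crossed on any curve. Since the biregular structure of a projective semiprojective toric $Y_\theta$ over $X$ is determined by the signs of $\theta(\varphi_C(\mathscr{O}_\ell))$, we deduce $\mathcal{M}_\theta \cong \mathcal{M}_{\theta'}$ canonically; only the polarisation and, as we shall see, the normalisation of the tautological bundle change.

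For the spherical twist, I would first identify the spherical object $E\in D^b_0(\!\coh(Y))$ as (a shift of) $\mathscr{R}_\rho^{-1}\otimes\omega_D$ associated to the data $(D,\rho)$ that cut out the wall. The Calabi--Yau condition $\omega_Y\cong\mathscr{O}_Y$ together with Grothendieck--Serre duality on the compact divisor $D$ reduces the spherical identity $\Hom^j(E,E)=\kk$ for $j=0,3$ to a local cohomology computation on $D$, which should be verifiable directly from the normal bundle of $D\subset Y$. The candidate equivalence $F:=(\Phi^{\mathscr{U}_{\theta'}})^{-1}\circ\Phi^{\mathscr{U}_\theta}$ is an autoequivalence of $D^b(\!\coh(Y))$ which is itself of Fourier--Mukai type with an explicit kernel on $Y\times Y$ pieced together from $\mathscr{U}_\theta$ and $\mathscr{U}_{\theta'}$. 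I would compare $F$ with $T_E$ by testing on a spanning class: using Lemma~\ref{lem:BM1} and the analogue of \eqref{eqn:BO} adapted to the pair $(\theta,\theta')$, one checks that $F(\mathscr{O}_y)\cong T_E(\mathscr{O}_y)$ for every $y\notin\supp(E)$ (where both functors act as a twist by a line bundle) and that $F$ and $T_E$ agree on the structure sheaf of a fibre of the contraction $D\to \tau(D)$, which generates the remaining piece of the derived category supported near $\supp(E)$. Since Fourier--Mukai kernels are determined by their action on a spanning class up to a shift/line bundle, this will identify $F$ with $T_E\otimes L$ for some $L\in \Pic(Y)$; the appearance of $L$ is unavoidable because the normalisation $\mathscr{W}_{\rho_0}\cong\mathscr{O}_Y$ is not preserved across the wall.

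Finally, to track the tautological summands I would apply $F$ to the images under $\Psi$ of the generators $\mathscr{O}_{\mathbb{A}^n_\kk}\otimes\rho^*$ of $\modA$; the exercise in Section~\ref{sec:McKay} gives $\Psi(\mathscr{O}_{\mathbb{A}^n_\kk}\otimes\rho^*)=\mathscr{W}_\rho^\vee$, and once $F=T_E\otimes L$ is known explicitly, the spherical twist exact triangle $\Rderived\Hom(E,\mathscr{W}_\rho^\vee)\otimes E\to \mathscr{W}_\rho^\vee\to T_E(\mathscr{W}_\rho^\vee)$ yields $(\mathscr{W}')_\rho$ in terms of $\mathscr{W}_\rho$, $E$ and $L$. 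The main obstacle, I expect, will be pinning down the precise spherical object and the line bundle $L$: matching the combinatorial data of the type~0 wall (the pair $(D,\rho)$ and the coefficients in $\varphi_C(\mathscr{R}_\rho^{-1}\otimes\omega_D)$) with the geometric data of the spherical triangle requires a careful local analysis of how $\mathscr{U}_\theta$ degenerates along the wall, and verifying that the Ext computations really yield a $2$-dimensional graded space concentrated in degrees $0$ and $3$.
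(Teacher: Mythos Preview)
The paper does not actually prove this lemma: it is stated without proof as part of the sketch of Theorem~\ref{thm:CrawIshii2}, with the detailed analysis deferred entirely to \cite{CrawIshii}. So there is no argument in the present paper against which to compare your proposal directly.

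Evaluated on its own terms, your sketch has two genuine gaps. First, the argument that $\mathcal{M}_\theta\cong\mathcal{M}_{\theta'}$ is circular. You invoke Proposition~\ref{prop:chamberstrong} to say that no curve-type inequality changes sign and then assert that the biregular type of $Y_\theta$ is determined by those signs. But both $\varphi_C$ and the map $L_\theta$ of \eqref{eqn:Ltheta} are built from the tautological bundle on $\mathcal{M}_\theta$; knowing that $L_\theta(\theta')$ remains ample on $Y=\mathcal{M}_\theta$ does not, by itself, show that the GIT quotient $\mathbb{V}(I_\varrho)\git_{\theta'}T$ is $Y$. One needs a direct comparison of the $\theta$- and $\theta'$-semistable loci, and this is exactly the content of the wall-crossing analysis in \cite{CrawIshii}. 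Second, your strategy for identifying $F=(\Phi^{\mathscr{U}_{\theta'}})^{-1}\circ\Phi^{\mathscr{U}_\theta}$ with $T_E\otimes L$ is not valid as stated: two Fourier--Mukai functors that agree on the objects of a spanning class need not be isomorphic, and there is no general principle that kernels are determined ``up to a shift or line bundle'' by their values on skyscrapers. The approach in \cite{CrawIshii} is quite different: one works directly with the universal families, analysing how each $\theta$-stable $G$-constellation destabilises on the wall via an explicit short exact sequence with a fixed simple sub- or quotient $S$, and the spherical twist (with spherical object $\Psi(S)$) is read off from these sequences rather than from an abstract functor comparison. Note also that Proposition~\ref{prop:chamberstrong}\two\ gives two kinds of type~0 wall, from $\mathscr{R}_\rho^{-1}\otimes\omega_D$ and from $\mathscr{R}_\rho^{-1}\vert_D$; your proposal treats only the first, and the associated spherical objects differ.
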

 
 We now complete the description of the proof of
 Theorem~\ref{thm:CrawIshii2}. For $Y=\mathcal{M}_\theta$, let \(C\)
 denote the chamber containing $\theta$. If $C$ has a wall that
 defines the desired flop according to
 Proposition~\ref{prop:chamberstrong}, then we pass through the wall,
 inducing the flop as required. Otherwise, move $\theta$ into an
 adjacent chamber \(C'\) separated from \(C\) by a wall of type 0.  A
 property of twists reveals that, roughly speaking, the images
 \(L_\theta(C)\) and \(L_{\theta'}(C')\) for $\theta'\in C'$ are
 adjacent cones in the Picard group.  We proceed in this way towards
 the desired wall of the ample cone, passing through at most finitely
 many such type 0 walls.  By crossing the final wall we induce the
 desired flop \(\mathcal{M}_{\theta}=Y \dashrightarrow
 Y'=\mathcal{M}_{\theta'}\).

 \subsection{McKay in higher dimensions}
 \label{sec:higherdims}
 It is natural to ask whether new examples of the McKay correspondence
 can be constructed using this moduli space approach in dimension
 \(n\geq 4\), when the singularity \(X = \mathbb{A}^n_\kk/G\) may
 admit crepant resolutions even though \(\ghilb(\mathbb{A}^n_\kk)\) is
 singular or discrepant. This final section studies one such example.
 
 Consider the quotient of \(\mathbb{A}^{4}_\kk\) by the action of type
 $\frac{1}{2}(1,1,1,0)\oplus\frac{1}{2}(1,1,0,1)\oplus\frac{1}{2}(1,0,1,1)$,
 that is, the action by the maximal diagonal subgroup \(G =
 (\ZZ/2)^{\oplus 3}\subset \SL(4,\kk)\) of exponent two. The
 resolution \(Y = \ghilb(\mathbb{A}^4_\kk)\rightarrow X\) has one
 exceptional divisor \(E\cong
 \mathbb{P}^1\times\mathbb{P}^1\times\mathbb{P}^1\) of discrepancy one
 that can be blown down to \(\mathbb{P}^1\times \mathbb{P}^1\) in
 three different ways, giving rise to crepant resolutions
 \(Y_i\rightarrow X\) with exceptional loci \(E_i \cong
 \mathbb{P}^1\times \mathbb{P}^1\) for \(i= 1,2,3\).  All four
 resolutions of \(X\) are toric morphisms, and the 3-dimensional
 cross-sections of the 4-dimensional fans defining these resolutions
 are shown in Figure~\ref{fig:toricZ2^3}. (In fact, the junior simplex
 is a tetrahedron, and the octahedra below are obtained only after
 chopping off all four corners; see Chiang--Roan~\cite{ChiangRoan}):
\begin{figure}[ht]
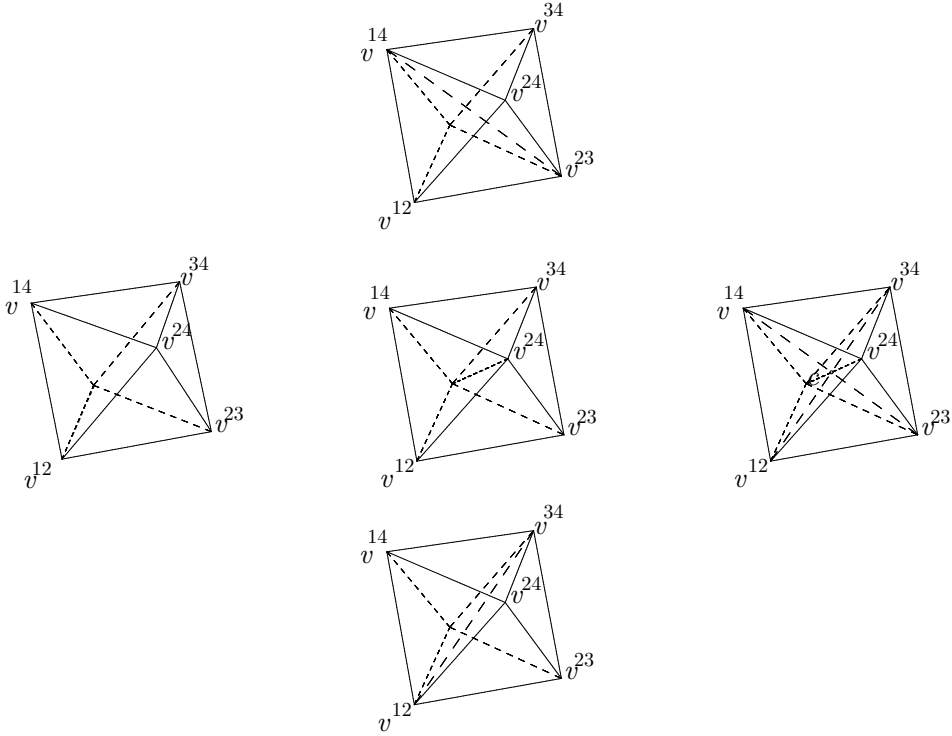

\label{fig:toricZ2^3}
$$\mbox{\psfig{figure=crep2.ps}}$$
$$\mbox{\psfig{figure=octa.ps}}\hskip 45pt \mbox{\psfig{figure=crep3.ps}}\hskip 45pt \mbox{\psfig{figure=hilbs.ps}}$$
$$\mbox{\psfig{figure=crep1.ps}}$$
 \caption{Toric picture as drawn by
   Chiang--Roan~\cite[Figure~3]{ChiangRoan}}
\end{figure}
We now show that each crepant resolution \(Y_i\) is a fine moduli
space \(\mathcal{M}_{\theta}\) of \(\theta\)-stable representations of
the bound McKay quiver $(Q,\varrho)$ for some generic \(\theta\in
\Wt(Q)\otimes_\ZZ \QQ\). Note that these are not the only ones, there
are another 189 distinct crepant resolutions (!).
 
 The chamber containing the weights defining \(\mathcal{M}_{\vartheta} =
 \ghilb(\mathbb{A}^4_\kk)\) is 
 \[
 C_0 := \big\{\theta =(\theta_\rho)_{\rho\in G^*}\in \Wt(Q)\otimes_\ZZ \QQ : \theta_\rho > 0 \text{ for
   all } \rho \neq \rho_{0} \big\}.
 \]
 Write \(\kk[x,y,z,w]\) for the coordinate ring of $\mathbb{A}^4_\kk$.
 The variables $x,y,z,w$ lie in distinct eigenspaces of the
 \(G\)-action that we denote \(\rho_{100}, \rho_{010}, \rho_{001},
 \rho_{111}\in G^*\cong(\ZZ/2)^{\oplus 3}\).  To simply the notation,
 we write \(\rho_0=\rho_{000}, \rho_1:= \rho_{011}, \rho_2:=
 \rho_{101}, \rho_3:= \rho_{110}\) for the complementary set of
 nontrivial irreducible representations. For \(i = 1,2,3\), let $W_i
 := \big\{\theta \in \overline{C_0} : \theta_{\rho_i} = 0\big\}$
 denote the codimension-one face of the closure of the chamber $C_0$
 that is contained in the indicated coordinate hyperplane.

 \begin{lemma}
   For $1\leq i\leq 3$, let \(C_i\) denote the unique GIT chamber
   lying adjacent to \(C_0\) that satisfies \(W_i = \overline{C_i}\cap
   \overline{C_0}\). Then \(Y_i = \mathcal{M}_\theta\) for all
   $\theta\in C_i$.
 \end{lemma}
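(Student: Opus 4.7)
The approach is to track, by variation of GIT, how the fine moduli space $\mathcal{M}_\theta$ changes as $\theta$ crosses from $C_0$ into $C_i$, and to identify this change with the divisorial contraction $Y\to Y_i$ that blows the exceptional $(\mathbb{P}^1)^3$ down along its $i$-th projection. Since $G$ is abelian, both $\mathcal{M}_\theta$ (via its coherent component, cf.\ Theorem~\ref{thm:CMT1}) and $Y_i$ are normal semiprojective toric varieties, so the plan reduces to a combinatorial comparison of toric fans.

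First I would enumerate the $T$-invariant $\vartheta$-stable $G$-constellations for $\vartheta\in C_0$, in the style of Example~\ref{exa:VGIT1/3(1,2)} but now for $\mathbb{A}^4_\kk/G$; these are precisely the $2^3=8$ torus-fixed $G$-clusters parametrising the vertices of $E\cong(\mathbb{P}^1)^3\subset Y$. Next I would determine which of these representations become strictly $\vartheta$-semistable on the wall $W_i$: this happens exactly when the representation admits a simple subrepresentation concentrated at vertex $\rho_i$, which in the $G$-Hilbert picture corresponds to the $\rho_i$-isotypic component of $\kk[x,y,z,w]/I$ being annihilated by the maximal ideal. A direct inspection identifies the $T$-invariant curves along which this degeneration happens with the fibres of the $i$-th projection $\pi_i\colon E\to\mathbb{P}^1\times\mathbb{P}^1$, so that the curves rendered numerically trivial on $W_i$ sweep out $E$ along the $i$-th ruling.

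Having identified $W_i$ with the $i$-th contracting family, I would then read off the fan of the coherent component $Y_\theta$ for $\theta\in C_i$ via the toric VGIT machinery of Section~\ref{sec:toric}: on the $C_i$-side, the unstable subscheme of $\mathbb{V}(I_Q)\subset\mathbb{A}^{Q_1}_\kk$ swells to contain the $T$-orbits of the old $T$-fixed points in the $i$-th family, and the resulting fan is obtained from the fan of $\ghilb(\mathbb{A}^4_\kk)$ by contracting the cones dual to that family. Comparing with Chiang--Roan's description of the four crepant resolutions depicted in the figure, this fan coincides with that of $Y_i$. As a cross-check, one can exhibit a tautological family of $\theta$-stable $G$-constellations on $Y_i$ by pushing the tautological sheaf on $Y$ down along $Y\to Y_i$ and verifying that the fibres remain $\theta$-stable on the $C_i$-side; the universal property of the fine moduli space then produces a morphism $Y_i\to\mathcal{M}_\theta$ identifying $Y_i$ with the coherent component.

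The main obstacle is that $W_i$ is a divisorial wall, contracting the $3$-fold $E$ onto the surface $\mathbb{P}^1\times\mathbb{P}^1$, rather than a flopping wall in the sense of the Craw--Ishii analysis of Proposition~\ref{prop:chamberstrong}, whose spherical-twist framework is developed only for 3-fold flops. One must therefore either adapt that derived-category machinery to 4-fold divisorial contractions, or---as proposed here---sidestep it entirely by direct toric combinatorics; I expect the latter to be cleaner but to require careful bookkeeping of the 3-dimensional cross-sections of the 4-dimensional fans, particularly to handle all four cones around the origin of the junior simplex simultaneously and to ensure that no unexpected type-$0$ walls are encountered between $C_0$ and $C_i$.
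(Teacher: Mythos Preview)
Your approach is in the same spirit as the paper's---a direct combinatorial wall-crossing analysis of the torus-invariant $G$-constellations---but there are two concrete errors in your execution plan.

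First, $Y=\ghilb(\mathbb{A}^4_\kk)$ has twelve torus-invariant points, not eight. In addition to the eight vertices of $E\cong(\mathbb{P}^1)^3$ there are four more, coming from the four ``corner'' simplices of the tetrahedral junior simplex that were chopped off to reveal the octahedron in Figure~\ref{fig:toricZ2^3}. The paper tracks all twelve: four (the corner $G$-clusters, of type $W_1$ in Figure~\ref{fig:quiverrepsZ/2^3}(a)) remain $\theta$-stable for $\theta\in C_i$, while the eight on $E$ (of types $W_2,W_3$) become unstable.

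Second, and more seriously, you describe the wall-crossing purely as a loss of stability followed by ``contracting the cones dual to that family,'' but the heart of the paper's argument is that on the $C_i$-side four \emph{new} torus-invariant $\theta$-stable $G$-constellations appear (the type $W_2'$ of Figure~\ref{fig:quiverrepsZ/2^3}(d)). These are not $G$-clusters: the arrow out of $\rho_i$ carries a nonzero map, so they are not quotients of $\kk[x,y,z,w]$. The proof hinges on producing these four representations explicitly and verifying, by an explicit deformation calculation, that they correspond to the four torus-invariant points of the exceptional $\mathbb{P}^1\times\mathbb{P}^1\subset Y_i$. Your proposed ``cross-check'' of pushing the tautological sheaf on $Y$ down along $Y\to Y_i$ does not supply these objects: the pushforward of a line bundle along a $\mathbb{P}^1$-fibration is not a line bundle, so the fibres over the exceptional locus would not even be $G$-constellations of the correct form. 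Without exhibiting the new stable representations and matching their deformation spaces to the charts of $Y_i$, the identification $\mathcal{M}_\theta\cong Y_i$ remains unproven.
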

 \begin{proof}
   As in Example~\ref{exa:VGIT1/3(1,2)}, we follow the fate of the
   $\vartheta$-stable \(G\)-constellations defined by the twelve
   torus-invariant points of \(Y\) as \(\vartheta\) passes through the
   wall \(W_i\). There are three cases.  
   
   First, consider the four $G$-clusters defining the torus-invariant
   points corresponding to the four simplices that were omitted in the
   right-most drawing in Figure~\ref{fig:toricZ2^3}. One such
   $G$-cluster $W_1$ is shown in Figure~\ref{fig:quiverrepsZ/2^3}(a),
   while the other three are obtained by cyclically permutting
   $x,y,z,w$. These remain \(\theta\)-stable with respect to
   parameters \(\theta\in C_i\).
 \begin{figure}[!ht] \centering
   \mbox{\subfigure[$W_1$]{ \psset{unit=1cm}
        \begin{pspicture}(0,0.5)(2.5,2.5)
        \rput(0,1){\rnode{A}{$\rho_{0}$}}
        \rput(1.6,1){\rnode{B}{$\rho_{100}$}}
        \rput(0,2.6){\rnode{C}{$\rho_{001}$}}
        \rput(1.6,2.6){\rnode{D}{$\rho_{2}$}}
        \rput(0.8,1.8){\rnode{A'}{$\rho_{010}$}}
        \rput(2.4,1.8){\rnode{B'}{$\rho_{3}$}}
        \rput(0.8,3.4){\rnode{C'}{$\rho_{1}$}}
        \rput(2.4,3.4){\rnode{D'}{$\rho_{111}$}}
         \ncline[nodesep=3pt]{->}{A}{B}
        \ncline[nodesep=3pt]{->}{A}{C}
        \ncline[nodesep=3pt]{->}{B}{D}
        \ncline[nodesep=3pt]{->}{C}{D}
        \ncline[nodesep=3pt]{->}{A'}{B'}
        \ncline[nodesep=3pt]{->}{A'}{C'}
        \ncline[nodesep=3pt]{->}{B'}{D'}
        \ncline[nodesep=3pt]{->}{C'}{D'} 
        \ncline[nodesep=3pt]{->}{A}{A'}
        \ncline[nodesep=3pt]{->}{B}{B'}
        \ncline[nodesep=3pt]{->}{C}{C'}
        \ncline[nodesep=3pt]{->}{D}{D'} 
       \end{pspicture}}
 \qquad
      \subfigure[$W_2$]{
        \psset{unit=1cm}
        \begin{pspicture}(-0.5,0.2)(2.5,2.5)
        \rput(0,1){\rnode{A}{$\rho_{0}$}}
        \rput(1.6,1){\rnode{B}{$\rho_{100}$}}
        \rput(0,2.6){\rnode{C}{$\rho_{001}$}}
        \rput(1.6,2.6){\rnode{D}{$\rho_{2}$}}
        \rput(0.8,1.8){\rnode{A'}{$\rho_{010}$}}
        \rput(2.4,1.8){\rnode{B'}{$\rho_{3}$}}
        \rput(0.8,3.4){\rnode{C'}{$\rho_{1}$}}
        \rput(-1,0.4){\rnode{D'}{$\rho_{111}$}}
         \ncline[nodesep=3pt]{->}{A}{B}
        \ncline[nodesep=3pt]{->}{A}{C}
        \ncline[nodesep=3pt]{->}{B}{D}
        \ncline[nodesep=3pt]{->}{C}{D}
        \ncline[nodesep=3pt]{->}{A'}{B'}
        \ncline[nodesep=3pt]{->}{A'}{C'}
        \ncline[nodesep=3pt]{->}{A}{A'}
        \ncline[nodesep=3pt]{->}{B}{B'}
        \ncline[nodesep=3pt]{->}{C}{C'}
        \ncline[nodesep=3pt,linecolor=gray]{->}{A}{D'} 
       \end{pspicture}}
 \qquad 
      \subfigure[$W_3$]{
        \psset{unit=1cm}
        \begin{pspicture}(-0.7,0)(2.5,2.5)
        \rput(0,1){\rnode{A}{$\rho_{0}$}}
        \rput(1.6,1){\rnode{B}{$\rho_{100}$}}
        \rput(0,2.6){\rnode{C}{$\rho_{001}$}}
        \rput(1.6,2.6){\rnode{D}{$\rho_{2}$}}
        \rput(0.8,1.8){\rnode{A'}{$\rho_{010}$}}
        \rput(2.4,1.8){\rnode{B'}{$\rho_{3}$}}
        \rput(-1,2){\rnode{C'}{$\rho_{1}$}}
        \rput(-1,0.4){\rnode{D'}{$\rho_{111}$}}
         \ncline[nodesep=3pt]{->}{A}{B}
        \ncline[nodesep=3pt]{->}{A}{C}
        \ncline[nodesep=3pt]{->}{B}{D}
        \ncline[nodesep=3pt]{->}{C}{D}
        \ncline[nodesep=3pt]{->}{A'}{B'}
        \ncline[nodesep=3pt]{->}{D'}{C'}
        \ncline[nodesep=3pt]{->}{A}{A'}
        \ncline[nodesep=3pt]{->}{B}{B'}
        \ncline[nodesep=3pt,linecolor=gray]{->}{C}{C'}
        \ncline[nodesep=3pt,linecolor=gray]{->}{A}{D'} 
       \end{pspicture}}
 \qquad 
      \subfigure[$W'_2$]{
        \psset{unit=1cm}
        \begin{pspicture}(-0.7,-0.7)(2.5,2.8)
        \rput(0,1){\rnode{A}{$\rho_{0}$}}
        \rput(1.6,1){\rnode{B}{$\rho_{100}$}}
        \rput(0,2.6){\rnode{C}{$\rho_{001}$}}
        \rput(1.6,2.6){\rnode{D}{$\rho_{2}$}}
        \rput(0.8,1.8){\rnode{A'}{$\rho_{010}$}}
        \rput(2.4,1.8){\rnode{B'}{$\rho_{3}$}}
        \rput(-1,-1.0){\rnode{C'}{$\rho_{1}$}}
        \rput(-1,0.4){\rnode{D'}{$\rho_{111}$}}
         \ncline[nodesep=3pt]{->}{A}{B}
        \ncline[nodesep=3pt]{->}{A}{C}
        \ncline[nodesep=3pt]{->}{B}{D}
        \ncline[nodesep=3pt]{->}{C}{D}
        \ncline[nodesep=3pt]{->}{A'}{B'}
        \ncline[nodesep=3pt]{<-}{D'}{C'}
        \ncline[nodesep=3pt]{->}{A}{A'}
        \ncline[nodesep=3pt]{->}{B}{B'}
         \ncline[nodesep=3pt, linecolor=gray]{->}{A}{D'} 
       \end{pspicture}}
    }
    \caption{$T_Y$-invariant $\vartheta'$-stable quiver representations}
 \label{fig:quiverrepsZ/2^3}
  \end{figure}
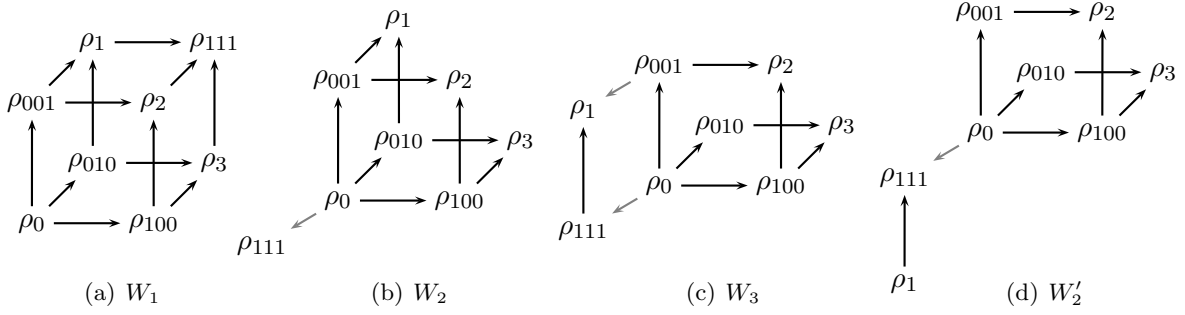
 Of the remaining eight torus-invariant \(G\)-clusters, four are of
   the type shown in Figure~\ref{fig:quiverrepsZ/2^3}(b), each
   obtained by cyclically permutting $x,y,z,w$, while the remaining
   four are of the type shown in
   Figure~\ref{fig:quiverrepsZ/2^3}(c). All 8 $G$-clusters in these
   latter two cases become unstable as $\vartheta\in C_0$ moves to the
   chamber $C_i$.  In addition, for $\theta\in C_i$, one new family
   of four $\theta$-stable $G$-constellations appears, one of which
   is shown in Figure~\ref{fig:quiverrepsZ/2^3}(d). An explicit
   deformation calculation shows that for each \(i = 1,2,3\), the four
   new \(G\)-constellations define the four torus-invariant points of
   \(Y_i\) corresponding to the four innermost simplices in the
   pictures that run down the centre of Figure~\ref{fig:toricZ2^3}.
   This shows that for each \(i = 1,2,3\), the birational map
   \(\ghilb(\mathbb{A}^4_\kk)=\mathcal{M}_\vartheta \rightarrow
   \mathcal{M}_{\theta}\) induced by variation of GIT quotient through
   the wall \(W_i\) coincides with the contraction \(Y\to Y_i\).
 \end{proof}

 Applying Theorem~\ref{thm:CrawIshii1} carefully leads to the following
 result:

 \begin{theorem}
   For \(i = 1,2,3\), there is a chamber \(C_i\subset
   \Wt(Q)\otimes_\ZZ \QQ\) such that \(Y_i =
   \mathcal{M}_{\theta}\) for \(\theta\in C_i\).  Moreover, in each
   case the universal sheaf \(\mathscr{U}_{\theta}\) on $Y_i\times
   \mathbb{A}^4_\kk$ defines an equivalence of derived categories
   $\Phi^{\mathscr{U}_{\theta}}\colon
   D^b\big(\!\coh(Y_\theta)\big)\rightarrow D^b(\modA)$.
 \end{theorem}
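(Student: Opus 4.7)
The first claim, that $Y_i = \mathcal{M}_\theta$ for $\theta \in C_i$, is precisely the content of the lemma immediately preceding the theorem, so no additional argument is needed. For the derived equivalence, the plan is to invoke Theorem~\ref{thm:CrawIshii1} with $\theta \in C_i$. Since each $Y_i$ is smooth, irreducible, and contains the points of $\mathcal{M}_\theta$ corresponding to free $G$-orbits, $Y_i$ is automatically the coherent component $Y_\theta \subseteq \mathcal{M}_\theta$, so the nontrivial hypothesis to check is the fibre-product bound $\dim(Y_i \times_X Y_i) \leq n+1 = 5$.

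To verify this bound I would stratify $X = \mathbb{A}^4_\kk/G$ by conjugacy class of stabilizer and compute the generic fibre dimension of $\tau_i \colon Y_i \to X$ over each stratum, reading the relevant data off the toric fan depicted in Figure~\ref{fig:toricZ2^3}. The stratification of $X$ has four types: the smooth open locus (dimension $4$, point fibres); six $2$-dimensional strata coming from the coordinate $2$-planes fixed by the six elements of $G$ with exactly two $-1$ eigenvalues, with transverse $A_1$-singularity resolved by a $\mathbb{P}^1$ (fibre dimension $1$); four $1$-dimensional strata coming from the coordinate axes, whose transverse singularity $\mathbb{A}^3_\kk/(\ZZ/2)^2 \subset \SL(3,\kk)$ has crepant resolution with $2$-dimensional exceptional locus over the isolated singular point; and the origin, whose fibre is precisely $E_i \cong \mathbb{P}^1 \times \mathbb{P}^1$ of dimension $2$. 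It is important here that no exceptional divisor of $Y_i$ lies over the origin of $X$: each of the six new rays $\tfrac{1}{2}(e_i + e_j)$ in the fan of $Y_i$ lies on a $2$-dimensional boundary face of the positive orthant, not in its interior.

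Decomposing $Y_i \times_X Y_i = \bigsqcup_\alpha \tau_i^{-1}(X_\alpha) \times_{X_\alpha} \tau_i^{-1}(X_\alpha)$ gives
\[
\dim(Y_i \times_X Y_i) = \max_\alpha\bigl(\dim X_\alpha + 2\dim F_\alpha\bigr),
\]
so the four stratum types contribute $4+0$, $2+2$, $1+4$ and $0+4$ respectively. The maximum is exactly $5$, saturating the bound. Theorem~\ref{thm:CrawIshii1} then yields the derived equivalence $\Phi^{\mathscr{U}_\theta}\colon D^b\bigl(\coh(Y_i)\bigr)\to D^b(\modA)$; the crepancy of $\tau_i$ furnished by the same theorem is already guaranteed by the toric construction.

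The main obstacle is the accounting of fibre dimensions over the codimension-three strata, where the transverse $(\ZZ/2)^2$-singularity in $\SL(3,\kk)$ is precisely what saturates the Bridgeland--King--Reid bound. The argument succeeds because the six junior rays happen to lie on the boundary of the orthant rather than in its interior, which prevents a divisorial contribution to the fibre over the origin; if instead one worked directly with $Y = \ghilb(\mathbb{A}^4_\kk)$, the extra interior ray $\tfrac{1}{2}(e_1+e_2+e_3+e_4)$ would produce a three-dimensional divisorial component of the origin fibre and the fibre-product dimension would exceed $n+1$, consistent with the failure of crepancy for $Y$.
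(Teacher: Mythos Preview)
Your strategy is exactly the paper's: reduce to Theorem~\ref{thm:CrawIshii1} by verifying the fibre-product bound $\dim(Y_i\times_X Y_i)\le n+1=5$, and do this by stratifying $X$ according to stabiliser type. The paper carries this out more tersely, checking only the origin (fibre $E_i\cong\mathbb{P}^1\times\mathbb{P}^1$) and the coordinate axes, and concluding with the same bound.

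There is one factual slip in your axis analysis that you should correct. The transverse singularity $\mathbb{A}^3_\kk/(\ZZ/2)^2$ along a coordinate axis is \emph{not} isolated: its singular locus is the union of the three coordinate axes (coming from the three nontrivial elements of $(\ZZ/2)^2$, each fixing a line). Consequently, the $2$-dimensional exceptional locus of any crepant resolution is spread over that $1$-dimensional singular locus, and the fibre over the origin of the transverse slice is only a tree of $\mathbb{P}^1$'s, hence $1$-dimensional. In the $4$-fold picture this is visible because each face of the junior tetrahedron opposite a vertex $e_j$ inherits the ``central triangle plus three corner triangles'' triangulation regardless of which diagonal of the octahedron you pick, so the transverse resolution is always the $(\ZZ/2)^2$-Hilbert scheme of Figure~\ref{fig:Z2xZ2}(b). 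Thus the correct contribution of the $1$-dimensional strata is $1+2\cdot 1=3$, not $1+2\cdot 2=5$, and in fact $\dim(Y_i\times_X Y_i)=4$.

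This does not damage your argument: you have merely overestimated one term, and the required inequality $\le 5$ still holds. The paper's own bound $2\cdot 3-1=5$ is similarly crude (it bounds the exceptional-divisor fibre product over the axes rather than computing the actual fibre), so your write-up, once the description of the transverse singularity is fixed, is if anything more informative than the original.
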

 \begin{proof}
   It remains to prove the second statement.  Write \(\tau_i\colon
   \mathcal{M}_{\theta_i} \rightarrow X\) for the crepant resolution
   with \(\theta_i \in C_i\) for \(i = 1,2,3\).  The fibre
   \(\tau_i^{-1}(\pi(0))\) over the point \(\pi(0)\in
   \mathbb{A}_\kk^4/G\) is isomorphic to \(\mathbb{P}^1\times
   \mathbb{P}^1\), so it satisfies the dimension condition required to
   apply Theorem~\ref{thm:CrawIshii2}.  This is not yet enough since
   the images under \(\pi\colon \mathbb{A}_\kk^4\rightarrow
   \mathbb{A}_\kk^4/G\) of all four coordinate axes are also singular
   and hence have been resolved in creating the resolution.
   Nevertheless, these singularities arise along a subvariety of
   dimension one, so the dimension of the fibre product over any such
   point \(\pi(x)\in \mathbb{A}^4_\kk/G\) is $2\cdot 3-1$, which
   equals \(n+1\) in this case. Theorem~\ref{thm:CrawIshii2} now applies.
 \end{proof}
 
 \begin{remark}
   Conjecture~\ref{conj:McKay} has also been established as an
   equivalence of derived categories for finite subgroups \(G\subset
   \Sp(n,\CC)\) by Kaledin--Bezrukavnikov~\cite{BK} and for finite
   abelian subgroups \(G\subset \SL(n,\CC)\) by
   Kawamata~\cite{Kawamata4}.
 \end{remark}

 \bibliography{toricquivers}
 \end{document}